\theoremstyle{plain}
\newtheorem{thm}{\protect\theoremname}[subsection]
\newtheorem*{thm*}{\protect\theoremname}
\newtheorem{lem}[thm]{\protect\lemmaname}
\newtheorem*{lem*}{\protect\lemmaname}
\newtheorem{prop}[thm]{\protect\propositionname}
\newtheorem*{prop*}{\protect\propositionname}
\newtheorem{cor}[thm]{\protect\corollaryname}
\newtheorem*{cor*}{\protect\corollaryname}
\newtheorem{fact}[thm]{\protect\factname}
\newtheorem*{fact*}{\protect\factname}
\newtheorem{conj}[thm]{\protect\conjecturename}
\theoremstyle{definition}
\newtheorem{defn}[thm]{\protect\definitionname}
\newtheorem*{defn*}{\protect\definitionname}
\newtheorem{exam}[thm]{\protect\examplename}
\theoremstyle{remark}
\newtheorem{rem}[thm]{\protect\remarkname}
\numberwithin{equation}{subsection}
\newcommand{\cA}{\mathcal{A}}
\newcommand{\cB}{\mathcal{B}}
\newcommand{\cC}{\mathcal{C}}
\newcommand{\cE}{\mathcal{E}}
\newcommand{\cF}{\mathcal{F}}
\newcommand{\cG}{\mathcal{G}}
\newcommand{\cL}{\mathcal{L}}
\newcommand{\cM}{\mathscr{M}}
\newcommand{\cP}{\mathcal{P}}
\newcommand{\cR}{\mathcal{R}}
\newcommand{\cT}{\mathcal{T}}
\newcommand{\R}{\mathbb{R}}
\newcommand{\N}{\mathbb{N}}
\newcommand{\prob}{\mathbb{P}}
\newcommand{\E}{\mathbb{E}}
\newcommand{\eps}{\epsilon}
\newcommand{\indicator}[1]{\mathbbm{1}_{#1}}
\newcommand{\eqdist}{\stackrel{(d)}{=}}
\newcommand{\convdist}{\stackrel{(d)}{\rightarrow}}
\newcommand{\tensor}{\otimes}
\newcommand{\gibbs}[1]{\left\langle#1\right\rangle}
\newcommand{\bsig}{\boldsymbol{\sigma}}
\newcommand{\bfW}{\mathbf{W}}
\newcommand{\bftW}{\tilde{\bfW}}
\newcommand{\talpha}{{\tilde{\alpha}}}
\newcommand{\tbeta}{{\tilde{\beta}}}
\newcommand{\bv}{\mathbf{v}}
\newcommand{\abs}[1]{\lvert#1\rvert}
\newcommand{\norm}[1]{\lvert\lvert#1\rvert\rvert}
\providecommand{\corollaryname}{Corollary}
\providecommand{\definitionname}{Definition}
\providecommand{\factname}{Fact}
\providecommand{\lemmaname}{Lemma}
\providecommand{\propositionname}{Proposition}
\providecommand{\theoremname}{Theorem}
\providecommand{\remarkname}{Remark}
\providecommand{\conjecturename}{Conjecture}
\providecommand{\examplename{Example}}
\newcommand{\documentname}{paper}
\begin{document}

\title{Approximate Ultrametricity for Random Measures and Applications to Spin Glasses}

\author{Aukosh Jagannath}
\date{December 22, 2014}
\keywords{spin glasses, ultrametricity, Ghirlanda-Guerra Identities, mass-partitions}
\subjclass[2010]{ 60G57, 60K35, 82B44, 82D30} 

\begin{abstract}
In this \documentname, we introduce a notion called ``Approximate Ultrametricity'' which encapsulates
the phenomenology of a sequence of random probability measures  having supports that behave
like ultrametric spaces insofar as they decompose into nested balls.  We provide a sufficient condition for 
a sequence of random probability measures on the unit ball of an infinite dimensional separable Hilbert space to admit such a decomposition, whose
elements we call clusters.  We also characterize the laws of the measures
of the clusters by showing that they converge in law to the weights of a Ruelle Probability Cascade.
These results apply to a large class of classical models in mean field spin glasses. 
We illustrate the notion of approximate ultrametricity by proving two important conjectures 
regarding mixed p-spin glasses.
\end{abstract}
\maketitle
\section{Introduction}

In their study of mean field spin glass models, the authors of \cite{Mez84} predicted 
that the support of the Gibbs measure should be ultrametric in the limit of an infinite number of spins. 
They determined that ultrametricity was a cornerstone for understanding Parisi's Replica Symmetry Breaking
ansatz. Ultrametricity, they  explained, 
accounts for the hierarchical decomposition of the Gibbs measure into ``pure states''
and their ``linear convex combinations'' by allowing one to interpret the latter as  balls, which have a natural
hierarchical structure in ultrametric spaces.  
The physics literature also characterized the laws of these pure states
using the so-called Ruelle Probability Cascades (RPCs) \cite{Der80,Mez84,Rue87}. 
(For a definition of RPCs, see the Appendix.)
In his recent fundamental study, Panchenko has established this ultrametricity property for a natural limiting
object, which he calls the ``Asymptotic Gibbs Measure'', for mean field spin
glass models under a natural condition, the \emph{Ghirlanda-Guerra identities} (see \prettyref{sec:main-results} for a precise definition) \cite{PanchUlt13}. 
This leads to the natural question: 
\begin{quote}
``Is there a sense in which one can see a `pure state' decomposition
occurring at large but finite numbers of spins?'' 
\end{quote}
The answer to this question has proven to be important 
in the physics literature not just for understanding the Replica Theory, 
but also its connection to the TAP approach \cite{MPV87}. This question was first studied by Talagrand in \cite{Tal09} 
where 
he obtained some partial results on this question assuming the  Ghirlanda-Guerra Identities, along with two other conditions, before Panchenko's proof of ultrametricity was known.

We answer this question in the affirmative. To this end, we introduce
the notion of \emph{approximate ultrametricity} to formalize the notion of a sequence of random measures  behaving
``asymptotically ultrametrically'' in that their supports admit a decomposition into ``pure state''-like clusters.  
 We then prove that
a sequence of random probability measures supported on the unit ball of an infinite dimensional separable Hilbert space
is approximately ultrametric provided that this sequence satisfy the Approximate Ghirlanda-Guerra identities along with an additional condition which can be thought of 
as encoding the radii allowed for the clusters. Furthermore, we characterize the laws of the measures of the
clusters by showing that they converge to those of an RPC. 

As a consequence of our studies, one finds that for a large class of mean field spin glass models, the Gibbs measures
do in fact admit such a decomposition at large but finite $N$. 
We discuss this in \prettyref{sec:models}. To further illustrate of our results, 
we prove Talagrand's Orthogonal Structures conjecture and as a consequence verify the 
Dotsenko-Franz-M\'ezard conjecture in a natural regime. We state these results  in \prettyref{sec:OSC-def}.

We state the main definitions and results of this \documentname~in the rest of this section which is organized as follows.
In \prettyref{sec:prelim-def}, we define approximate ultrametricity and the Approximate Ghirlanda-Guerra identities,
along with definitions necessary for understanding the main results of this \documentname.
In \prettyref{sec:main-results}, we state the aforementioned results. We outline the proofs of these results
in \prettyref{sec:outline}. See \prettyref{sec:organization} for an explanation of the organization of the remainder of
this \documentname.

\subsection{The Notion of Approximate Ultrametricity}\label{sec:prelim-def}
We now state the definitions necessary to understand the statements of the main results of this \documentname. 
We begin by introducing the following notions to encapsulate the idea of a sequence of measures behaving
``increasingly ultrametrically'' as discussed above. We begin with the following
 definition. Roughly speaking, this notion should be
thought of as encoding the collections of radii that are allowed
for our ``pure state'' decompositions.

\begin{defn}
Let $\zeta$ be a probability measure on the interval $[-1,1]$. 
A finite increasing sequence of points $\{q_k\}_{k=1}^r$ on the interval $(0,1]$ is said to be $\zeta$-\emph{admissible} if:
\begin{enumerate}
\item They are all continuity points:
\[
\zeta(\{q_k\})=0.
\]
\item There is mass between them:
\[
\zeta([q_k,q_{k+1}])>0.
\]
\item Some but not all of the $\zeta$-mass lies between $0$ and  $q_1$, and $q_r$ and $1$:
\[
0<\zeta([0,q_1])<1\text{ and }0<\zeta([q_r,1])<1.
\]
\end{enumerate}
\end{defn}

With this in hand we can then define what we mean by being approximately
ultrametric. 
This definition encodes the idea that if one picks a (finite) increasing sequence of radii, then there is a sequence
of sets that are hierarchically arranged by inclusion that uniformly ``almost'' exhaust the measure of the space
and for which points within the sets are uniformly close or far depending on the relation of the sets in this hierarchy.
As we will be focusing primarily
on sequences of random measures on the unit ball of an infinite dimensional separable Hilbert space, we will tailor the definition to this setting, though one 
could extend this notion to more general metric spaces. Without loss of generality, we take
this Hilbert space to be $\ell_2$ throughout this \documentname.

In the following for any $\alpha,\beta$ vertices in a tree, 
$\abs{\alpha}$ means the depth of the vertex $\alpha$, $\alpha\precsim\beta$ means that $\alpha$ is along 
the root vertex path to $\beta$, and $\alpha\nsim\beta$ means that $\alpha$ and $\beta$ are on different root-vertex 
paths, in which case we call $\alpha$ and $\beta$ \emph{cousins}. Let $child(\alpha)$ be those $\beta$ such that $\abs{\beta}=\abs{\alpha}+1$ and for which $\alpha\precsim\beta$. For more on this notation see \prettyref{sec:trees}.

\begin{defn}\label{defn:approx-um}
A sequence $\{\mu_N\}_{N=1}^\infty$ 
of random measures supported on the unit ball of $\ell_2$ is said to 
be  \emph{approximately ultrametric} with respect to the measure $\zeta$
 if for every $r$ and every $\zeta$-admissible sequence, 
$\{q_k\}_{k=1}^r$,  there is a sequence of finite rooted trees of depth $r$, $\{\tau_{N,r}\}$, 
and sequences $a_N$, $b_N$, and $\eps_N$ all tending to $0$ such that with probability tending to one, 
there are sets $\{C_{\alpha,N}\}_{\alpha\in\tau_{N,r}}$ with the following properties.
\begin{enumerate}
\item The inclusion-exclusion structure of the sets is natural with respect to the partial order of $\tau_{N,r}$:
\[
C_{\alpha,N} \cap C_{\beta,N} =\begin{cases} C_{\alpha,N} &\text{ if }\beta\precsim\alpha.\\							\emptyset &\text{ if } \beta\nsim\alpha.
\end{cases}
\]
\item The sets almost exhaust the measure at every depth: for each $k\in[r]$, 
\[
\sum_{\abs{\alpha}=k} \mu_N(C_{\alpha,N}) \geq 1-\eps_N.
\]

\item For $\alpha\in\tau_{N,r}\setminus\partial\tau_{N,r}$ 
\[
0\leq \mu_N(C_{\alpha,N}) -\sum_{\beta\in child(\alpha)} \mu_N(C_{\beta,N}) \leq \eps_N.
\]
\item Points in the same cluster are uniformly close with high probability in the environment: for every $\alpha$ in $\tau_{N,r}$, 
\begin{equation}\label{eq:unif-close}
\mu_N \left(\sigma^1,\sigma^2\in C_{\alpha,N} :(\sigma^1,\sigma^2)\leq q_{\abs{\alpha}}-a_N\right)\leq b_N
\end{equation}
\item Points in clusters that are cousins are uniformly far apart: for every $\alpha\nsim\beta$ in $\tau_{N,r}$, if $\gamma\prec\alpha$  and $\eta\prec\beta$ are such that
$\abs{\gamma}=\abs{\eta}=\abs{\alpha\wedge\beta}+1$, then
\begin{equation}\label{eq:unif-far}
\mu_N\left(\sigma^1\in C_\gamma,\sigma^2\in C_\eta :(\sigma^1,\sigma^2) \geq q_{\abs{\gamma\wedge\eta}+1} +a_N\right) \leq b_N.
\end{equation}
\end{enumerate}
We say that this sequence is \emph{regularly} approximately ultrametric if, furthermore, there is a sequence $m_N
\rightarrow \infty$ such that $\tau_{N,r}$ is the $m_N$-regular tree of depth $r$. We call these $C_{\alpha,N}$ the  
\emph{clusters} corresponding to the sequence $\{q_k\}$.
\end{defn}

\begin{rem}
We would like to point out here that as stated the above
definition uses very little about $\zeta$. All we needed was a notion of admissible
sequences. This notion is invariant under changing the measure to an equivalent measure.
If another measure $\nu$ is equivalent to $\zeta$ in the sense
that there is a positive function $f$ with $d\nu =f d\zeta$, then
the class of admissible sequences  for $\nu$ and $\zeta$ are the same. The
reader is encouraged, however, to think of 
$\zeta$ as the limit of the mean law of $(\sigma^1_N,\sigma^2_N)$ where $\sigma_N^1$ and $\sigma_N^2$ are 
drawn i.i.d from $\mu_N$. 
The reason for this will become clear in the next section.
\end{rem}

\noindent  The first three properties together are called $(\eps_N,0)$-\emph{hierarchical exhaustion}, and the last two 
properties are called $(a_N,b_N)$-\emph{hierarchical clustering}. For a more thorough explanation and motivation of the 
above definitions,  see  \prettyref{sec:um}.
Also see \prettyref{fig:h-exhaust}. 
 
\begin{figure}[t]\label{fig:h-exhaust}
  \centering
      \includegraphics[width=0.4\textwidth,height=0.3\textheight]{./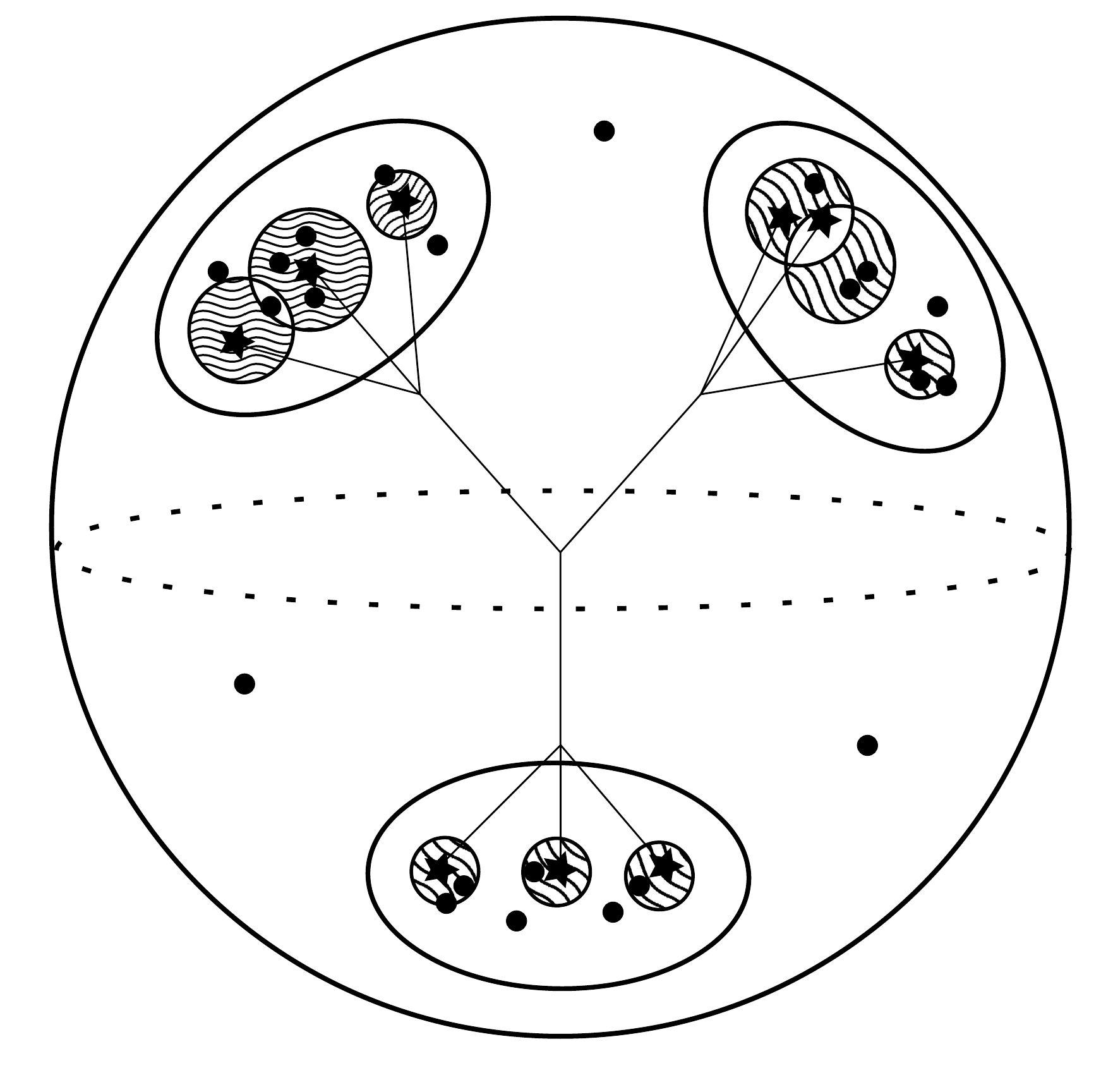}
  \caption{A collection of clusters that satisfy \prettyref{defn:approx-um}. The centers of the balls are denoted by stars,
  the sets are denoted by the discs with wavy lines, and the measure puts even mass at all of the dots and stars. In the top left, one of the discs has a center that is too close the center of the other, so it is excised from the exhaustion.}
  
\end{figure}

These clusters are to be compared with the ``pure states'' of physicists \cite{MPV87}. 
In particular, we think of them as an approximation to the pure states
at a large but finite number of spins. 
As we shall see shortly, these clusters are constructed
such that they are not only  ``pure state''-like in the sense of radii, but they are in fact
approximately balls, this approximation becoming exact in the thermodynamic limit.

\subsection{Statement of main results}\label{sec:main-results}
In this \documentname, we obtain sufficient conditions to conclude that a sequence of random measures
on the unit ball of $\ell_2$ is approximately ultrametric with respect to
some $\zeta$. In the spin glass literature,
a natural symmetry property of such measures  has been identified as being related to ultrametricity, which we define
presently. Before we do so, we need a few technical definitions.

Let $\mu$ be such a random measure on a fixed probability space 
$(\Omega,\cF,\prob)$. Draw $(\sigma^i)_{i=1}^\infty$ i.i.d. from $\mu$, that is, $\mu^{\tensor\infty}$ is a
regular conditional probability distribution for $(\sigma^i(\omega))$ (see \cite{AldExch83}). 
Let $R_{ij}=(\sigma^i,\sigma^j)$ be the inner
product of these random variables. Here and in the following, the subscripts always refer
to the index of the element of the draw.
Finally let $R^n=(R_{ij})_{i,j\in[n]}$ be the $n$-by-$n$ array of pairwise inner products of the first $n$ draws.

With this in hand, we can then define the symmetry property.
\begin{defn}
We say that a sequence $\{\mu_N\}$ of random measures supported on the unit ball of $\ell_2$
satisfies the \emph{Approximate Ghirlanda-Guerra Identities} (AGGIs) 
if  for every $n\in\N$,   bounded Borel measurable $f$ on $[-1,1]^{n^2}$, and  $\psi\in C[-1,1]$,  
\begin{equation}
\lim_{N\rightarrow\infty}\abs{n\E\gibbs{ f(R^n)\psi(R_{1,n+1})}_N - \E\gibbs{f(R^n)}_N\E\gibbs{\psi(R_{12})}_N -\sum_{k=2}^n \E\gibbs{f(R^n)\psi(R_{1k})}_N}=0,
\end{equation}
where $\gibbs{\cdot}_N$ denotes the expectation with respect to $\mu_N$, and $R_{ij}$ and $R^n$ are as above. 
\end{defn} 

With this language we can then state the main result of the \documentname.

\begin{thm}\label{thm:main-result-1}
Let $\{\mu_N\}$ be a sequence of random probability measures supported on the unit ball of $\ell_2$ 
on a fixed probability space $(\Omega,\cF,\prob)$. Suppose that this sequence satisfies the Approximate 
Ghirlanda-Guerra identities and that
\[
\zeta_N = \E\mu_N^{\tensor2}(R_{12}\in \cdot)\rightarrow\zeta
\]
weakly for some $\zeta$. 
Then this sequence is regularly approximately ultrametric with respect to $\zeta$. 
\end{thm}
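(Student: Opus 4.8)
The plan is to reduce to the structure theory for \emph{asymptotic Gibbs measures} by passing to a weak limit, and then to push the resulting hierarchical ball decomposition back down to finite $N$ by a sampling argument.

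\medskip
\noindent\textbf{Passing to the limit.} Under $\E\gibbs{\cdot}_N$ the law of the infinite overlap array $R^\infty$ lives on the compact space $[-1,1]^{\N\times\N}$, so the family $\{R^\infty\}_N$ is tight; fix a subsequential weak limit. The limiting array is weakly exchangeable, so by the Dovbysh--Sudakov representation (cf. \cite{AldExch83}) it is the Gram array $R_{ij}=(\sigma^i,\sigma^j)$ off the diagonal of an i.i.d. sample $(\sigma^i)$ from a random measure $\mu$ on a separable Hilbert space, with a constant on the diagonal. The quantities in the AGGIs are, after approximating the bounded Borel $f$ and using that the $q_k$ are continuity points so that the relevant boundaries are $\zeta$-null, continuous functionals of the law of the array; hence they pass to the limit, $\mu$ satisfies the exact Ghirlanda--Guerra identities, and $\E\mu^{\tensor2}(R_{12}\in\cdot)=\zeta$. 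By Panchenko's theorem \cite{PanchUlt13}, $\mu$ is then ultrametric, and the identities together with $\zeta$ pin down the law of the whole overlap array as that of the Ruelle Probability Cascade directed by $\zeta$; in particular the subsequential limit is unique, so the full sequence $\{R^\infty\}_N$ converges in law to that cascade. We also use the standard consequence of the identities that the self-overlaps concentrate at $q^\ast:=\max\operatorname{supp}\zeta$, and note that $\zeta$-admissibility forces $0<q_1<\dots<q_r<q^\ast$.

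\medskip
\noindent\textbf{The limiting decomposition.} Fix $r$ and a $\zeta$-admissible sequence $\{q_k\}_{k=1}^r$. In the cascade $\mu$, ultrametricity partitions the support at each level $k$ into the balls $\{x:(x,\cdot)\ge q_k\}$, nested across levels, whose masses are the weights of a Ruelle cascade with parameters read off from $\zeta$ at the $q_k$. For every $\delta>0$ only finitely many balls at each level $k\le r$ carry mass exceeding $\delta$, and by the Poisson--Dirichlet structure the total mass of the remaining balls at each level vanishes as $\delta\to0$; moreover, because each $q_k$ is a continuity point of $\zeta$, the defining inequalities for nested versus cousin balls separate strictly, almost surely. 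Thus, up to an error controlled by $\delta$, $\mu$ is hierarchically exhausted by a finite (random) tree of ``heavy'' balls whose inter-center overlaps obey strict inequalities dictated by the $q_k$.

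\medskip
\noindent\textbf{Transfer to finite $N$.} Sample $n$ i.i.d. points $\sigma^1,\dots,\sigma^n$ from $\mu_N$ via the regular conditional distribution. The statistics we need are all continuous functionals of the law of a bounded sub-array: which tuples among the samples have pairwise overlaps above or below the $q_k$; and, at each level $k$, the leftover mass $\gibbs{\indicator{\max_{i\le n}(\sigma^{n+1},\sigma^i)<q_k-a}}_N$, a bounded function of $R^{n+1}$. Hence these converge as $N\to\infty$ to their values under the limiting cascade, where, choosing $\delta$ small and then $n$ large, the samples hit every heavy ball at every level $\le r$, reproduce the heavy-ball tree in their overlap pattern, and leave only $\delta$-small leftover mass, all with probability $\ge1-\eps$. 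Transferring, for $N$ large we obtain with probability $\ge1-2\eps$ a tree $\tau$ with representatives $\{\sigma^{i_\alpha}\}$; set $C_{\alpha,N}=\{\norm{\sigma}\le1\}$ for $\alpha$ the root and recursively $C_{\alpha,N}=C_{\mathrm{parent}(\alpha),N}\cap\{\sigma:(\sigma,\sigma^{i_\alpha})>q_{\abs{\alpha}}-a\}$. Nesting along paths is immediate; disjointness of cousins follows from $\norm{\sigma^{i_\gamma}-\sigma^{i_\eta}}^2=\norm{\sigma^{i_\gamma}}^2+\norm{\sigma^{i_\eta}}^2-2(\sigma^{i_\gamma},\sigma^{i_\eta})$ being too large, once $a$ is small compared to the cousin gap, for any $\sigma$ on the unit ball with self-overlap near $q^\ast$ to have overlap $>q_\ell-a$ with both centers; properties (2) and (3) are the heavy-ball exhaustion together with the leftover-mass bound; and (4)--(5) follow from a further triangle inequality, since inside $C_{\alpha,N}$ every point has overlap $>q_{\abs{\alpha}}-a$ with $\sigma^{i_\alpha}$ (respectively, across cousins, the centers' overlaps are separated below $q_\ell$), the exceptional $\mu_N$-mass being absorbed into $b_N$. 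Finally, pad each internal node of $\tau$ with empty clusters to make it the $m_N$-regular tree — empty sets satisfy all five constraints vacuously — and let $m_N\to\infty$ slowly enough that the true number of heavy children never exceeds $m_N$ with high probability; a diagonalization over $\eps\downarrow0$, $\delta\downarrow0$, $n\uparrow\infty$ then yields, for each fixed $r$ and admissible $\{q_k\}$, sequences $a_N,b_N,\eps_N\to0$ with all the required properties.

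\medskip
\noindent\textbf{Main obstacle.} The analytic core — ultrametricity of $\mu$ and the identification of its overlap law with a Ruelle cascade — is Panchenko's theorem, so the real work is the transfer: extracting a genuine geometric ball decomposition of $\mu_N$ from only weak convergence of finite overlap sub-arrays, with every error parameter tending to zero. The delicate points are (a) verifying that the leftover-mass quantity is a continuous functional of a finite overlap array so that it transfers; (b) obtaining disjointness and uniform cousin-separation of the clusters from overlap inequalities, via the concentration of the self-overlaps at $q^\ast$; and (c) organizing the three limits $N\to\infty$, $n\to\infty$, $\delta\to0$, together with the padding to the $m_N$-regular tree, into a single diagonal argument producing $a_N,b_N,\eps_N\to0$ and $m_N\to\infty$ simultaneously.
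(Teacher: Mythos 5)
Your overall architecture --- pass to the Dovbysh--Sudakov limit, invoke the Ghirlanda--Guerra identities and Panchenko's theorem to obtain an exact ultrametric/RPC ball decomposition there, then sample from $\mu_N$ and transfer the heavy-ball tree back to finite $N$ via convergence of overlap statistics, finishing with padding to a regular tree and a diagonal argument --- is essentially the paper's. The genuine gap is in the step where you claim the defining cluster properties follow from Hilbert-space geometry. Disjointness of cousin clusters does not follow from the centers being far apart: on the unit ball a point $\sigma$ can have overlap $\geq q_k-a$ with two centers whose mutual overlap is below $q_\ell$ (take two nearly orthogonal centers and $\sigma$ proportional to their sum); this is only excluded when roughly $2(q_k-a)^2\geq 1+q_\ell$, which fails for general admissible sequences, and concentration of self-overlaps at $q_*$ does not rescue it. Likewise properties (4)--(5) of \prettyref{defn:approx-um} are not consequences of ``a further triangle inequality'': two points each with overlap $\geq q_{\abs{\alpha}}-a$ with the center $\sigma^{i_\alpha}$ can have mutual overlap far below $q_{\abs{\alpha}}$, and points in cousin clusters can have large mutual overlap, so the ``exceptional $\mu_N$-mass'' you absorb into $b_N$ has no geometric bound at all.

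The missing mechanism is the quantitative transfer of ultrametricity itself to finite $N$, which is how the paper proceeds. For (4)--(5) one controls
$f^N_{k,\eps}(\sigma)=\mu_N^{\tensor 2}\bigl(R_{12}\leq q_k-\eps,\ (\sigma^1,\sigma)\geq q_k,\ (\sigma^2,\sigma)\geq q_k\bigr)$
and the analogous cousin quantity $g^N$: their Gibbs averages are bounded by $\E\mu_N^{\tensor 3}$ of three-replica events that are continuity sets of the limiting law and have limiting probability zero by Panchenko's theorem, and then Markov's inequality with a union bound over the $O(m^{r+1})$ (resp.\ $O(m^{2r+2})$) cluster pairs yields uniform closeness/farness with $a_N,b_N\to 0$ on a high-probability event. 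Disjointness (property (1)) and the parent--children mass identity (3) are not obtained geometrically either: one excises $C_\alpha=B_\alpha\setminus\bigcup_{\beta\nsim\alpha}B_\beta$ and controls the excised mass because the pairwise intersection masses $\mu_N(B_\alpha\cap B_\beta)$ for cousins are part of the converging weight vector and vanish in the limit, where cousin balls are genuinely disjoint for the ultrametric $\mu$ --- this is precisely why the paper tracks weights indexed by pairs of vertices, not just single balls. Without these two ingredients your construction does not deliver (1), (3), (4), (5); with them, the rest of your outline (sampling, heavy-ball exhaustion, regularization, diagonalization) matches the paper's proof.
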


We are also able to characterize the laws of the sequence of weights $\{\mu_N(C_{\alpha,N})\}$. 
To do this we introduce the following two definitions. Let $\cA_r$ be the rooted tree of depth $r$ for 
which each non-leaf vertex has $\N$ children. For readers unfamiliar with $\cA_r$, see \prettyref{sec:trees}.

\begin{defn}
A collection of random variables $(V_\alpha)_{\alpha\in\cA_r}$ is said to be
in \emph{standard order} if the following is true. The $V_n$ (i.e. the weights of the vertices
at the first level) are arranged in decreasing order. The weights corresponding
to the children of any vertex $\alpha$, 
$V_{\alpha n}$, are arranged in decreasing order.
\end{defn}

Let $\mu_N$ and $\zeta$ satisfy the conditions of \prettyref{thm:main-result-1}. Then for any $\zeta$-admissible
$\{q_k\}_{k=1}^r$, let  $C_{\alpha,N}$ be the corresponding clusters, and set 
\[
\tilde{Y}_{\alpha,N}=\begin{cases} C_{\alpha,N} & \text{ for } \alpha\in\tau_{N,r}\\
 0 &\text{ otherwise}
\end{cases}
\]
where $\tau_{N,r}$ is the $m_N$-regular tree from \prettyref{defn:approx-um}. Finally, let $(Y_{\alpha,N})_{\alpha\in\cA_r}$ be $(\mu_N(\tilde{Y}_{\alpha,N}))_{\alpha\in\cA_r}$ arranged in standard order. We then have the following theorem.

\begin{thm}\label{thm:main-result-2}
Let $(Y_\alpha)$ be distributed like the weights of a Ruelle Probability Cascade with parameters 
$\zeta_k=\zeta[q_k,q_{k+1}]$ with $q_0=0$ and $q_{r+1}=1$. Then
\[
(Y_{\alpha,N})\convdist (Y_\alpha).
\]
 
\end{thm}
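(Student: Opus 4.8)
The plan is to combine the cluster construction from \prettyref{thm:main-result-1} with Panchenko's characterization of the limiting overlap array and the known ``free energy / Poisson--Dirichlet'' structure of the Ruelle Probability Cascades. The strategy proceeds in three stages. First, one passes to a subsequence along which the sequence of random measures $\{\mu_N\}$ has a well-defined ``Asymptotic Gibbs Measure'' $\mu_\infty$ in the sense of Panchenko (the Dovbysh--Sudakov representation of the weak limit of the overlap arrays $R^n$): since the AGGIs hold in the limit and $\zeta_N \to \zeta$, the limiting array satisfies the exact Ghirlanda--Guerra identities, so $\mu_\infty$ is ultrametric and its overlap distribution is $\zeta$. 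On this limiting object, for a $\zeta$-admissible $\{q_k\}_{k=1}^r$ the balls of radii $q_1,\dots,q_r$ partition (up to a null set) the support into a genuine hierarchy, and the masses of these balls are \emph{exactly} distributed as the weights of an RPC with parameters $\zeta_k = \zeta[q_k,q_{k+1}]$ — this is the content of Panchenko's work on the structure of the asymptotic Gibbs measure under the GGIs, together with the Bolthausen--Sznitman/Ruelle description of cascade weights via nested Poisson--Dirichlet variables. So the target law $(Y_\alpha)$ is identified intrinsically with the ball-mass hierarchy of $\mu_\infty$.

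Second, I would argue that the finite-$N$ cluster weights $(Y_{\alpha,N})$ converge in law to exactly this ball-mass hierarchy. The key point is that the clusters $C_{\alpha,N}$ produced in the proof of \prettyref{thm:main-result-1} are \emph{approximately balls}: by the hierarchical clustering property \eqref{eq:unif-close}–\eqref{eq:unif-far}, two points land in the same depth-$k$ cluster essentially iff their overlap is at least $q_k$ (up to the vanishing slack $a_N$ on a set of $\mu_N^{\otimes 2}$-measure at most $b_N$), and by hierarchical exhaustion (properties (2)–(3)) these clusters carry all but $\eps_N$ of the mass in a way consistent with the tree order. Consequently, for any fixed finite set of test vertices $\alpha^1,\dots,\alpha^m \in \cA_r$, the joint law of $(\mu_N(C_{\alpha^i,N}))$ is determined, up to errors tending to $0$, by the empirical distribution of overlaps $R^n$ under $\mu_N^{\otimes n}$ for $n$ large; this latter distribution converges to that of $\mu_\infty$. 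One then expresses the vector of cluster weights as a continuous (almost-everywhere, with respect to the limit) functional of the overlap array — concretely, via moments: $\E[\prod_i \mu_N(C_{\alpha^i,N})^{k_i}]$ can be written, using replicas, as $\E\gibbs{\indicator{\text{overlap pattern lies in the cluster cells}}}_N$ plus $o(1)$, and such replica expectations converge by the assumed convergence of $\E\gibbs{f(R^n)}_N$. Since a distribution on the (infinite-dimensional) simplex of weight sequences is determined by these moments, and since the limiting moments coincide with those of the RPC weights with parameters $\zeta_k$, we get $(Y_{\alpha,N}) \convdist (Y_\alpha)$ in the product topology.

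Third, one must handle the bookkeeping of the \emph{standard order} and the passage from subsequences to the full sequence. Re-ordering in decreasing order within each generation is a measurable operation that is continuous at every point of the simplex whose nonzero coordinates are distinct; the RPC weights have a.s.\ distinct coordinates (Poisson--Dirichlet atoms are a.s.\ distinct), so the continuous-mapping theorem applies and the standard ordering commutes with the limit. Finally, because the limiting law $(Y_\alpha)$ is the \emph{same} RPC for every subsequence (it depends only on $\zeta$ and the fixed $\{q_k\}$, via $\zeta_k = \zeta[q_k,q_{k+1}]$), a standard subsequence argument upgrades convergence along subsequences to convergence of the whole sequence.

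I expect the main obstacle to be the second stage: rigorously showing that the \emph{constructed} clusters $C_{\alpha,N}$ — which are built by an explicit but somewhat ad hoc greedy/excision procedure inside the proof of \prettyref{thm:main-result-1} — have masses that are genuinely captured by overlap-array functionals, i.e.\ that the ``approximately a ball'' statement is quantitatively strong enough to control the full joint law of the weights and not merely pairwise overlaps. Concretely, one needs that the error in replacing $C_{\alpha,N}$ by the true metric ball of radius $q_{\abs{\alpha}}$ around its ``center'' goes to zero in a way uniform over the (growing number of) clusters, so that moment computations of all orders pass to the limit; controlling this uniformly, rather than vertex-by-vertex, and reconciling it with the $m_N$-regular tree structure versus the $\cA_r$ indexing, is where the real work lies.
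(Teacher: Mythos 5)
Your overall strategy is the same as the paper's: identify the target law with the ball-mass hierarchy of the limiting Dovbysh--Sudakov measure (the paper's \prettyref{fact:RPC} and the $\Gamma$-approximator), approximate cluster-membership indicators by functions of the overlap (the paper's \prettyref{lem:indicator-approximation} and \prettyref{lem:indicator-approximation-2}), and dispose of the ordering via a.s.\ distinctness of the cascade weights plus the subsequence principle. The contrary to your expectation, the ``clusters are approximately balls'' uniformity is not the hard part; the paper's $f$ and $g$ functionals handle it directly. The genuine gap is in your second stage, in the sentence ``a distribution on the simplex of weight sequences is determined by these moments.'' The quantities you can actually compute by replicas are not the moments $\E\prod_i Y_{\alpha^i,N}^{k_i}$ of the \emph{standard-ordered, tree-indexed} weights: a replica expectation of an overlap-pattern indicator equals a \emph{symmetrized} sum over embeddings of the pattern tree into $\cA_r$, of the form $\sum_{\phi}\prod_\alpha v_{\phi(\alpha)}^{n_\alpha}$ (this is exactly the identity used in the paper's \prettyref{lem:overlap-conv-to-GM}). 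Such functionals determine the weights only up to relabelling, i.e.\ they determine the associated exchangeable overlap structure, not the law of the ordered array on $[0,1]^{\cA_r}$. Passing from that symmetric information to the ordered, genealogically-indexed weights is precisely where the paper's real work lies: it encodes each cascade into a ROSt via the map $\cR$ of \prettyref{eq:rost-map}, shows the ROSts of any convergent subsequence converge to the ROSt of the limit point (\prettyref{lem:overlap-conv-lt-pt}), identifies the unique limiting overlap structure as the $\Gamma$-approximator of $\mu$, and then invokes the uniqueness part of Dovbysh--Sudakov to produce a random isometry whose induced tree bijection is forced to be the identity by the standard order, the parent--child structure, and a.s.\ distinctness of the RPC weights.

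A second, related omission is tightness of mass: to get convergence of the cascade weights in the product topology from convergence of symmetric (or ROSt) data, you must rule out mass leaking outside any fixed finite subtree and into the ``dust'' ($\sum_{|\alpha|=k} Y_{\alpha,N}$ can be strictly less than $1$, and the truncation to $\tau_{m_N}$ versus $\cA_r$ must be controlled uniformly). The paper proves this via \prettyref{thm:clust-reg-pd} (Talagrand's identities give Poisson--Dirichlet limits for the level-$k$ weights, hence uniform tail-sum control) together with a Scheff\'e argument in the dustbin lemma preceding \prettyref{lem:overlap-conv-lt-pt}. Your stage-3 continuity-of-ordering argument also quietly relies on this: reordering is only continuous at proper configurations with distinct nonzero entries, and one needs to know the limit point is a proper cascade before applying it. So the skeleton of your argument is right, but as written the moment-determinacy step would fail, and it must be replaced either by the paper's ROSt/Dovbysh--Sudakov route or by an honest point-process argument that separately establishes no mass loss at every level.
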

This convergence in distribution is topologized in \prettyref{sec:cascades}. For the reader's convenience, basic facts about Ruelle Probability Cascades are reviewed in the appendix. 
Alternatively, see \cite{PanchSKBook}. 

We would like to end this section with a remark regarding the
possibility of quantifying these results. At first glance
one might expect that the above results are  
unquantifiable. This, however, is not the case. In \prettyref{sec:quant-version}, we demonstrate that if one can 
obtain a uniform rate of convergence of the probabilities
of a particular class of sets (having to do with $R^n$), 
then one can obtain an estimate on the rates of convergence of the above. In particular, fix an admissible sequence 
$\{q_k\}_{k=1}^r$ and let $\zeta_1=\zeta[0,q_1)$, and 
 let $\mathscr{B}$ be defined by 
\[
\mathscr{B}=\{ E:\exists k: E=\{ x_1 \geq q_k + \eps, x_2 \geq q_k, x_3 <q_k\}, \text{ or }, \exists k: E=\{x_1\leq q_{k}-\epsilon,x_{2}\geq q_k,x_{3}\geq q_{k}\}\}.
\]
These sets measure the failure of ultrametricity. 
Furthermore, let $\mathscr{A}(\{q_k\})$ be defined as in \prettyref{sec:ballweights}. Roughly speaking, these sets
have to do with the probabilities of balls of certain radii. 
Finally let $D_1(N;\{q_k\})$ and $D_2(N;\{q_k\})$  be such that
\[
D_1(N)=\sup_{A\in\mathscr{A}}\abs{P_N(A)-P(A)}
\text{ and } 
D_2(N)=\sup_{B\in\mathscr{B}}\abs{P_N(B)-P(B)}.
\]
If we assume that there is a monotone decreasing function $D(N)$
that goes to $0$ as $N\rightarrow\infty$ such that
\[
D(N)\geq D_1(N) \vee D_2(N),
\]
then we have the following (probably highly sub-optimal) result.
\begin{thm}\label{thm:quant-clust}
Let $\{q_k\}_{k=1}^r$ be an admissible sequence with parameters
$\zeta_k=\zeta[0,q_k)$. Then there are functions $n_0(N)$,
$\nu(N)$, and $m_{**}(N)$ such that $\mu_N$ admits a collection
$\{C_{\alpha,N}\}_{\alpha\in\tau_{m_{**}(N)}}$ that
is $(2^{-\nu+1},0)$-hierarchically exhausting and 
$(1/2^{n_0},1/2^{n_0/2})$-hierarchically clustering with probability 
greater than $1-1/2^{\nu(N)}$.
Furthermore we have the bounds,
\begin{align*}
\nu (N)&\geq \Omega(\log\log\log\log(1/D(N)))\\
n_0 (N) &\geq \Omega(\log\log\log\log(1/D(N)))\\
m_{**}(N) &\geq \Omega((\log\log\log(1/D(N)))^c),
\end{align*}
where these inequalities are to be understood 
up to constants that depend on $r$
and $\zeta_1$, and $c$ also depends on these parameters.
\end{thm}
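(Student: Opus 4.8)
\emph{Strategy.}
I would prove this by rerunning the arguments behind \prettyref{thm:main-result-1} and \prettyref{thm:main-result-2} with every appeal to weak convergence or compactness replaced by an effective statement whose precision is chosen as a slowly growing function of $1/D(N)$. The starting point is that both properties in \prettyref{defn:approx-um} are read off from finite overlap arrays: the hierarchical clustering estimates \eqref{eq:unif-close}--\eqref{eq:unif-far} are, after an annealed-to-quenched conversion, statements about the $P_N$-probabilities of the sets in $\mathscr{B}$, while the cluster weights $\mu_N(C_{\alpha,N})$, and hence hierarchical exhaustion, are governed by the ball-weight sets in $\mathscr{A}$. By \prettyref{thm:main-result-1}--\prettyref{thm:main-result-2} the limiting law $P$ is exactly ultrametric with Ruelle-cascade weights; in particular $P(B)=0$ for every $B\in\mathscr{B}$ (in an ultrametric the minimum of the three overlaps of a triple is attained at least twice, so neither kind of $\mathscr{B}$-set is charged) and the $\mathscr{A}$-probabilities are exactly the cascade's. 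Thus $D(N)$ literally bounds the deviation of the relevant finite-$N$ statistics from their idealized values, and the construction reduces to carrying it out for the cascade with explicit tolerances.

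\emph{Building the clusters.}
First I would draw $\sigma^1,\dots,\sigma^n\sim\mu_N$ i.i.d.\ conditionally on the environment. By exchangeability and $P(B)=0$, the annealed probability that any fixed triple of samples fails to be $\eps$-ultrametric at some level $q_k$, with $\eps=2^{-n_0}$, is at most $D(N)$; a union bound over the $\binom{n}{3}$ triples together with Markov's inequality in the environment produces a $\prob$-event of probability $\ge 1-2^{-\nu}$ on which, with conditional $\mu_N^{\otimes n}$-probability $\ge 1-2^{-\nu}$, the sampled array is genuinely $\eps$-ultrametric at all $r$ levels. On such an environment I would define $C_{\alpha,N}$ recursively, the children of a vertex being the $q_{k+1}$-balls around a choice of sampled representatives inside the parent ball, each intersected with the parent to force the exact inclusion--exclusion of item (1); $\eps$-ultrametricity of the sample makes those intersections cost only an $O(D(N)^{1/2})$-fraction of the mass (Markov again), and one then reads off the clustering estimates with $a_N=2^{-n_0}$ and $b_N=2^{-n_0/2}$, the square root being the annealed-to-quenched price. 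The values of $\nu$, $n_0$, and $n$ are pinned down last.

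\emph{Exhaustion, regularization, and the parameter chain.}
For the $(2^{-\nu+1},0)$-hierarchical exhaustion the $n$ samples must have hit every cluster of $\mu_N$-mass above about $2^{-\nu}$ divided by the number of surviving clusters, after which one truncates each internal vertex to its $m_{**}$ heaviest children, making $\tau_{N,r}$ the $m_{**}$-regular tree (this also yields items (2)--(3)). Since the cluster weights are, up to $D(N)$ on $\mathscr{A}$, Ruelle-cascade weights -- Poisson--Dirichlet at each level -- a coupon-collector bound controls the required $n$, and the Poisson--Dirichlet tails control how large $m_{**}$ must be so that the discarded light children cost at most $2^{-\nu}$ uniformly over the $\le m_{**}^{r}$ vertices; both force $m_{**}$ exponential in $\nu$ (so $m_{**}^{r}\gtrsim 2^{\nu}$) and $n$ exponential in $\nu\log m_{**}$. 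Propagating these back -- resolving the weights at scale $2^{-\nu}$ demands resolving the overlap array at a finer scale, which through the effective form of \prettyref{thm:main-result-1} demands resolving the Approximate Ghirlanda--Guerra identities at a scale finer still -- gives a chain of roughly four nested exponential losses, and inverting it yields $\nu(N),n_0(N)=\Omega(\log\log\log\log(1/D(N)))$ and $m_{**}(N)=\Omega((\log\log\log(1/D(N)))^{c})$, the constants and $c$ absorbing $r$ and $\zeta_1$.

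\emph{Where the difficulty lies.}
The union bounds, the Markov passages between annealed and quenched events, the coupon-collector estimate, and the truncation to a regular tree are routine. The genuinely hard step -- and the one responsible for the result being, as the statement warns, probably far from optimal -- is the first link of the chain: making the passage from the Approximate Ghirlanda--Guerra identities to the self-similar Ruelle-cascade structure of the overlap array quantitative. The qualitative proof of \prettyref{thm:main-result-1} runs through soft ingredients -- passing to a limiting random measure satisfying the exact Ghirlanda--Guerra identities, the fact that those identities pin down the law of the overlap array to be that of a Ruelle cascade, and Panchenko's ultrametricity argument -- each of which, asked to output structure at scale $\eps$, only constrains its input at a scale exponentially smaller; it is the careful composition of this handful of exponential losses, tracked through the construction above, that produces the tower of logarithms in the statement.
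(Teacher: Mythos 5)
Your construction itself—nested balls centered at sampled replicas, cleaned of cousin intersections, with the clustering bounds obtained from the $\mathscr{B}$-class via Markov (giving $a_N=2^{-n_0}$, $b_N=2^{-n_0/2}$), Poisson--Dirichlet tail estimates to truncate to an $m_{**}$-regular tree, and a repeated-sampling bound for how many replicas are needed—is essentially the paper's route through \prettyref{thm:main-result-1-proof}. But your ``parameter chain'' terminates at the wrong link: you make the proof hinge on an effective version of the passage from the Approximate Ghirlanda--Guerra identities to ultrametricity and the Ruelle-cascade structure, and you single that out as the hard step. No such quantitative statement is available or used. The hypothesis that $D(N)\geq D_1(N)\vee D_2(N)$ over $\mathscr{A}\cup\mathscr{B}$ is exactly the quantitative surrogate for that passage (the paper points out that a rate on $\mathscr{B}$ is equivalent to the Strong Ultrametricity conjecture, which is why it is \emph{assumed}), and the exact RPC/ultrametric structure is invoked only for the limiting law $P$, where it holds with no error by \prettyref{fact:RPC} and \prettyref{prop:GGI-cons}. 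A proof organized around quantifying Panchenko's theorem and the AGGIs would not go through; the theorem is formulated precisely so that you never have to.

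What is genuinely needed, and missing from your outline, is the transfer from the hypothesis to the distribution of the ball weights. $D_1(N)$ controls $\abs{P_N(A)-P(A)}$ only for the moment-type events $A\in\mathscr{A}$, i.e.\ it controls joint moments of the weight vector in \prettyref{eq:W_N-Def}; your claim that ``the cluster weights are, up to $D(N)$, Ruelle-cascade weights, Poisson--Dirichlet at each level''---which your coupon-collector and truncation estimates at finite $N$ rely on---does not follow from a uniform moment bound without an explicit quantitative moments-to-distribution argument. The paper supplies this with a Bernstein-polynomial approximation (\prettyref{lem:poly-approx} and \prettyref{lem:reg-quant}): a Lipschitz function squeezed between the indicator of the open exhaustion event and its enlargement is approximated by its Bernstein operator, at a cost of order $n^{2}2^{2nd-1}D(N)+d/(2\sqrt{n}\,\iota)$, and this, together with the explicit localization of the PD and Poisson weights (which forces $m_{**}\sim 2^{c\nu}$ and $M_{**}\sim\exp(c\,m_{**}^{r+1})$), is where the exponential losses actually accumulate, yielding $N_{0}=D^{-1}\bigl(\exp[-C_{1}\exp(c_{3}e^{c_{4}\nu})]\bigr)$ and $N_{1}=D_{2}^{-1}(\eps_{n})$ and hence the iterated logarithms in the statement. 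Without some analogue of this moment-transfer device your plan cannot produce any rate for the exhaustion part at all, however the remaining bookkeeping is arranged.
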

 We would like to re-iterate that the rates we get are to be viewed as 
 most likely \emph{highly} 
sub-optimal. 
We do not try to find the optimal rates
as we believe the real issue will be finding $D(N)$. To see why, note that 
to obtain a uniform rate for the class $\mathscr{B}$ is equivalent to proving the so-called
``Strong Ultrametricity'' conjecture \cite{TalBK11}. Note also that one might instead
attempt to obtain a rate of convergence for the limiting overlap distribution as a whole. 
This should allow one to simultaneously obtain rates for the classes $\mathscr{A}$ and
$\mathscr{B}$. 

A final remark: G. Parisi mentioned in private communication that simulations suggest  
 that the free energy decays like a power law,
so it seems reasonable to assume 
$D(N)$ decays like a power law.

\section*{Acknowledgments}
The author would like to thank Dmitry Panchenko for lending his insight at the early stages 
of this project, as well as many helpful correspondences.
The author thanks Antonio Auffinger for many helpful
discussions and for a very careful reading of an early version of this paper. 
The author also thanks his Ph.D. advisor G\'erard Ben Arous for his support. 
This research was conducted while
the author was supported by an NSF Graduate Research Fellowship DGE-0813964 
and NSF Grant DMS 1209165.

\section{Applications to Spin Glasses}\label{sec:applications}
In this section we describe how to apply the above results to spin glasses.
We begin in \prettyref{sec:models}
with a discussion of how to apply these results to  specific 
models on the hypercube.
We then use the results to recover a similar result of Talagrand in \prettyref{sec:tal-pure-intro}.
We end by discussing how to use these results to prove 
two important conjectures in spin glasses in \prettyref{sec:OSC-def}.

\subsection{Models on the Hypercube}\label{sec:models}
In this section we explain how to apply the above results to spin glass
models on the hypercube. We begin by explaining how to view spin
glass models in this framework and then discuss specific 
models to which the results apply.

Let $\Sigma_N=\{-1,+1\}^N$ 
be the $N$-dimensional hypercube.
We focus on Gaussian spin glass models which have the
following form. For each $N$, we have a Hamiltonian 
$(H_N(\sigma))_{\sigma \in \Sigma_N}$ which
is a centered Gaussian process indexed by $\Sigma_N$ with covariance
\begin{equation}\label{eq:cov-str}
\E H_N(\sigma^1)H_N(\sigma^2) = N\xi(R(\sigma^1,\sigma^2))
\end{equation}
where 
\[
R(\sigma^1,\sigma^2)=\frac{1}{N} \sum_{i=1}^N\sigma^1_i\sigma^2_i
\]
is called the \emph{overlap}. Corresponding to this sequence of Hamiltonians, is the
sequence of Gibbs measures
\[
G_N(\sigma)=\frac{e^{-\beta H_N(\sigma)}}{Z_N}
\]
where the normalization $Z_N$
is the \emph{partition function}.

To place this in the above framework,  
view $\Sigma_N$ as included in $\ell_2$ through the natural inclusion map
$i:\Sigma_N\hookrightarrow \ell_2$ defined by
\[
(\sigma_1,\ldots,\sigma_N)\stackrel{i}{\mapsto} 
(\frac{1}{\sqrt{N}} \sigma_1,\ldots,\frac{1}{\sqrt{N}}\sigma_N,0,\ldots).
\]
Notice that under this inclusion, the overlap becomes the $\ell_2$ inner product,
\[
R(\sigma^1,\sigma^2)=(i(\sigma^1),i(\sigma^2))_{\ell_2}.
\]
Furthermore we see that $\norm{i(\sigma)}_2 =1$, so that 
$i(\Sigma_N)\subset B_{\ell_2}(0,1).$
The push-forward of the Gibbs measures through this map
$\mu_N = i_* G_N$
is the desired sequence of random measures on the unit ball that are a.s.
supported on the unit sphere $\{\norm{\sigma}=1\}$. 

The arrays $R^n$ from \prettyref{sec:prelim-def} are
then the leading principal minors of the overlap array since
\[
\E G_N^{\tensor\infty}((R(\sigma^i,\sigma^j))_{i,j\in[n]}\in \cdot )
=\E\mu_N^{\tensor\infty}(R^n\in\cdot)
\]
and similarly $\zeta_N$ is the overlap distribution
corresponding to $G_N$
\[
\zeta_N(\cdot)=\E G_N^{\tensor 2}(R(\sigma^1,\sigma^2)\in \cdot).
\]

To understand these results, suppose for a moment 
that we know that  $\{\mu_N\}$ satisfies the AGGI's and 
$\zeta_N\rightarrow\zeta$ for some $\zeta$ with 
a non-empty collection of admissible sequences. 
Then \prettyref{thm:main-result-1} tells us that for any $\zeta$-
admissible sequence $\{q_k\}_{k=1}^r$,  with high probability 
in the choice of $H_N$, there is a nontrivial decomposition of the hypercube
into clusters  which organize by their overlap structure off of which
the measure places essentially no mass, 
as with the pure states described in \cite{MPV87}. Furthermore,
\prettyref{thm:main-result-2} says that these clusters have masses that approach that of a Ruelle Probability Cascade
as with the pure states from the physics literature. Thus
we can see the Ruelle Cascade arising at finite $N$ by looking at 
where the measure places mass.

\begin{rem}
One would like to think of these sets as balls on the 
hypercube in the induced metric from $\ell_2$. As is evident from the proof
of \prettyref{thm:main-result-1}, this is quantitatively not far 
from what is proven.
\end{rem}

We now discuss specific models to which our results apply.

\begin{exam}[Mixed p-spin glasses]
The \emph{mixed p-spin glass} model is the model with Hamiltonian $H_N$
of the above form with covariance structure defined by \prettyref{eq:cov-str}
where $\xi$ is of the form
\[
\xi(t)=\sum_{p\geq 1} \beta_p^2 t^p
\]
and is such that $\xi(1+\eps)<\infty$ for some $\eps>0$. A mixed p-spin
glass model is said to be \emph{generic} if the monomials
$\left\{x^p:\beta_p >0\right\}$ 
are total in $(C[0,1],\norm{\cdot}_\infty)$ (see the M\"untz-Szaz theorem \cite{DymMckean} for a necessary and sufficient condition). 
It is well-known \cite{PanchSKBook} that for generic models, the corresponding
sequence of $\mu_N$ satisfy the AGGI's. Furthermore, it is known
\cite{TalPM06} that the sequence $\zeta_N$ has a unique limit point, 
$\zeta$. It is also known \cite{AufChen13} that at low-temperature,
that is, $\beta$ large, $\zeta$ has admissible sequences of length at
least 2. Our results thus apply to generic models at low temperature.
\end{exam}

\begin{rem}
It is not known whether or not the Ghirlanda-Guerra identities 
hold for a larger class of models. Indeed
it can be shown that the Sherrington-Kirkpatrick (SK) model ($p=2$) with no external field does not satisfy the 
Ghirlanda-Guerra identities as it violates the Talagrand Positivity
Principle (See \prettyref{sec:GGI} for more). 
That being said,  one can
prove that for any mixed $p$ model there is a perturbation of the Hamiltonian that does not change the free energy such 
that the perturbed model has the Ghirlanda-Guerra Identities  \cite{TalBK11,PanchSKBook}. It is not known if the overlap distribution converges in
a more general setting.
\end{rem}
\begin{exam}[REM]
The Random Energy Model (REM) \cite{Der80} is the model with covariance structure of the form \prettyref{eq:cov-str}
where $\xi$ is given by 
\[
\xi(t) = \indicator{t=1}.
\]
It is known that for the REM, the measures $\mu_N$ satisfy the AGGIs \cite{BovKurk04-1}
and that the $\zeta_N$ weakly converge to
\[
\zeta=(1-\frac{\beta}{\beta_c})\delta_0 + \frac{\beta}{\beta_c}\delta_1
\]
for $\beta>\beta_c$ where $\beta_c = \sqrt{2\log2}$ \cite{Bov12}. Thus our
results apply to the REM. In particular they give a decomposition with
precisely one level (here the only admissible sequences consist of a single
point $q\in(0,1)$).
It has been shown that at low temperature,
the Gibbs measure converges as a point process  to a Ruelle Cascade \cite{Bov12}. 
Our results give a new proof of this, as well as showing how the cascade 
can be seen to be occurring qualitatively at finite $N$.
\end{exam}

\begin{exam}[GREM]
The Generalized Random Energy model (GREM) \cite{Der85} has a slightly non-standard covariance structure. 
Fix an $r\in\N$, and sequences
\[
0=q_0 <q_1<\ldots<q_{r}\leq 1
\text{ and }
0=\zeta_{-1}<\ldots<\zeta_{r+1}=1.
\]
Let $\log(\alpha^N_i) = N(q_{i+1}-q_{i})\log(2)$ and let
\[
\xi(\sigma^1,\sigma^2) = \sum_{k\leq r} (\zeta_{k+1}-\zeta_{k}) \indicator{(\sigma^1(k),\sigma^2(k))=1},
\]
where $\sigma(k)$ is the vector with the same first $N\sum{i\leq k}\lg(\alpha_i^N)$ components as $\sigma$
and has the remaining coordinates set to $0$. The GREM
is the model whose covariance is of the form 
\[
\E H_N(\sigma^1)H_N(\sigma^2) = N\xi(\sigma^1,\sigma^2).
\]
It is known that the sequence $\zeta_N$ converges to $\zeta$ 
such that
\[
\zeta\{q_k\} = \zeta_{k+1}-\zeta_{k}
\]
at sufficiently low temperature. (See \cite[Chapter 10]{Bov12} for the 
relevant temperature ranges.) 
It is also known that the sequence of $\mu_N$ corresponding to the GREM 
has the Ghirlanda-Guerra property \cite{Bov12,BovKurk04-1,BovKurk04-2}. Our results then 
apply to the GREM at low-temperature. In particular, they tell us
that at sufficiently low temperature the admissible sequences can be taken
to be any sequence that interlaces the above $q_k$. Taking a sequence
that is arbitrarily close to the $q_k$ then shows us how the Ruelle Cascade
predicted in \cite{Der85} and \cite{Rue87} appears at finite $N$ as
we get clusters of sets that are effectively balls on which most of
the mass is supported that are indexed by the $m_N$-regular rooted
tree of depth $r$ whose masses are close, in law, to those of an RPC.
In particular, as with the REM, 
our results provide a new proof of convergence 
to an RPC, as well as providing a sense in which this structure
arises at finite $N$ at low temperature.
\end{exam}
There are many other models which are expected to satisfy the AGGIs. In particular, it is known after using
a perturbation as with the mixed p models for the Diluted SK model \cite{PanchSGSD13}, the Random K-SAT model
 \cite{PanchHEpure2013}, and the Edwards-Anderson model \cite{ContStarr13} to name a few. 
To our knowledge, it is not yet known in any of these models if the overlap distribution converges.

\subsection{A new proof of Talagrand's pure state construction}\label{sec:tal-pure-intro}
We now discuss how to use our results to recover a result of Talagrand
on the existence of approximate pure states. 
Until now, the best answer to the question of the existence of approximate pure states at finite particle number was 
\cite[\theoremname~2.4]{Tal09}. Talagrand showed that under certain
conditions on the overlap distribution
there are sets inside the support of the
Gibbs measure such within any given set, the 
overlap between two configurations is almost $q_*$, the supremum of the 
support of the overlap distribution, 
on average. Furthermore, these sets are disjoint,  exhaust the measure in 
the limit, and 
have masses which converge in distribution to a Poisson-Dirichlet process.

The clusters from the above should be compared with these sets.
As a consequence of the proofs of \prettyref{thm:main-result-1} and 
\prettyref{thm:main-result-2}, 
with minor modifications, one 
can recover the existence of these sets up to a small correction. 
Furthermore, if there are admissible sequences of length at least
two,  one gains
additional information about the distance between the sets in the 
sense of \prettyref{thm:main-result-1} and
\prettyref{thm:main-result-2}. We state this  as a corollary. 

\begin{cor} \label{cor:tal-pure}
Let $\mu_N$ be as above. Assume that $\zeta$ is
supported on $[0,q_*]$ and that $\zeta\{q_*\}>0$. 
Then there is a sequence of random sets $A_{k,N}\subset\Sigma_N$ 
such that $\mu_N (A_{k,N})$ converge in distribution
to the points of a $PD(1-\zeta(\{q_*\}))$ and such that for any $\eps$ 
positive, there 
exists an $N_0$ such that for $N>N_0$,
\begin{equation}\label{eq:tal-pure}
\int_{A_{k,N}\times A_{k,N}} \abs{R_{12}-q_*} d\mu_N^{\tensor2} < \eps 
\mu_N(A_{k,N})^2 +o(1)
\end{equation}
for all $k$, with probability at least $1-\eps$.  Furthermore, for all 
$\eps$ positive and $k_0\in\N$, there is an
$N_0$ such that for $N>N_0$ the expression \prettyref{eq:tal-pure} holds 
without the $o(1)$ correction for those
$k\leq k_0$. Finally, these $A_{k,N}$ are the mass-rearranged leaves of a 
random sequence of sets $A^\prime_{\alpha,N}$ whose masses converge to an 
RPC with parameters $\zeta_k = \zeta[q_k,q_{k+1})$ 
where $q_r=q_*$.
\end{cor}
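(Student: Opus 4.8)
The plan is to derive \prettyref{cor:tal-pure} from \prettyref{thm:main-result-1} and \prettyref{thm:main-result-2} by applying them along a family of $\zeta$-admissible sequences whose largest coordinate is pushed up to $q_*$. Since $\zeta(\{q_*\})>0$, the point $q_*$ is not a continuity point of $\zeta$ and hence cannot itself appear in an admissible sequence; but because $\zeta$ is carried by $[0,q_*]$ with $0<\zeta(\{q_*\})<1$, for every small $\delta>0$ we can choose a continuity point $q(\delta)\in(q_*-\delta,q_*)$ and prepend any fixed $\zeta$-admissible sequence $\{q_k\}_{k=1}^{r-1}$ (possibly empty) below it to obtain a $\zeta$-admissible sequence $S_\delta=\{q_1,\dots,q_{r-1},q(\delta)\}$; admissibility of the top endpoint is immediate from $\zeta([q(\delta),1])\geq\zeta(\{q_*\})>0$, and if $\zeta=\delta_{q_*}$ the corollary is trivial. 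First I would fix $\delta$, apply \prettyref{thm:main-result-1} to $S_\delta$, set $A'_{\alpha,N}:=i^{-1}(C_{\alpha,N})$ for the clusters $C_{\alpha,N}$ it produces (extended by empty sets and put in standard order as in \prettyref{thm:main-result-2}), and let $A_{k,N}$ be the depth-$r$ clusters, the leaves, rearranged by $\mu_N$-mass.

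The next step is to verify \prettyref{eq:tal-pure} for these leaves. Items (1)--(3) of \prettyref{defn:approx-um} make the $A_{k,N}$ essentially disjoint and exhausting, and item (4) gives that inside each leaf cluster the overlap is at least $q(\delta)-a_N\geq q_*-\delta-a_N$ off a set of $\mu_N^{\otimes2}$-measure $\leq b_N$. The matching upper bound is \emph{not} part of the approximate-ultrametric structure and must be imported separately: since $\zeta_N\to\zeta$ weakly and $\zeta$ is supported on $[0,q_*]$, $\E\mu_N^{\otimes2}(R_{12}>q_*+\eps')\to0$ for every $\eps'>0$, so by Markov's inequality, off an event of vanishing probability $\mu_N^{\otimes2}(R_{12}>q_*+\eps')$ is itself $o(1)$. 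Intersecting these events, on a set of probability tending to one we get, uniformly over leaves $\alpha$,
\[
\int_{C_{\alpha,N}\times C_{\alpha,N}}\abs{R_{12}-q_*}\,d\mu_N^{\otimes2}\leq(\delta+a_N+\eps')\,\mu_N(C_{\alpha,N})^2+2\big(b_N+o(1)\big),
\]
the error being uniform in $\alpha$ because $b_N$ and the Markov bound are. Taking $\eps'=\eps/3$ and $\delta<\eps/3$, the right side is $\leq\eps\,\mu_N(A_{k,N})^2+o(1)$ for $N$ large, which is \prettyref{eq:tal-pure}; and for $k\leq k_0$ the masses $\mu_N(A_{k,N})$ are bounded below in probability (they converge to fixed positive Poisson--Dirichlet weights), so the $o(1)$ is absorbed into $\eps\,\mu_N(A_{k,N})^2$, giving the sharper claim.

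To produce a single sequence of sets valid for all $\eps$ and to pin down the limiting laws, I would then let $\delta=\delta_N\to0$ along a subsequence chosen slowly enough — a routine diagonalization over the countably many applications of \prettyref{thm:main-result-1} and \prettyref{thm:main-result-2} — that the high-probability existence, the shrinking widths, and the distributional convergence all survive. By \prettyref{thm:main-result-2}, for fixed $\delta$ the masses of $\{A'_{\alpha,N}\}$ converge in law to the weights of a Ruelle Probability Cascade with parameters read off $S_\delta$; as $q(\delta_N)$ is a continuity point tending to $q_*$ these converge to $\zeta_k=\zeta[q_k,q_{k+1})$ with $q_r=q_*$, and since the law of an RPC depends continuously on its parameters, along the diagonal $\{A'_{\alpha,N}\}$ converges to that cascade. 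Finally, the mass-rearranged leaf weights of this cascade are distributed as $PD(1-\zeta(\{q_*\}))$ — the identification of the RPC leaf weights recalled in the appendix, consistent with $1-\zeta(\{q_*\})=\zeta[0,q_*)$ being the cumulative mass below the finest level — which gives $\mu_N(A_{k,N})\convdist$ the points of $PD(1-\zeta(\{q_*\}))$ and completes the proof.

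The hard part is the simultaneous limit $\delta_N\to0$: one must keep (i) the event from \prettyref{thm:main-result-1} of probability $\to1$, (ii) the within-cluster overlap width below the prescribed $\eps$, and (iii) the convergence of \prettyref{thm:main-result-2}, all at once, which forces a careful ordering of thresholds ($\delta$ chosen first, then $N$) and, for (iii), continuity of the cascade law in its parameters. A secondary nuisance is reconciling the annealed upper bound $R_{12}\lesssim q_*$ with the quenched, high-probability statements of \prettyref{defn:approx-um} by keeping the exceptional events disjoint and taking a union bound. Everything else — admissibility of $S_\delta$, disjointness and exhaustion of the $A_{k,N}$, and the uniform-in-$k$ bookkeeping of error terms — is routine.
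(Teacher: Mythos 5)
Your proposal is correct in substance and shares the paper's underlying idea (truncate just below the atom at $q_*$, cluster at the truncated level, control the upper overlap tail by the annealed bound coming from $\zeta_N\to\zeta$ with support in $[0,q_*]$, let the truncation level tend to zero along $N$, and absorb the additive error for $k\leq k_0$ via Poisson--Dirichlet lower bounds as in \prettyref{lem:ak-for-pd}), but it executes it by a genuinely different, more modular route. The paper does not apply \prettyref{thm:main-result-1} and \prettyref{thm:main-result-2} as black boxes: it re-opens the construction, adds the auxiliary functions $h^N_{\eps,\Delta}$ to the good event of \prettyref{lem:clust-1-mod} so that the moving cutoff $q_r(N)=q_*-\Delta_{\nu(N)}$ and the $L_1$ bound \prettyref{eq:L_1-over-ctrl} come out of a single diagonalized event with explicit rates $c_N,d_N,p_N$, and then reworks the indicator-approximation lemmas ($\phi_{\kappa,\lambda}$ in place of $\phi_\kappa$, and the modified \prettyref{lem:indicator-approximation-2}) to prove convergence of the masses directly to the $\Gamma$-approximator of $\mu$, thereby handling the fact that $q_*$ is an atom rather than a continuity point. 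Your route instead keeps the main theorems intact for each fixed admissible sequence $S_\delta$ and pushes the atom-handling into two external steps: (i) a per-cluster upper-tail estimate obtained from the global bound $\E\mu_N^{\tensor2}(R_{12}>q_*+\eps')\to0$ plus Markov (this is essentially the paper's first lemma of \prettyref{sec:tal-pure}, and your uniform-in-$\alpha$ use of it is fine), and (ii) passage of the limit laws through the diagonal $\delta_N\to0$, which for the leaves needs continuity of $PD(\theta)$ in $\theta$ (available as \prettyref{lem:theta-det-conv}) and for the full tree needs continuity of the RPC law in its parameters $\zeta_k$, a fact you assert but which is not in the paper and would have to be proved (it is true and standard, e.g.\ via the Poisson-process construction, but it is the one real IOU in your argument; the paper's reworked \prettyref{sec:cascades}-style argument is precisely what replaces it). Two smaller bookkeeping points: admissibility of $S_\delta$ requires not only $\zeta([q(\delta),1])>0$ but also $\zeta([q_{r-1},q(\delta)])>0$, which constrains the fixed prefix (the paper hypothesizes such a choice in \prettyref{prop:clust-hier-mod} as well, and $r=1$ covers the REM-type case); and the diagonalization must be set up on metrized spaces ($\cC_r$ and $\cP_m$ are compact Polish, so a triangle-inequality argument in the L\'evy--Prokhorov metric works), exactly the ordering-of-thresholds issue you flag. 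What your approach buys is brevity and modularity; what the paper's buys is self-containedness and explicit rates, which is what feeds the quantitative statements of \prettyref{sec:quant-version}.
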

\noindent Here $PD(\theta)$ is the Poisson-Dirichlet Process 
\cite{PanchSKBook}. In terms of the two-parameter PD process, this is $PD(\theta,0)$. 

This is to be compared with the result of Talagrand where he obtains sets as above where
\prettyref{eq:tal-pure} holds without the $o(1)$ correction for all $k$. In exchange for price of the $o(1)$
correction, however, we get additional structure. If $\{q_k\}_{k=1}^r$ is $\zeta$-admissible and $r$
is at least two, 
then these sets are the leaves of a collection of sets $\{A_{\alpha,N}\}_{\cA_{r+1}}$ that hierarchically cluster
and the masses of this latter sequence of sets converges to an RPC. 

\subsection{Talagrand's Orthogonal Structures Conjecture and The Dotsenko-Franz-M\'ezard Conjecture}\label{sec:OSC-def}

We now discuss how to use the above techniques to prove two conjectures regarding 
mixed p-spin glass models on the hypercube. In \cite{Tal07}, Talagrand conjectured that 
the support of the Gibbs measure at low temperature admits a special decomposition which he called 
an ``Orthogonal Structure''

\begin{defn}
A spin glass model on the hypercube
with corresponding Gibbs measures $G_N$  is said to admit an \emph{Orthogonal Structure}
if there is a sequence $(a_k)_{k\geq0}$ with $a_k>0$ such that  for any $k_0\in\N$ and $\eps$ positive,  there is an $N_0$ such that for $N\geq N_0$,
with probability at least 3/4, there is a random collection of sets $\{A_{k,N}\}_{k\leq k_0}\subset\Sigma_N$ such that 
\[
G_N(A_{k,N})\geq a_k
\] 
and on these sets, the points in different sets are almost orthogonal in an $L_1$ sense: for each $k,l  \leq k_0$ with $k\neq l$
\[
\gibbs{\abs{R_{12}}\indicator{\sigma^1\in A_{k,N},\sigma^2\in A_{l,N}}}<\eps.
\]
\end{defn}
He then conjectured that  at zero external field, mixed p-spin glass models admit orthogonal structures. 

\begin{conj} [Orthogonal Structures Conjecture]
The Gibbs measures for a  mixed p-spin glass model with $0$ external field admit an Orthogonal Structure.
\end{conj}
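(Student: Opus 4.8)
The plan is to deduce the Orthogonal Structure from \prettyref{thm:main-result-1} and \prettyref{thm:main-result-2}, taking the $A_{k,N}$ to be the largest clusters at the first level of the tree. \emph{Reductions.} First I would add to $H_N$ the usual perturbation \cite{TalBK11,PanchSKBook}: it does not change the limiting free energy and after it $\{\mu_N\}$ satisfies the AGGIs, and we may pass to a subsequence along which $\zeta_N\to\zeta$. If $\zeta=\delta_0$ the overlap concentrates at $0$ under $G_N$ and the statement is immediate --- take $A_{k,N}$ deterministic of density $2a_k$ for a fixed summable $a_k>0$ and use $G_N(A_{k,N})\to2a_k$ together with $\gibbs{\abs{R_{12}}}_N\to0$ in probability. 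So assume $\zeta\neq\delta_0$, i.e.\ the low-temperature regime. The real subtlety at zero external field is that when $\xi$ is even the Gibbs measure has the spin-flip symmetry, $\zeta$ is symmetric about $0$, the Talagrand positivity principle fails, and \prettyref{thm:main-result-1} does not apply to $\{\mu_N\}$ directly. I would therefore work throughout with the \emph{folded} overlap $\abs{R_{12}}$, whose law is $\hat\zeta=(t\mapsto\abs t)_*\zeta$, supported on $[0,q_*]$. Two facts must be established: (a) the folded overlap array satisfies the AGGIs --- automatic when $\zeta$ is already supported on $[0,1]$ (e.g.\ $\xi$ has an odd monomial, so positivity holds and the folded array equals the unfolded one a.s.), and obtained in the even case by running the standard Ghirlanda--Guerra derivation on spin-flip-invariant test functions; and (b) $\inf\operatorname{supp}(\hat\zeta)=0$ with $\hat\zeta$ admitting a short admissible sequence at low temperature --- at zero external field there is always a ``zero-overlap scale'' (an atom of $\hat\zeta$ at $0$ in the $1$-step case, accumulation at $0$ otherwise).

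\emph{Construction.} The cluster construction and all estimates in the proofs of \prettyref{thm:main-result-1} and \prettyref{thm:main-result-2} use the overlap array only through its law and the threshold sets it cuts out, so granting (a) one may run them with $\abs{R_{ij}}$ in place of $R_{ij}$: for any $\hat\zeta$-admissible $\{q_k\}_{k=1}^r$, with probability tending to one there are spin-flip-symmetric sets $C_{\alpha,N}\subset\Sigma_N$ that are $(\eps_N,0)$-hierarchically exhausting and $(a_N,b_N)$-hierarchically clustering \emph{for $\abs{R_{12}}$}, with $(\mu_N(C_{\alpha,N}))$ converging in law to the weights of the RPC of parameters $\zeta_k=\hat\zeta[q_k,q_{k+1}]$. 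Fix once and for all a short admissible sequence whose top point $q_r$ lies strictly inside $(0,q_*)$ (anywhere in $(0,q_*)$ if $\hat\zeta$ has an atom at its top); as the bottom point varies downward this pins the first-level RPC parameter to a fixed compact subinterval of $(0,1)$, and from the corresponding Poisson--Dirichlet laws one reads off a fixed fast-decaying sequence $a_k>0$ with $\prob(k\text{-th weight}\ge a_k)\ge1-2^{-k-3}$ uniformly over that interval. Now, given $k_0$ and $\eps$, shrink the bottom point $q_1$ below $\min(\eps/2,q_r)$ and apply the above; let $A_{1,N},\dots,A_{k_0,N}$ be the first-level clusters $C_{\gamma,N}$, $\abs\gamma=1$, in decreasing order of mass. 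By \prettyref{thm:main-result-2}, $(\mu_N(A_{k,N}))_{k\le k_0}$ converges in law to the top $k_0$ Poisson--Dirichlet weights, so $G_N(A_{k,N})\ge a_k$ for all $k\le k_0$ with probability at least $1-1/8-o(1)$; and for $k\neq l$ the folded cousin bound \eqref{eq:unif-far} at level $\abs{\gamma\wedge\eta}+1=1$ reads $\mu_N(\sigma^1\in A_{k,N},\sigma^2\in A_{l,N}:\abs{R_{12}}\ge q_1+a_N)\le b_N$, whence $\gibbs{\abs{R_{12}}\indicator{\sigma^1\in A_{k,N},\sigma^2\in A_{l,N}}}_N\le q_1+a_N+b_N<\eps$ for $N$ large. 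Intersecting with the probability-$\to1$ event of \prettyref{thm:main-result-1} gives the Orthogonal Structure with probability $\ge3/4$ for $N$ large, with the fixed sequence $(a_k)$.

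\emph{Main obstacle.} The crux is fact (a) in the symmetric case: establishing the AGGIs for the folded overlap array of an even, zero-field model and checking that the cluster construction inside the proof of \prettyref{thm:main-result-1} is spin-flip-equivariant, so that it really outputs $\pm$-symmetric subsets of $\Sigma_N$ --- equivalently, that the Gram/de Finetti formalism underlying \prettyref{thm:main-result-1} descends to the quotient $B_{\ell_2}/\{\pm1\}$. The second point to isolate is (b), that $\inf\operatorname{supp}(\hat\zeta)=0$ and that a suitable short admissible sequence exists at low temperature; this should follow from the structure of the Parisi measure at zero external field, but it deserves its own lemma.
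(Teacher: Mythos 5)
Your core construction is essentially the paper's: clusters for an admissible sequence whose bottom point $q_1$ is pushed below $\eps$, Poisson--Dirichlet lower bounds giving the masses $a_k$, the cousin bound \prettyref{eq:unif-far} for the orthogonality estimate, and a $3/4$ probability budget --- this is exactly \prettyref{thm:low-temp-OSC} together with Lemmas \ref{lem:ak-for-pd}--\ref{lem:tal-pos}. But two of your steps have genuine gaps. First, the reductions: adding the perturbation preserves the free energy but changes the Gibbs measure, and the Orthogonal Structure is a statement about $G_N$ itself, so you cannot transfer it back to the unperturbed model; this is precisely why the paper restricts to \emph{generic} models, for which the AGGIs hold for the actual $G_N$. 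Likewise the folding device is not free: $(\abs{R_{ij}})$ is in general not a Gram--de Finetti array (entrywise absolute value does not preserve positive semidefiniteness), so the Dovbysh--Sudakov/ultrametricity machinery behind \prettyref{thm:main-result-1} does not run verbatim on the folded array, and the folded AGGIs/quotient construction that you yourself flag as the ``main obstacle'' is exactly the unproven step. The paper needs none of this: negative overlaps are handled by Talagrand's positivity principle (\prettyref{lem:tal-pos}), and the orthogonality integral is split into the pieces $\{R_{12}<-\eps/3\}$, $\{\abs{R_{12}}<\eps/3\}$ and $\{R_{12}>\eps/3\}$, the last controlled by the clustering function $g$.

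Second, and more fundamentally, your claim (b) is false as stated, and it is exactly where the hypothesis $\zeta(\{0\})>0$ of the paper's theorem enters. The first-level cluster masses converge to $PD(\zeta[0,q_1))$ (see \prettyref{thm:clust-reg-pd} and its corollary); the parameter is governed by the \emph{bottom} point $q_1$, not by $q_r$, and since the construction forces $q_1<\eps/2$ the parameter tends to $\zeta(\{0\})$. If $0$ is not an atom this parameter tends to $0$, the Poisson--Dirichlet weights degenerate to $(1,0,0,\dots)$, and no fixed sequence $(a_k)$ with $a_2>0$ can work: mere accumulation of the support at $0$ does not suffice. This is why \prettyref{lem:ak-for-vn} assumes $0$ is an atom of $\zeta$, why the paper proves the statement only for generic models with $\zeta(\{0\})>0$, and indeed why it shows the atom is \emph{necessary}; since $\zeta(\{0\})>0$ at zero external field is not known in general, your route cannot deliver the unconditional conjecture. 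Two smaller points: in the $\zeta=\delta_0$ case the Gibbs mass of a fixed deterministic set does not converge to its counting density --- simply take $A_{k,N}=\Sigma_N$; and because you only pass to a subsequence of $\zeta_N$, you still owe an argument that one sequence $(a_k)$ serves all subsequential limits (for generic models the limit is unique, so the paper faces no such issue).
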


As an illustration of the use of approximate ultrametricity, we prove this  conjecture for generic models.
\begin{thm}[Orthogonal Structures Conjecture]  
For a generic mixed p-spin glass model,  $\zeta(\{0\})>0$ if and only if 
Gibbs measures $G_N$ admit an Orthogonal Structure.
\end{thm}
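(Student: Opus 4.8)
The plan is to prove the two implications separately. The implication ``$G_N$ admits an Orthogonal Structure $\Rightarrow\zeta(\{0\})>0$'' should be soft and use only that $\zeta_N\to\zeta$ weakly (which holds for a generic model), while the reverse implication is where I would bring in \prettyref{thm:main-result-1} and \prettyref{thm:main-result-2}, together with the fact that for a generic model $\{\mu_N=i_*G_N\}$ satisfies the AGGIs and $\zeta_N\to\zeta$.

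For the backward direction I would apply the definition of an Orthogonal Structure with $k_0=2$. Fix a small $\zeta$-continuity point $q_0\in(0,1)$ and apply the definition with $\eps<q_0a_1a_2/4$: for $N$ large, with probability at least $3/4$ there are sets $A_{1,N},A_{2,N}$ with $G_N(A_{i,N})\ge a_i$ and $\gibbs{\abs{R_{12}}\indicator{\sigma^1\in A_{1,N},\sigma^2\in A_{2,N}}}<\eps$. Since $\abs{R_{12}}\ge q_0\indicator{\abs{R_{12}}\ge q_0}$, this forces $G_N^{\tensor2}(\sigma^1\in A_{1,N},\sigma^2\in A_{2,N},\abs{R_{12}}\ge q_0)<\eps/q_0$, hence $G_N^{\tensor2}(\abs{R_{12}}<q_0)\ge a_1a_2-\eps/q_0\ge\tfrac34a_1a_2$ on that event. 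Taking expectations gives $\zeta_N((-q_0,q_0))\ge\tfrac{9}{16}a_1a_2$ for all large $N$; letting $N\to\infty$ (using that $\pm q_0$ are continuity points) and then $q_0\downarrow0$ (so $\zeta((-q_0,q_0))\downarrow\zeta(\{0\})$) yields $\zeta(\{0\})\ge\tfrac{9}{16}a_1a_2>0$. This argument needs nothing beyond weak convergence of $\zeta_N$.

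For the forward direction, assume $\zeta(\{0\})>0$ and split into two cases according to whether $\zeta$ admits a $\zeta$-admissible sequence, noting the elementary reduction that for a generic model a $\zeta$ with $\zeta(\{0\})>0$ fails to admit any $\zeta$-admissible sequence only when $\zeta=\delta_0$. In the main case $\zeta\ne\delta_0$: given $k_0$ and $\eps>0$, I would choose a $\zeta$-admissible sequence $\{q_k\}_{k=1}^r$ with $q_1<\eps/2$ (shrink the first coordinate of any admissible sequence to a small continuity point; possible since $\zeta(\{0\})>0$), apply \prettyref{thm:main-result-1} to obtain the clusters $\{C_{\alpha,N}\}$, and take $A_{k,N}$ to be the depth-one cluster of $k$-th largest mass. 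These are disjoint by property (1); for $k\ne l$ they lie in distinct depth-one subtrees, so property (5) (common ancestor the root, recalling $q_0=0$) controls the $\mu_N^{\tensor2}$-mass of $\{R_{12}\ge q_1+a_N\}$ inside $A_{k,N}\times A_{l,N}$, and property (4) at the root controls $\{R_{12}\le-a_N\}$; together these give $\gibbs{\abs{R_{12}}\indicator{\sigma^1\in A_{k,N},\sigma^2\in A_{l,N}}}\le q_1+a_N+2b_N<\eps$ for $N$ large. For the mass lower bounds, \prettyref{thm:main-result-2} gives that $(\mu_N(A_{1,N}),\dots,\mu_N(A_{k_0,N}))$ converges in law to the top $k_0$ weights of a Ruelle Probability Cascade, a Poisson--Dirichlet vector with parameter $\zeta[0,q_1)$; since $\zeta[0,q_1)\to\zeta(\{0\})\in(0,1)$ as $q_1\downarrow0$, this parameter lies, for all small $q_1$, in a fixed compact subset of $(0,1)$ independent of $\eps$, and using that Poisson--Dirichlet weights are a.s.\ positive and weakly continuous in the parameter, I would fix once and for all $a_k>0$ with $\prob(V_k(\theta)>a_k)\ge1-2^{-k-4}$ uniformly over $\theta$ in that compact set. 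Portmanteau then gives $\prob(\mu_N(A_{k,N})>a_k\ \forall k\le k_0)\ge7/8$ for $N$ large, and intersecting with the ($\to1$)-probability event that the clusters exist and the orthogonality bound holds produces the required probability-$3/4$ event. In the degenerate case $\zeta=\delta_0$ there are no clusters and I would argue by hand: $\zeta_N\to\delta_0$ gives $\gibbs{\abs{R_{12}}}\to0$ and $\max_\sigma G_N(\sigma)\to0$ in probability, so with $a_k=2^{-k-2}$ a greedy assignment of the atoms of $G_N$ in decreasing order produces, with probability $\to1$, disjoint $A_{k,N}$ with $G_N(A_{k,N})\ge a_k$ (the total target $\sum a_k<\tfrac12$ leaves room for the at most $k_0$ overshoots, each at most $\max_\sigma G_N(\sigma)$), and orthogonality is immediate from $\gibbs{\abs{R_{12}}}<\eps$.

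The hard part will be the bookkeeping in the main case: forcing the inter-cluster overlap below $\eps$ forces $q_1$ to depend on $\eps$, which changes the limiting law of the cluster masses, so one must verify that the Poisson--Dirichlet parameter $\zeta[0,q_1)$ stays in a single compact subset of $(0,1)$ and that one fixed sequence $(a_k)$ suffices for all $\eps$, as the definition of an Orthogonal Structure demands. The remaining ingredients --- the soft backward implication, the reduction of the degenerate case to $\zeta=\delta_0$, and combining the several high-probability-in-$H_N$ events with the convergence in law --- should be routine.
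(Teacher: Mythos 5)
Your overall route is the same as the paper's: split into the converse and the degenerate case $\zeta=\delta_0$ (both soft, via Markov and weak convergence, and omitted in the paper), and in the main regime $0<\zeta(\{0\})<1$ use an admissible sequence whose first point $q_1$ is a small continuity point, take the depth-one clusters from \prettyref{thm:main-result-1} as the $A_{k,N}$, control the cross overlap by the clustering property at the root's children, and get the mass lower bounds from \prettyref{thm:main-result-2} together with the observation that the Poisson--Dirichlet parameter $\zeta[0,q_1)$ tends to $\zeta(\{0\})\in(0,1)$ as $q_1\downarrow0$, so that one sequence $(a_k)$ can be fixed independently of $\eps$ (this is exactly the content of the paper's Lemmas \ref{lem:ak-for-pd}--\ref{lem:ak-for-yn}, which implement your ``uniform over a compact parameter set'' step along a sequence $q^{(n)}\downarrow 0$).

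There is, however, one genuine gap: your control of the negative overlaps. You bound the contribution of $\{R_{12}\le -a_N\}$ by invoking ``property (4) at the root,'' but \prettyref{defn:approx-um} gives no such clause: the admissible sequence is $\{q_k\}_{k=1}^r\subset(0,1]$, there is no $q_0$ in the definition, and the construction in \prettyref{thm:main-result-1-proof} only verifies the clustering bounds through the functions $f^N_{k,\eps}$ of \prettyref{eq:f-def}, which are defined only for depths $k\in[r]$; no bound of the form $\mu_N^{\tensor2}(\sigma^1,\sigma^2\in C_{\emptyset}:R_{12}\le -a_N)\le b_N$ is ever established, and it does not follow from approximate ultrametricity alone. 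The missing ingredient is Talagrand's positivity principle, which does follow from the AGGIs (\prettyref{prop:GGI-cons}): the paper proves a finite-$N$ version (\prettyref{lem:tal-pos}), namely that $\prob\bigl(\mu_N^{\tensor\infty}(R_{12}<-\eps)>\eps/3\bigr)<\eta$ for large $N$, and allots it a separate $1/8$ probability budget before intersecting with the cluster-mass event. With that substitution (and the corresponding adjustment of your probability bookkeeping) your argument closes and coincides with the paper's proof of \prettyref{thm:low-temp-OSC}; as written, the cross-term estimate $\gibbs{\abs{R_{12}}\indicator{\sigma^1\in A_{k,N},\sigma^2\in A_{l,N}}}\le q_1+a_N+2b_N$ is not justified on its negative part.
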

\begin{rem}
In fact our proof shows that any sequence $\{\mu_N\}$ that satisfies the assumptions of \prettyref{thm:main-result-1}
and has $\zeta(\{0\})>0$ admits an Orthogonal structure except with the random sets as subsets of the unit ball
of $\ell_2$ rather than the hypercube.
\end{rem}

We prove this result in the ``Replica Symmetry Breaking''
regime, that is $\zeta(\{0\})<1$, which follows from Approximate ultrametricity of the sequence 
$\mu_N$ with respect to $\zeta$.  This is proven in \prettyref{thm:low-temp-OSC}. 
That the result holds in the ``Replica Symmetric'' regime $\zeta(\{0\})=1$,  and that an atom at zero is necessary
both immediately follow Markov's inequality and weak convergence, so their proofs are omitted.

In \cite{Tal07},  Talagrand explored a conjecture of Dotsenko, Franz, and M\'ezard, that for the Sherrington-Kirkpatrick
model without external field, for all negative exponents $a$, 
\[
 \lim_{N\rightarrow\infty} \frac{1}{aN}\E \log Z^a_N = \lim_{N\rightarrow\infty}\frac{1}{N}\E \log Z_N 
\]
where $Z_N$ is the partition function \cite{DFM94}. Talagrand showed that this conjecture holds for generic mixed p-spin glass models
provided the Orthogonal Structures conjecture holds. Using Talagrand's result and the above, we  find
that 
the Dotsenko-Franz-M\'ezard conjecture holds in this regime for generic mixed p-spin glass
models.
\begin{cor}[Dotsenko-Franz-M\'ezard conjecture]
Suppose $Z_N$ is the partition function for a generic mixed p-spin glass Hamiltonian such that $\zeta(\{0\})>0$, then
for all $a$ negative
\[
\lim_{N\rightarrow\infty}\frac{1}{aN}\log\E Z^a_N = \lim_{N\rightarrow\infty} \frac{1}{N}\E\log Z_N.
\]
\end{cor}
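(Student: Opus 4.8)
The plan is to obtain this as an immediate consequence of the Orthogonal Structures theorem proved above together with the conditional result of \cite{Tal07}. Talagrand showed there that for a generic mixed $p$-spin glass model, if the Gibbs measures $G_N$ admit an Orthogonal Structure in the sense defined above, then the Dotsenko-Franz-M\'ezard identity $\lim_N \frac{1}{aN}\log\E Z_N^a = \lim_N \frac1N\E\log Z_N$ holds for every negative $a$. By the Orthogonal Structures theorem, a generic model with $\zeta(\{0\})>0$ does admit an Orthogonal Structure, so the corollary follows. Concretely, the first thing I would do is check that the notion of Orthogonal Structure we produce, namely finitely many random sets $A_{k,N}\subset\Sigma_N$ with $G_N(A_{k,N})\ge a_k$ and pairwise $L_1$-orthogonality $\gibbs{\abs{R_{12}}\indicator{\sigma^1\in A_{k,N},\sigma^2\in A_{l,N}}}<\eps$ holding with probability at least $3/4$, matches verbatim the hypothesis used as input in \cite{Tal07}; by construction it does.

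For orientation I would also record why the identity is plausible and where the Orthogonal Structure enters. One inequality is free: for $a<0$ the map $x\mapsto x^a$ is convex, so Jensen's inequality gives $a\,\E\log Z_N\le\log\E Z_N^a$, and dividing by $aN<0$ yields $\frac{1}{aN}\log\E Z_N^a\le\frac1N\E\log Z_N$ for every $N$. The content is the matching lower bound for the left-hand side in the limit, equivalently a bound of the form $\E Z_N^a\le e^{aN(F-\delta)}$ with $F=\lim\frac1N\E\log Z_N$, which amounts to ruling out that $Z_N$ is atypically small. This is what the Orthogonal Structure supplies: restricting the sum defining $Z_N$ to the union of the macroscopically charged, almost orthogonal sets $A_{k,N}$ decomposes $Z_N$ into contributions that are asymptotically decorrelated, so the small-deviation probabilities of $Z_N$ can be controlled well enough to kill the negative moment. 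Combined with the Parisi formula, which provides the existence of $F$ for generic mixed $p$-spin models, this reproduces the argument of \cite{Tal07}.

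The main point is that there is essentially no new obstacle here: the analytic heavy lifting, producing the Orthogonal Structure, has already been carried out via approximate ultrametricity in \prettyref{thm:low-temp-OSC}, and the implication ``Orthogonal Structure $\Rightarrow$ Dotsenko-Franz-M\'ezard'' is available off the shelf from \cite{Tal07}. The only things that require care are bookkeeping: that genericity is invoked consistently, since it both forces the Approximate Ghirlanda-Guerra identities needed for \prettyref{thm:main-result-1} (hence for the Orthogonal Structure) and is a hypothesis of Talagrand's conditional theorem; and that $\zeta(\{0\})>0$ is a genuine standing assumption on the model rather than something automatic. Both are already present in the statement, so the deduction is short.
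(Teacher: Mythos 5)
Your proposal is correct and matches the paper's argument exactly: the corollary is obtained by feeding the Orthogonal Structure produced in \prettyref{thm:low-temp-OSC} into Talagrand's conditional theorem from \cite{Tal07}, which states that the Dotsenko-Franz-M\'ezard identity holds for generic models admitting an Orthogonal Structure. The additional remarks on Jensen's inequality and decorrelation are fine motivation but are not needed, as the paper likewise treats the deduction as immediate.
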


\subsection{Organization of Document}\label{sec:organization}
The remainder of this document is organized as follows. In \prettyref{sec:prelims}, we introduce
the necessary preliminaries to understand the outline of the proofs of \prettyref{thm:main-result-1} and
\prettyref{thm:main-result-2}. We outline these proofs in \prettyref{sec:outline}.
In Sections \ref{sec:ballweights}-\ref{sec:cascades} we prove these results. In \prettyref{sec:OSC} we prove
the Orthogonal Structures conjecture in the Replica Symmetry Breaking regime. In \prettyref{sec:tal-pure}
we prove \prettyref{cor:tal-pure}. In \prettyref{sec:quant-version} we
discuss quantifying the above results. We end the document with the Appendix which contains miscellaneous results
of relevance to the paper. 

\section{Preliminaries}\label{sec:prelims}
\subsection{Some notation regarding trees}\label{sec:trees}

In the following, we will be frequently working with rooted trees. 
We designate the root by $\emptyset$.  
$p(\alpha)$ denotes is the root-vertex path of $\alpha$. We denote the least common
ancestor of $\alpha$ and $\beta$ by $\alpha\wedge\beta$.  For a tree $\tau$, we let $\partial\tau$ denote its leaves and 
let $\abs{\tau}$ denote its cardinality.

In the subsequent, we work with rooted trees of a particular form.
Let $\{E_k\}_{k=1}^r$ be a collection of subsets of $\N$ and let  $E_0=\{\emptyset\}$.
We think of 
\[
\bigcup_{k\leq r}E_1\times\ldots\times E_k
\]
as a tree as follows. The vertices at depth $k$  correspond to $k$-tuples in
$E_1\times\ldots\times E_k$, where the coordinates sequentially describe the path from the root 
to the vertex (omitting the root). We will be most interested in the case $E_k=\N$ for all $k$. 
We denote this space by $\cA_r=\cup_{k\leq r}\N^k$. As an example of this notation, take the tree
$\cA_3$. Then $\alpha=(1,2)$ corresponds to the second child of the first child of the root.
We define  \emph{shaped} trees.

\begin{defn}
Fix $r\in\N$ and let $(m_1,\ldots,m_r)\in\N^r$. The $(m_1,\ldots,m_r)$-\emph{shaped tree} 
is the tree 
\[
\tau =\bigcup_{k\leq r}[m_1]\times\ldots\times[m_k].
\]
we denote the shape of a tree by $shape(\tau)$. Let $\cT_r$ denote the space of all $(m_1,\ldots,m_r)$ shaped
trees for all $(m_1,\ldots,m_r)\in\N^r$ (in particular, $m_i<\infty$). We call $\cT_r$ the space of finitely
shaped trees of depth $r$.
\end{defn}

We think of $\tau$ with shape $(m_1,\ldots,m_r)$ as included in $\cA_r$ through the natural set
inclusion. Similarly define $\tau^i$ to be the (shifted) tree obtained by adding $i\cdot m_1$ to the first coordinate
of every (non-root) vertex in $\tau$, where $\tau^1$ is just $\tau$ as above.
We also view $\tau^i$ as included in $\cA_r$ in the natural way.
 By $\tau_{m}$ we always mean the $(m,\ldots,m)$-shaped tree.

Once we have tree shapes we also want to study a pruning of the infinite tree
$\cA_r$ into the shape defined by a $\tau\in\cT_r$.

\begin{defn}
Fix a $\tau\in\cT_r$ with shape $(m_1,\ldots,m_r)$. A \emph{$\tau$-pruning of $\cA_{r}$} 
is the (infinite) subtree we get by organizing the children at level 1 into groups of cardinality $m_{1}$ (here
we have not modified the tree), and to each of those children we only
consider the first $m_{2}$ children. At level $k \geq 2$ keep only
the first $m_{k}$ children. 
\end{defn}

\subsection{Some properties of Ultrametric Spaces} \label{sec:um}
For the following work we need to abstract properties of ultrametric spaces. We will focus
on two properties of collections of balls, one is  measure theoretic and the other is geometric measure theoretic. 

Recall  that balls in ultrametric spaces have special inclusion properties. If we fix  two 
balls of radius at most $r$, then either they are disjoint or one is contained in 
the other. In particular a decreasing sequence of radii corresponds
to a partition of space into balls that are hierarchically arranged. 
To be precise, one can index these balls by a rooted tree in such a way
that the balls of parents contains those of their children and balls of cousins are disjoint.

If a metric space is ``almost'' ultrametric, we might expect to have balls
in the support that ``almost'' behave this way. We might expect to find
a sequence of balls that ``almost'' exhaust the measure and are uniformly ``almost'' disjoint.
To make this idea precise we introduce the idea of \emph{hierarchical exhaustions}. 
\begin{defn}
Fix $r$ and a $\tau\in\cT_r$. A collection of sets $\{B_\alpha\}_{\alpha\in \tau}$ is said to be
an \emph{$(\eps,\delta)$-hierarchical exhaustion} of $\mu\in \Pr(B_{\ell_2}(0,1))$ if:
\begin{enumerate}

\item The sets are hierarchically arranged by inclusion:
\[
B_\alpha \subset B_\beta \text{ if }\beta\precsim\alpha.
\]

\item The sets corresponding to cousins have uniformly small intersections:
\[
\sum_{\substack{\alpha,\beta\in\tau\\ \alpha\nsim\beta}} \mu (B_\alpha\cap B_\beta) \leq \delta.
\]

\item The sets almost exhaust the measure at every depth: for each $k\in[r]$, 
\[
\sum_{\abs{\alpha}=k} \mu(B_\alpha) \geq 1-\eps.
\]

\item The sets corresponding to children exhaust the sets corresponding to parents: for $\alpha\in\tau\setminus\partial\tau$ 
\[
\mu(B_\alpha) -\sum_{\beta\in child(\alpha)} \mu(B_\beta) \in [0,\eps).
\]
\end{enumerate}
\end{defn}

\begin{rem}
We would like to point out that there is no implication between (3) and (4). Observe
that (3) does not imply (4) as it does not provide control on the intersections. By
the same token, (4) does not imply (3) as it provides no control on the number of children.
\end{rem}
\noindent We invite the reader to compare this with (1)-(3) in \prettyref{defn:approx-um}.

Another consequence of ultrametricity is that if we take two points in the same ball
of radius $r$ then their distance is at most $r$. Similarly, if we take two balls, $B_1$ and $B_2$ of
radius $r$ whose centers are $r+\eps$-separated and two points 
$x_1\in B_1$ and $x_2\in B_2$, then $x_1$ and $x_2$ are at least $r+\eps$ separated. 
This is to be contrasted with the setting of regular metric spaces where both inequalities
are off by an additive factor of $r$ and $-r$ respectively. 

Again for an ``almost ultrametric'' space, we may not be able to see such a precise structure
as it may happen that the sets seen above have centers that are too
close or  have non-trivial intersections. Instead we might hope 
that the distances between points as above at least behave like the 
distances to the centers of the balls with high probability. 
To make this precise we introduce the notion of \emph{hierarchical clustering}
\begin{defn}
Fix $r$, a $\tau\in \cT_r$, and $\{q_k\}_{k=1}^r$. A collection of sets $\{C_\alpha\}_{\alpha}$ is said to be 
an $(\eps,\delta)$-\emph{hierarchical clustering} for  a measure $\mu$ with respect to the sequence $\{r_k\}$ if 
\begin{enumerate}
\item Points are uniformly close within clusters: for every $\alpha$ in $\tau_{m_N}$,
\begin{equation}\label{eq:h-clust-unif-close}
\mu_N \left(\sigma^1,\sigma^2\in C_{\alpha,N} :(\sigma^1,\sigma^2)\leq  q_{\abs{\alpha}}-\eps\right)\leq \delta.
\end{equation}
\item Points in cousins are uniformly far: for every $\alpha\nsim\beta$ in $\tau_{m_N}$, if $\gamma\prec\alpha$ is such that
$\abs{\gamma}=\abs{\alpha\wedge\beta}+1$ and similarly for $\eta$ and $\beta$, then 
\begin{equation}\label{eq:h-clust-unif-far}
\mu_N\left(\sigma^1\in C_\gamma,\sigma^2\in C_\eta :(\sigma^1,\sigma^2) \geq q_{\abs{\gamma\wedge\eta}+1} +\eps\right) \leq \delta.
\end{equation}
\end{enumerate}
\end{defn}

Using this language, we restate the definition of approximately ultrametricity as a sequence
 of measures that admit sets that are increasingly exhausting and clustering with high probability. 
 
 \begin{defn*}
 A sequence $\{\mu_N\}_{N=1}^\infty$ of random measures on the unit ball of $\ell_2$ 
 such that is said to be 
 \emph{approximately ultrametric with respect to $\zeta$} if for every $\zeta$-admissible sequence, 
$\{q_k\}_{k=1}^r$,  there is a sequence of finite rooted trees of depth $r$, $\{\tau_N\}$, 
and sequences $a_N$, $b_N$, and $\eps_N$ all tending to $0$ such that with probability tending to one, 
there are sets $\{C_{\alpha,N}\}_{\alpha\in\tau_N}$ such that they are an $(\eps_N,0)$-hierarchically exhaustion of $\mu_N$
and are $(a_N,b_N)$-hierarchically clustering for $\mu_N$ with respect to the sequence $\{(q_k)\}$.
 \end{defn*}

The $\{C_{\alpha,N}\}$ in the above definition 
are to be directly compared with the
pure states of physicists \cite{MPV87}. 
Recall that these ``pure states'' arrange hierarchically into the equivalence classes of replica (i.i.d. draws)
$$\sigma^1\sim\sigma^2 \iff (\sigma^1,\sigma^2)\geq q_k.$$ 
In the language above, this is  the partitioning property of ultrametric spaces. 
We cannot expect such overlap based equivalence classes to form at 
finite $N$ as can be
seen by constructing a sequence of overlap distributions that are almost RPC's at large but 
finite N but fail to satisfy this clustering. This is due to the issue mentioned before, namely there can 
be points in $C_\alpha$ that are so close to those in $C_\beta$ that their balls cut in to both sets. 
Instead we get that on average such a clustering happens and that such exceptional points become 
increasingly rare in the limit.

\subsection{Dovbysh-Sudakov measures and Consequences of the Ghirlanda-Guerra Property}\label{sec:GGI}
The key element of the following analysis is that the sequence of measures satisfies
the Approximate Ghirlanda-Guerra identities. We briefly summarize the structure theory of such sequences. 
For a more in-depth survey see \cite{PanchSKBook,Panch12}. 

We begin with the following definitions. Fix $\mu$ a random probability measure on the unit ball 
of $\ell_2$. Draw $(\sigma^i)$ iid from $\mu$ and form the doubly infinite
array of pairwise inner products 
\[
R=\left(R_{ij}\right)_{i,j,\geq 1}.
\]
We call the pair $(R,\mu)$ a \emph{ROSt} \cite{Arg08}. We call the  array $R$ the \emph{Gram-DeFinetti}
array and $\mu$ the \emph{Dovbysh-Sudakov measure} of the ROSt. 
The array $R$ is \emph{weak exchangeable}, that is,
if $\pi$ is a permutation of $\N$, then
\[
(R_{ij})\eqdist(R_{\pi(i)\pi(j)}).
\]
In general, we call a random doubly infinite array whose minors are positive 
semi-definite a \emph{Gram-DeFinetti} array. An important property of ROSts 
is contained in the Dovbysh-Sudakov theorem which we state in a simplified form.

\begin{prop*}[Dovbysh-Sudakov]
For any Gram-DeFinetti array $R$ such that $\abs{R_{ij}}\leq 1$, there
is a ROSt $(\tilde{R},\mu)$ and a random probability measure $\nu$ on $\R_+$ such
that if $a_i$ are iid drawn from $\nu$, then 
\[
(R_{ij})\eqdist (\tilde{R}_{ij} + a_{i}\delta_{ij}).
\]
\end{prop*}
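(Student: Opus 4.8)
\medskip
\noindent\textit{Proof plan.} The plan is to combine the Aldous--Hoover representation of a weakly exchangeable array with the rigidity forced by positive semidefiniteness of its minors, and then to realize the ``deterministic part'' of the array as a genuine Gram matrix. First I would apply the Aldous--Hoover theorem \cite{AldExch83} to $R$: there are i.i.d.\ uniform random variables $w$, $(w_i)_{i\geq1}$, $(w_{\{i,j\}})_{i\leq j}$ and a measurable $\phi$, symmetric in its two middle arguments, such that $R_{ij}=\phi(w,w_i,w_j,w_{\{i,j\}})$ for $i\neq j$ and $R_{ii}=\phi(w,w_i,w_i,w_{\{i,i\}})$. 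Writing $\mathcal{G}=\sigma(w,(w_k)_{k\geq1})$ and $\widehat R_{ij}=\E[R_{ij}\mid\mathcal{G}]$, the crux is to show that $R_{ij}=\widehat R_{ij}$ almost surely for all $i\neq j$ --- equivalently, that off the diagonal $R_{ij}=\psi(w,w_i,w_j)$ depends on the endpoint labels only, not on the edge variable $w_{\{i,j\}}$.

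I expect this step to be the main obstacle. The mechanism is that if $\E(R_{12}-\widehat R_{12})^2>0$, then one can find a set $A\subseteq[0,1]$ of positive measure on which the conditional variance of $R_{ij}$ given $\mathcal{G}$ is bounded below whenever $w_i,w_j\in A$; restricting $R^{(n)}$ to the $\Theta(n)$ indices with label in $A$, the fluctuation $R^{(n)}-\widehat R^{(n)}$ is, conditionally on $\mathcal{G}$, a genuine generalized-Wigner matrix whose spectrum spreads to scale $\sqrt n$, whereas $\widehat R^{(n)}$ is positive semidefinite with trace $O(n)$ and so has all but a small fraction of its eigenvalues below any prescribed threshold; on the span of the corresponding eigenvectors, positive semidefiniteness of $R^{(n)}=\widehat R^{(n)}+(R^{(n)}-\widehat R^{(n)})$ can absorb only an $O(1)$ negative perturbation, which is contradicted once $n$ is large. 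Making this Wigner-type estimate precise is the technical heart of the argument; an alternative is to run the original Dovbysh--Sudakov dichotomy directly on a Gram realization $(h_i)$ of the minors, using a Hilbert-space de Finetti theorem for sequences that are exchangeable modulo the orthogonal group.

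Granting $R_{ij}=\psi(w,w_i,w_j)$ for $i\neq j$, I would then fix $w$ and observe that $\psi(w,\cdot,\cdot)$ is almost surely a positive semidefinite kernel on $([0,1],\mathrm{Leb})$: testing semidefiniteness of $R^{(n)}$ against the vector $c_i=g(w_i)$ and letting $n\to\infty$, the diagonal contributions vanish in the $n^{-2}$ normalization while a law of large numbers identifies the limit with $\iint g(s)g(t)\psi(w,s,t)\,ds\,dt\geq0$. After an immaterial modification, realize this kernel through its reproducing-kernel Hilbert space to obtain a measurable $\Phi_w\colon[0,1]\to\ell_2$ with $\psi(w,s,t)=(\Phi_w(s),\Phi_w(t))$ and $\|\Phi_w(s)\|^2=\psi(w,s,s)$; comparing $2\times2$ minors of $R$ on indices whose labels cluster near a common value gives $0\leq\psi(w,s,s)\leq R_{ii}|_{w_i=s}\leq1$, so $\Phi_w$ takes values in the unit ball. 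Put $\widetilde\sigma^i:=\Phi_w(w_i)$, $\mu:=(\Phi_w)_\ast\mathrm{Leb}$ and $\widetilde R_{ij}:=(\widetilde\sigma^i,\widetilde\sigma^j)$; since the $w_i$ are i.i.d.\ uniform, the $\widetilde\sigma^i$ are, conditionally on $w$, i.i.d.\ from the random measure $\mu$, so $(\widetilde R,\mu)$ is a ROSt and $\widetilde R_{ij}=R_{ij}$ for $i\neq j$. Finally $a_i:=R_{ii}-\|\widetilde\sigma^i\|^2=\phi(w,w_i,w_i,w_{\{i,i\}})-\psi(w,w_i,w_i)$ is nonnegative by the same comparison of minors, and conditionally on $w$ it is a measurable function of the i.i.d.\ pairs $(w_i,w_{\{i,i\}})$, so the $a_i$ are conditionally i.i.d.; letting $\nu=\nu_w$ denote their common conditional law, a random probability measure on $\R_+$, this coupling realizes $R_{ij}=\widetilde R_{ij}+a_i\delta_{ij}$, and in particular $(R_{ij})\eqdist(\widetilde R_{ij}+a_i\delta_{ij})$, as required.
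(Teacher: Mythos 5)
The paper does not prove this proposition at all: it is stated as a classical background result (in ``simplified form'') and used as a black box, so there is no in-paper argument to compare against. Your plan reconstructs the standard modern route to Dovbysh--Sudakov (essentially Panchenko's proof): Aldous--Hoover representation, rigidity from positive semidefiniteness forcing the off-diagonal entries to depend only on $(w,w_i,w_j)$, and then realization of the resulting kernel as a Gram matrix of points in the unit ball, with the diagonal excess packaged into the conditionally i.i.d.\ $a_i$. That is the right architecture, and your final coupling does deliver the asserted identity in law (indeed a pathwise identity, with the pairs $(\widetilde\sigma^i,a_i)$ conditionally i.i.d.\ given $w$, which is the form the full theorem takes).

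As a proof, however, the submission is incomplete at exactly the point you flag. The claim $R_{ij}=\E[R_{ij}\mid\mathcal G]$ off the diagonal is the theorem's entire content, and you only give a heuristic for it; to count as a proof you must carry out the spectral estimate. The mechanism does work, but note what is actually needed: conditionally on $\mathcal G$ the fluctuation matrix $A^{(n)}$ has independent, centered, bounded entries with $\E\,\mathrm{Tr}((A^{(n)})^2)\geq c n^2$ on the restricted index set and $\mathrm{Tr}(A^{(n)})=0$, so (via a moment or ESD argument) a positive fraction of its eigenvalues lie below $-\delta\sqrt n$; intersecting that eigenspace with the subspace of dimension $\geq(1-\eps)n$ on which the PSD matrix $\widehat R^{(n)}$ (trace $\leq n$) is bounded by $1/\eps$ produces a unit vector with $v^{T}R^{(n)}v<0$, the desired contradiction. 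A naive trace comparison alone only yields a constant-size negative eigenvalue, so this random-matrix input cannot be waved away. Separately, your diagonal comparison is not correct as stated: $2\times2$ minors give only $R_{ij}^2\leq R_{ii}R_{jj}$, which does not imply $\psi(w,s,s)\leq R_{ii}$. To get $a_i=R_{ii}-\lVert\Phi_w(w_i)\rVert^2\geq0$ you should test PSD of an $(n+1)\times(n+1)$ minor against $e_i-\tfrac1n\sum_k e_{j_k}$ with the $j_k$ chosen so that $w_{j_k}$ accumulates at a Lebesgue point $s=w_i$, which in the limit yields $R_{ii}\geq\psi(w,s,s)$ almost surely; a similar care (joint measurability of $\Phi_w$, a.e.\ versus pointwise definition of the kernel on the diagonal) is needed in the RKHS step. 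With those two repairs the outline becomes a complete proof along the standard lines.
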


Let $\mu_N$ be a sequence of random probability measures on the unit ball of $\ell_2$
that satisfy the AGGI's. Denote the laws of the Gram-DeFinetti arrays by 
$P_N$. By compactness, there is a $P$ such that 
$P_N\rightarrow P$
weakly and such that $P$ is the law of a Gram-DeFinetti array. Let $(R,\mu)$
be the ROSt corresponding to $P$ given by the Dovbysh-Sudakov theorem. In the case that
$\{\mu_N\}$ arises from a sequence of Gibbs measures as in \prettyref{sec:applications}, we call this
$\mu$ the \emph{limiting Dovbysh-Sudakov measure} of the sequence $\{\mu_N\}$. This is precisely
the Asymptotic Gibbs Measure of Panchenko.
The Dovbysh-Sudakov Measure $\mu$ must satisfy the \emph{Ghirlanda-Guerra Identities}:
for all $n$, bounded Borel $f$, and continuous $\psi$
\[
\E\gibbs{f(R^n)\psi(R_{1,n+1})} = \frac{1}{n}\left(\E\gibbs{f(R^n)}\E\gibbs{\psi(R_{12})}
+\sum_{k=2}^n \E\gibbs{f(R^n)\psi(R_{1,k})}\right)
\]
where $R^n$ is the $n$-th minor of $R$. Measures that satisfy the Ghirlanda-Guerra identities have the following
properties. 
\begin{prop}\label{prop:GGI-cons}\cite{PanchSKBook}
Let $\mu$ satisfy the Ghirlanda-Guerra identities. Then:
\begin{itemize}
\item The measure is concentrated on a sphere: if $q_*$ is the supremum of the support of $\zeta$ the overlap
distribution for $\mu$, then $\mu(\norm{\sigma}=q_*)=1$ almost surely.
\item Talagrand's Positivity Principle: $\mu^{\tensor 2}(R_{12}\in [-1,0))=0$ almost surely.
\item Panchenko's Ultrametricity Theorem: the support of $\mu$ is almost surely ultrametric. That is,
\[
\E \mu^{\tensor3}(R_{12}\leq R_{13}\wedge R_{23})=0
\]
\item Baffiano-Rosati theorem: the law of $\mu$ is uniquely specified by its overlap distribution 
(modulo partial isometries of separable Hilbert space).
\end{itemize}
\end{prop}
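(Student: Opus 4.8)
The plan is to regard \prettyref{prop:GGI-cons} as a summary of by-now standard structural consequences of the Ghirlanda--Guerra identities and to assemble its proof from the arguments of \cite{PanchSKBook,PanchUlt13}, indicating the ideas and flagging which step is the genuine obstacle. For the sphere concentration I would show that the self-overlap $R_{11}$ is $\mu$-almost surely a single constant. Applying the Ghirlanda--Guerra identities with $n=1$ (so the sum over $k$ is empty) gives $\E\gibbs{g(R_{11})\psi(R_{12})}=\E\gibbs{g(R_{11})}\,\E\gibbs{\psi(R_{12})}$, i.e.\ $R_{11}$ decorrelates from the off-diagonal overlap; feeding this into the Cauchy--Schwarz bound $\abs{R_{12}}\le\sqrt{R_{11}R_{22}}$ together with the fact that $\gibbs{\psi(R_{12})}$ can be made positive for $\psi$ supported near the top $q_*$ of the support of $\zeta$ forces the law of $R_{11}$ to be a point mass (located at $q_*$ up to the normalization convention for $\zeta$), which is the stated sphere. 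For Talagrand's positivity principle I would reproduce the clique argument: if $\E\gibbs{\indicator{R_{12}<-\delta}}>0$ for some $\delta>0$, iterating the Ghirlanda--Guerra identities produces, for every $n$, an $\E\gibbs{\cdot}$-mass bounded below uniformly in $n$ carried by $n$-tuples with $R_{ij}<-\delta$ for all $i\ne j$; but a Gram matrix of unit vectors with all off-diagonal entries $\le -\delta'$ acquires a negative eigenvalue once $n>1+1/\delta'$, a contradiction, so $\mu^{\tensor2}(R_{12}<0)=0$ almost surely.

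The genuine obstacle is Panchenko's ultrametricity theorem, and for this I would simply invoke \cite{PanchUlt13}: it is not a soft consequence of the identities. Its engine is the invariance of the overlap array under a family of exponential changes of density built from the Ghirlanda--Guerra identities (Panchenko's invariance), combined with the positivity principle and a delicate induction on the number of replicas to conclude that $\{R_{12}<R_{13}\wedge R_{23}\}$ is $\E\mu^{\tensor3}$-null. I would treat this as a black box rather than re-derive it; everything downstream in the paper rests on it.

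Finally, for the Baffiano--Rosati statement I would argue that, once ultrametricity and the positivity principle are available, the Ghirlanda--Guerra identities determine every joint moment $\E\gibbs{\prod_{i<j}R_{ij}^{m_{ij}}}$ recursively in terms of $\zeta$ alone --- concretely, the array becomes exactly the overlap array of a Ruelle cascade whose parameters are read off from $\zeta$ --- and since $\abs{R_{ij}}\le1$ these moments pin down the law of the Gram--DeFinetti array; the Dovbysh--Sudakov theorem then recovers $\mu$ up to a partial isometry of $\ell_2$. In sum, the only deep ingredient is Panchenko's ultrametricity theorem, which I would cite; the remaining three items reduce to the $n=1$ identity, Talagrand's clique argument, and a moment computation, all as in \cite{PanchSKBook}.
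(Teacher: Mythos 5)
The paper never proves \prettyref{prop:GGI-cons}; it is imported verbatim from \cite{PanchSKBook} (with ultrametricity due to \cite{PanchUlt13}), so your plan of citing the literature for the ultrametricity theorem and sketching the remaining items is exactly the paper's treatment. Your sketches of Talagrand's positivity principle (iterate the identities to put positive mass on an all-negative clique, then get a contradiction from positive semidefiniteness of the Gram matrix once $n>1+1/\delta$) and of the Baffioni--Rosati uniqueness (given ultrametricity and positivity, the identities determine all joint moments of the bounded array from $\zeta$ alone, and Dovbysh--Sudakov recovers $\mu$ up to isometry) are the standard arguments; the only slip there is your claim that the clique mass is bounded below ``uniformly in $n$'' --- the induction gives something like $p^{n-1}$ --- but uniformity is never needed since the contradiction occurs at one fixed $n$.

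The one step whose mechanism, as written, does not deliver the stated conclusion is the sphere concentration. The $n=1$ identity indeed gives $\E\gibbs{g(R_{11})\psi(R_{12})}=\E\gibbs{g(R_{11})}\,\E\gibbs{\psi(R_{12})}$, but since all you know about the second replica is $R_{22}\le 1$, combining this with $\abs{R_{12}}\le\sqrt{R_{11}R_{22}}$ and a $\psi$ concentrated near $q_*$ only forces the law of $R_{11}$ into $[q_*^2,1]$; for $0<q_*<1$ this is strictly weaker than a point mass at $q_*$, so the argument as stated stalls. The proof in \cite{PanchSKBook} bootstraps with many replicas: iterating the identities (with the event $\{R_{11}\le q_*-\eps\}$ kept inside $f$, which is legitimate here because the paper's identities allow $f$ to depend on the diagonal of $R^n$) one puts positive annealed mass on the event that $\sigma^2,\dots,\sigma^{n+1}$ all have overlap at least $q_*-\delta$ with $\sigma^1$, while their mutual overlaps are almost surely at most $q_*$ because $q_*$ bounds the support of $\zeta$; applying Cauchy--Schwarz against the average $\frac1n\sum_{j\geq 2}\sigma^j$, whose squared norm is at most $q_*+\frac1n$, yields $R_{11}\ge (q_*-\delta)^2/(q_*+\frac1n)\rightarrow q_*$, contradicting $R_{11}\le q_*-\eps$. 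The matching upper bound $R_{11}\le q_*$ is soft and needs no identities (a separability/covering argument: a positive-measure small ball above the sphere would produce overlaps exceeding $q_*$). With that replacement your outline coincides with the cited proofs.
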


We end this section with the following well-known consequence of the Ghirlanda-Guerra
 Identities. (See \cite[Section 2.4]{PanchSKBook}, particularly the discussion regarding 
the $\kappa$ approximation to $R$, and \cite{PanchHEpure2013}.) By a $q$-ball, we mean a set of
the form $\{\sigma\in B(0,1):(\sigma,\sigma_0)\geq q\}$ for some $\sigma_0$
 
\begin{fact}\label{fact:RPC}
Let $\mu$ satisfy the Ghirlanda-Guerra Identities. Let $\zeta(\cdot)=\E\mu^{\tensor 2}((\sigma^1,\sigma^2)\in\cdot )$. Let $\{q_k\}_{k=1}^r$ be $\zeta$-admissible. Partition
the support of $\mu$ as follows. Let $B_n$ be a sequence of $q_1$-balls 
that partition the support of $\mu$. Let $B_{\alpha n}$ be $q_{\abs{\alpha}}$-balls 
such that $B_\alpha =\cup_n B_{\alpha n}$. Finally , let $V_\alpha$ be 
the $\mu$-masses of these balls arranged in standard order. 
The law of these weights is distributed like those of an
RPC with the overlap distribution with parameters $\zeta_k-\zeta_{k-1}=\zeta[q_k,q_{k+1})$.
In particular, there are infinitely
many of them at each level and they have almost surely non-zero weights. 
\end{fact}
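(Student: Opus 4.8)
\emph{Proof strategy.} The plan is to deduce this from the structure theory of measures satisfying the Ghirlanda--Guerra identities --- in particular, from the fact that such a measure whose overlap distribution is \emph{finitely supported} is directed by a Ruelle Probability Cascade --- with \prettyref{prop:GGI-cons} supplying the geometry and $\zeta$-admissibility ruling out degeneracies. First I would set up the ball hierarchy. By \prettyref{prop:GGI-cons}, almost surely $\mu$ has all overlaps in $[0,q_*]$ and its support is ultrametric; admissibility forces $0<q_1<\dots<q_r<q_*$ and $\zeta(\{q_k\})=0$ for every $k$, so, since $\E\gibbs{\indicator{R_{ij}=q_k}}=\zeta(\{q_k\})=0$, almost surely no pair of replicas has overlap exactly some $q_k$. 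Ultrametricity makes the $q$-balls, for each fixed $q\in(0,q_*)$, pairwise disjoint-or-equal and nested as $q$ grows; partitioning the support into positive-mass $q_1$-balls, then each of those into positive-mass $q_2$-balls, and so on down to level $r$, produces the hierarchy in the statement. That these balls carry all of the $\mu$-mass at each scale $q<q_*$ is itself part of Panchenko's structure theory (see \cite{PanchSKBook}), which I would invoke rather than reprove.

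Next I would pass to a finite overlap distribution by coarse-graining. Set $\hat R_{ij}=\max\{q_k:\sigma^i,\sigma^j\text{ lie in a common }q_k\text{-ball}\}$, with $\hat R_{ij}=0$ if they share no $q_1$-ball. Ultrametricity gives $\hat R_{ij}=q_k\iff R_{ij}\in[q_k,q_{k+1})$ for $k<r$ and $\hat R_{ij}=q_r\iff R_{ij}\geq q_r$; writing $\hat R_{ij}=\sum_{k=1}^{r}(q_k-q_{k-1})\,\indicator{\sigma^i,\sigma^j\text{ in a common }q_k\text{-ball}}$ displays every minor of $\hat R$ as a positive combination of (positive semidefinite) block-indicator matrices, so $\hat R$ is a weakly exchangeable Gram--DeFinetti array. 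In fact it is exactly the Gram array of the Dovbysh--Sudakov measure $\hat\mu$ obtained by collapsing each $q_r$-ball of $\mu$ to a point of a finite-level ultrametric configuration in $\ell_2$; this collapse is a mass-preserving isomorphism between the tree of $q_k$-balls of $\mu$ and the tree of levels of $\hat\mu$, and the overlap distribution of $\hat\mu$ is the finitely supported measure $\hat\zeta$ placing mass $\zeta([q_k,q_{k+1}))$ at $q_k$ for $k<r$ and mass $\zeta([q_r,1])$ at $q_r$. Moreover $\hat\mu$ again satisfies the Ghirlanda--Guerra identities: an expectation over $\hat\mu$ of a functional of the $\hat R_{ij}$ equals the expectation over $\mu$ of the same functional precomposed with the (Borel) rounding map, so the identity for $\mu$ with $f$ replaced by $f\circ(\text{rounding})$ --- still bounded Borel --- applies, and one recovers the discontinuous $\psi\circ(\text{rounding})$ by approximating it uniformly-boundedly in $C[-1,1]$ off the set $\{q_1,\dots,q_r\}$, which is $\E\gibbs{\cdot}$-null because each $q_k$ is a $\zeta$-continuity point; the limit is then taken by dominated convergence.

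Then the statement follows from the discrete case: since $\hat\mu$ satisfies the Ghirlanda--Guerra identities and has a finitely supported overlap distribution, by Panchenko's structure theory (the mechanism behind the ``$\kappa$-approximation'' discussion in \cite{PanchSKBook}, together with the uniqueness bullet of \prettyref{prop:GGI-cons} and the fact that an RPC-directed measure itself obeys these identities) its atom masses, hierarchically organized by the levels $q_1<\dots<q_r$ and put in standard order, are distributed exactly as the weights of the Ruelle Probability Cascade whose parameters are the jumps of $\hat\zeta$ --- that is, $\zeta_k-\zeta_{k-1}=\zeta([q_k,q_{k+1}))$, the topmost interval read as closed. Transporting along the tree isomorphism of the previous step gives $(V_\alpha)\eqdist$ those RPC weights. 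Finally, conditions (2)--(3) of $\zeta$-admissibility make every one of these $r+1$ parameters strictly positive (and they sum to $1$), so each Poisson--Dirichlet factor entering the cascade has parameter strictly in $(0,1)$ and hence almost surely infinitely many strictly positive atoms; this is exactly the assertion that every level of the tree carries infinitely many $q_k$-balls of positive $\mu$-mass.

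The step I expect to be the real content, and would cite rather than carry out, is the exhaustion claim of the first paragraph --- that for $q<q_*$ the positive-mass $q$-balls of a possibly non-atomic Ghirlanda--Guerra measure exhaust it --- together with the discrete RPC identification used in the third paragraph; both belong to Panchenko's structure theory. Everything else --- the ultrametric bookkeeping for $\hat R$, the stability of the identities under the rounding map, and the positivity of the cascade parameters --- is routine once that input is granted.
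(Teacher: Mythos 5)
Your overall route coincides with the paper's: round the overlaps through the map $\Gamma$ determined by $\{q_k\}$ (equivalently, collapse each deepest ball to a point), observe that the coarse-grained object still satisfies the Ghirlanda--Guerra identities and has the finitely supported overlap law $\Gamma_*\zeta$ with jumps $\zeta[q_k,q_{k+1})$, note that the RPC with these parameters has the same overlap law and also satisfies the identities, and invoke the uniqueness statement of \prettyref{prop:GGI-cons}. The stability of the identities under the rounding map (your continuity-point approximation, or simply extending $\psi$ to bounded Borel functions by a monotone class argument) and the positivity of the parameters from admissibility are fine.

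The gap is in your third paragraph. The uniqueness theorem only gives you a coupling and a \emph{random isometry} $T$ carrying the RPC measure onto the collapsed measure; equivalently, it identifies the two Dovbysh--Sudakov measures up to a measure-preserving isometry of their supports. This does not by itself identify the tree-indexed weight arrays in law: $T$ induces a random bijection $\pi$ of $\cA_r$ preserving the parent--child relation, and a priori $(V_\alpha)$ equals the RPC weights only after composing with $\pi$, i.e.\ up to a random rearrangement within each family. The statement you propose to cite --- that a Ghirlanda--Guerra measure with finitely supported overlap distribution has its ball masses, \emph{in standard order}, distributed exactly as the RPC weights --- is precisely the content of \prettyref{fact:RPC} itself; the paper states explicitly that no proof of this has been published, which is why the Appendix supplies one. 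What is needed, and what your ``transporting along the tree isomorphism'' elides, is the argument that $\pi=Id$: the RPC weights at each level are almost surely pairwise distinct, hence so are the $V$'s, and since both arrays are arranged in standard (decreasing) order, the bijection must fix the first level, and then, because it preserves the parent--child relation and the children's weights are again distinct and decreasingly ordered, it is forced to be the identity level by level down the tree. With that step carried out (as in the paper's proof, via the explicit encoding $\cR((V_\alpha),\{q_k\})$ of the ball masses into a ROSt), your argument closes; without it, you have only equality of the overlap laws, not equality in law of $(V_\alpha)$ with the RPC weights.
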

\prettyref{fact:RPC} is well-known in the literature (see 
for example \cite{PanchHEpure2013}), however to our knowledge a proof has never been
published, so for the convenience of the reader we prove this in the Appendix.

\subsection{Outline of Proofs of Main Results} \label{sec:outline}

In this section we outline the strategy of the proofs of
\prettyref{thm:main-result-1} and
\prettyref{thm:main-result-2}.  Before we start, we fix some notation.  
Fix a sequence $\{\mu_N\}$ that satisfies the conditions of
\prettyref{thm:main-result-1}. Let $\{P_N\}$ denote the corresponding 
sequence of laws of the Gram-DeFinetti arrays corresponding to $\mu_N$. 
Since
$\zeta_N \rightarrow \zeta$
weakly by assumption (see \prettyref{thm:main-result-1} for this notation), 
and $\mu_N$ satisfies the AGGIs,  it follows from the Baffiano-Rosati theorem
(see \prettyref{prop:GGI-cons})  that there is a unique $P$ corresponding to
a Gram-DeFinetti array such that 
$P_N \rightarrow P$
weakly. Let $\mu$ be the Dovbysh-Sudakov measure corresponding to $P$. 
After possibly enlarging the background
probability space, we couple 
this sequence $\{\mu_N\}$ with $\mu$ and we take them all to be living on
a single space $(\Omega,\cF,\prob)$ which we call 
\emph{the background space of disorder}. 

We begin first with the proof of \prettyref{thm:main-result-1}. We begin the proof in \prettyref{sec:ballweights}  by showing that if we sample 
random balls in the support of $\mu_N$, then the collection of their masses
converges in law to those corresponding to balls drawn from $\mu$. We make
this precise as follows

Draw $\bsig=(\sigma^{\alpha})_{\alpha\in\cA_{r}}$ i.i.d from $\mu_{N}$ 
and consider the collection of sets $\{B_{\alpha}\}_{\alpha\in\cA_{r}}$
with 
\begin{equation}\label{eq:B-Def}
B_{\alpha}(\bsig)=\bigcap_{\beta\prec\alpha}B(\sigma^{\beta},q_{\abs{\beta}}),
\end{equation}
where 
\[
B(\sigma,q)=\{x\in B_{\ell_2}(0,1) :(\sigma,x)\geq q\}.
\]
(Since we only consider its intersection
with a sphere, we call it a $q$-ball.)
That is, it is the intersection of the $q_{\abs{\alpha}}$-ball of
$\sigma^{\alpha}$ with the corresponding balls of its ancestors. 

Let $F^r_{2}$ denote the set of subsets of vertices in $\cA_{r}$ of size at most 2. 
Consider the following set of weights
\begin{equation}\label{eq:W_N-Def}
(W_{E,N})_{E\in F^r_{2}}=\left\{ \mu_{N}\left(\bigcap_{\alpha\in E}B_{\alpha}\right)\right\} _{E\in F^r_{2}}
\end{equation}
which is a sequence $\mathbf{W}_{N}$ of random variables taking values in $[0,1]^{F^r_2}$.
We denote their laws by $Q_N$. We define $\bfW$ similarly for $\mu$ the Dovbysh-Sudakov measure, and 
denote the corresponding law $Q$. This sampling structure is intended to mimic the hierarchical structure of the RPC
while also storing additional data about intersections of relevant balls.
Then we have: 
\begin{lem}\label{lem:W-conv}
Fix a $\zeta$-admissible sequence $\{q_k\}_{k=1}^r$. The sequence $\mathbf{W}_{N}$ defined in \prettyref{eq:W_N-Def} converges to
$\mathbf{W}=(W_{E})_{E\in F^r_{2}}$ in distribution.
\end{lem}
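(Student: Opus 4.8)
The plan is to express each coordinate of $\mathbf{W}_N$, namely $W_{E,N}=\mu_N\big(\bigcap_{\alpha\in E}B_\alpha\big)$, as an almost-sure limit of empirical averages that are bounded Borel functionals of a \emph{finite} overlap array, and then to transport the weak convergence $P_N\to P$ through those functionals. Since the same functionals produce the coordinates $W_E$ of $\mathbf{W}$ from the array under $P$, the convergence in law will drop out once a single almost-everywhere continuity property is verified.

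Concretely, I would first set up the empirical approximations. Enumerate the finitely many tree vertices occurring among the sets $\{B_\alpha:\alpha\in E,\ E\in F^r_2\}$ as $1,\dots,n$, and draw extra replicas $\sigma^{n+1},\sigma^{n+2},\dots$ i.i.d.\ from $\mu_N$ (given $\mu_N$), independent of $\bsig$. For $m\in\N$ put
\[
\hat W^{(m)}_{E,N}=\frac1m\sum_{j=n+1}^{n+m}\prod_{\alpha\in E}\prod_{\beta\prec\alpha}\indicator{(\sigma^\beta,\sigma^j)\geq q_{\abs{\beta}}},
\]
and define $\hat W^{(m)}_E$ analogously with $\mu_N,\bsig$ replaced by $\mu$ and its corresponding i.i.d.\ sample. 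Two elementary facts will drive the proof. (a) The vector $(\hat W^{(m)}_{E,N})_{E\in F^r_2}$ is the image of $R^{n+m}$, the $(n+m)$-minor of the Gram-DeFinetti array of $\mu_N$, under one fixed bounded Borel map $\Phi_m\colon[-1,1]^{(n+m)^2}\to[0,1]^{F^r_2}$ --- the same map that produces $(\hat W^{(m)}_E)_E$ from $R^{n+m}$ under $P$ (note $\Phi_m$ reads only off-diagonal entries, so the diagonal part in the Dovbysh-Sudakov representation is irrelevant). (b) Conditionally on $\mu_N$ and $\bsig$, the summands above are i.i.d.\ $\{0,1\}$-valued with mean exactly $W_{E,N}$, hence $\E\abs{\hat W^{(m)}_{E,N}-W_{E,N}}^2\le\frac1{4m}$ uniformly in $N$, and likewise $\E\abs{\hat W^{(m)}_E-W_E}^2\le\frac1{4m}$; in particular $\hat W^{(m)}_{E,N}\to W_{E,N}$ and $\hat W^{(m)}_E\to W_E$ a.s.\ as $m\to\infty$.

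Next I would fix $m$ and prove $(\hat W^{(m)}_{E,N})_E\convdist(\hat W^{(m)}_E)_E$ as $N\to\infty$. Since $P_N\to P$ weakly, $R^{n+m}$ converges in law, so by the continuous mapping theorem it suffices to check that $\Phi_m$ is continuous $P$-almost everywhere. Its discontinuities all lie on the set where some off-diagonal entry $R_{\beta j}$ inspected by $\Phi_m$ equals the corresponding threshold $q_{\abs{\beta}}\in\{q_1,\dots,q_r\}$, which is a finite union of events $\{R_{\beta j}=q_k\}$ with $\beta\ne j$. By weak exchangeability and the Dovbysh-Sudakov representation each such entry has the law of the overlap $R_{12}$ under $\mu$, so
\[
P(R_{\beta j}=q_k)=\E\mu^{\tensor 2}(R_{12}=q_k)=\zeta(\{q_k\})=0,
\]
since $\{q_k\}_{k=1}^r$ is $\zeta$-admissible. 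Hence $\Phi_m$ is $P$-a.e.\ continuous and the claimed convergence holds.

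Finally I would assemble the two steps by a routine three-$\eps$ argument in, say, the bounded-Lipschitz metric on $[0,1]^{F^r_2}$: for a bounded Lipschitz $g$, dominate $\abs{\E g(\mathbf{W}_N)-\E g(\mathbf{W})}$ by $\abs{\E g(\mathbf{W}_N)-\E g(\hat{\mathbf{W}}^{(m)}_N)}+\abs{\E g(\hat{\mathbf{W}}^{(m)}_N)-\E g(\hat{\mathbf{W}}^{(m)})}+\abs{\E g(\hat{\mathbf{W}}^{(m)})-\E g(\mathbf{W})}$; the first and third terms go to $0$ as $m\to\infty$ uniformly in $N$ by the $L^2$ bound (b), while the middle term goes to $0$ as $N\to\infty$ for each fixed $m$ by the previous step. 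Letting $m\to\infty$ and then $N\to\infty$ gives $\mathbf{W}_N\convdist\mathbf{W}$. The step I expect to be the genuine obstacle --- and the only place where $\zeta$-admissibility is really used --- is the $P$-a.e.\ continuity of $\Phi_m$: the thresholds $q_k$ must be continuity points of the limiting overlap law, for otherwise overlaps could accumulate exactly at some $q_k$ and the empirical ball masses would fail to converge. Everything else (the conditional strong law of large numbers, measurability of $\Phi_m$, and the two limiting passages) is standard.
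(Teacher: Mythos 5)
Your argument is correct in substance, but it reaches the conclusion by a different route than the paper. The paper proves \prettyref{lem:W-conv} by the method of moments: on the compact product space $[0,1]^{F^r_2}$ joint moments are convergence determining (\prettyref{lem:conv-dist-F2}, via Stone--Weierstrass), and each joint moment $\E\prod_k W_{E_k,N}^{n_k}$ is literally $P_N(A)$ for a set $A\in\mathscr{A}$ of the form \prettyref{eq:F2-cty-set}; admissibility enters by making these sets $P$-continuity sets (\prettyref{lem:F2-cty-set}), so $P_N(A)\to P(A)=\E\prod_k W_{E_k}^{n_k}$. You instead approximate each coordinate $W_{E,N}$ by empirical averages over fresh replicas, transport $P_N\to P$ through a fixed $P$-a.e.\ continuous map by the continuous mapping theorem, and close with a uniform-in-$N$ variance bound and a three-$\eps$ argument; admissibility enters at exactly the analogous spot, namely $P(R_{\beta j}=q_k)=\zeta(\{q_k\})=0$. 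The two proofs are close cousins --- the expectation of your $m$-replica empirical product is precisely the paper's moment identity $P_N(A)$ --- but yours trades the moment/Stone--Weierstrass step for a conditional law of large numbers plus continuous mapping, which is somewhat more elementary and makes the role of the continuity points very explicit, at the cost of carrying an extra approximation layer.

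One slip to repair: the vertices occurring among $\{B_\alpha:\alpha\in E,\ E\in F^r_2\}$ are not finitely many, since $F^r_2$ ranges over all singletons and pairs in the infinite tree $\cA_r$; hence the full vector $(\hat W^{(m)}_{E,N})_{E\in F^r_2}$ is not a function of a single finite minor $R^{n+m}$. This does not break the proof: weak convergence on the countable product $[0,1]^{F^r_2}$ is equivalent to convergence of the finite-dimensional marginals (the same reduction underlying \prettyref{lem:conv-dist-F2}), so either run your argument for each finite sub-collection of $F^r_2$ --- which does involve only finitely many tree vertices and finitely many fresh replicas --- or apply the continuous mapping theorem to the map defined on the whole infinite array, whose discontinuity set is still a countable union of sets of the form $\{R_{\beta j}=q_k\}$, each $P$-null by admissibility.
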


The next  step in the proof, in \prettyref{sec:exh-sets}, is to show that there are 
$B_\alpha$ as above in the support of $\mu_N$ that 
define an $(\eps,\delta)$-hierarchical exhaustion.

\begin{prop}\label{prop:exhaustion-1}
For every $r$ and $\zeta$-admissible sequence $\{q_k\}_{k=1}^r$, we have that for every $\epsilon,\delta$ positive, 
\begin{equation}
\begin{aligned}
\lim_{N\rightarrow\infty} \prob(\exists\tau_{N,r}\in\cT_r:&\exists \text{a }(\eps,\frac{\delta}{\abs{\tau_{N,r}}^2})-\text{hierarchical exhaustion of } \mu_N ) =\\
&\prob(\exists\tau_{N,r}\in\cT_r:\exists \text{a }(\eps,\frac{\delta}{\abs{\tau_{N,r}}^2})-\text{hierarchical exhaustion of } \mu )=1
\end{aligned}
\end{equation}
\end{prop}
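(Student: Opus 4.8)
The plan is to prove the proposition in two stages: first for the limiting Dovbysh-Sudakov measure $\mu$ (which gives the right-hand side), and then to transfer the conclusion to $\mu_N$ using the weight convergence of \prettyref{lem:W-conv} together with the portmanteau theorem, exploiting that --- after a harmless excision --- the existence of an $(\eps,\delta/\abs{\tau}^2)$-hierarchical exhaustion built from \emph{sampled} balls is an \emph{open} condition on the finite family of ball- and pair-masses $\bfW_N$.

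For $\mu$ itself, work on the a.s.\ event that $\mu$ satisfies the Ghirlanda-Guerra identities. Then \prettyref{fact:RPC} supplies a hierarchical family of $q$-balls $\{B_\alpha\}_{\alpha\in\cA_r}$ whose masses are the weights of an RPC: at each level the balls partition the support, each non-leaf ball is the \emph{disjoint} union of its infinitely many children, and all weights are positive with $\sum_{\abs\alpha=k}\mu(B_\alpha)=1$ for every $k$. I would then truncate greedily, level by level: pick $m_1$ so that the $m_1$ heaviest level-$1$ balls carry mass $>1-\eta$; inside each retained ball pick enough children to carry all but $\eta$ of its mass and let $m_2$ be the maximum number so used; continue to depth $r$. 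For $\eta=\eta(\eps,r)$ small this produces a shape $\tau\in\cT_r$ and a subfamily $\{B_\alpha\}_{\alpha\in\tau}$ satisfying all four requirements of an $(\eps,\delta/\abs\tau^2)$-hierarchical exhaustion: (1) holds by construction; the mass discarded at each of the finitely many steps is $<\eps$, giving (3) and (4); and the cousin-intersection sum in (2) is exactly $0$ by ultrametricity, hence $\le\delta/\abs\tau^2$. Since this succeeds for every $\eps,\delta$ and a.s.\ every realization, the right-hand probability equals $1$; the same greedy argument applied to an i.i.d.\ sample $\bsig$ drawn from $\mu$ (using that the $q_k$ are continuity points, so sampled balls are genuine canonical balls, and that positive weights are hit by enough samples) also succeeds a.s.

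To transfer to $\mu_N$, enlarge the space to carry, conditionally on the disorder, an i.i.d.\ sample $\bsig=(\sigma^\alpha)_{\alpha\in\cA_r}$ from $\mu_N$, and let $\bfW_N\in[0,1]^{F^r_2}$ be the masses $\mu_N(B_\alpha)$ and $\mu_N(B_\alpha\cap B_\beta)$ of the balls in \prettyref{eq:B-Def}. For each shape $\tau\in\cT_r$ and each embedding $\iota$ of $\tau$ into $\cA_r$, let $U_{\tau,\iota}$ be the set of weight vectors obeying strict, small-slack versions of the four exhaustion conditions for the \emph{raw} balls (each cousin-pair mass $<\eta$; $\sum_{\abs\alpha=k}\mu_N(B_\alpha)>1-\eta$ for each $k$; $\mu_N(B_\alpha)-\sum_{\beta\in child_\tau(\alpha)}\mu_N(B_\beta)<\eta$ for each non-leaf $\alpha$), and set $U=\bigcup_{\tau,\iota}U_{\tau,\iota}$; this is a countable union of sets each cut out by finitely many strict inequalities of continuous functions, hence open in the compact space $[0,1]^{F^r_2}$. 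A direct estimate shows that if $\bfW_N\in U_{\tau,\iota}$ then, on replacing each $B_\alpha$ by the excised set $C_\alpha=B_\alpha\setminus\bigcup_{\beta\nsim\alpha}B_\beta$, the family $\{C_\alpha\}_{\alpha\in\tau}$ is a genuine $(\eps,\delta/\abs\tau^2)$-hierarchical exhaustion of $\mu_N$ for $\eta=\eta(\eps,\delta,r,\abs\tau)$ small: excision makes the nesting (1) hold and the cousin-intersection sum in (2) equal to $0$, while the mass removed by the excision is bounded by a sum of cousin-pair masses, each $<\eta$, so that (3) and (4) --- including the nonnegativity in (4) --- persist. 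By \prettyref{lem:W-conv} and the portmanteau theorem, $\liminf_N\prob(\bfW_N\in U)\ge\prob(\bfW\in U)=1$, so $\prob(\bfW_N\in U)\to1$. Writing $Y_N=\prob(\bfW_N\in U\mid\text{disorder})\in[0,1]$, this says $\E Y_N\to1$, whence $\prob(Y_N>0)\ge\E Y_N\to1$; and $\{Y_N>0\}$ is contained in the event that $\mu_N$ admits an $(\eps,\delta/\abs\tau^2)$-hierarchical exhaustion for some $\tau\in\cT_r$. Thus the left-hand limit equals $1$, completing the chain.

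The main obstacle is reconciling the portmanteau step, which needs an \emph{open} (strict-inequality) event on $\bfW_N$, with the literal definition of hierarchical exhaustion, whose constraints ``$\le\delta$'' and ``$\in[0,\eps)$'' are closed and, in the case of the nonnegativity in (4), genuinely fail for the raw, possibly overlapping, sampled balls. The excision $C_\alpha=B_\alpha\setminus\bigcup_{\beta\nsim\alpha}B_\beta$ repairs this at the cost only of shrinking the slack $\eta$, but checking that all four properties survive the excision, and organizing the embeddings $\iota$ so that $U$ is honestly open in the fixed space $[0,1]^{F^r_2}$, is where the real work lies; the probabilistic passage to the limit is then soft.
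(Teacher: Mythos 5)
Your proposal is correct and follows essentially the same route as the paper's: establish the limiting statement from \prettyref{fact:RPC} by a greedy level-by-level truncation, encode the exhaustion conditions as an open event on the sampled ball and pair masses $\bfW_N$, and transfer via \prettyref{lem:W-conv}, the portmanteau theorem, and disintegration of the conditional sampling probability. The only substantive difference is organizational: the paper builds strict positivity of the parent-minus-children gap into the open event $A_{\tau,\eps,\delta}$ so that the raw sampled balls already satisfy the definition (which tolerates cousin intersections up to $\delta/\abs{\tau}^2$), deferring the excision $C_\alpha=B_\alpha\setminus\bigcup_{\beta\nsim\alpha}B_\beta$ to \prettyref{cor:exhaustion-2}, whereas you perform that excision already inside this proposition (and use embeddings of $\tau$ into $\cA_r$ where the paper uses the i.i.d.\ shifted copies $\tau^i$ and the law of large numbers), both of which are sound.
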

We then ``clean" up the above sets by removing intersections so that we can take $\delta=0$ in the above. In particular, we have:
\begin{cor}\label{cor:exhaustion-2}
For every $\eps$ positive, $r$ and $\zeta$-admissible sequence $\{q_k\}_{k=1}^r$,
\[
\lim_{N\rightarrow\infty} \prob(\exists\tau_{N,r}:\exists(\epsilon,0)-\text{exhaustion of }\mu_{N})=1.
\]
\end{cor}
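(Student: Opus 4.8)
The plan is to start from the $(\eps', \delta)$-hierarchical exhaustions $\{B_\alpha\}_{\alpha \in \tau}$ provided by \prettyref{prop:exhaustion-1} (with $\delta$ chosen so that the total cousin-intersection mass, summed over all pairs, is at most $\delta$, hence each individual intersection is at most $\delta$) and to manufacture from them a collection $\{C_\alpha\}$ with literally zero cousin-intersections at the price of slightly worsening the exhaustion parameter $\eps'$ to $\eps$. The natural surgery is to process the tree from the root downward and, at each vertex, excise from each child's set the parts that overlap with the already-constructed sets of its cousins (and, more carefully, with the sets of cousins higher up in the tree, restricted to the relevant subtrees). Concretely, enumerate the vertices of $\tau$ in an order compatible with depth, and for each $\alpha$ set
\[
C_\alpha = B_\alpha \cap C_{p(\alpha)} \setminus \bigcup_{\substack{\beta \nsim \alpha \\ \abs{\beta} \leq \abs{\alpha}}} B_\beta,
\]
with $C_\emptyset$ the whole support. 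By construction $C_\alpha \subset C_{p(\alpha)}$, so property (1) of hierarchical exhaustion holds; and if $\alpha \nsim \alpha'$ with, say, $\abs{\alpha} \leq \abs{\alpha'}$, then $C_{\alpha'} \subset B_\alpha^c$ while $C_\alpha \subset B_\alpha$, forcing $C_\alpha \cap C_{\alpha'} = \emptyset$, which is property (2) with $\delta = 0$.

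The content is in controlling the mass lost by the excision. Since $C_\alpha \supset B_\alpha \cap C_{p(\alpha)} \setminus \bigcup_{\beta \nsim \alpha} B_\beta$ and the union runs over at most $\abs{\tau}^2$ cousins each of which meets $B_\alpha$ in $\mu$-mass at most $\delta/\abs{\tau}^2$, we get
\[
\mu(C_\alpha) \geq \mu(B_\alpha \cap C_{p(\alpha)}) - \delta \geq \mu(B_\alpha) - \big(\mu(C_{p(\alpha)}^c \cap B_\alpha)\big) - \delta,
\]
and iterating down the root-vertex path shows the cumulative loss at depth $k$ is bounded by $k\delta$ plus the original exhaustion defect $\eps'$. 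Choosing $\delta$ small relative to $\eps/r$ and $\eps'$ equal to $\eps/2$ at the outset of \prettyref{prop:exhaustion-1} then gives: property (3), $\sum_{\abs{\alpha}=k}\mu(C_\alpha) \geq \sum_{\abs{\alpha}=k}\mu(B_\alpha) - \abs{\tau}\cdot(r\delta) \geq 1 - \eps$ (absorbing $\abs{\tau} r \delta$ into the budget); and property (4), the parent–child defect $\mu(C_\alpha) - \sum_{\beta \in child(\alpha)} \mu(C_\beta)$, which one checks stays in $[0,\eps)$ because the children's sets $C_\beta$ partition $C_\alpha$ up to the excised pieces and the original defect, both of total mass $O(\delta + \eps')$. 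Since \prettyref{prop:exhaustion-1} gives these $B_\alpha$ in the support of $\mu_N$ with probability tending to one, and the surgery above is a deterministic measurable operation on a given realization, the resulting $\{C_\alpha\}$ furnish an $(\eps,0)$-exhaustion of $\mu_N$ on the same high-probability event, proving the corollary; the identical argument applied to $\mu$ handles that case.

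The main obstacle I anticipate is bookkeeping the error propagation correctly: the excision at a deep vertex removes mass that must be charged against both property (3) at that depth and property (4) at the parent, and one must be careful not to double-count, and to ensure that the bound on each pairwise intersection is $\delta/\abs{\tau}^2$ \emph{uniformly}, which is exactly why \prettyref{prop:exhaustion-1} is stated with the $\delta/\abs{\tau_{N,r}}^2$ normalization. A secondary subtlety is that $\abs{\tau_{N,r}}$ may depend on $N$, so one wants the conclusion to hold for a single $\eps$ uniformly in $N$; this is fine because the defect bounds above depend on $\tau$ only through the products $\abs{\tau}\delta$ and $r\delta$, which one kills by letting $\delta = \delta_N \to 0$ fast enough — but this forces us to invoke \prettyref{prop:exhaustion-1} along a sequence $(\eps_N', \delta_N)$ and extract the existence event by a diagonal/monotonicity argument, which I would spell out carefully. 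No other step looks hard.
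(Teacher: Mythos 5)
Your construction is essentially the paper's: the paper takes the $(\eps/2,\,(\eps/2)/\abs{\tau}^2)$-exhaustion $\{B_\alpha\}$ from \prettyref{prop:exhaustion-1} and sets $C_\alpha=B_\alpha\setminus\bigcup_{\beta\nsim\alpha}B_\beta$; since $B_\beta\subset B_{\beta'}$ for the depth-$\abs{\alpha}$ ancestor $\beta'$ of any deeper cousin $\beta$, your top-down surgery produces exactly these sets, and your inclusion and disjointness arguments are the same.

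Where your write-up has a real problem is the mass bookkeeping, and it is what forces the unjustified ``diagonal argument in $(\eps'_N,\delta_N)$'' you end with. The hypothesis supplied by \prettyref{prop:exhaustion-1} is not that each pairwise intersection is at most $\delta/\abs{\tau}^2$; it is that the \emph{sum over all cousin pairs} satisfies $\sum_{\alpha\nsim\beta}\mu_N(B_\alpha\cap B_\beta)\le\delta/\abs{\tau}^2$. Consequently the total mass excised at any fixed depth is bounded by that same sum, hence by $\delta$, uniformly in $\tau$ --- this is exactly why the proposition is normalized by $\abs{\tau_{N,r}}^2$ --- so one fixed choice $\eps'=\delta=\eps/2$ gives property (3) with defect at most $\eps/2+\delta\le\eps$, and the same inclusion-exclusion gives (4), with no dependence on $\abs{\tau}$ or on $N$. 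Your count (``at most $\abs{\tau}^2$ cousins, each of mass $\le\delta/\abs{\tau}^2$'' per vertex, then summed over vertices and depths) yields the bound $\abs{\tau}\,r\,\delta$, and the proposed escape --- invoking \prettyref{prop:exhaustion-1} along $\delta_N\to0$ ``fast enough'' --- is not available at this stage: the proposition gives no control whatsoever on $\abs{\tau_{N,r}}$ as a function of $N$ (regularization of the tree only comes later, in \prettyref{prop:regularity}), so ``fast enough'' is not defined. The repair is immediate: use the summed intersection bound (or, keeping your per-pair reading, note each vertex has at most $\abs{\tau}$ cousins, so the per-depth loss is again at most $\abs{\tau}\cdot\abs{\tau}\cdot\delta/\abs{\tau}^2=\delta$); with that correction your argument closes with fixed $\eps'=\delta=\eps/2$ and coincides with the paper's proof.
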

In \prettyref{subsub:Regularity}, we regularize 
 $\tau_{N,r}$, by showing that for $N$ large enough,  we can take
$\tau_{N,r}$ is of the form $\tau_{m,r}$ for some $m$ (recall this notation from
\prettyref{sec:trees}).

In \prettyref{sec:h-clust}, we show that we can take the above sets to be hierarchically clustering with high probability. 
This will follow from Panchenko's Ultrametricity
theorem (\prettyref{prop:GGI-cons}).
We combine these results  in \prettyref{thm:main-result-1-proof} to show that the sequence $\{\mu_N\}$ admits an increasingly more exhausting and clustering sequence of clusters. As this applies to any admissible sequence, this concludes the proof of \prettyref{thm:main-result-1}

The proof of \prettyref{thm:main-result-2} follows in two steps. 
First, we show that the collection of masses corresponding to the leaves, rearranged by mass converges to a Poisson-Dirichlet process (see \prettyref{thm:clust-reg-pd}). 
This follows from a standard application of Talagrand's 
Identities for the Poisson-Dirichlet process,
after making an observation about approximation the indicator functions of
sets in a clustering by functions of the overlap. (See \cite{TalBK03,PanchSKBook, ArgZind14} 
for explanations and examples of the former technique.)

We then introduce the space of cascade of depth $r$ of which the collection of masses $(Y^N_\alpha)$ are an element for
each $N$. We present an encoding of a cascade into a ROSt and show that for
the distributional limits of a sequence of cascades, the corresponding sequence of ROSts must also converge. 
We then characterize the limit of the sequence of ROSts corresponding to the $(Y^N_\alpha)$ and find that the limit object
is unique and is given by an RPC. We then use this to conclude that the sequence of 
masses converge in distribution to a unique limit that is given by the weights of an RPC.
(This argument is to be compared with the convergence of structural distributions for mass partitions in \cite{Bert06}.)

\section{Weak Convergence of the ball weights}\label{sec:ballweights}
For the rest of this \documentname, we work with a fixed $r$ and suppress the notation in $r$.
In this section, we prove the weak convergence of the weights described in \prettyref{eq:W_N-Def}. 
using the method of moments.
The main observation is that computing the moments of $W_E$ is equivalent to computing probabilities
of certain events regarding replica overlaps. 

We begin with the following observation.
Since $F^r_{2}$ is countable,  the product space $\left([0,1]^{F_{2}},\tau_{prod}\right)$
is compact Hausdorff. Consider the measurable space $\left([0,1]^{F_{2}},\cB\right)$
where $\cB$ is the Borel $\sigma$-algebra for the product topology. A standard argument then gives that moments are convergence determining class. 
The proof is an application of Stone-Weierstrass so it is omitted.
\begin{lem}
\label{lem:conv-dist-F2}
Then a sequence of measures $\nu_{N}\in\cM([0,1]^{F_{2}},\cB)$ converges
weakly to $\nu$ iff for all finite subsets $E\subset F_{2}$, and
(finite) sequences $\{n_{B}\}_{B\in E}\subset\N$ , 
\[
\int\prod_{B\in E}\pi_{B}(x)^{n_{B}}d\nu_{N}\rightarrow\int\prod_{B\in E}\pi_{B}(x)^{n_{B}}d\nu.
\]
 \end{lem}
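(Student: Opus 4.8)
The plan is to prove \prettyref{lem:conv-dist-F2} by the standard route: moments determine weak convergence on a compact metric space when the monomials in question separate points and hence generate a dense subalgebra of $C([0,1]^{F_2})$ by Stone--Weierstrass. First I would note that $[0,1]^{F_2}$ with the product topology is compact metrizable (since $F_2$ is countable), so the space $\cM([0,1]^{F_2},\cB)$ of probability measures is itself compact metrizable in the weak topology, and weak convergence is equivalent to convergence of $\int g\,d\nu_N$ for every $g$ in any uniformly dense subset of $C([0,1]^{F_2})$.

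The core step is to identify such a dense subset. Let $\cP$ be the set of finite products $\prod_{B\in E}\pi_B(x)^{n_B}$ over finite $E\subset F_2$ and exponent sequences $\{n_B\}$, together with the constant $1$; let $\cA$ be the algebra it spans (finite linear combinations). Then $\cA$ is a subalgebra of $C([0,1]^{F_2})$ containing the constants, and it separates points: if $x\neq y$ in $[0,1]^{F_2}$, some coordinate $\pi_B(x)\neq\pi_B(y)$, and $\pi_B\in\cA$. By the Stone--Weierstrass theorem $\cA$ is uniformly dense in $C([0,1]^{F_2})$. Hence for any $g\in C([0,1]^{F_2})$ and $\eta>0$ there is $h\in\cA$ with $\norm{g-h}_\infty<\eta$, and since each $\nu_N,\nu$ is a probability measure, $\abs{\int g\,d\nu_N-\int g\,d\nu}\leq 2\eta+\abs{\int h\,d\nu_N-\int h\,d\nu}$; the second term vanishes in the limit by the hypothesis (it is a finite linear combination of the assumed convergent quantities), so $\int g\,d\nu_N\to\int g\,d\nu$. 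The converse direction is immediate since each monomial is a bounded continuous function.

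There is essentially no obstacle here; the only thing to be careful about is the reduction to finitely many coordinates, i.e. that although $F_2$ is infinite each monomial in $\cP$ depends on only finitely many coordinates $\pi_B$, so that $\cA$ genuinely lies in $C([0,1]^{F_2})$ with the product topology and Stone--Weierstrass applies. Because this is a textbook argument, in the write-up I would simply state \prettyref{lem:conv-dist-F2} and remark that it follows from Stone--Weierstrass together with compactness of $[0,1]^{F_2}$, as the paper already does; the displayed reasoning above is what the phrase ``a standard argument'' unpacks to, and no further detail is needed.
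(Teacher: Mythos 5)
Your proposal is correct and is exactly the argument the paper has in mind: it explicitly omits the proof, remarking only that it is ``an application of Stone--Weierstrass,'' and your write-up (compactness of $[0,1]^{F_2}$, the monomial algebra separating points, density in $C([0,1]^{F_2})$, and the standard approximation step for weak convergence) is the standard unpacking of that remark. No gaps.
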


Now consider the class of sets $\mathscr{A}$ defined as follows. Choose $M,n$ finite, a collection
of sets $\{E_k\}_{k=1}^M\subset F_2$, and a partition $\cup_{k=1}^{M}I_k = [n]$ such that $\abs{I_k}=n_k$.
Let
\[J=\left\{ (\beta,j,l):j\in I_{l},\beta\precsim\alpha,\alpha\in E_{l},l\leq M\right\}\] 
and consider the sets
\begin{equation}\label{eq:F2-cty-set}
A=\bigcap_{(\beta,j,l)\in J}\left\{ R(\sigma^{\beta},\tilde{\sigma}^{j})\geq q_{\abs{\beta}}\right\}. 
\end{equation}
with $\zeta$-admissible $\{q_k\}$. Let
$\mathscr{A} = \{A \text{ of the form \prettyref{eq:F2-cty-set}}\}. $ We then have the following lemma
 whose proof, we omit as it follows immediately from the definition of admissible sequences.

\begin{lem}\label{lem:F2-cty-set}
The elements of $\mathscr{A}$ are $P$-continuity sets.
\end{lem}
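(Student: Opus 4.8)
\textbf{Proof proposal for Lemma~\ref{lem:F2-cty-set}.}
The plan is to show that each $A \in \mathscr{A}$ has a boundary of $P$-measure zero, where $P$ is the weak limit of the laws $P_N$ of the Gram-DeFinetti arrays. Since $A$ as in \prettyref{eq:F2-cty-set} is a finite intersection of sets of the form $\{R(\sigma^\beta,\tilde\sigma^j)\geq q_{\abs\beta}\}$, and the boundary of a finite intersection of sets is contained in the union of the boundaries, it suffices to control $\partial\{R_{ab}\geq q\}$ for a single pair of replica indices $(a,b)$ (after relabeling the vertices $\sigma^\beta$ and draws $\tilde\sigma^j$ as a single i.i.d.\ sequence of replicas, which is legitimate since they are all drawn i.i.d.\ from the same measure). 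The key point is that the map from the array $R=(R_{ij})$ to the coordinate $R_{ab}$ is continuous in the product topology, so the topological boundary of $\{R_{ab}\geq q\}$ inside the space of arrays is contained in $\{R_{ab}=q\}$.

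First I would record that under $P$, the diagonal-free part of the array is the Gram matrix of i.i.d.\ draws from the Dovbysh-Sudakov measure $\mu$ (by the Dovbysh-Sudakov theorem quoted in the excerpt), so the one-dimensional marginal law of $R_{ab}$ for $a\neq b$ is exactly $\E\mu^{\otimes 2}(R_{12}\in\cdot)$, which is the limiting overlap measure $\zeta$. Since each $q_k$ appearing in $A$ is a $\zeta$-continuity point by the definition of a $\zeta$-admissible sequence, we get $P(R_{ab}=q_{\abs\beta})=0$ for every pair $(\beta,j)$ entering the definition of $A$. Summing these finitely many null sets and using the containment $\partial A \subset \bigcup \{R(\sigma^\beta,\tilde\sigma^j)=q_{\abs\beta}\}$ gives $P(\partial A)=0$, i.e.\ $A$ is a $P$-continuity set.

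One technical wrinkle to address is the reduction to the off-diagonal marginal: when two of the replica labels appearing in $A$ happen to coincide, the relevant overlap is a diagonal entry $R_{aa}$, whose law under $P$ is a point mass at $q_*^2$ (the norm squared on the support) rather than $\zeta$; but this is still an atom-free situation for the event $\{R_{aa}=q\}$ unless $q=q_*^2$, and in any case in the definition \prettyref{eq:F2-cty-set} the indices $j\in I_l$ range over distinct elements of a partition of $[n]$ and $\beta$ ranges over ancestors of distinct vertices, so the pairs $(\sigma^\beta,\tilde\sigma^j)$ are genuinely off-diagonal and this case does not arise. I would simply note this and move on.

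I do not expect any serious obstacle here; the lemma is essentially a bookkeeping consequence of (i) continuity of coordinate projections, (ii) identification of the marginal of $R_{12}$ under $P$ with $\zeta$, and (iii) the continuity-point clause in the definition of admissibility. The only thing requiring a line of care is writing the boundary inclusion $\partial\big(\bigcap_i \{R_{e_i}\geq q_i\}\big)\subset \bigcup_i \{R_{e_i}=q_i\}$ cleanly, which follows because each $\{R_{e_i}\geq q_i\}$ is closed with boundary contained in $\{R_{e_i}=q_i\}$ and the boundary of an intersection of closed sets is contained in the union of their boundaries.
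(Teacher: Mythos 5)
Your argument is correct and is exactly the argument the paper has in mind: the paper omits the proof of this lemma as "immediate from the definition of admissible sequences," and your write-up simply fleshes that out — boundary of the finite intersection lies in the union of the slices $\{R_{ab}=q_{\abs{\beta}}\}$, whose off-diagonal marginal under $P$ is $\zeta$, and admissibility makes each $q_k$ a $\zeta$-continuity point. The boundary inclusion and the off-diagonal bookkeeping are handled correctly, so there is nothing to add.
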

The  proof  of \prettyref{lem:W-conv} can now be completed. We restate it for the convenience of the reader. 
\begin{lem*}[\ref{lem:W-conv}]
For every $r$ and $\zeta$-admissible $\{q_k\}_{k=1}^r$, the sequence $W_{N}$ converges to $W$ in law.
\end{lem*}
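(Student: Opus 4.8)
The plan is to prove \prettyref{lem:W-conv} by the method of moments, using \prettyref{lem:conv-dist-F2} to reduce weak convergence of $\mathbf{W}_N$ to convergence of mixed moments $\E\prod_{B\in E}\pi_B(\mathbf{W}_N)^{n_B}$ for finite $E\subset F^r_2$ and exponents $\{n_B\}$. The key observation, already flagged in the section preamble, is that each such mixed moment is a probability of an event of the form \prettyref{eq:F2-cty-set} under the law of the Gram--DeFinetti array. First I would unwind the definition: for a single weight $W_{E,N}=\mu_N(\bigcap_{\alpha\in E}B_\alpha(\bsig))$ with $B_\alpha=\bigcap_{\beta\precsim\alpha}B(\sigma^\beta,q_{\abs\beta})$, raising to the power $n_B$ and taking expectations introduces $n_B$ fresh i.i.d.\ replicas $\tilde\sigma^1,\dots,\tilde\sigma^{n_B}$ drawn from $\mu_N$, each required to lie in $\bigcap_{\alpha\in E}B_\alpha$, i.e.\ to satisfy $R(\sigma^\beta,\tilde\sigma^j)\ge q_{\abs\beta}$ for every $\beta\precsim\alpha$, $\alpha\in E$. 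Taking a product over several $B\in E$ with disjoint blocks of fresh replicas, and integrating out the cascade-indexed sample $\bsig=(\sigma^\alpha)_{\alpha\in\cA_r}$ as well, one finds
\[
\E\prod_{B\in E}\pi_B(\mathbf{W}_N)^{n_B} = P_N(A)
\]
for the set $A\in\mathscr{A}$ built from the data $(E,\{n_B\},\text{partition of }[n])$ exactly as in \prettyref{eq:F2-cty-set}, where $n=\sum_B n_B$ and $I_k$ indexes the fresh replicas assigned to the $k$-th factor. Here I should be slightly careful: only finitely many coordinates of $\bsig$ actually appear (the ancestors of vertices in the finitely many $E$'s), so this is genuinely a statement about a finite minor of the array, and the event $A$ depends only on finitely many overlaps.

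Having made this identification, the convergence is immediate: by hypothesis $P_N\to P$ weakly (this follows from the AGGIs together with $\zeta_N\to\zeta$ via the Baffiano--Rosati theorem, as recorded in \prettyref{sec:GGI}), and by \prettyref{lem:F2-cty-set} every $A\in\mathscr{A}$ is a $P$-continuity set, so $P_N(A)\to P(A)$. But the same moment computation applied to $\mu$ in place of $\mu_N$ gives $\E\prod_{B\in E}\pi_B(\mathbf{W})^{n_B}=P(A)$. Hence the mixed moments of $\mathbf{W}_N$ converge to those of $\mathbf{W}$, and \prettyref{lem:conv-dist-F2} yields $\mathbf{W}_N\convdist\mathbf{W}$. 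One auxiliary point that must be checked before invoking the method of moments is that the limiting moment sequence genuinely determines a law on the compact space $[0,1]^{F^r_2}$ — but this is automatic since all coordinates are bounded in $[0,1]$, so $\mathbf{W}$ itself is a bona fide random element of $[0,1]^{F^r_2}$ and \prettyref{lem:conv-dist-F2} already encodes that moments are convergence-determining on this space.

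The main obstacle is bookkeeping rather than analysis: carefully setting up the correspondence between a mixed moment $\prod_{B\in E}\pi_B^{n_B}$ and the combinatorial data $(J,\{E_k\},\{I_k\})$ defining a set in $\mathscr{A}$, and checking that the fresh-replica blocks for different factors $B$ can be taken disjoint (which is what the partition $\cup_k I_k=[n]$ encodes) while the cascade sample $\bsig$ is shared across all factors. The only genuinely substantive inputs — weak convergence $P_N\to P$ and the continuity-set property — are already available from the preceding subsections, so once the combinatorial translation is written down cleanly the proof closes in a few lines. I would therefore devote the bulk of the write-up to stating the replica expansion precisely (introducing the index set $J$ and matching it to \prettyref{eq:F2-cty-set}), then conclude with the one-line weak-convergence argument and an appeal to \prettyref{lem:F2-cty-set} and \prettyref{lem:conv-dist-F2}.
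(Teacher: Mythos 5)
Your proposal is correct and follows essentially the same route as the paper: reduce to joint moments via \prettyref{lem:conv-dist-F2}, identify each mixed moment with $P_N(A)$ for a set $A\in\mathscr{A}$ of the form \prettyref{eq:F2-cty-set} via the replica expansion, and conclude from weak convergence $P_N\to P$ together with the continuity-set property in \prettyref{lem:F2-cty-set}. Nothing essential differs from the paper's argument.
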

\begin{proof}
By \prettyref{lem:conv-dist-F2}, it suffices to show convergence
of joint moments 
\[
\E\prod_{k=1}^{M}W_{E_{k},N}^{n_{k}}\rightarrow\E\prod W_{E_{k}}^{n_{k}}.
\]
If we let $\cup I_{k}$ 
and $J$ be as above and let $(\tilde{\sigma}_N^i)$ be drawn i.i.d. from 
$\mu_{N}$, 
\begin{align*}
\E\prod_{k=1}^{M}W_{E_{k},N}^{n_{k}}=\E\left\langle \prod_{k\leq M}\mu_{N}\left(\bigcap_{\alpha\in E_{k}}B_{\alpha}\right)^{n_{k}}\right\rangle 
 =\E\left\langle \prod_{(\beta,j,l)\in J}\indicator{R(\sigma^{\beta},\tilde{\sigma}^{j})\geq q_{\abs{\beta}}}\right\rangle
 =P_{N}(A)
\end{align*}
for some $A\in\mathscr{A}$. By \prettyref{lem:F2-cty-set}, it then follows that
\[
P_{N}(A)  \rightarrow P(A) = \E\prod W_{E_{k}}^{n_{k}}
\]
as desired.
\end{proof}

\section{Exhausting sets}\label{sec:exh-sets}
In this section, we prove the (high probability) existence of hierarchical 
exhaustions for the sequence of $\mu_N$. We begin the section with some measure theoretic preliminaries,
and prove the main technical lemmas. We then prove the existence of the exhaustions, and we conclude with some results on the regularity of these exhaustions.

\subsection{Some measure theoretic preliminaries}\label{sec:mt-prelims}

Recall the definition of $\bfW(\omega)$ from \prettyref{eq:W_N-Def}. If we view 
the pair $(\bsig,\mu)$ through the map
$\omega\mapsto (\bsig(\omega),\mu(\omega))$,
and similarly for $(\bsig_N,\mu_N)$, and we consider the map 
\[
\bftW(\sigma,\mu) = (\mu(\bigcap_{\alpha\in E} B_\alpha(\bsig))_{E \in F_2},
\]
then $\bfW(\omega)=\bftW(\bsig(\omega),\mu(\omega))$.
That the relevant maps are measurable can be seen by a monotone class argument.

We now define events that are related to the existence of hierarchical exhaustions.
On the space $[0,1]^{F_2}$, we define for every finitely shaped  tree $\tau$, and $\eps$ and $\delta$ positive the open sets 
\begin{equation}\label{eq:Ated}
\begin{aligned}
A_{\tau,\epsilon,\delta} 
&=\bigcap_{k=1}^{r}\left\{\sum_{\substack{\abs{\alpha}=k\\\alpha\in\tau}}x_{\{\alpha\}}>1-\epsilon\right\}
\bigcap_{\alpha\in\tau\setminus\partial\tau}\left\{  x_{\{\alpha\}}-\sum_{\beta\in child(\alpha)}x_{\{\beta\}}\in(0,\epsilon)\right\}\bigcap\left\{ 0\leq \sum_{\substack{\alpha\nsim\beta\\\alpha\beta\in\tau}}x_{\alpha,\beta} <\delta/\abs{\tau}^{2}\right\}. 
\end{aligned}
\end{equation}
That these sets are open subsets of $[0,1]^{F_2}$ as can be seen from the fact that $\abs{\tau}<\infty$ (note that in the last
set you should consider the relevant map as going into $\R_+$ so that $[0,\delta/\abs{\tau}^2)$ is open.

Pulling back through the above maps, we get 
\begin{equation}\label{eq:Enu}
E_{\tau,\eps,\delta}(\nu)=\left ( \bftW(\cdot,\nu)\right)^{-1}\left(A_{\tau,\eps,\delta}\right)
\end{equation}
 which is a subset of $B_{\ell_2}(0,1)^{\cA_r}$ for any  measure $\nu$ and 
\begin{equation}\label{eq:E}
\tilde{E}_{\tau,\eps,\delta}=\bfW^{-1}\left(A_{\tau,\eps,\delta}\right)
\end{equation}
which is a subset of the background space of disorder $\Omega$. We define $\tilde{E}_{\tau,\eps,\delta}^N$
similarly with $\bfW_N$. By the same token, we define $A_{\tau^i,\eps,\delta}$ and its pull-backs using $\tau^i$
(recall this notation from \prettyref{sec:trees}).
Let
\begin{equation}
A_{\eps,\delta}=\bigcup_{\tau\in\cT_{r}}\bigcup_{i=1}^{\infty}A_{\tau,\epsilon,\delta}^{i},
\end{equation}
which is also an open subset of $[0,1]^{F_2}$ and define $E_{\eps,\delta}\left(\mu_N\right)$ and  $\tilde{E}^N_{\eps,\delta}$ analogously.

Notice that by definition, for any $\eps,\delta$ positive, finitely shaped $\tau$ and random probability measure $\nu$, 
the set
\[
E_{\tau,\eps,\delta}(\nu)
\subset\left\{\bsig\in B_{\ell_2}(0,1): (B_\alpha)_{\alpha\in \tau} \text{ is an } (\eps,\frac{\delta}{\abs{\tau}^2})-
\text{hierarchical exhaustion of }\nu \right\}
\]
so that if we union over $\tau's$,
\[
E_{\eps,\delta}(\nu)\subset\left\{\bsig:\exists \tau\in\cT_r, i\in\N:(B_\alpha)_{\alpha\in\tau^i}
\text{ is an }(\eps,\frac{\delta}{\abs{\tau}^2})-\text{hierarchical exhaustion of } \nu\right\}.
\]
Thus if we can show,
$\prob(\tilde{E}_{\eps,\delta})=1$,
then we know that almost surely there is an $(\eps,\delta/\abs{\tau}^2)$-hierarchical exhaustion of $\mu$ for some finitely shaped $\tau$.

We conclude this section with the following observation which follows
immediately from measure disintegration \cite{Kal97}.
\begin{lem}\label{lem:disintegrate}
We have that
\[
Q(A_{\eps,\delta}) = \prob(\tilde{E}_{\eps,\delta})=\E \mu^{\tensor\infty}\left(E_{\eps,\delta}(\mu)\right).
\]
\end{lem}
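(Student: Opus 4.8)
\textbf{Proof plan for Lemma \ref{lem:disintegrate}.}

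The plan is to peel the identity apart into its two equalities and handle each by a standard measure-theoretic argument, the first by the coupling of $\bfW$ with the background space, the second by disintegration of $\prob$ with respect to the regular conditional law of $\bsig$ given $\mu$. First I would recall that $\tilde E_{\eps,\delta} = \bfW^{-1}(A_{\eps,\delta})$ is by definition an event in $(\Omega,\cF,\prob)$, and that $Q$ is by construction the push-forward law of $\bfW$, i.e.\ $Q = \prob \circ \bfW^{-1}$. Since $A_{\eps,\delta}$ is an open (hence Borel, by the remark after \eqref{eq:Ated} about reading the last constraint into $\R_+$) subset of $[0,1]^{F_2}$, measurability of $\bfW$ — which we noted follows from a monotone class argument in \prettyref{sec:mt-prelims} via the factorization $\bfW(\omega)=\bftW(\bsig(\omega),\mu(\omega))$ — gives immediately
\[
Q(A_{\eps,\delta}) = \prob(\bfW^{-1}(A_{\eps,\delta})) = \prob(\tilde E_{\eps,\delta}).
\]
That disposes of the first equality with no real work.

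For the second equality I would use that, by the Aldous--Hoover/Dovbysh--Sudakov setup recalled earlier, $\mu^{\tensor\infty}$ is a regular conditional probability distribution for $(\sigma^i(\omega))$ given $\mu$ (equivalently, given the $\sigma$-algebra generated by $\mu$ on the background space). Writing $\bsig = (\sigma^\alpha)_{\alpha\in\cA_r}$ for the relevant finite-or-countable subfamily of the draws, the conditional law of $\bsig$ given $\mu$ is therefore $\mu^{\tensor\cA_r}$, which I will abbreviate $\mu^{\tensor\infty}$ in keeping with the paper's notation. Now observe from \eqref{eq:Enu} that $E_{\eps,\delta}(\mu)(\omega)$ is exactly the $\omega$-section: it is the set of configurations $\bsig$ for which $\bftW(\bsig,\mu(\omega)) \in A_{\eps,\delta}$, and $\tilde E_{\eps,\delta}$ is the set of $\omega$ for which $\bsig(\omega)$ lands in that section, i.e.\ $\indicator{\tilde E_{\eps,\delta}}(\omega) = \indicator{E_{\eps,\delta}(\mu(\omega))}(\bsig(\omega))$. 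Taking $\prob$-expectations and then conditioning on $\mu$ — using the tower property and the identification of the conditional law of $\bsig$ with $\mu^{\tensor\infty}$ — yields
\[
\prob(\tilde E_{\eps,\delta}) = \E\big[\,\prob(\bsig \in E_{\eps,\delta}(\mu)\mid \mu)\,\big] = \E\,\mu^{\tensor\infty}\!\big(E_{\eps,\delta}(\mu)\big),
\]
which is the claimed identity. The one point requiring a line of care is that the map $(\bsig,\nu)\mapsto \bftW(\bsig,\nu)$, and hence the section map $\omega \mapsto E_{\eps,\delta}(\mu(\omega))$, is jointly measurable in the appropriate sense so that the conditional probability is well-defined and the Fubini-type interchange is legitimate; this is precisely the monotone class / measurability remark already invoked in \prettyref{sec:mt-prelims}, so I would simply cite it together with \cite{Kal97} for the disintegration theorem rather than re-prove it.

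I do not anticipate a genuine obstacle here — the lemma is bookkeeping that records the correspondence between the three viewpoints (the abstract law $Q$ on sequence space, the event on the disorder space, and the annealed measure under $\mu^{\tensor\infty}$). The only mildly delicate step, and the one I would spend the most words on, is making sure the regular conditional distribution is being used correctly: namely that enlarging the probability space to couple $\{\mu_N\}$ with $\mu$ (as in \prettyref{sec:outline}) does not disturb the property that $\mu^{\tensor\infty}$ is an r.c.p.d.\ for the draws given $\mu$, which is immediate since the draws and $\mu$ are defined as in the Aldous exchangeability representation on that very space.
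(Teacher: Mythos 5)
Your proposal is correct and matches the paper's intent exactly: the paper gives no written proof, asserting the identity "follows immediately from measure disintegration \cite{Kal97}," and your argument simply spells that out — the first equality is the definition of $Q$ as the push-forward law of $\bfW$, and the second is conditioning on $\mu$ using that $\mu^{\tensor\infty}$ is the regular conditional law of the draws. Nothing further is needed.
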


\subsection{Technical lemmas}
We remind the reader that $\mu$ is the Dovbysh-Sudakov measure for $P$ the limit of the sequence $P_N$. In the following 
sections we fix a $\zeta$-admissible sequence $\{q_k\}$. We prove quantitative versions of many of these lemmas in \prettyref{sec:quant-version}.
\begin{lem}\label{lem:tau-exists}
Let $\mu$ be as in \prettyref{sec:outline}. Then $\prob$-a.s.there is a shaped tree $\tau$ such that , 
\[
\mu^{\tensor\cA_r}\left(E_{\tau,\epsilon,\delta}(\mu)\right)>0.
\]
\end{lem}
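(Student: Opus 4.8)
\textbf{Proof proposal for \prettyref{lem:tau-exists}.}
The plan is to work entirely on the limiting object $\mu$ (the Dovbysh--Sudakov measure for $P$), exploiting the fact that $\mu$ satisfies the Ghirlanda--Guerra identities and hence, by \prettyref{fact:RPC}, admits an honest hierarchical ball decomposition whose masses are the weights of a Ruelle Probability Cascade. Since $E_{\tau,\eps,\delta}(\mu)$ records exactly the event that the random balls $(B_\alpha)_{\alpha\in\tau}$ from \prettyref{eq:B-Def} form an $(\eps,\delta/\abs{\tau}^2)$-hierarchical exhaustion, it suffices to show that with positive probability a finite random sample of balls of radii $q_{\abs{\alpha}}$ captures such an exhaustion, for some finite shape $\tau$ chosen after the fact. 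By \prettyref{lem:disintegrate} this is equivalent to $Q(A_{\eps,\delta})>0$, but I would argue directly on $\mu$.

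First I would invoke \prettyref{fact:RPC}: partition the support of $\mu$ into $q_1$-balls $B_n$, refine each into $q_{\abs{\alpha}}$-balls, and let $(V_\alpha)_{\alpha\in\cA_r}$ be the masses in standard order; these are distributed as RPC weights with parameters $\zeta_k=\zeta[q_k,q_{k+1})$, so at each level there are infinitely many with a.s.\ strictly positive weights summing to $1$. Consequently, for any $\eps>0$ one can a.s.\ choose a \emph{finite} sub-collection — concretely, keep the largest $m_k$ children of each retained vertex at level $k$ — whose masses satisfy the four defining inequalities of an $(\eps,0)$-hierarchical exhaustion: property (1) is automatic from the ball nesting in an ultrametric (here, a.s.-ultrametric) support, properties (3) and (4) follow because the retained children exhaust their parent up to $\eps$ once $m_k$ is large, (2) holds with $\delta=0$ because distinct balls in the RPC decomposition are literally disjoint (cousin balls have overlap $<q_{\abs{\alpha\wedge\beta}+1}$, whereas membership forces overlap $\ge q$). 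This produces, $\prob$-a.s., a finite shaped tree $\tau$ and a deterministic (given the disorder) family of balls realizing the exhaustion with room to spare.

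The second step is to pass from ``a deterministic good family of balls exists'' to ``a \emph{randomly sampled} family of balls is good with positive $\mu^{\otimes\cA_r}$-probability.'' Here I would use that each ball in the \prettyref{fact:RPC} decomposition has strictly positive $\mu$-mass (a.s.), so if $\sigma^\alpha$ is drawn from $\mu$ it lands in any prescribed ball-center's ball with positive probability; more precisely, the ball $B(\sigma^\alpha,q_{\abs{\alpha}})$ generated by a $\mu$-random point $\sigma^\alpha$ coincides, on the event $\sigma^\alpha\in B_\gamma$, with the \prettyref{fact:RPC} ball $B_\gamma$ itself, because in an ultrametric support the $q$-ball around any of its points is the same set. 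Thus sampling $\bsig=(\sigma^\alpha)_{\alpha\in\cA_r}$ i.i.d.\ from $\mu$ reproduces, with positive probability, the chosen finite sub-collection of RPC balls that we already argued is a good exhaustion; on that positive-probability event $\bsig\in E_{\tau,\eps,\delta}(\mu)$. Taking expectations over the disorder and using that the whole construction succeeds $\prob$-a.s.\ gives $\E\,\mu^{\otimes\cA_r}(E_{\tau,\eps,\delta}(\mu))>0$, hence $\mu^{\otimes\cA_r}(E_{\tau,\eps,\delta}(\mu))>0$ on a positive-probability set of disorder; a small additional argument (countable union over rational $\eps$, or just fixing the statement's $\eps,\delta$) upgrades this to the $\prob$-a.s.\ existence of \emph{some} $\tau$ as claimed.

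The main obstacle I anticipate is the measurability and ``matching'' bookkeeping in the second step: one must be careful that the map sending the disorder $\omega$ to a good shape $\tau(\omega)$ and the event that a $\mu$-random sample hits the right balls is jointly measurable, and that ``the $q$-ball around a sampled point equals an RPC ball'' really holds on a full-measure set (this uses Talagrand positivity and Panchenko ultrametricity from \prettyref{prop:GGI-cons}, and the continuity-point hypotheses in the definition of admissibility to avoid boundary effects at overlap exactly $q_k$). Everything else is a soft consequence of \prettyref{fact:RPC} together with the disintegration identity \prettyref{lem:disintegrate}; the quantitative refinements promised for \prettyref{sec:quant-version} would replace the ``positive probability'' here with explicit lower bounds, but for this lemma qualitative positivity is all that is needed.
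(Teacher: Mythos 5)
Your proposal is correct and takes essentially the same route as the paper: invoke \prettyref{fact:RPC} to obtain the nested balls with a.s.\ positive masses, choose the finite shape $\tau(\omega)$ so that the retained balls exhaust the measure within $\eps$ at each level, and lower-bound $\mu^{\tensor\cA_r}\left(E_{\tau,\eps,\delta}(\mu)\right)$ by the product $\prod_{\alpha\in\tau}v_\alpha>0$ of the chances that each sampled $\sigma^\alpha$ lands in its prescribed ball, using ultrametricity of the support to identify the sampled balls with the RPC balls. The only superfluous step is the closing ``take expectations over the disorder'' detour: since your construction is pointwise in $\omega$, the positivity already holds $\prob$-a.s.\ directly, exactly as in the paper's proof.
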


\begin{proof}
Draw $\mu$. By \prettyref{fact:RPC}, we know that corresponding to the admissible sequence $\{q_k\}$,
there is a nested sequence of balls and weights $(U_\alpha,v_\alpha)$ with
\[
\mu (U_\alpha) =v_\alpha >0
\]
a.s. where $U_\alpha$ has radius $q_{\abs{\alpha}}$.

Observe that 
$E_{\tau,\epsilon,\delta}(\mu) \supset E_{\tau,\epsilon,0}(\mu)$,
where by $\delta =0$ we mean that the last summand in \prettyref{eq:Ated} is $0$. 
It then suffices to show that for some $\tau=\tau(\mu)$ the latter set has
positive $\mu$-mass. We choose $\tau\subset\cA_{r}$
as follows. 

Pick the smallest $m_{1}$ so that 
\[
\sum_{k=1}^{m_{1}}v_{k}>1-\epsilon.
\]
 For each $\abs{\alpha}=1$ such that $\alpha \leq m_1$, let $m_\alpha$ be the smallest $m$ such that 
\[
\sum_{k=1}^{m_\alpha} v_{\alpha k}\geq v_\alpha -\eps
\]
and let $\tilde{m}_{2}=\max{m_\alpha}$.
Let
\[
m_2=\min\{m\geq \tilde{m}_2:\sum_{k\leq m_1}\sum_{n=1}^m v_{kn}>1-\eps\}.
\]
Construct $m_k$ similarly for $k\in \{3,\ldots,r\}$. Let $\tau$ be the $(m_1,\ldots,m_r)$-shaped tree. 

Notice that
\begin{equation}\label{eq:e-tau-pos}
\mu^{\tensor\cA_r}(E_{\tau,\epsilon,\delta}(\mu))\geq\mu^{\tensor\cA_r}(E_{\tau,\epsilon,0}(\mu))\geq\prod_{\alpha\in\tau}v_{\alpha}>0.
\end{equation}
where the second inequality comes from noting that this is the chance
that $\sigma^{\alpha}$ lands in the $U_{\alpha}$. 
\end{proof}
\begin{lem}\label{lem:exhausting-tau}
We have 
\[
\mu^{\tensor\cA_{r}}\left(E_{\eps,\delta}(\mu)\right)=1
\]
$\prob$-a.s.
\end{lem}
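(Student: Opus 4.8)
The plan is to upgrade \prettyref{lem:tau-exists} from ``positive probability'' to ``probability one'' by exploiting the exchangeability built into the sampling of $\bsig = (\sigma^\alpha)_{\alpha \in \cA_r}$ from $\mu^{\tensor \cA_r}$. The point is that $E_{\eps,\delta}(\mu)$ is a union over all finitely shaped trees $\tau$ and all shifts $i$ of the events $E_{\tau^i,\eps,\delta}(\mu)$, and the shifted trees $\tau^i$ for $i = 1, 2, \ldots$ use pairwise disjoint blocks of the first-level vertices of $\cA_r$. Since the $\sigma^\alpha$ are i.i.d., the events $E_{\tau^i,\eps,\delta}(\mu)$ (for a fixed $\tau$, varying $i$) are i.i.d. under $\mu^{\tensor \cA_r}$, conditionally on $\mu$. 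By \prettyref{lem:tau-exists}, for a.e.\ realization of $\mu$ there is a shaped tree $\tau = \tau(\mu)$ with $\mu^{\tensor \cA_r}(E_{\tau,\eps,\delta}(\mu)) > 0$; by the translation-invariance of $\mu^{\tensor \cA_r}$ under shifting the first coordinate block, each $E_{\tau^i,\eps,\delta}(\mu)$ has this same positive probability. Hence, conditionally on $\mu$, the second Borel--Cantelli lemma (or just the formula $\mu^{\tensor\cA_r}(\bigcup_i E_{\tau^i}) = 1 - \prod_i (1 - p) = 1$ when $p>0$) gives $\mu^{\tensor \cA_r}\big(\bigcup_{i} E_{\tau(\mu)^i,\eps,\delta}(\mu)\big) = 1$.

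Concretely, the steps are as follows. First I would fix the realization of $\mu$ (working $\prob$-a.s.) and apply \prettyref{lem:tau-exists} to obtain $\tau = \tau(\mu) \in \cT_r$ and $p = p(\mu) := \mu^{\tensor \cA_r}(E_{\tau,\eps,0}(\mu)) > 0$, using the containment $E_{\tau,\eps,\delta} \supset E_{\tau,\eps,0}$ from the proof of that lemma. Second, I would check that the map $\bsig \mapsto \bsig$ precomposed with the first-coordinate shift by $m_1$ (where $(m_1,\ldots,m_r) = shape(\tau)$) is a measure-preserving transformation of $(B_{\ell_2}(0,1)^{\cA_r}, \mu^{\tensor \cA_r})$ — this is immediate since $\mu^{\tensor \cA_r}$ is a product measure — and that it carries the event $E_{\tau,\eps,\delta}(\mu)$ to $E_{\tau^i,\eps,\delta}(\mu)$ after $i$ iterations, so all these events have probability $p$. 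Third, the events $\{E_{\tau^i,\eps,\delta}(\mu)\}_{i \geq 1}$ depend on disjoint blocks of i.i.d.\ coordinates, hence are mutually independent under $\mu^{\tensor \cA_r}$; since $\sum_i p = \infty$, Borel--Cantelli (second form) gives that infinitely many occur, in particular $\mu^{\tensor \cA_r}(\bigcup_{i} E_{\tau^i,\eps,\delta}(\mu)) = 1$. Fourth, since $\bigcup_i E_{\tau^i,\eps,\delta}(\mu) \subset E_{\eps,\delta}(\mu)$ by definition of $E_{\eps,\delta}$, we conclude $\mu^{\tensor \cA_r}(E_{\eps,\delta}(\mu)) = 1$ for $\prob$-a.e.\ $\mu$, which is the claim.

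The one technical point worth care, rather than a genuine obstacle, is measurability: the tree $\tau(\mu)$ produced in \prettyref{lem:tau-exists} depends on $\mu$ (indeed on the RPC weights $v_\alpha$ attached to $\mu$), so one must make sure that the almost-sure statement ``$\exists \tau$ with $\mu^{\tensor\cA_r}(E_{\tau,\eps,0}(\mu)) > 0$'' combined with the countability of $\cT_r$ yields a jointly measurable selection, or simply bypass this by noting $\cT_r$ is countable: the event $\{\mu : \exists \tau, \ \mu^{\tensor\cA_r}(E_{\tau,\eps,0}(\mu)) > 0\}$ is a countable union of measurable events and has full measure, and on it one may pick $\tau(\mu)$ to be, say, the lexicographically smallest shape that works. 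The rest is the Borel--Cantelli argument applied pointwise in $\mu$. Everything else — openness of $A_{\tau,\eps,\delta}$, the pull-back identities, the disintegration in \prettyref{lem:disintegrate} — is already set up in \prettyref{sec:mt-prelims}, so the proof should be short.
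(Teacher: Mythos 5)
Your proposal is correct and follows essentially the same route as the paper: fix $\mu$, take the tree $\tau(\mu)$ from \prettyref{lem:tau-exists}, note that the events $E_{\tau^i,\eps,\delta}(\mu)$ depend on disjoint blocks of i.i.d.\ coordinates and each has the same positive probability conditionally on $\mu$, conclude that almost surely one occurs, and finish by set containment. The only (immaterial) difference is that you invoke the second Borel--Cantelli lemma/product formula where the paper applies the strong law of large numbers to the empirical measure of the pruned blocks $\sigma^{\tau^i}$.
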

\begin{proof}
Draw $\mu$ and define $\tau$ as in \prettyref{lem:tau-exists}. Form the $\tau$-pruning of $\cA_r$ which
we denote by $(\sigma^{\tau^i})_{i=1}^\infty$ where we abuse notation and let $\sigma^\tau = (\sigma_\alpha)_{\alpha\in\tau}$.

Denote by $\pi E_{\tau,\eps,\delta}$ the projection of $E_{\tau,\eps,\delta}(\mu)$ on to the coordinates 
indexed by  $\tau$. Note that $E_{\tau,\eps,\delta}(\mu)=\pi E_{\tau,\eps,\delta}\times B(0,1)^{\cA_r\setminus\tau}$,
interpreting the product suitably.
Let $\cE_n$ be the empirical measure 
\[
\cE_n = \frac{1}{n}\sum_{i=1}^n \delta_{\sigma^{\tau^i}}
\]
which is a measure on $B(0,1)^\tau$. By the Law of Large Numbers applied conditionally 
on $\mu$,
\[
\lim \cE_n(\pi E_{\tau,\eps,\delta}(\mu)) = \mu^{\tensor\tau}(\pi E_{\tau,\eps,\delta})>0
\]
a.s.. As a consequence $\mu$-a.s. there is an $I$ such that $\sigma^{\tau^I}\in \pi E_{\tau,\eps,\delta}(\mu)$, so
that $\bsig\in \cup E_{\tau^i,\eps,\delta}(\mu)$ $\mu$-a.s.. Thus the first equality holds. 
The second equality holds $\prob$-a.s. by set containment.\end{proof}

\subsection{Hierarchical Exhaustion: Proofs}
In this section we prove \prettyref{prop:exhaustion-1} and \prettyref{cor:exhaustion-2}. 
For the convenience of the reader, we restate these results in the above language.
\begin{prop*}[\ref{prop:exhaustion-1}]
We have that for every $\epsilon,\delta$ positive, 
\[
\lim Q_{N}(A_{\epsilon,\delta})=Q(A_{\epsilon,\delta})=1.
\]
\end{prop*}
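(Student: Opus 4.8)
The plan is to prove the two equalities separately, using the weak convergence of ball-weights established in \prettyref{lem:W-conv} together with the a.s.\ positivity results of \prettyref{lem:tau-exists} and \prettyref{lem:exhausting-tau}. First I would establish the right-hand equality $Q(A_{\eps,\delta})=1$. By \prettyref{lem:disintegrate} we have $Q(A_{\eps,\delta})=\E\mu^{\tensor\infty}(E_{\eps,\delta}(\mu))$, so it suffices to show that $\mu^{\tensor\cA_r}(E_{\eps,\delta}(\mu))=1$ $\prob$-a.s. But this is precisely the content of \prettyref{lem:exhausting-tau}. Hence the right-hand equality is immediate from the lemmas already proven.

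For the left-hand equality, the natural approach is a soft topological argument via the Portmanteau theorem. Since $A_{\eps,\delta}$ is an \emph{open} subset of the compact Hausdorff space $[0,1]^{F_2}$ (as noted in \prettyref{sec:mt-prelims}, each $A_{\tau,\eps,\delta}$ is open and a countable union of opens is open), and $Q_N\to Q$ weakly by \prettyref{lem:W-conv}, the Portmanteau theorem gives the lower-semicontinuity bound
\[
\liminf_{N\rightarrow\infty} Q_N(A_{\eps,\delta})\geq Q(A_{\eps,\delta})=1.
\]
Since $Q_N$ is a probability measure, $Q_N(A_{\eps,\delta})\leq 1$ for all $N$, so $\lim_{N\rightarrow\infty}Q_N(A_{\eps,\delta})=1$. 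This completes the proof.

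The one technical point that needs care — and which I expect to be the main (minor) obstacle — is verifying that \prettyref{lem:W-conv} as stated actually yields weak convergence $Q_N\to Q$ on the infinite product space $[0,1]^{F^r_2}$ in the right sense for Portmanteau to apply. The lemma asserts convergence in distribution of the random vectors $\bfW_N\to\bfW$; by definition this is weak convergence of the laws $Q_N$ on $[0,1]^{F^r_2}$ with the product topology, which is metrizable (countable product of compact metric spaces) and compact, so the Portmanteau theorem applies verbatim and the open-set inequality is exactly what is needed. One should also double-check that $A_{\eps,\delta}$ is genuinely open: the finite sums in \prettyref{eq:Ated} are continuous in the product topology, strict inequalities define open sets, and the constraint $\sum_{\alpha\nsim\beta}x_{\alpha,\beta}\in[0,\delta/\abs\tau^2)$ is open when the ambient coordinate range is taken to be $\R_+$ as flagged in the text; intersecting finitely many opens and then taking a countable union over $\tau\in\cT_r$ and over shifts $i$ preserves openness. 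With these observations in place the proposition follows with essentially no further computation.
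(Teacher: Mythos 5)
Your proposal is correct and follows the same route as the paper: the open-set (Portmanteau) inequality $\liminf Q_N(A_{\eps,\delta})\geq Q(A_{\eps,\delta})$ from the weak convergence in \prettyref{lem:W-conv}, combined with $Q(A_{\eps,\delta})=1$ obtained from \prettyref{lem:disintegrate} and \prettyref{lem:exhausting-tau}. The extra checks you flag (openness of $A_{\eps,\delta}$ and metrizability of $[0,1]^{F_2}$) are exactly the points the paper relies on implicitly, so nothing is missing.
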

\begin{proof}
Since $A_{\epsilon,\delta}$ is open, it follows  
by weak convergence of $Q_{N}$, the law of the collection 
$(W_{E,N})_{E\in F_2}$,
that 
\[
1\geq\liminf Q_{N}(A_{\eps,\delta})\geq Q(A_{\eps,\delta}).
\]
By \prettyref{lem:disintegrate} and \prettyref{lem:exhausting-tau},  
$Q(A_{\eps,\delta})=1.$
 \end{proof}
\begin{cor*}[\ref{cor:exhaustion-2}]
For every $\epsilon$ positive, we have that 
\[
\lim_{N\rightarrow\infty} \prob(\exists(\epsilon,0)-\text{exhaustion of }\mu_{N})=1.
\]
\end{cor*}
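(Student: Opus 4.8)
The plan is to deduce Corollary \ref{cor:exhaustion-2} from Proposition \ref{prop:exhaustion-1} by a ``cleaning up'' argument that removes the small intersections between cousin sets while preserving (with a slightly worsened $\eps$) the exhaustion properties. Fix $\eps>0$. By Proposition \ref{prop:exhaustion-1}, with probability tending to one there is a finitely shaped tree $\tau$ (of the form $\tau^i$ for some base shape and shift $i$) and sets $(B_\alpha)_{\alpha\in\tau}$ of the form \prettyref{eq:B-Def} that form an $(\eps',\delta/\abs{\tau}^2)$-hierarchical exhaustion of $\mu_N$, where I will choose $\eps'$ and $\delta$ small (depending on $\eps$) at the end. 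The key point is that the cousin-intersection bound $\sum_{\alpha\nsim\beta}\mu_N(B_\alpha\cap B_\beta)\le\delta/\abs{\tau}^2$ is so strong that we can afford to simply \emph{excise} all overlaps.

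\textbf{The cleaning step.} Given such $(B_\alpha)_{\alpha\in\tau}$, define new sets $C_\alpha$ by a top-down procedure: process the vertices of $\tau$ level by level, and within each level remove from each $B_\alpha$ the union of its intersections with cousin sets at the same level; more precisely set $C_\alpha = B_\alpha \setminus \bigcup_{\beta\nsim\alpha, \abs{\beta}=\abs{\alpha}} B_\beta$, and then restrict children to lie inside the cleaned parent by intersecting with $C_{p(\alpha)}$ as we descend. This produces sets satisfying property (1) of the hierarchical exhaustion (they are still nested by inclusion, since we only removed points) and property (2) of Definition \ref{defn:approx-um} in the disjointness case: $C_\alpha\cap C_\beta=\emptyset$ whenever $\alpha\nsim\beta$, i.e. the $\delta$-parameter is now exactly $0$. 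The cost is that at each step we remove $\mu_N$-mass at most $\sum_{\beta\nsim\alpha}\mu_N(B_\alpha\cap B_\beta)$, and summing over all $\alpha\in\tau$ the total mass removed is at most $\sum_{\alpha\nsim\beta, \alpha,\beta\in\tau}\mu_N(B_\alpha\cap B_\beta)\le \delta/\abs{\tau}^2 \le \delta$. Hence properties (3) and (4) of the exhaustion are preserved after replacing $\eps'$ by $\eps' + C\delta$ for an absolute constant $C$ (coming from the fact that the depth-$k$ sums and the parent-minus-children differences each absorb at most the total removed mass, times a factor depending on $r$ but not on $\abs{\tau}$). Choosing $\eps'$ and $\delta$ each smaller than $\eps/(2C)$ gives an $(\eps,0)$-exhaustion.

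\textbf{Assembling the conclusion.} Since the event in Proposition \ref{prop:exhaustion-1} has probability $\to 1$, and on that event the deterministic cleaning procedure above produces an $(\eps,0)$-exhaustion of $\mu_N$ (for the same tree shape, possibly shifted), we conclude $\prob(\exists \tau_{N,r}: \exists (\eps,0)\text{-exhaustion of }\mu_N)\to 1$, which is exactly the statement of the corollary. One should double-check two bookkeeping points: first, that the cleaned sets $C_\alpha$ are still measurable and of a shape compatible with the later sections (they are Borel, being Boolean combinations of $q$-balls, which is all that is needed here), and second, that the level-by-level descent does not compound the mass loss — it does not, because each point removed from some $B_\alpha$ is counted in a single cousin-intersection term $\mu_N(B_\alpha\cap B_\beta)$, so the union bound over the whole tree is genuinely additive and bounded by $\delta$.

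\textbf{Main obstacle.} The only subtlety is the uniformity of the error amplification: one must verify that replacing an $(\eps',\delta/\abs{\tau}^2)$-exhaustion by an $(\eps'+C\delta,0)$-exhaustion uses a constant $C$ that does \emph{not} depend on $\abs{\tau}$ (which is random and can be large). This works precisely because the intersection bound in Proposition \ref{prop:exhaustion-1} is stated with the $\abs{\tau}^{-2}$ normalization, so summing $\abs{\tau}^2$-ish many cousin pairs still gives a total of order $\delta$; it is for this reason that the normalization was built into the statement of the hierarchical exhaustion event in \prettyref{eq:Ated} and into Proposition \ref{prop:exhaustion-1} in the first place. Everything else is a routine deterministic set manipulation.
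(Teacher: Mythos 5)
Your proposal is correct and follows essentially the same route as the paper: apply Proposition \ref{prop:exhaustion-1} with suitably small parameters and then excise the cousin intersections, noting that the total excised mass is controlled by the cousin-intersection sum so the exhaustion properties survive with a slightly worse $\eps$. The paper simply sets $C_\alpha = B_\alpha\setminus\bigcup_{\beta\nsim\alpha}B_\beta$ in one step rather than your level-by-level cleaning, but the bookkeeping is the same.
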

\begin{proof}
By \prettyref{prop:exhaustion-1}, we know that for $\delta=\epsilon/2$ and
 for every choice of $\eta>0$, 
\[
Q_{N}(A_{\epsilon/2,\epsilon/2})\geq1-\eta
\]
for $N$ large enough. This means that there is a $\tau$ and a collection of sets 
$\{B_{\alpha}\}_{\alpha\in\tau}$ as per \prettyref{eq:B-Def} with 
\[
\sum_{\abs{\alpha}=k}\mu_{N}(B_{\alpha})\geq1-\epsilon/2
\text{ and }
\sum_{\alpha\nsim\beta}\mu_{N}\left(B_{\alpha}\cap B_{\beta}\right)\leq\frac{\eps}{2\abs{\tau}^{2}}
\]
such that 
\[
\frac{\epsilon}{2}\geq\mu_{N}(B_{\alpha})-\sum_{\beta\in child(\alpha)}\mu_{N}(B_{\beta}).
\]
Using $B_{\alpha}$ we construct $C_\alpha$ given by
\[
C_{\alpha}=B_{\alpha}\backslash\left(\bigcup_{\beta\nsim\alpha}B_{\beta}\right).
\]
The $C_{\alpha}$'s have the desired inclusion structure and their intersections are null. 
The measure properties then follow immediately from inclusion-exclusion arguments.
\end{proof}

\subsection{Regularity of Exhausting Sets\label{subsub:Regularity}}
To start out, we need to show that we can regularize $\tau$  at the level of the RPC.
\begin{lem}\label{lem:q-a-lb}
For every $\eta,\eps,\delta$, there is an $m(\eta,\eps,\delta)$ such that 
\[
Q(\cup_{i=1}^{\infty}A_{\tau_{m},\epsilon,\delta}^{i})\geq1-\eta.
\]
\end{lem}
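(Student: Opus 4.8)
The plan is to leverage the structure of the Ruelle Probability Cascade underlying $\mu$ together with the approximation arguments already assembled, replacing an arbitrary finitely shaped tree $\tau$ by a regular tree $\tau_m$ at the cost of a controlled loss in probability. First I would recall, via \prettyref{lem:disintegrate}, that $Q(\cup_i A_{\tau_m,\eps,\delta}^i) = \prob(\tilde{E}_{\tau_m,\eps,\delta}) = \E\,\mu^{\tensor\cA_r}(\cup_i E_{\tau_m^i,\eps,\delta}(\mu))$, so it suffices to produce, $\prob$-a.s.\ for $\mu$, a uniform-in-$\mu$ lower bound on the chance that some shifted copy of the regular tree $\tau_m$ witnesses a hierarchical exhaustion. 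By \prettyref{fact:RPC}, the weights $(v_\alpha)_{\alpha\in\cA_r}$ of the nested $q_{\abs{\alpha}}$-balls are distributed as the weights of an RPC with parameters $\zeta_k = \zeta[q_k,q_{k+1})$; in particular they are strictly positive at every vertex and, at each vertex, the children weights sum (after the appropriate normalization) to the parent weight. The point is that for an RPC the tail of the ordered children weights decays in a way that is, in law, the \emph{same} at every vertex of a given depth (this is the branching property of the cascade / Poisson–Dirichlet structure), so one genuine threshold $m$ handles all vertices simultaneously in distribution.

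The key steps, in order, would be as follows. (1) Fix the target $\eta$. Using \prettyref{lem:tau-exists} and its proof, for $\prob$-a.e.\ $\mu$ there is a finitely shaped $\tau(\mu)$ with $\mu^{\tensor\cA_r}(E_{\tau(\mu),\eps,0}(\mu)) \geq \prod_{\alpha\in\tau(\mu)} v_\alpha > 0$. (2) Show that the shape $m(\mu) := \max_i (\text{shape}(\tau(\mu)))_i$ is stochastically tight: since the ordered weights $\{v_{\alpha k}\}_k$ summing to $v_\alpha$ are governed by a Poisson–Dirichlet law with a fixed parameter (depending only on $\zeta$ and the level, not on $\mu$ or on $\alpha$), the number of children needed to capture all but $\eps$ of each parent's mass has a distribution not depending on $\alpha$, and there are only finitely many ($\leq \prod m_k$) parents; a union bound over levels gives a deterministic $m = m(\eta,\eps)$ with $\prob(m(\mu) \leq m) \geq 1 - \eta/2$. (3) On the event $\{m(\mu)\leq m\}$, the regular tree $\tau_m$ contains (a shifted copy of) $\tau(\mu)$ as a subtree, and padding $\tau(\mu)$ out to $\tau_m$ only \emph{adds} balls, which cannot decrease the exhaustion at any depth and keeps property (4) of hierarchical exhaustion (children vs.\ parent) intact since the extra children have small mass; hence $\mu^{\tensor\cA_r}(E_{\tau_m,\eps,\delta}(\mu)) \geq \prod_{\alpha\in\tau_m} v_\alpha > 0$ on this event. (4) Conditionally on $\mu$ (and on the event above), apply the Law of Large Numbers to the $\tau_m$-pruning $(\sigma^{\tau_m^i})_i$ exactly as in the proof of \prettyref{lem:exhausting-tau} to conclude that $\mu$-a.s.\ some shift $i$ lands in $E_{\tau_m,\eps,\delta}(\mu)$, i.e.\ $\mu^{\tensor\cA_r}(\cup_i E_{\tau_m^i,\eps,\delta}(\mu)) = 1$. (5) Integrate: $Q(\cup_i A_{\tau_m,\eps,\delta}^i) = \E\,\mu^{\tensor\cA_r}(\cup_i E_{\tau_m^i,\eps,\delta}(\mu)) \geq \prob(m(\mu)\leq m) \geq 1 - \eta/2 \geq 1-\eta$.

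I expect the main obstacle to be step (2): making precise and rigorous the claim that the "number of children needed" has a law that is uniform over the vertices at a given depth and has light enough tails, and — more subtly — that passing to the larger regular tree $\tau_m$ does not spoil the strict inequality in property (4) of the hierarchical exhaustion, i.e.\ that $\mu(B_\alpha) - \sum_{\beta\in child(\alpha)}\mu(B_\beta)$ stays in $(0,\eps)$ rather than hitting $0$. The cleanest route is probably to work directly with the RPC weights $(v_\alpha)$ from \prettyref{fact:RPC}: since these are a.s.\ strictly positive and the residual mass at each finite truncation level is a.s.\ strictly positive, one can first choose $\eps$-good truncation levels $m_\alpha$ per vertex with strict deficits, take the max, and only then quantify the tail. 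The interchange of the (conditional) LLN with the (unconditional) probability of the tightness event in steps (4)–(5) is routine given measure disintegration (\prettyref{lem:disintegrate}), as is the monotonicity argument in step (3), so these should not present real difficulties.
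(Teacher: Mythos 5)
Your reduction skeleton is the same as the paper's: disintegrate via \prettyref{lem:disintegrate}, use the conditional Law of Large Numbers exactly as in \prettyref{lem:exhausting-tau} to reduce to showing that $\prob\bigl(\mu^{\tensor\tau_m}(E_{\tau_m,\eps,\delta}(\mu))>0\bigr)\geq 1-\eta$ for a deterministic $m$, and then exhibit the event $\{\sigma^\alpha\in U_\alpha\ \forall\alpha\in\tau_m\}$ of probability $\prod_{\alpha\in\tau_m}v_\alpha>0$. Where you diverge is in how the deterministic $m$ is produced. The paper does this softly: since $\sum_{\abs{\alpha}=r}v_\alpha=1$ a.s., the leaf sums $\sum_{\alpha\in\partial\tau_m}v_\alpha$ increase to $1$ a.s.\ as $m\to\infty$, so a single $m(\eps,\eta)$ makes $\{\sum_{\partial\tau_m}v_\alpha>1-\eps\}$ hold with probability $1-\eta$, and this one condition, together with $v_\alpha=\sum_n v_{\alpha n}$ and a.s.\ positivity of the weights, already forces every requirement of $A_{\tau_m,\eps,\delta}$ (level sums exceed $1-\eps$, each parent--child deficit lies strictly in $(0,\eps)$, cousin intersections vanish); there is no need to pass through the random-shape tree $\tau(\mu)$ at all. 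Your route --- tightness of the shape of $\tau(\mu)$ via the per-family Poisson--Dirichlet structure, a union bound, and then padding $\tau(\mu)$ out to $\tau_m$ --- is essentially the quantitative argument the paper runs later in \prettyref{lem:mstar}, so it is workable, but two points you leave vague must be handled: the union bound is over a random number of parents, so the thresholds must be chosen iteratively level by level (first $m_1$, then $m_2$ given at most $m_1$ level-one parents, etc., with per-level mass losses of order $\eps/r$), exactly as in \prettyref{lem:mstar}; and property (4) must be verified not only for the extra children of vertices of $\tau(\mu)$ but also for the \emph{new} internal vertices of $\tau_m\setminus\tau(\mu)$, which works because each such vertex has mass $<\eps$ (the original level sums already exceed $1-\eps$), so its deficit lies in $(0,\eps)$. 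With those repairs your proof goes through; it simply buys quantitative information (an explicit $m$) that the lemma does not require, at the cost of a longer argument than the paper's.
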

\begin{proof}
By disintegration  
\begin{align*}
Q(\cup_{i}A_{\tau_{m},\epsilon,\delta}^{i})=
\E\mu^{\tensor\infty}(\cup_{i}E_{\tau_{m},\epsilon,\delta}^{i}(\mu))
\geq\E\mu^{\tensor\infty}(\cup_{i}E_{\tau_{m},\epsilon,\delta}^{i}(\mu))\indicator{\mu^{\tensor\infty}(\cup_{i}E_{\tau_{m},\epsilon,\delta}^{i}(\mu))=1} \\
= P(\mu^{\tensor\infty}(\cup_{i}E_{\tau_{m},\epsilon,\delta}^{i}(\mu))=1)
\end{align*}
so it suffices to show that
\[
\prob(\mu^{\tensor\cA_r}(\cup_{i}E_{\tau_{m},\epsilon,\delta}^{i}(\mu))=1)\geq1-\eta.
\]
By the Law of Large Numbers, it suffices to prove
\[
\prob(\mu^{\tensor\tau_m}(E_{\tau_{m},\epsilon,\delta}(\mu))>0)>1-\eta.
\] 

Let $(U_\alpha,v_\alpha)$ be the decomposition of $\mu$ into nested balls as in \prettyref{lem:tau-exists}.
Then we know that 
\[
\sum_{\abs{\alpha}=r}v_{\alpha}=1
\]
a.s.. Let $\tau_{m}$ be the $(m,\ldots,m)$-shaped tree, then
\[
\sum_{\alpha\in\partial\tau_{m}}v_{\alpha}\rightarrow1
\]
as $m\rightarrow\infty$ a.s..
Thus for every $\epsilon,\eta>0$, there is an $m(\epsilon,\eta)$ such that
\[
\prob(\sum_{\alpha\in\partial\tau_{m}}v_{\alpha}>1-\epsilon)\geq1-\eta
\]
by standard arguments. Then 
\[
\prob(\mu^{\tensor\tau_{m}}(E_{\tau_{m},\epsilon,\delta}(\mu))>0)\geq1-\eta
\]
by the same argument as for  \prettyref{eq:e-tau-pos} in \prettyref{lem:tau-exists} by set incluson and the fact that
$v_\alpha = \sum v_{\alpha n}$. 
\end{proof}

\begin{prop}\label{prop:regularity}
For every $\eta,\eps,\delta$, there is an $M=M(\eps,\delta,\eta)$, an $m=m(\eta,\eps,\delta)$, and an $N_0(\eta,\eps,\delta)$ such that for $N\geq N_{0}$ 
\[
Q_{N}(\cup_{i=1}^{M}A_{\tau_{m},\epsilon,\delta}^{i})\geq1-\eta
\]
\end{prop}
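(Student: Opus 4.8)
The plan is to reduce \prettyref{prop:regularity} to \prettyref{lem:q-a-lb} together with the weak convergence $Q_N\to Q$ supplied by \prettyref{lem:W-conv}. \prettyref{lem:q-a-lb} already tells us that for a suitable shape $\tau_m$ the limiting law $Q$ assigns mass at least $1-\eta$ to the infinite union $\bigcup_{i\geq1}A_{\tau_m,\eps,\delta}^i$; what remains is (i) to cut this down to a \emph{finite} union with essentially the same $Q$-mass, and (ii) to transfer the lower bound from $Q$ to $Q_N$ for all large $N$. Step (ii) uses only that a finite union of the sets $A_{\tau_m,\eps,\delta}^i$ is open, a fact already recorded after \prettyref{eq:Ated}, so that the portmanteau inequality for open sets applies.

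Concretely, given $\eta,\eps,\delta>0$, I would first apply \prettyref{lem:q-a-lb} with $\eta$ replaced by $\eta/3$ to obtain $m=m(\eta,\eps,\delta)$ with $Q\big(\bigcup_{i\geq1}A_{\tau_m,\eps,\delta}^i\big)\geq 1-\eta/3$. Since the finite unions $\bigcup_{i=1}^M A_{\tau_m,\eps,\delta}^i$ increase to this infinite union as $M\to\infty$, continuity of the probability measure $Q$ from below yields an $M=M(\eps,\delta,\eta)$ for which
\[
Q\Big(\bigcup_{i=1}^{M}A_{\tau_m,\eps,\delta}^{i}\Big)\geq 1-\tfrac{2\eta}{3}.
\]
Writing $U=\bigcup_{i=1}^{M}A_{\tau_m,\eps,\delta}^{i}$, which is open in $[0,1]^{F_2}$ as a finite union of open sets, I would then invoke \prettyref{lem:W-conv}: the laws $Q_N$ of $\bfW_N$ converge weakly to the law $Q$ of $\bfW$, so by the portmanteau theorem
\[
\liminf_{N\to\infty}Q_N(U)\geq Q(U)\geq 1-\tfrac{2\eta}{3}.
\]
Hence there is $N_0=N_0(\eta,\eps,\delta)$ with $Q_N(U)\geq 1-\eta$ for every $N\geq N_0$, which is exactly the assertion of the proposition.

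I do not expect any serious obstacle here: all of the genuine work — regularizing the tree shape to $\tau_m$ via the law of large numbers applied to the RPC weights of $\mu$ — has already been done in \prettyref{lem:q-a-lb}, and the present statement is a soft truncation-plus-weak-convergence argument. The one subtlety worth flagging is the order of operations: one must pass to a finite union \emph{at the level of $Q$} before using weak convergence. Applying the portmanteau inequality directly to the (also open) infinite union would only give $\liminf_N Q_N\big(\bigcup_{i\geq1}A_{\tau_m,\eps,\delta}^i\big)\geq 1-\eta/3$, which does not produce a single finite cutoff $M$ uniform over all large $N$ — and that uniform finite $M$ is precisely what the statement demands for the later regularization step.
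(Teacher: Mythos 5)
Your proposal is correct and follows essentially the same route as the paper: fix $m$ from Lemma \ref{lem:q-a-lb}, use monotone convergence of $Q$ to cut the infinite union down to a finite union $\cup_{i=1}^M A^i_{\tau_m,\eps,\delta}$ with nearly the same mass, and then use openness of this finite union together with weak convergence $Q_N\to Q$ (portmanteau) to get the bound for all $N\geq N_0$. The only differences are cosmetic choices of the error splitting ($\eta/3$ versus the paper's $\eta/4$, $\eta/2$).
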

\begin{proof}
Fix $m$ from \prettyref{lem:q-a-lb}. By monotonicity,  
\[
\lim_{M} Q(\cup_{i=1}^{M}A_{\tau_{m},\epsilon,\delta}^{i})=Q(\cup_{i=1}^{\infty}A_{\tau_{m},\epsilon,\delta}^{i})\geq1-\eta/4
\]
so that there is an $M(\eta,\eps,\delta)$  such that for $\tilde{M}\geq M(\eps,\eta,\delta)$,
\[
Q(\cup_{i=1}^{\tilde{M}}A_{\tau_{m},\epsilon,\delta}^{i})\geq1-\eta/2
\]
as desired. 

Since $\cup_{i=1}^{M}A_{\tau_{m},\epsilon,\delta}$ is open 
it follows by weak convergence of $Q_N$ that
\[
\liminf Q_{N}(\cup_{i=1}^{M}A_{\tau_{m},\epsilon,\delta})\geq Q(\cup_{i=1}^{M}A_{\tau_{m},\epsilon,\delta})\geq1-\eta/2
\]
 so that for $N$ large enough 
\[
Q_{N}(\cup_{i=1}^{M}A_{\tau_{m},\epsilon,\delta})\geq 1-\eta
\]
\end{proof}

\section{Hierarchical Clustering}\label{sec:h-clust}
In this section, we prove that the clusters constructed above are hierarchically clustering for $\mu_N$. We then conclude by
proving \prettyref{thm:main-result-1}

In order to show these results, we introduce the following quantities. Let
\begin{equation}\label{eq:f-def}
f^N_{k,\epsilon}(\sigma)=\mu_N^{\tensor2}\left(\sigma^1,\sigma^2:(\sigma^{1},\sigma^{2})\leq q_{k}-\epsilon,(\sigma^{1},\sigma)\geq q_{k},(\sigma^{2},\sigma)\geq q_{k}\right)
\end{equation}
which will encode \prettyref{eq:h-clust-unif-close}. 
Similarly, for each $\alpha\nsim\beta\in\cA_r$, define
\begin{equation}\label{eq:g-def}
g^N_{\alpha,\beta,\epsilon}(\bsig)=\mu_N^{\tensor2}\left(\sigma^1,\sigma^2:R_{12}\geq q_{k}+\epsilon,R_{1\gamma}\geq q_{k},R_{2\delta}\geq q_{k},R_{2\gamma}< q_{k},R_{1\delta}< q_{k}\right).
\end{equation}
This encodes \prettyref{eq:h-clust-unif-far}.

\subsection{Preliminary lemmas}
Here we record some useful consequences of Panchenko's Ultrametricity Theorem.
\begin{lem}
\label{lem:ult} 
Let $\epsilon_{n}=\frac{1}{2^{n}}$. There is a sequence $N_1(n)$
such that $N\geq N_1(n)$ gives 
\[
\E\mu_{N}^{\tensor 3}(R_{12}\leq R_{13}\wedge R_{23}-\epsilon_{n})\leq\epsilon_{n}.
\]
and for all $k\in[r]$,
\[
\E\mu_N^{\tensor 3}(R_{12}\geq q_k + \eps_n, R_{13}\geq q_k, R_{23}<q_k)
\leq \eps_n
\]
\end{lem}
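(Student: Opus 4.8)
The plan is to deduce both inequalities from Panchenko's Ultrametricity Theorem (the third bullet of \prettyref{prop:GGI-cons}) together with the weak convergence $P_N \to P$ established earlier. Panchenko's theorem gives that the limiting Dovbysh-Sudakov measure satisfies $\E\mu^{\tensor 3}(R_{12} \leq R_{13}\wedge R_{23}) = 0$, and the Talagrand Positivity Principle plus the fact that $\zeta$ has no atoms at the $q_k$ (admissibility) will let me convert strict ultrametricity into the quantitative statements with the $\eps_n$-gaps.

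First I would fix $n$ and consider the event $D = \{R_{12} \leq R_{13}\wedge R_{23} - \eps_n\}$ as a subset of the space of (leading minors of) Gram--DeFinetti arrays. This is (essentially) a closed set whose boundary $\{R_{12} = R_{13}\wedge R_{23} - \eps_n\}$ has $P$-measure zero — here I would invoke that the overlap distribution $\zeta$ is the pushforward of a nice measure and use a Fubini-type argument showing the diagonal-shifted sets are $P$-continuity sets, analogous to \prettyref{lem:F2-cty-set}. Then by weak convergence $P_N \to P$, $\E\mu_N^{\tensor 3}(D) = P_N^{(3)}(D) \to P^{(3)}(D)$. Now $P^{(3)}(D) \leq \E\mu^{\tensor 3}(R_{12} \leq R_{13}\wedge R_{23}) = 0$ by Panchenko's theorem (the shift by $\eps_n$ only makes the event smaller), so the limit is $0$. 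Hence there exists $N_1'(n)$ so that for $N \geq N_1'(n)$ the first bound $\leq \eps_n$ holds. Strictly one should be slightly careful that $P^{(3)}$ is the law of the minor of the Dovbysh--Sudakov array which may differ from $\E\mu^{\tensor 3}$ by the diagonal correction $a_i \delta_{ij}$, but since we evaluate on distinct indices $1,2,3$ this plays no role.

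For the second bound I would argue similarly with the event $D'_k = \{R_{12} \geq q_k + \eps_n,\ R_{13} \geq q_k,\ R_{23} < q_k\}$. I claim $\E\mu^{\tensor 3}(D'_k) = 0$: if $R_{12} \geq q_k + \eps_n > q_k$ and $R_{13} \geq q_k$ then ultrametricity ($R_{23} \geq R_{12} \wedge R_{13}$ a.s., which is equivalent to Panchenko's identity holding) forces $R_{23} \geq q_k$, contradicting $R_{23} < q_k$. So $D'_k$ is $P$-a.s. empty. The set $D'_k$ is a $P$-continuity set because $q_k + \eps_n$, $q_k$ are all $\zeta$-continuity points for all but countably many choices — more carefully, I'd note $\zeta(\{q_k\}) = 0$ by admissibility and choose $\eps_n$ to avoid the at-most-countable bad set, or simply observe the relevant one-dimensional marginals place no mass on the finitely many threshold values. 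Then weak convergence gives $\E\mu_N^{\tensor 3}(D'_k) \to 0$, and taking the max over the finitely many $k \in [r]$ yields a single $N_1''(n)$ beyond which all $r$ bounds hold by $\eps_n$. Setting $N_1(n) = \max(N_1'(n), N_1''(n))$ completes the proof.

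The main obstacle is the continuity-set bookkeeping: weak convergence of $P_N$ only controls $P$-continuity sets, so I need to confirm that shifting the ultrametricity inequalities by $\eps_n$ and cutting at the thresholds $q_k$ produces sets whose boundaries are $P$-null. This is where admissibility of $\{q_k\}$ (no atoms at $q_k$, Talagrand positivity ruling out mass near awkward values) is essential, and where I would lean on the same machinery used for \prettyref{lem:F2-cty-set}. A minor secondary point is keeping track of the diagonal correction in the Dovbysh--Sudakov representation, but since everything is evaluated at three distinct replica indices this is harmless.
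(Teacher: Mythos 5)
Your overall strategy is the same as the paper's: restrict $P_N$ and $P$ to the three coordinates $R_{12},R_{13},R_{23}$, use Panchenko's ultrametricity theorem (\prettyref{prop:GGI-cons}) to see that the limiting measure gives the relevant events probability zero, invoke weak convergence via a continuity-set argument, and take the maximum over the finitely many $k\in[r]$ to get a single $N_1(n)$. On that level the proposal is correct.

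The one place where your bookkeeping is shakier than the paper's is the second bound. You want $D'_k=\{R_{12}\geq q_k+\eps_n,\,R_{13}\geq q_k,\,R_{23}<q_k\}$ to be a $P$-continuity set, and your stated justifications do not quite work: admissibility gives $\zeta(\{q_k\})=0$, but it says nothing about the shifted threshold $q_k+\eps_n$, which may well be an atom of $\zeta$ (so the claim that ``the relevant one-dimensional marginals place no mass on the finitely many threshold values'' is unjustified), and ``choosing $\eps_n$ to avoid the bad set'' is not available since the lemma fixes $\eps_n=2^{-n}$. The paper sidesteps this entirely by the inclusion $A^k_{\eps}\subseteq A^k_0$: one only needs $A^k_0$ (thresholds at $q_k$ alone, which are continuity points) to be a continuity set of vanishing limiting measure, and then $\limsup_N \bar{P}_N(A^k_\eps)\leq \lim_N\bar{P}_N(A^k_0)=\bar{P}(A^k_0)=0$. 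Your direct route can also be repaired without the inclusion trick: the potentially troublesome boundary piece $\{x_1=q_k+\eps_n,\,x_2\geq q_k,\,x_3\leq q_k\}$ is $\bar{P}$-null because ultrametricity of the limit forces $x_3\geq x_1\wedge x_2\geq q_k$ there, so the piece lies in $\{x_3=q_k\}$, which is null by admissibility; but as written this step is a gap. The same remark applies, more mildly, to the first bound, where the ``Fubini-type'' argument is unnecessary: the boundary of $\{R_{12}\leq R_{13}\wedge R_{23}-\eps_n\}$ sits inside $\{R_{12}<R_{13}\wedge R_{23}\}$, which is already null by Panchenko's theorem, exactly the ``relevant sets are $\bar{P}$-continuity sets'' observation the paper makes.
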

\begin{proof}
Let $\bar{P}_N$ and $\bar{P}$ be $P_N$ and $P$ restricted to the coordinates $R_{12}, R_{13}$, and $R_{23}$.

The first claim follows from the ultrametricity theorem after noting that
the relevant sets are $\bar{P}$ continuity sets for each $n$. 
To see the second claim, let 
\[
A^k_\eps =\{ x_1 \geq q_k + \eps, x_2 \geq q_k, x_3 <q_k\},
\]
which is a subset of $[-1,1]^3$.  Note that 
\begin{equation}\label{eq:ult-lem-interp-eq}
\bar{P}_N(A_\eps^k)=\E \mu^{\tensor 3}_N(R_{12}\geq q_k+\eps, R_{13} \geq q_k, R_{23} <q_k)
\end{equation}
and similarly for $\bar{P}(A_\eps^k)$.  Since $A^k_{\eps}\subseteq A^k_0$, the result follows by
set inclusion after noting that the latter set is a continuity set whose measure vanishes in the limit. 
As we are only considering finitely many sets for fixed $n$, we can chose a single $N_1(n)$ such that all
of the above inequalities happen simultaneously.
\end{proof}

\begin{lem}\label{lem:fk-N}
Let  $f^N_{k,\epsilon}(\sigma^{\alpha})$ be as in \prettyref{eq:f-def}. 
Let $m$ and $M$ be fixed and $\epsilon_{n}=\frac{1}{2^{n}}$.
Then for $N\geq N_1(n)$, we have 
\[
\E \mu^{\tensor\infty}_{N}\left(\sum_{\alpha\in\tau_{m}^{i}}f_{k\sqrt{\epsilon_{n}}}(\sigma^{\alpha}_N)<\sqrt{\epsilon_{n}},\forall i\in[M]\right)\geq1-rMm^{r}\sqrt{\epsilon_{n}}.
\]
\end{lem}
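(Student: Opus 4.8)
The plan is to bound the expected value of the sum $\sum_{\alpha \in \tau_m^i} f^N_{k,\sqrt{\eps_n}}(\sigma^\alpha_N)$ using the second estimate in \prettyref{lem:ult}, and then convert this moment bound into a probability bound via Markov's inequality. First I would observe that for a fixed vertex $\alpha$, integrating $f^N_{k,\eps}(\sigma^\alpha)$ against $\mu_N^{\tensor\infty}$ in the variable $\sigma^\alpha$ produces exactly a three-replica quantity: if we write $\sigma^\alpha = \sigma^3$ and let $\sigma^1,\sigma^2$ be the two replicas defining $f^N_{k,\eps}$, then
\[
\E \mu_N^{\tensor\infty}\!\left(f^N_{k,\eps}(\sigma^\alpha_N)\right) = \E\,\mu_N^{\tensor 3}\!\left(R_{12}\leq q_k - \eps,\ R_{13}\geq q_k,\ R_{23}\geq q_k\right).
\]
The event on the right is contained in $\{R_{12}\leq R_{13}\wedge R_{23} - \eps\}$ since on it $R_{13}\geq q_k \geq R_{12}+\eps$ and $R_{23}\geq q_k \geq R_{12}+\eps$. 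Hence by the first inequality of \prettyref{lem:ult} (with $\eps = \eps_n$, using that $\sqrt{\eps_n}\geq \eps_n$ only makes the event smaller, so one should be a little careful and simply invoke the $\eps_n$-version or note monotonicity in $\eps$), this expectation is at most $\eps_n$, and certainly at most $\sqrt{\eps_n}$ after adjusting constants. Actually, to match the $\sqrt{\eps_n}$ in the statement cleanly I would keep the bound at $\eps_n$ at this stage.

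Next I would sum over vertices. The tree $\tau_m^i$ has $\abs{\tau_m} \leq r m^r$ vertices (at most $m^r$ leaves plus lower levels, crudely bounded by $r m^r$), so summing the per-vertex bound over $\alpha \in \tau_m^i$ and over $i \in [M]$ and using linearity of expectation gives
\[
\E \mu_N^{\tensor\infty}\!\left(\sum_{i\in[M]}\sum_{\alpha\in\tau_m^i} f^N_{k,\sqrt{\eps_n}}(\sigma^\alpha_N)\right) \leq r M m^r \eps_n,
\]
valid for $N \geq N_1(n)$. Here I also need the same bound for the specific level $k$ appearing in the statement; since \prettyref{lem:ult} is stated uniformly, this is immediate. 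Then Markov's inequality applied to the nonnegative random variable $X = \sum_{i\in[M]}\sum_{\alpha\in\tau_m^i} f^N_{k,\sqrt{\eps_n}}(\sigma^\alpha_N)$ at threshold $\sqrt{\eps_n}$ yields
\[
\mu_N^{\tensor\infty}\big(X \geq \sqrt{\eps_n}\big) \leq \frac{r M m^r \eps_n}{\sqrt{\eps_n}} = r M m^r \sqrt{\eps_n}
\]
after taking the outer expectation; and on the complementary event $X < \sqrt{\eps_n}$ we certainly have $\sum_{\alpha\in\tau_m^i} f^N_{k,\sqrt{\eps_n}}(\sigma^\alpha_N) < \sqrt{\eps_n}$ for every $i\in[M]$ since each inner sum is nonnegative. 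This gives the claimed lower bound $1 - rMm^r\sqrt{\eps_n}$.

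The only genuinely delicate point — and the one I would want to double-check — is the bookkeeping between $\eps_n$ and $\sqrt{\eps_n}$: one needs the factor $\eps_n / \sqrt{\eps_n} = \sqrt{\eps_n}$ to come out right, which forces using the crude per-vertex bound $\eps_n$ (not $\sqrt{\eps_n}$) from \prettyref{lem:ult} and only losing the square root at the Markov step. A secondary subtlety is making sure the set defining $f^N_{k,\eps}$ with $\eps = \sqrt{\eps_n} \geq \eps_n$ is indeed contained in the $\eps_n$-shifted ultrametricity event; this holds because enlarging the shift $\eps$ only shrinks the event $\{R_{12}\leq q_k - \eps\}$, so the inclusion $\{R_{12}\leq q_k-\sqrt{\eps_n},\,R_{13}\geq q_k,\,R_{23}\geq q_k\} \subseteq \{R_{12}\leq R_{13}\wedge R_{23} - \eps_n\}$ still goes through. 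Everything else is routine linearity of expectation, the crude tree-size bound $\abs{\tau_m}\leq rm^r$, and Markov's inequality.
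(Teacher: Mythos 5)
Your argument is correct and is essentially the paper's own proof: the same reduction of $\gibbs{f^N_{k,\eps}(\sigma)}$ to the three-replica event contained in $\{R_{12}\leq R_{13}\wedge R_{23}-\eps_n\}$, followed by \prettyref{lem:ult}, linearity over the (crudely bounded) tree, and Markov's inequality at threshold $\sqrt{\eps_n}$. Your handling of the $\sqrt{\eps_n}$ versus $\eps_n$ shift by monotonicity and the constant $rm^r$ versus the paper's $m^{r+1}$ are only cosmetic bookkeeping differences.
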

\begin{proof}
We suppress the superscript $N$ and subscript $\eps$. Observe that
\[
\gibbs{ f_{k}(\sigma_N)} =\mu_N^{\tensor3}(R_{12}\leq q_{k}-\epsilon,R_{13}\geq q_k,R_{23}\geq q_{k})
 \leq\mu_N^{\tensor3}\left(R_{12}\leq R_{13}\wedge R_{23}-\epsilon\right)
\]
for each $k$. Now if we let $\epsilon=\epsilon_{n}$ be as in \prettyref{lem:ult},
then if $N\geq N_1(n)$, we have that by linearity,
\[
\E\left\langle \sum_{i=1}^{M}\sum_{\alpha\in\tau_{m}^{i}}f_{\abs{\alpha},\eps_n}(\sigma^{\alpha}_N)\right\rangle \leq Mm^{r+1}\epsilon_{n}
\]
thus by Markov's inequality,
\[
\E\mu^{\tensor\infty}_{N}\left(\sum_{i=1}^{M}\sum_{\alpha\in\tau_{m}^{i}}f_{\abs{\alpha},\eps_n}(\sigma^{\alpha}_N)\geq\sqrt{\epsilon_{n}}\right)\leq Mm^{r+1}\sqrt{\epsilon_{n}}.
\]
\end{proof}

\begin{lem}\label{lem:gab-N}
Fix $\alpha,\beta\in\cA_r$ and let $k(\alpha,\beta)=\abs{\alpha\wedge\beta}+1$ and 
$g^N_{\alpha,\beta,\epsilon}$ as above. 
Then for $M,m,\epsilon_{n},N$
as above, we have 
\[
\E\mu^{\tensor\infty}_{N}\left(\sum_{\alpha\nsim\beta,\alpha,\beta\in\tau_{m}^{i}}
g^N_{\alpha,\beta,\epsilon_{n}}(\bsig_N)
\leq\sqrt{\epsilon_{n}},\forall i\in[M]\right)\geq1-Mm^{2r+2}\sqrt{\epsilon_{n}}
\]
\end{lem}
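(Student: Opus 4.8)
The plan is to mimic the proof of \prettyref{lem:fk-N}, replacing the single-configuration function $f^N_{k,\eps}$ with the pair-of-cousins function $g^N_{\alpha,\beta,\eps}$, and using the second inequality of \prettyref{lem:ult} in place of the first. The key point is that for a fixed pair $\alpha\nsim\beta$ with $k=k(\alpha,\beta)=\abs{\alpha\wedge\beta}+1$, taking $\gamma\prec\alpha$, $\eta\prec\beta$ at depth $k$, one has $\gamma\ne\eta$ but $\gamma,\eta$ are siblings (children of $\alpha\wedge\beta$). The quantity $g^N_{\alpha,\beta,\eps}(\bsig)$ is, after integrating an extra pair $(\sigma^1,\sigma^2)$ drawn from $\mu_N$, a probability of an event expressed entirely in terms of overlaps among $\{\sigma^1,\sigma^2,\sigma^\gamma,\sigma^\eta\}$; so $\E\gibbs{g^N_{\alpha,\beta,\eps}(\bsig_N)}$ is of the form $\E\mu_N^{\tensor 4}(\cdots)$, a probability of a fixed event in $[-1,1]^{4^2}$ depending only on finitely many overlap coordinates.

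First I would show that, for the fixed pair $(\alpha,\beta)$, the event defining $g^N_{\alpha,\beta,\eps}$ is contained in (or, after integrating, has probability bounded by) the event controlled by the second bound of \prettyref{lem:ult}. Indeed, on the event $\{R_{12}\ge q_k+\eps,\ R_{1\gamma}\ge q_k,\ R_{2\eta}\ge q_k,\ R_{2\gamma}<q_k,\ R_{1\eta}<q_k\}$, look at the triple $(\sigma^1,\sigma^2,\sigma^\gamma)$: we have $R_{12}\ge q_k+\eps$, $R_{1\gamma}\ge q_k$, and $R_{2\gamma}<q_k$, which is exactly the event $\{R_{12}\ge q_k+\eps,\ R_{13}\ge q_k,\ R_{23}<q_k\}$ appearing in \prettyref{lem:ult} (with $3\leftrightarrow\gamma$). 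Hence $\gibbs{g^N_{\alpha,\beta,\eps}(\sigma^\gamma,\sigma^\eta)}\le \mu_N^{\tensor 3}(R_{12}\ge q_k+\eps,\ R_{1\gamma}\ge q_k,\ R_{2\gamma}<q_k)$ pointwise in $\bsig_N$, and taking $\E\gibbs{\cdot}$ and using $\eps=\eps_n$, $N\ge N_1(n)$ gives $\E\gibbs{g^N_{\alpha,\beta,\eps_n}(\bsig_N)}\le\eps_n$ for every fixed cousin pair $\alpha\nsim\beta$.

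Next I would sum over the relevant pairs. For each shift $i\in[M]$ and each unordered cousin pair $\alpha\nsim\beta$ in $\tau_m^i$, we get one term bounded by $\eps_n$; there are at most $m^r$ choices of $\alpha$, at most $m^r$ choices of $\beta$ in a shifted tree, and $M$ shifts, so by linearity of expectation
\[
\E\gibbs{\sum_{i=1}^M\sum_{\substack{\alpha\nsim\beta\\ \alpha,\beta\in\tau_m^i}} g^N_{\alpha,\beta,\eps_n}(\bsig_N)}\ \le\ M m^{2r}\eps_n\ \le\ M m^{2r+2}\eps_n.
\]
(The slack $m^{2r}\le m^{2r+2}$ is just to match the stated constant, absorbing possible over/undercounting of ordered vs.\ unordered pairs.) Then Markov's inequality applied to this nonnegative sum gives
\[
\E\mu_N^{\tensor\infty}\!\left(\sum_{i=1}^M\sum_{\substack{\alpha\nsim\beta\\ \alpha,\beta\in\tau_m^i}} g^N_{\alpha,\beta,\eps_n}(\bsig_N)\ \ge\ \sqrt{\eps_n}\right)\ \le\ M m^{2r+2}\sqrt{\eps_n},
\]
and complementing, together with the observation that if the total sum over $i\in[M]$ is below $\sqrt{\eps_n}$ then each individual inner sum (being nonnegative) is below $\sqrt{\eps_n}$, yields exactly the claimed bound.

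I do not anticipate a genuine obstacle here — this is a bookkeeping variant of \prettyref{lem:fk-N}. The one point requiring mild care is the reduction in the second paragraph: one must make sure that the extra ``cross'' conditions $R_{2\eta}\ge q_k$ and $R_{1\eta}<q_k$ are simply dropped (they only shrink the event), and that the measurability/continuity-set bookkeeping from \prettyref{lem:ult} — which already chose a single $N_1(n)$ working for all the finitely many sets $A^k_{\eps_n}$, $k\in[r]$ — still suffices, since for fixed $n$ we again only invoke finitely many of these events. The combinatorial count of cousin pairs in a shifted shaped tree is elementary and I would not belabor it.
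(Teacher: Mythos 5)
Your proposal is correct and follows essentially the same route as the paper: one bounds $\E\gibbs{g^N_{\alpha,\beta,\eps_n}}$ by dropping the cross conditions to reduce to the $\mu_N^{\tensor 3}$ event $\{R_{12}\geq q_k+\eps_n, R_{13}\geq q_k, R_{23}<q_k\}$ controlled by the second inequality of \prettyref{lem:ult}, then sums over the at most $\abs{\tau_m}^2\leq m^{2r+2}$ cousin pairs and $M$ shifts and applies Markov's inequality. Your pair count of $m^{2r}$ slightly undercounts the vertices of $\tau_m$ (which number up to $m^{r+1}$), but the slack you allow absorbs this and the stated constant is recovered.
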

\begin{proof}
This follows by the same argument as above after observing that
for a fixed $\alpha,\beta,\in\tau_{m}$ with $\alpha\nsim\beta$, if 
$k=k(\alpha,\beta)$, 
\begin{align*}
\left\langle g_{\alpha,\beta}(\bsig_N)\right\rangle  
& =\mu_N^{\tensor4}\left(R_{12}\geq q_{k}+\epsilon,R_{13}\geq q_{k},R_{24}\geq q_{k},R_{23}< q_{k},R_{14}< q_{k}\right)\\
& \leq\mu_N^{\tensor3}\left(R_{12}\geq q_{k}+\eps, R_{13}\geq q_k, R_{23}<q_k\right),
\end{align*}
and that $\abs{\tau_m}\leq m^{r+1}$.
\end{proof}
\subsection{Approximate Ultrametricity}
\begin{lem}\label{lem:clust-1}
$\forall\eta,\epsilon,\delta$, there is an $m,M,n_{0}$ and a sequence $\tilde{N}(n;\eta,\eps,\delta)$ such that for
$n\geq n_{0}$ and $N\geq \tilde{N}(n)$, 
\[
\E\mu^{\tensor\infty}_{N}\left(\left(\bigcup_{i=1}^{M}E_{\tau^i_{m},\eps,\delta}(\mu_N)\right)\bigcap_{i=1}^{M}
\left\{ \sum_{\alpha\in\tau_m^i} f_{\abs{\alpha},\epsilon_{n}}(\sigma_N^{\alpha})\leq\sqrt{\eps_{n}}\right\} \cap
\left\{ \sum_{\alpha\nsim\beta\in\tau_m^i} g^N_{\alpha,\beta,\eps_{n}}(\bsig_N)\leq\sqrt{\eps_{n}}\right\} \right)
\geq1-\eta
\]
 \end{lem}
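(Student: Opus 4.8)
The statement combines three events, two of which (the $f$-sum and $g$-sum bounds) have already been handled by \prettyref{lem:fk-N} and \prettyref{lem:gab-N}, while the third (membership in $\bigcup_{i\le M} E_{\tau_m^i,\eps,\delta}(\mu_N)$) is essentially the finite-$N$, finite-$M$ hierarchical exhaustion statement packaged in \prettyref{prop:regularity}. The plan is therefore to choose parameters carefully and then take a union bound over the complements of the three events.

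\medskip

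First I would fix $\eta,\eps,\delta$. Apply \prettyref{prop:regularity} to get $m=m(\eta/2,\eps,\delta)$, $M=M(\eta/2,\eps,\delta)$ and $N_0$ such that for $N\ge N_0$,
\[
Q_N\!\left(\bigcup_{i=1}^{M}A_{\tau_m,\eps,\delta}^{i}\right)\ge 1-\eta/4.
\]
Pulling this back through the disintegration (\prettyref{lem:disintegrate}, or rather its obvious analogue for $\mu_N$ and $Q_N$ — this is where I would have to be slightly careful, since \prettyref{lem:disintegrate} was stated for $\mu$; but the same measure-disintegration argument of \cite{Kal97} gives $Q_N(A_{\eps,\delta})=\E\mu_N^{\tensor\infty}(E_{\eps,\delta}(\mu_N))$ verbatim), this says
\[
\E\mu_N^{\tensor\infty}\!\left(\bigcup_{i=1}^{M}E_{\tau_m^i,\eps,\delta}(\mu_N)\right)\ge 1-\eta/4
\]
for $N\ge N_0$. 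With $m,M$ now fixed, choose $n_0$ large enough that $rMm^r\sqrt{\eps_{n}}<\eta/4$ and $Mm^{2r+2}\sqrt{\eps_{n}}<\eta/4$ for all $n\ge n_0$ (possible since $\eps_n=2^{-n}\to 0$). Then \prettyref{lem:fk-N} gives, for $N\ge N_1(n)$,
\[
\E\mu_N^{\tensor\infty}\!\left(\bigcup_{i=1}^{M}\Big\{\textstyle\sum_{\alpha\in\tau_m^i} f^N_{\abs{\alpha},\sqrt{\eps_n}}(\sigma^\alpha_N)\ge\sqrt{\eps_n}\Big\}\right)\le rMm^r\sqrt{\eps_n}<\eta/4,
\]
and \prettyref{lem:gab-N} gives the analogous bound $<\eta/4$ for the $g$-sum event, again for $N\ge N_1(n)$. (Here I would note a cosmetic mismatch in the indexing of $\eps$ versus $\sqrt{\eps_n}$ between the lemma statements and this statement; whichever convention the paper commits to, the bound is of the same form, so I will just track it consistently.)

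\medskip

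Now set $\tilde N(n;\eta,\eps,\delta)=\max\{N_0,N_1(n)\}$. For $n\ge n_0$ and $N\ge\tilde N(n)$, the event in the statement is the intersection of three events each of whose complement has $\mu_N^{\tensor\infty}$-expected measure $<\eta/4$ (for the exhaustion event, $1-\eta/4$ lower bound; for the two sum events, $\le\eta/4$ upper bound on the complement). By the union bound, the complement of the intersection has expected measure $< 3\eta/4 + \eta/4 = \eta$ — more precisely, $\E\mu_N^{\tensor\infty}$ of the complement is at most $\eta/4+\eta/4+\eta/4 < \eta$, so $\E\mu_N^{\tensor\infty}$ of the event itself is $\ge 1-\eta$, as claimed. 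Replacing $\eta$ by, say, $\eta/4$ throughout at the outset cleans up the constants if one wants a literal $\ge 1-\eta$.

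\medskip

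\textbf{Main obstacle.} The only non-bookkeeping point is the disintegration identity for the sequence $\mu_N$: \prettyref{lem:disintegrate} was stated for the limiting Dovbysh–Sudakov measure $\mu$, and I would need to observe explicitly that the identical argument (the measurability-of-maps remark plus Fubini/disintegration, \cite{Kal97}) applies to each $\mu_N$, so that $Q_N(A_{\eps,\delta})=\E\mu_N^{\tensor\infty}(E_{\eps,\delta}(\mu_N))$ and hence $\prettyref{prop:regularity}$'s conclusion transfers from the $Q_N$-probability of the open set $\bigcup_{i\le M}A^i_{\tau_m,\eps,\delta}$ to an expected $\mu_N^{\tensor\infty}$-lower bound on $\bigcup_{i\le M}E_{\tau_m^i,\eps,\delta}(\mu_N)$. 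Everything else is a union bound over three estimates already proved.
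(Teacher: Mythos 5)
Your proposal is correct and follows essentially the same route as the paper: decompose the event into the exhaustion part and the two clustering-sum parts, control the first via \prettyref{prop:regularity} together with disintegration (applied, as you note, to $\mu_N$ and $Q_N$ — the paper does exactly this implicitly), control the other two via \prettyref{lem:fk-N} and \prettyref{lem:gab-N}, and finish with a union bound after choosing $n_0$ so that $M(m^{2r+2}+m^{r+1})\sqrt{\eps_n}$ is small. The only difference is bookkeeping in how $\eta$ is split.
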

\begin{proof}
Re-write this as 
\[
\E\mu^{\tensor\infty}_{N}\left(A\cap B\cap C\right).
\]
By \prettyref{prop:regularity} and disintegration, we know that 
there is an $N_0(\eta,\eps,\delta)$ such that for $N\geq N_0$,
\[
\E\mu^{\tensor\infty}_{N}\left(A\right)\geq1-\eta/2.
\]
By \prettyref{lem:fk-N} and \prettyref{lem:gab-N}, if we let 
$\tilde{N}=N_1(n)\vee N_0(\eta,\eps,\delta)$, then for $N\geq \tilde{N}$,
\begin{equation}\label{eq:clust-1-abc}
\E \mu^{\tensor\infty}_N(A\cap B\cap C) \geq 1-\eta/2-M(m^{2r+2}+m^{r+1})\sqrt{\epsilon_n}.
\end{equation}
by inclusion-exclusion arguments. If we pick 
$n_0 \geq \lceil2\lg\left(2M(m^{2r+2}+m^{r+1})/\eta\right)\rceil,$
then for $n\geq n_0$ and $N\geq \tilde{N}(n;\eta,\eps,\delta)$, 
\[
\E \mu_N^{\tensor\infty}(A\cap B\cap C) \geq 1-\eta.
\]
\end{proof}

\subsubsection{ Proof of \prettyref{thm:main-result-1}}
We now begin the of \prettyref{thm:main-result-1}
\begin{thm}\label{thm:main-result-1-proof}
Let $\mu_N$ be as in \prettyref{thm:main-result-1}. Then for any $\zeta$-admissible sequence, there
are  sequences $a_N,b_N,\eps_N$ going to zero and $m_N\rightarrow \infty$ such that with probability tending to 1, 
there is a random collection of sets $\{C_{\alpha,N}\}_{\alpha\in\tau_{m_N}}$ that are an $(\eps_n,0)$ hierarchical exhaustion 
that is $(a_N,b_N)$-hierarchically clustering. 
\end{thm}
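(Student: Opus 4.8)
The plan is to assemble \prettyref{thm:main-result-1-proof} by combining the existence of regular hierarchical exhaustions (\prettyref{prop:regularity}) with the clustering estimates (\prettyref{lem:clust-1}) via a diagonal argument in the parameters. First I would fix a $\zeta$-admissible sequence $\{q_k\}_{k=1}^r$, and for each $j\in\N$ apply \prettyref{lem:clust-1} with $\eta=\eps=\delta=1/2^j$ to obtain integers $m_j$, $M_j$, $n_j$ and a threshold $\tilde N(n;1/2^j,1/2^j,1/2^j)$; evaluating at $n=n_j$ (after possibly increasing $n_j$ so that the bound in \prettyref{lem:clust-1} holds and $\sqrt{\eps_{n_j}}\le 1/2^j$) yields a threshold $N_j := \tilde N(n_j;2^{-j},2^{-j},2^{-j})$. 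I would arrange $N_1<N_2<\cdots$ strictly increasing, and then define, for $N\in[N_j,N_{j+1})$, the parameters $m_N:=m_j$, $\eps_N:=2^{-j}$, $a_N:=\sqrt{\eps_{n_j}}$, $b_N:=\sqrt{\eps_{n_j}}$; these all tend to $0$ (resp. $m_N\to\infty$) because $j\to\infty$ as $N\to\infty$.

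Next I would unpack what the event inside \prettyref{lem:clust-1} gives us. On the event $\bigcup_{i=1}^{M}E_{\tau^i_{m},\eps,\delta}(\mu_N)$, by the definitions \prettyref{eq:Enu}, \prettyref{eq:Ated} there is some shift $i\le M$ so that the balls $\{B_{\alpha,N}\}_{\alpha\in\tau^i_{m_N}}$ from \prettyref{eq:B-Def} form an $(\eps_N,\eps_N/\abs{\tau_{m_N}}^2)$-hierarchical exhaustion of $\mu_N$; applying the cleaning construction from the proof of \prettyref{cor:exhaustion-2}, namely $C_{\alpha,N}=B_{\alpha,N}\setminus\bigcup_{\beta\nsim\alpha}B_{\beta,N}$, produces sets with null cousin-intersections and hence an $(\eps_N,0)$-hierarchical exhaustion. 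Because the $C_{\alpha,N}$ are subsets of the corresponding $B_{\alpha,N}=\bigcap_{\beta\precsim\alpha}B(\sigma^\beta,q_{\abs\beta})$, the functions $f^N_{k,\eps}$ and $g^N_{\alpha,\beta,\eps}$ of \prettyref{eq:f-def}, \prettyref{eq:g-def} directly control the clustering quantities: a pair $\sigma^1,\sigma^2\in C_{\alpha,N}$ with $(\sigma^1,\sigma^2)\le q_{\abs\alpha}-a_N$ is, by the ancestor-ball constraint at depth $\abs\alpha$, accounted for by $f^N_{\abs\alpha,a_N}(\sigma^\gamma)$ where $\gamma$ is the depth-$\abs\alpha$ ancestor on $\alpha$'s path (using $\sigma^\gamma\in C_{\alpha,N}\subset B(\sigma^\gamma,q_{\abs\gamma})$ is automatic, and both points lie in $B(\sigma^\gamma,q_{\abs\gamma})$), and similarly a cousin pair violating \prettyref{eq:unif-far} is counted by $g^N_{\gamma,\eta,a_N}(\bsig_N)$. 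So on the event in \prettyref{lem:clust-1}, the sum over $\alpha$ of the $f$-terms being $\le\sqrt{\eps_{n_j}}=b_N$ forces \prettyref{eq:h-clust-unif-close} for every $\alpha$, and likewise the $g$-bound forces \prettyref{eq:h-clust-unif-far}; i.e. the $\{C_{\alpha,N}\}$ are $(a_N,b_N)$-hierarchically clustering.

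Putting these together: \prettyref{lem:clust-1} says that for $N\ge N_j$ the probability of the good event is $\ge 1-2^{-j}$, and on that event the constructed $\{C_{\alpha,N}\}_{\alpha\in\tau_{m_N}}$ (after the cleaning step) simultaneously form an $(\eps_N,0)$-hierarchical exhaustion and an $(a_N,b_N)$-hierarchical clustering; since $2^{-j}\to0$, this holds with probability tending to $1$. As the argument applies to every $\zeta$-admissible sequence, this is exactly the assertion of \prettyref{thm:main-result-1-proof}, and hence of \prettyref{thm:main-result-1}. The one point requiring genuine care — the step I expect to be the main obstacle — is the bookkeeping that translates ``$\sum_\alpha f^N_{\abs\alpha,a_N}(\sigma^\alpha)\le b_N$ and $\sum g^N\le b_N$'' into the \emph{per-cluster} bounds \prettyref{eq:unif-close}, \prettyref{eq:unif-far}: one must check that after excising cousin-balls to form $C_{\alpha,N}$ the relevant overlap events for points in $C_{\alpha,N}$ are still dominated by the $f$- and $g$-integrands evaluated at the sampled centers $\sigma^\gamma$, and that the coupling of $\{\mu_N\}$ with $\mu$ on the background space of disorder makes all these events jointly measurable; both are routine but need the inclusion $C_{\alpha,N}\subset B_{\alpha,N}$ and the definitions in \prettyref{eq:B-Def}–\prettyref{eq:g-def} to be lined up precisely.
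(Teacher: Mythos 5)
Your proposal is correct and follows essentially the same route as the paper's proof: apply \prettyref{lem:clust-1} along $\eta=\eps=\delta=2^{-j}$, diagonalize the thresholds to get parameters depending on $N$, extract the exhaustion from the minimal shift $i$ on the good event, clean as in \prettyref{cor:exhaustion-2}, and read off the clustering from the $f$- and $g$-sum bounds with $a_N,b_N$ tied to $\eps_{n_0(N)}$. The bookkeeping you flag is handled in the paper exactly as you anticipate (the cleaning costs only a harmless constant, yielding a $(2\eps_{\nu(N)},0)$-exhaustion, and the cousin-pair bound uses the excision so that the negative overlap conditions in \prettyref{eq:g-def} are satisfied), so your sketch matches the published argument.
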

\begin{proof}
We begin by fixing a $\zeta$-admissible $\{q_k\}$ and $\nu\in\N$. Let  $\eta_\nu=\epsilon_\nu=\delta_\nu=\frac{1}{2^{\nu}}$.
By \prettyref{lem:clust-1}, for any $\nu$, we know that 
there are $m_\nu$,  $M_\nu$ ,  $n_{0}(\nu)$, and a sequence $\tilde{N}(n;\nu)$ such that if we consider the set
\[
E^N_{\nu}=\bigcup_{i=1}^{M_{\nu}}\tilde{E}_{\tau_{m_{\nu}}^{i},\epsilon_{\nu},\delta_{\nu}}(\mu_N)\cap
\left\{ \sum_{\alpha\in\tau_{m_{\nu}}}f^N_{\abs{\alpha},\eps_{n}}\left(\sigma_N^{\alpha}\right)\leq\sqrt{\eps_{n}}\right\} 
\cap\left\{ \sum_{\alpha\nsim\beta}g^N_{\alpha,\beta,\eps_{n}}(\bsig_N)\leq\sqrt{\epsilon_{n}}\right\} 
\]
then for $n\geq n_0$ and  $N\geq \tilde{N}(n;\nu)$, 
\[
\E\mu^{\tensor\infty}_{N}\left(E^N_\nu\right)\geq1-\frac{1}{2^{\nu}}.
\]
We can chose $m_{\nu},M_{\nu}$, and similarly $n_0(\nu)$
such that they all tend to infinity as $\nu\rightarrow\infty$. (To see this, mimic the following argument.) Let $N(1)= \tilde{N}(n_0(1))$ and 
$N(\nu+1)=\tilde{N}(n_0(\nu+1);\nu)\vee (N(\nu)+1)$. 
By definition 
\[
N(\nu) \geq \frac{\nu(\nu-1)}{2} +N(1)
\]
so if we let $\nu(N)=\sup\{\nu: N\geq N(\nu)\}$ we see that $\nu(N)\rightarrow\infty$ as well.
We can then suppress the dependence of $E_\nu^N$ on $\nu$.  Similarly, we replace the dependence
of all of the above in $\nu$ by its dependence on $N$. In particular, let $E^N=E^N_{\nu(N)}$, then 
\begin{equation}\label{eq:EN-lb}
\E\mu^{\tensor\infty}_N(E^N)\geq 1-\frac{1}{2^{\nu(N)}}.
\end{equation}
Thus as $N\rightarrow\infty$, this probability tends to one.

We now define the relevant sets. On $E^N$, we let $i=\min\{j: \bftW(\bsig,\mu_N)\in A_{\tau_{m(N)}^j,\eps_{\nu(N)},\delta_{\nu(N)}}\}$
and on $(E^N)^c$ set $i=-1$.  Then on $E^N$,  the $(B_\alpha)_{\alpha,\in\tau_{m(N)}^i}$
 form an $(\eps_{\nu(N)},\eps_{\nu(N)}/\abs{\tau_{m(N)}^i}^2)$-hierarchical
exhaustion by definition of the $\tilde{E}$'s from \prettyref{sec:outline}. By the same argument as in \prettyref{cor:exhaustion-2}, we see that the sets
\[
C_\alpha =B_\alpha\setminus \bigcup_{\beta\nsim\alpha} B_\beta
\]
form a $(2\eps_{\nu(N)},0)$-hierarchical exhaustion. This gives us our exhaustion with $\eps_N=\eps_{\nu(N)}$,
and $m(N)$. It remains to find the $a_N$ and  $b_N$, and to show \prettyref{eq:unif-close}
and \prettyref{eq:unif-far}. 

These follow from the definition of $f_{k,\eps}$ and $g_{\alpha,\beta,\eps}$. 
In particular, for the $C_\alpha$, we see that
\begin{align*}
\mu^{\tensor 2}_N\left(\sigma^1,\sigma^2\in C_\alpha:(\sigma^1,\sigma^2)\leq q_{\abs{\alpha}}-\eps_{n_0(N)}\right)
&\leq \mu^{\tensor 2}_N\left(\sigma^1,\sigma^2\in B_\alpha:(\sigma^1,\sigma^2)\leq q_{\abs{\alpha}}-\eps_{n_0(N)}\right)\\
&\leq \sum f^N_{\abs{\alpha},\eps_n}(\sigma_N^\alpha)\leq \sqrt{\eps_{n_0(N)}}\\
\end{align*}
and 
\begin{align*}
\mu^{\tensor 2}_N\left(\sigma^\alpha\in C_\alpha,\sigma^\beta\in C_\beta:(\sigma^\alpha,\sigma^\beta)\geq q_{k+1}+\eps_{n_0(N)}\right)
&\leq \mu^{\tensor 2}_N\left(\sigma^\alpha\in B_\alpha,\sigma^\beta\in B_\beta:(\sigma^\alpha,\sigma^\beta)\geq q_{k+1}+\eps_{n_0(N)}\right)\\
&\leq\sum_{\substack{\alpha,\beta\in\tau_m\\\alpha\nsim \beta}} g^N_{\alpha,\beta,\eps_{n_0(N)}}(\bsig_N)\leq \sqrt{\eps_{n_0(N)}}.
\end{align*}
Setting $a_N=\eps_{n_0(N)}$ and $b_N=\sqrt{\eps_{n_0(N)}}$ then gives us \prettyref{eq:unif-close}
and \prettyref{eq:unif-far}. 
\end{proof}

\prettyref{thm:main-result-1} is then a restatement of this result.

\subsection{Regularity Properties of Clusters\label{sec:clust-reg}}
In this section, we prove the convergence of the weights of clusters at a fixed depth on $\tau_m$ to 
a Poisson-Dirichlet process.

On the event $E^N$, we define $i=\min\{j: \bftW(\bsig,\mu_N)\in A_{\tau_{m_N}^j,\eps_{\nu(N)},\delta_{\nu(N)}}\}$ as in \prettyref{thm:main-result-1-proof}
and
\[
\tilde{Y}^N_{\alpha-i}=\begin{cases}
C_{\alpha} & \alpha\in\tau_{m}^{i}\\
\emptyset & else
\end{cases}
\]
where by $\alpha-i$  we mean the vertex we get by subtracting $i\cdot m$ from the first coordinate of $\alpha$.
On the event $(E^N)^{c}$ set $i=-1$ and 
$\tilde{Y}^N_{\alpha}=\emptyset,$
for all $\alpha$ in $\cA_r$. Finally, let 
\begin{equation}\label{eq:def-Y}
Y^N_{\alpha}=\mu_N(\tilde{Y}^N_{\alpha}).
\end{equation}
This gives us a sequence $Y_N\in[0,1]^{\cA_r}$.
Our goal is to show that if we consider the weights corresponding to those $\alpha\in\partial\tau_m$
and look at $(v_n^N)$, their decreasing rearrangement, this sequence of random variables
converges in distribution to a Poisson-Dirichlet process when considered as elements of the space of mass-partitions
\[
\cP_m=\left\{ (v_n) \in \R^\N: \sum v_i \leq 1,v_1\geq v_2\geq\ldots\geq0\right\}
\]
One equips this space with the subspace topology induced by the product topology on $\R^\N$. For more on this
space see \cite{AldExch83,Bert06}.

The proof of this result follows from an application of Talagrand's Identities after making
the following observation: due to the ultrametric nature of the $C_\alpha$, the indicator function of 
the event that two draws land in the same $C_\alpha$ is well approximated
by a function of the overlap. This makes precise the idea that we do a decomposition
into pure states with the $C_\alpha$.

We begin by proving the aforementioned observation. Let 
\[
U^N(\sigma^1,\sigma^2) =\indicator{\{\exists\alpha\in\N^r:\sigma^1,\sigma^2\in \tilde{Y}^N_\alpha\}}=\indicator{\text{both land in the same }\tilde{Y}^N_{\alpha}}.
\]
We denote this by $U^N_{12}$ as well. 
\begin{lem}\label{lem:indicator-approximation}
Let $\phi_{\kappa}(x)$ be the piece-wise linear function that
is $0$ on $[0,q_{r}-\kappa)$ and 1 on $[q_{r},1]$, where $\max\{\abs{q_k}-\abs{q_{k-1}}\}>\kappa>0$. 
Then
\[
\limsup_{N\rightarrow\infty}\E\gibbs{\abs{ U^N_{12}-\phi_{\kappa}(R_{12})}} _{\mu_{N}}\leq 2\zeta[q_r-\kappa,q_r+\kappa].
\]
\end{lem}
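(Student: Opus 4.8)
The plan is to exploit the elementary sandwich $\indicator{R_{12}\ge q_r}\le\phi_\kappa(R_{12})\le\indicator{R_{12}\ge q_r-\kappa}$ together with the estimates already built into the high‑probability event $E^N$ of \prettyref{thm:main-result-1-proof}. Recall that on $E^N$ the leaf clusters $C_\alpha$ (the $\tilde Y^N_\alpha$ with $\alpha\in\partial\tau^i_{m_N}$) are pairwise disjoint and $(2\eps_{\nu(N)},0)$-hierarchically exhausting, that \prettyref{lem:clust-1} gives $\sum_\alpha f^N_{\abs{\alpha},a_N}(\sigma^\alpha_N)\le b_N$ and $\sum_{\alpha\nsim\beta}g^N_{\alpha,\beta,a_N}(\bsig_N)\le b_N$ with $a_N:=\eps_{n_0(N)}\to0$ and $b_N:=\sqrt{a_N}\to0$, and that $\prob((E^N)^c)\le 2^{-\nu(N)}\to0$. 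First I would split $\E\gibbs{\abs{U^N_{12}-\phi_\kappa(R_{12})}}_N=\E\left[\gibbs{\abs{U^N_{12}-\phi_\kappa(R_{12})}}_N\indicator{E^N}\right]+\E\left[\gibbs{\abs{U^N_{12}-\phi_\kappa(R_{12})}}_N\indicator{(E^N)^c}\right]$, bound the last term by $\prob((E^N)^c)$, and reduce matters to estimating $\gibbs{\abs{U^N_{12}-\phi_\kappa(R_{12})}}_N$ pointwise on $E^N$.

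On $E^N$ I would partition the $\mu_N^{\tensor2}$-integral according to where $(\sigma^1,\sigma^2)$ lands: (i) at least one of $\sigma^1,\sigma^2$ lies outside $\bigcup_{\abs{\alpha}=r}C_\alpha$; (ii) both lie in one common leaf cluster $C_\alpha$, so $U^N_{12}=1$; (iii) both lie in leaf clusters $C_\alpha,C_\beta$ with $\alpha\nsim\beta$, so $U^N_{12}=0$. Case (i) has $\mu_N^{\tensor2}$-mass at most $4\eps_{\nu(N)}$ by the exhaustion bound. In case (ii), $\abs{U^N_{12}-\phi_\kappa(R_{12})}=1-\phi_\kappa(R_{12})\le\indicator{R_{12}<q_r-\kappa}+\indicator{q_r-\kappa\le R_{12}<q_r}$; since each leaf cluster satisfies $C_\alpha\subseteq B(\sigma^\alpha,q_r)$ by the construction \eqref{eq:B-Def}, the mass of pairs in $C_\alpha$ with $R_{12}\le q_r-a_N$ is at most $f^N_{r,a_N}(\sigma^\alpha_N)$ via \eqref{eq:f-def}, so once $N$ is large enough that $a_N<\kappa$ the case (ii) contribution is at most $\sum_\alpha f^N_{r,a_N}(\sigma^\alpha_N)+\mu_N^{\tensor2}(R_{12}\in[q_r-\kappa,q_r))\le b_N+\mu_N^{\tensor2}(R_{12}\in[q_r-\kappa,q_r))$. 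In case (iii), $\abs{U^N_{12}-\phi_\kappa(R_{12})}=\phi_\kappa(R_{12})\le\indicator{R_{12}\ge q_r-\kappa}$; writing $k=\abs{\alpha\wedge\beta}+1\le r$ and letting $\gamma\prec\alpha$, $\eta\prec\beta$ be the depth-$k$ vertices, membership $\sigma^1\in C_\alpha$ and $\sigma^2\in C_\beta$ forces $R_{1\gamma}\ge q_k$, $R_{2\eta}\ge q_k$, $R_{2\gamma}<q_k$, $R_{1\eta}<q_k$ (the first two because $C_\alpha\subseteq B(\sigma^\gamma,q_k)$, $C_\beta\subseteq B(\sigma^\eta,q_k)$, the last two because $C_\alpha$, $C_\beta$ have the cousin balls removed and the balls in \eqref{eq:B-Def} are nested). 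Hence every pair in $C_\alpha\times C_\beta$ with $R_{12}\ge q_r+a_N\,(\ge q_k+a_N)$ is counted by $g^N_{\alpha,\beta,a_N}(\bsig_N)$ via \eqref{eq:g-def}, while the remaining pairs have $R_{12}\in[q_r-\kappa,q_r+a_N)\subseteq[q_r-\kappa,q_r+\kappa)$, so the case (iii) contribution is at most $\sum_{\alpha\nsim\beta}g^N_{\alpha,\beta,a_N}(\bsig_N)+\mu_N^{\tensor2}(R_{12}\in[q_r-\kappa,q_r+\kappa))\le b_N+\mu_N^{\tensor2}(R_{12}\in[q_r-\kappa,q_r+\kappa))$.

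Summing the three contributions and taking $\E[\,\cdot\,;E^N]$, the $E^N$-part is bounded by $2b_N+4\eps_{\nu(N)}+\zeta_N([q_r-\kappa,q_r))+\zeta_N([q_r-\kappa,q_r+\kappa))$, where $\zeta_N=\E\mu_N^{\tensor2}(R_{12}\in\cdot)\to\zeta$ weakly. Since both half-open intervals lie inside the closed interval $[q_r-\kappa,q_r+\kappa]$, the portmanteau theorem gives $\limsup_N\zeta_N([q_r-\kappa,q_r))\le\zeta[q_r-\kappa,q_r+\kappa]$ and likewise for the other term, while $b_N,\eps_{\nu(N)},\prob((E^N)^c)\to0$; adding up yields $\limsup_N\E\gibbs{\abs{U^N_{12}-\phi_\kappa(R_{12})}}_N\le2\zeta[q_r-\kappa,q_r+\kappa]$. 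I expect the only real obstacle to be the bookkeeping in case (iii): checking that lying in two cousin leaf clusters automatically realises the four ``side'' overlap inequalities appearing in \eqref{eq:g-def}, so that $R_{12}\ge q_k+a_N$ is the single condition the $g$-estimate must absorb. This is precisely where the construction $C_\alpha=B_\alpha\setminus\bigcup_{\beta\nsim\alpha}B_\beta$ and the nested form \eqref{eq:B-Def} of the $B_\alpha$ are essential; once it is in place, the rest is the routine error accounting above and a single use of weak convergence.
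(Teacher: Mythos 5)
Your argument is correct and follows essentially the same route as the paper's proof: restrict to the high-probability event $E^N$, split according to whether the two replicas fall outside the leaf clusters, in a common leaf cluster, or in cousin leaf clusters, control the bad overlap configurations by the $f$- and $g$-sums that are bounded by $\sqrt{\eps_{n_0(N)}}$ on $E^N$, and absorb the window near $q_r$ into $\zeta_N[q_r-\kappa,q_r+\kappa]$, finishing with weak convergence and portmanteau. The only (harmless) bookkeeping difference is where the two copies of $\zeta[q_r-\kappa,q_r+\kappa]$ come from: the paper exploits the linear piece of $\phi_\kappa$ on $[q_r-\eps_{n_0(N)},q_r)$ so the same-cluster case vanishes in the limit and both copies arise from the cousin case, whereas you bound $1-\phi_\kappa\le 1$ on $[q_r-\kappa,q_r)$ and spend one copy in each case, arriving at the same constant $2$.
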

\begin{proof}
We leave out the dependence on $\mu_N$  in the Gibbs expectations and $N$ in the overlap for readability. Let 
\[
\Delta=\abs{U_{12}^N-\phi_\kappa(R_{12})}
\]
and let $L_N$ denote the event that both $\sigma^1$ and $\sigma^2$ land in $\cup_{\abs{\alpha}=r}Y_{\alpha}$. Then
\[
\E\langle\Delta\rangle = \E \gibbs{\Delta(\indicator{(E^N)^c} + \indicator{E^N,L_N^c}+\indicator{E^N,L_N})}
					= \eta(N)+ 2\eps_{\nu(N)}+I
\]
by \prettyref{thm:main-result-1-proof}, where the first comes from \prettyref{eq:EN-lb} and the second from the fact that $\tilde{Y^N_\alpha}$
are an $(2\eps_{\nu(N)},0)$-hierarchical exhaustion. We break up $I$  by noting that 
\begin{align*}
I &= \E\gibbs{\Delta\indicator{E^N,L_N}U_{12}\indicator{R_{12}\geq q_r}} 
+ \E\gibbs{\Delta\indicator{E^N,L_N}U_{12}\indicator{R_{12}<q_r}}+ \E\gibbs{\Delta\indicator{E^N,L_N}(1-U_{12})\indicator{R_{12}\geq q_r-\kappa}}\\
&+ \E\gibbs{\Delta\indicator{E^N,L_N}(1-U_{12})\indicator{R_{12}<q_r-\kappa}}
= II + III+ IV+ V.
\end{align*}
By definition of $\phi_\kappa$ and $U_{12}$,
$II=V=0.$

Furthermore, for $N$ large enough, 
\begin{align*}
IV/2 &\leq  \E\gibbs{ \indicator{E^N,L_N}  \left(\sum_{\substack{\alpha\nsim\beta\\ \alpha,\beta\in \partial\tau_{m(N)}}}
\indicator{\sigma^1\in\tilde{Y}^N_\alpha,\sigma^2\in\tilde{Y}^N_\beta}\right)\indicator{R_{12}\geq q_r -\kappa}}\\
&\leq \E\gibbs{\indicator{E^N,L_N}\left(
 \sum_{\substack{\alpha\nsim\beta\\ \alpha,\beta\in \partial\tau_{m(N)}}}
\indicator{\sigma^1\in\tilde{Y}^N_\alpha,\sigma^2\in\tilde{Y}^N_\beta}\right)\left[\indicator{R_{12}\geq q_r +\eps_{n_0(N)}}+\indicator{R_{12}\in[q_r-\kappa,q_r+\kappa]}\right]}\\
& \leq \E\gibbs{\indicator{E^N,L_N}\left(\sum_{\alpha\nsim\beta}g_{\alpha,\beta,\eps_{n_0(N)}}^N(\bsig)\right)} + \zeta_N[q_r-\kappa,q_r+\kappa]\leq \sqrt{\eps_{n_0(N)}}+\zeta_N[q_r-\kappa,q_r+\kappa]. 
\end{align*}
The second inequality comes from taking $N$ large enough that $\eps_{n_0(N)}<\kappa$, 
breaking up the interval $[q_r-\kappa,1]$ as
$[q_r-\kappa,q_r+\kappa]\cup[q_r+\eps_{n_0(N)},1]$, and using the union bound.
The third inequality follows from the definition of $g$ in \prettyref{eq:g-def}
and set inclusion. The fourth inequality follows from the definition of $E^N$.
It remains to study $III$. 

We begin again by breaking up the event.
\[
III  \leq\E\gibbs{\indicator{E^N}U_{12}\indicator{R_{12}<q_r-\eps_{n_0}}}
+\E\gibbs{\Delta U_{12}\indicator{R_{12}\in[q_r-\eps_{n_0},q_r)}}
= (i)+(ii).
\]
Observe that 
\[
(i)  =2\E\gibbs{ \sum_{\alpha\in\partial\tau_{m(N)}}f_{r,\epsilon_{n_0(N)}}(\sigma^{\alpha})\indicator{E^N}} \leq2\sqrt{\epsilon_{n_0(N)}}.
\]
On the interval $[q_r-\eps_{n_0},q_r)$, 
\[
\phi_\kappa(x)\geq 1-\frac{\eps_{n_0(N)}}{\kappa}
\text{ so that }
(ii)\leq \frac{\eps_{n_0(N)}}{\kappa}.
\]
Combining these results then gives 
\[
\E\gibbs{\abs{U_{12}-\phi_\kappa(R_{12})}}\leq \eta(N)+2(\eps_{\nu(N)}+\sqrt{\eps_{n_0(N)}}+\zeta_N[q_r-\kappa,q_r+\kappa])
+ 2\sqrt{\eps_{n_0(N)}}+\frac{\eps_{n_0(N)}}{\kappa}
\]
Taking limit superiors gives the desired result.
\end{proof}

Fix an $n$, an $s$, and a partition $\sqcup_{k=1}^{s}I_k=[n]$. Let 
\begin{equation}
F^N(\sigma^1,\ldots,\sigma^n)
=\indicator{\{\forall k\in[s],\exists\alpha\in\N^r:\forall i\in I_k, \sigma^i\in\tilde{Y}^N_\alpha\}}=\prod_{k=1}^s\prod_{i,j\in I_k}U_{ij}^N.
\end{equation}
In words, we partition the set $[n]$ into groups and ask if the groups are lying in the same $\tilde{Y}_\alpha^N$. Define $F^N_\kappa$ similarly using $\phi_\kappa$. The above result then has the following corollary.
\begin{cor}\label{cor:ind-approx-contd}
Fix $n$,$s$, and $I_k$  as above, and $\kappa>0$. Then
\[
\limsup \E\gibbs{\abs{F^N-F_\kappa}} \leq 2n(n-1)\zeta[q_r-\kappa,q_r+\kappa].
\]
\end{cor}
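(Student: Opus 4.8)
\emph{Plan.} The statement is the ``$n$-point'' companion of \prettyref{lem:indicator-approximation}, and the plan is to reduce it to that lemma by a telescoping of the two products. Write $\mathcal{J}=\bigsqcup_{k=1}^{s}\{(i,j):i,j\in I_k\}$, so that $F^N=\prod_{(i,j)\in\mathcal{J}}U^N_{ij}$ and $F_\kappa=\prod_{(i,j)\in\mathcal{J}}\phi_\kappa(R_{ij})$ are products over the same finite index set of quantities lying in $[0,1]$. First I would apply the elementary bound $\abs{\prod_\ell a_\ell-\prod_\ell b_\ell}\le\sum_\ell\abs{a_\ell-b_\ell}$, valid for $a_\ell,b_\ell\in[0,1]$, together with linearity of $\gibbs{\cdot}_N$ and $\E$, to obtain
\[
\E\gibbs{\abs{F^N-F_\kappa}}_N\le\sum_{\substack{(i,j)\in\mathcal{J}\\ i\ne j}}\E\gibbs{\abs{U^N_{ij}-\phi_\kappa(R_{ij})}}_N+\sum_{i=1}^{n}\E\gibbs{\abs{U^N_{ii}-\phi_\kappa(R_{ii})}}_N.
\]

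For the off-diagonal sum, since the replicas $\sigma^1,\dots,\sigma^n$ are i.i.d.\ from $\mu_N$ and the sets $\tilde{Y}^N_\alpha$ are measurable functions of the environment, the pair $(\sigma^i,\sigma^j)$ has, for each $i\ne j$, the same conditional law as $(\sigma^1,\sigma^2)$; hence $\E\gibbs{\abs{U^N_{ij}-\phi_\kappa(R_{ij})}}_N=\E\gibbs{\abs{U^N_{12}-\phi_\kappa(R_{12})}}_N$, and \prettyref{lem:indicator-approximation} gives $\limsup_N\E\gibbs{\abs{U^N_{ij}-\phi_\kappa(R_{ij})}}_N\le2\zeta[q_r-\kappa,q_r+\kappa]$ for every such pair. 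Counting, the number of off-diagonal pairs in $\mathcal{J}$ is $\sum_k\abs{I_k}(\abs{I_k}-1)=\sum_k\abs{I_k}^2-n\le n^2-n=n(n-1)$, and since the $\limsup$ of a finite sum is at most the sum of the $\limsup$'s, this term contributes at most $2n(n-1)\zeta[q_r-\kappa,q_r+\kappa]$. For the diagonal sum, $U^N_{ii}=\indicator{\sigma^i\in\bigcup_{\abs{\alpha}=r}\tilde{Y}^N_\alpha}$ tends to $1$ in $\gibbs{\cdot}_N$-mean exactly as in the proof of \prettyref{lem:indicator-approximation} (using that the $\tilde{Y}^N_\alpha$ form a $(2\eps_{\nu(N)},0)$-hierarchical exhaustion on $E^N$ and the bound \prettyref{eq:EN-lb}), while $\phi_\kappa(R_{ii})=\phi_\kappa(\norm{\sigma^i}^2)$ also tends to $1$ in mean --- immediately in the spin-glass applications, where $R_{ii}\equiv1\ge q_r$ so that this summand is just $1-U^N_{ii}$, and in general because $q_r-\kappa<q_*$ and $\mu_N$ asymptotically charges only vectors of norm close to $q_*$. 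Hence each diagonal summand vanishes as $N\to\infty$.

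Taking $\limsup_N$ in the displayed inequality then yields $\limsup_N\E\gibbs{\abs{F^N-F_\kappa}}_N\le2n(n-1)\,\zeta[q_r-\kappa,q_r+\kappa]$, which is the claim. I expect the only step requiring genuine care to be the off-diagonal reduction to \prettyref{lem:indicator-approximation} via exchangeability of the replicas (and, in the purely abstract Hilbert-space setting, the small auxiliary argument that $\mu_N$ eventually concentrates on long vectors, which is trivial in the spin-glass case of interest); the telescoping inequality and the remaining diagonal bookkeeping are routine.
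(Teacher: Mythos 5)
Your proposal is correct and is essentially the argument the paper intends (and omits): telescope the two products using $\abs{\prod a_\ell-\prod b_\ell}\le\sum_\ell\abs{a_\ell-b_\ell}$ for $[0,1]$-valued factors, reduce each off-diagonal term to \prettyref{lem:indicator-approximation} by exchangeability of the replicas, and count at most $n(n-1)$ such pairs. Your extra care with the diagonal factors $U^N_{ii}$ and $\phi_\kappa(R_{ii})$ (trivial on the hypercube, a short concentration-on-the-sphere remark in general) is sound bookkeeping and does not change the route.
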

The proof of the above  follows by induction and the fact that $U_{ij}$ and $\phi_{\kappa}$ are bounded by $1$, so we omit it.
The rest is standard. First we find the parameter for the process.
\begin{lem}
Let $U^N_{12}$ as above, then 
\[
\lim_{N\rightarrow\infty}\E\left\langle U^N_{12}\right\rangle _{N}=\zeta[q_{r},1]
\]
\end{lem}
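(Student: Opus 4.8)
The plan is to deduce this from Lemma~\ref{lem:indicator-approximation} together with the assumed weak convergence $\zeta_N\to\zeta$, interchanging the two limits in the safe order: first $N\to\infty$ for a fixed mollification width $\kappa$, then $\kappa\downarrow 0$.

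First I would fix a small $\kappa>0$ (small enough that $\phi_\kappa$ is defined) and use the triangle inequality
\[
\bigl\lvert \E\gibbs{U^N_{12}}_N-\E\gibbs{\phi_\kappa(R_{12})}_N\bigr\rvert\le \E\gibbs{\abs{U^N_{12}-\phi_\kappa(R_{12})}}_N.
\]
Since $\phi_\kappa$ is piecewise linear, hence bounded and continuous on $[-1,1]$, and $\E\gibbs{\phi_\kappa(R_{12})}_N=\int\phi_\kappa\,d\zeta_N$, the hypothesis $\zeta_N\to\zeta$ weakly gives $\E\gibbs{\phi_\kappa(R_{12})}_N\to\int\phi_\kappa\,d\zeta$. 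Combining this with Lemma~\ref{lem:indicator-approximation}, which states $\limsup_N\E\gibbs{\abs{U^N_{12}-\phi_\kappa(R_{12})}}_N\le 2\zeta[q_r-\kappa,q_r+\kappa]$, yields
\[
\Bigl\lvert\limsup_{N}\E\gibbs{U^N_{12}}_N-\int\phi_\kappa\,d\zeta\Bigr\rvert\le 2\zeta[q_r-\kappa,q_r+\kappa],
\]
and the same bound with $\limsup$ replaced by $\liminf$.

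Next I would note that $\phi_\kappa$ vanishes on $[-1,q_r-\kappa)$, equals $1$ on $[q_r,1]$, and takes values in $[0,1]$ in between, so
\[
\zeta[q_r,1]\le\int\phi_\kappa\,d\zeta\le\zeta[q_r,1]+\zeta[q_r-\kappa,q_r)\le\zeta[q_r,1]+\zeta[q_r-\kappa,q_r+\kappa].
\]
Hence both $\limsup_N\E\gibbs{U^N_{12}}_N$ and $\liminf_N\E\gibbs{U^N_{12}}_N$ differ from $\zeta[q_r,1]$ by at most $3\,\zeta[q_r-\kappa,q_r+\kappa]$. Letting $\kappa\downarrow 0$, continuity from above of the finite measure $\zeta$ gives $\zeta[q_r-\kappa,q_r+\kappa]\to\zeta(\{q_r\})$, which is $0$ because $\{q_k\}_{k=1}^r$ is $\zeta$-admissible and so $q_r$ is a continuity point of $\zeta$. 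Therefore $\liminf_N\E\gibbs{U^N_{12}}_N=\limsup_N\E\gibbs{U^N_{12}}_N=\zeta[q_r,1]$, as claimed. I do not expect a genuine obstacle here; the only point requiring attention is that one must send $N\to\infty$ before $\kappa\to 0$, since Lemma~\ref{lem:indicator-approximation} controls only the limit superior in $N$ at each fixed $\kappa$, and it is precisely the admissibility of $\{q_k\}$ that forces the residual error $\zeta[q_r-\kappa,q_r+\kappa]$ to vanish in the limit.
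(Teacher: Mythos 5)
Your proposal is correct and follows essentially the same route as the paper: the same triangle-inequality decomposition into the approximation error controlled by Lemma~\ref{lem:indicator-approximation}, the weak convergence of $\zeta_N$ applied to the bounded continuous function $\phi_\kappa$, and the bound $\abs{\int\phi_\kappa\,d\zeta-\zeta[q_r,1]}\leq\zeta[q_r-\kappa,q_r)$, with $N\to\infty$ taken before $\kappa\to0$ and admissibility of $q_r$ killing the residual term. No substantive differences from the paper's argument.
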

\begin{proof}
Note that 
\[
\abs{\E\left\langle U^N_{12}\right\rangle _{N}-\zeta[q_{r},1]}\leq\abs{\E\left\langle U^N_{12}-\phi_{\kappa}\right\rangle _{N}}+\abs{\E\left\langle \phi_{\kappa}\right\rangle _{N}-\E\left\langle \phi_{\kappa}\right\rangle _{\infty}}+\abs{\E\left\langle \phi_{\kappa}\right\rangle _{\infty}-\zeta[q_{r},1]}
= I+II+III.
\]
By convergence of the law of the overlap, 
\[
I \leq \E\gibbs{\abs{U_{12}^N -\phi_\kappa}}\text{, }
II \rightarrow 0
\text{, and }
III\leq \zeta[q_r-\kappa,q_r).
\]
If we combine these, we see that by \prettyref{lem:indicator-approximation}, 
\[
\limsup \abs{\E\gibbs{U^N_{12}}_{N}-\zeta[q_{r},1)}\leq \zeta[q_r-\kappa,q_r)+2\zeta[q_r-\kappa,q_r+\kappa].
\]
The result then follows after sending $\kappa\rightarrow 0$. 
\end{proof}
Then we show that the Ghirlanda-Guerra Identities hold for the relevant functions.
\begin{lem}\label{lem:GGI}
Fix $n$,$s$ and $I_k$ as above. Then we have the Ghirlanda-Guerra
identities
\begin{equation}\label{eq:GGI}
\lim\abs{n\E\gibbs{U^N_{1,n+1}F^N} -\E\gibbs{U^N_{12}}\E\gibbs{F^N} 
-\sum_{k=2}^{n}\E\gibbs{F^N U^N_{1,k}}}=0.
\end{equation}
\end{lem}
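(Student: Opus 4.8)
The plan is to deduce \prettyref{eq:GGI} --- a Ghirlanda--Guerra identity for the cluster-membership functions $U^N$ and $F^N$, which are \emph{not} functions of the overlap array --- from the Approximate Ghirlanda--Guerra identities applied to an honest overlap functional, using the $L^1$ estimates of \prettyref{lem:indicator-approximation} and \prettyref{cor:ind-approx-contd} to pass between the two, and then letting the approximation parameter $\kappa\to0$.

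First I would fix a small $\kappa>0$ (in the range allowed by \prettyref{lem:indicator-approximation}) and extend $\phi_\kappa$ to an element of $C[-1,1]$ by setting it equal to $0$ on $[-1,q_r-\kappa]$. Let $f_\kappa$ be the bounded Borel function on $[-1,1]^{n^2}$ with $f_\kappa(R^n)=\prod_{k=1}^{s}\prod_{i,j\in I_k}\phi_\kappa(R_{ij})$, so that $f_\kappa(R^n)$ coincides with the function $F_\kappa$ appearing in \prettyref{cor:ind-approx-contd}. Since every $U^N_{ij}$, $F^N$, $f_\kappa(R^n)$, and $\phi_\kappa$ takes values in $[0,1]$, a one-line triangle-inequality split gives, for each $k\in\{2,\dots,n\}$ (and likewise with $k$ replaced by $n+1$),
\[
\gibbs{\abs{U^N_{1,k}F^N-\phi_\kappa(R_{1,k})f_\kappa(R^n)}}\le\gibbs{\abs{U^N_{1,k}-\phi_\kappa(R_{1,k})}}+\gibbs{\abs{F^N-F_\kappa}},
\]
and a similar bound holds for $\abs{\E\gibbs{U^N_{12}}\E\gibbs{F^N}-\E\gibbs{\phi_\kappa(R_{12})}\E\gibbs{f_\kappa(R^n)}}$. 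Taking $\E$, then $\limsup_N$, and invoking \prettyref{lem:indicator-approximation} --- together with the exchangeability of the replica indices, which makes $\E\gibbs{\abs{U^N_{ij}-\phi_\kappa(R_{ij})}}$ the same for every pair $i\ne j$ --- and \prettyref{cor:ind-approx-contd}, each of these $\limsup$'s is at most $C(n)\,\zeta[q_r-\kappa,q_r+\kappa]$ for an explicit polynomial $C(n)$.

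Next I would apply the Approximate Ghirlanda--Guerra identities with $\psi=\phi_\kappa\in C[-1,1]$ and $f=f_\kappa$ a bounded Borel function on $[-1,1]^{n^2}$, which yields
\[
\lim_{N\to\infty}\left|n\E\gibbs{f_\kappa(R^n)\phi_\kappa(R_{1,n+1})}-\E\gibbs{f_\kappa(R^n)}\E\gibbs{\phi_\kappa(R_{12})}-\sum_{k=2}^{n}\E\gibbs{f_\kappa(R^n)\phi_\kappa(R_{1,k})}\right|=0.
\]
Combining this with the previous step and the triangle inequality bounds $\limsup_N$ of the left side of \prettyref{eq:GGI} by $C'(n)\,\zeta[q_r-\kappa,q_r+\kappa]$. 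Since $\{q_k\}$ is $\zeta$-admissible, $q_r$ is a continuity point of $\zeta$, so $\zeta[q_r-\kappa,q_r+\kappa]\to\zeta(\{q_r\})=0$ as $\kappa\to0$; as the left side of \prettyref{eq:GGI} is independent of $\kappa$, it must be $0$.

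The main obstacle here is conceptual rather than computational, and it is precisely what \prettyref{lem:indicator-approximation} and \prettyref{cor:ind-approx-contd} were built to overcome: $U^N$ and $F^N$ record which cluster a replica falls into, hence are not functions of the overlap array, so the AGGIs cannot be applied to them directly. The remaining care is bookkeeping --- keeping the finitely many approximation errors uniformly in $N$ of order $\zeta[q_r-\kappa,q_r+\kappa]$ so that they vanish as $\kappa\to0$ --- and remembering to use replica exchangeability so that \prettyref{lem:indicator-approximation}, stated for the pair $(1,2)$, controls $\E\gibbs{\abs{U^N_{ij}-\phi_\kappa(R_{ij})}}$ for an arbitrary pair $i\ne j$.
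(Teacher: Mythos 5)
Your proposal is correct and follows essentially the same route as the paper: apply the AGGIs to the genuine overlap functionals $\phi_\kappa$ and $F_\kappa$, transfer to $U^N$ and $F^N$ via the $L^1$ bounds of \prettyref{lem:indicator-approximation} and \prettyref{cor:ind-approx-contd} (using boundedness, which is exactly the ``standard approximation argument'' the paper invokes), and send $\kappa\to0$ using that $q_r$ is a $\zeta$-continuity point. Your explicit triangle-inequality split and the remark on replica exchangeability simply spell out details the paper leaves implicit.
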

\begin{proof}
Note that by the Approximate Ghirlanda-Guerra Identities,
\[
\lim\abs{n\E\gibbs{\phi_{\kappa}(R_{1n+1})F_\kappa} -\E\gibbs{\phi_{\kappa}(R_{12})}\E\gibbs{F_\kappa}
 -\sum_{k=2}^{n}\E\gibbs{F_\kappa \phi_{\kappa}(R_{1k})}}=0
\]
for every $\kappa$ positive. Using boundedness of the relevant functions, \prettyref{lem:indicator-approximation},
and \prettyref{cor:ind-approx-contd}, we see that by a standard approximation argument
\[
\limsup\abs{n\E\gibbs{ U^N_{1,n+1}F^N} -\E\gibbs{U^N_{12}} \E\gibbs{F^N} 
-\sum_{k=1}^{n}\E\gibbs{F^NU^N_{1,k}}}\leq
C(n)\zeta[q_{r}-\kappa,q_{r}+\kappa].
\]
where $C(n)$ is finite and depends only on $n$.
Sending $\kappa\rightarrow 0$ gives the result since $q_{r}$ is a
$\zeta$ -continuity point. 
\end{proof}
We end this section by  showing the convergence to the Poisson-Dirichlet weights
\begin{thm}\label{thm:clust-reg-pd}
The weights $(v_{n}^{N})$ satisfy Talagrand's identities in the limit.
In particular they converge in distribution to $PD(\zeta[0,q_{r}])$
\end{thm}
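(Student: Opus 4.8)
The plan is to identify the decreasing rearrangement $(v_n^N)$ of the leaf-weights with a sequence of random mass-partitions and verify that Talagrand's identities for the Poisson--Dirichlet process hold in the limit; convergence in distribution to $PD(\zeta[0,q_r])$ then follows from the standard characterization of $PD(\theta)$ by these identities (see \cite{TalBK03,PanchSKBook}). Recall that Talagrand's identities state that for $PD(\theta)$ weights $(v_n)$, the law of the random partition of $[n]$ induced by the equivalence ``$i\sim j$ iff $\sigma^i,\sigma^j$ lie in the same atom'' when $(\sigma^i)$ are sampled according to $(v_n)$ is a Chinese-restaurant-type law determined by $\theta$; equivalently, for every $n$, every $s$, and every partition $\sqcup_{k=1}^s I_k = [n]$, the quantity $\E\gibbs{F}$ (where $F$ is the indicator that the partition into the $I_k$ refines the atom-partition) satisfies a fixed recursion in $n$ with parameter $\theta$.

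The key steps, in order, are as follows. First, I would use \prettyref{lem:GGI}: the random variables $U^N_{ij}$ and $F^N$ (built from the $\tilde Y^N_\alpha$) satisfy the Ghirlanda--Guerra identities in the limit, i.e. \prettyref{eq:GGI} holds. Second, I would observe that $U^N_{ij}$ is genuinely a $\{0,1\}$-valued ``same-block'' indicator for the finite collection of disjoint sets $\{\tilde Y^N_\alpha\}_{\alpha\in\partial\tau_{m_N}}$, so that the array $(U^N_{ij})$, together with the block-weights $Y^N_\alpha = \mu_N(\tilde Y^N_\alpha)$, is exactly the ``same-table'' structure of the random partition induced by sampling i.i.d. from the mass-partition $(v_n^N)$. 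Third, I would feed the limiting Ghirlanda--Guerra identities for these indicator-type functions into the standard argument (as in \cite[Chapter 2]{PanchSKBook} or \cite{TalBK03}) that shows: an exchangeable random partition whose ``same-block'' array satisfies the GGI with parameter $\zeta[q_r,1]$ must have its finite-dimensional block-partition laws given by the $PD$ recursion; this pins down the law of every finite marginal $(v_1^N,\dots,v_n^N)$ in the limit. Fourth, since $\cP_m$ carries the product topology and convergence of all finite-dimensional marginals of a tight family of mass-partitions implies convergence in distribution in $\cP_m$ (tightness being automatic as $\cP_m$ is compact in the product topology), I would conclude $(v_n^N)\convdist PD(\zeta[0,q_r])$, using that $PD(\zeta[q_r,1])$ in Talagrand's normalization is $PD(\theta)$ with $\theta = \zeta[0,q_r]$ (the complementary mass).

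The main obstacle is the bookkeeping in the third step: translating the ``approximate'' nature of the construction---$U^N$ is only \emph{close} in $L^1$ to a function $\phi_\kappa(R)$ of the overlap, per \prettyref{lem:indicator-approximation} and \prettyref{cor:ind-approx-contd}, rather than literally such a function---into a clean derivation of the exact $PD$ recursion in the limit. One has to run the Talagrand-identities computation with the $\phi_\kappa(R)$-approximants, invoke the Approximate Ghirlanda--Guerra identities for those (continuous) functions, then pass $N\to\infty$ and finally $\kappa\to 0$, controlling all error terms by $\zeta[q_r-\kappa,q_r+\kappa]\to 0$ since $q_r$ is a $\zeta$-continuity point. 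Once \prettyref{lem:GGI} is granted, however, this is the routine ``GGI $\Rightarrow$ PD'' mechanism and no new idea is needed; the remaining work is checking that the recursion one obtains is exactly the defining recursion for $PD(\zeta[0,q_r])$ and that the finitely many error terms at each fixed $n$ all vanish.
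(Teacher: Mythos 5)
Your proposal is correct and follows essentially the same route as the paper: granting \prettyref{lem:GGI}, one identifies $\E\gibbs{F^N}$ with the joint moments $\E\prod_{k}\sum_n (v_n^N)^{n_k}$ of the mass-partition, derives the Poisson--Dirichlet recursion in the limit, and uses compactness of $\cP_m$ together with the uniqueness of $PD(\zeta[0,q_r))$ under Talagrand's identities to conclude. The only (cosmetic) difference is that the paper runs the final identification through continuity of the power sums on $\cP_m$ and a subsequence principle rather than through convergence of finite-dimensional marginals via Kingman's correspondence, and the $\phi_\kappa$-approximation work you flag as the main obstacle is already absorbed into \prettyref{lem:GGI}, so no further bookkeeping is needed.
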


\begin{proof}
Let 
\[
S_{N}(n_{1},\ldots, n_{s}) =\E\prod_{k\leq s}\sum v_{n}^{n_{k}} = \E\left\langle F^N\right\rangle. 
\]
Note that 
\[
S_{N}(n_{1}+1,\ldots,n_{s})=\E\left\langle U^N_{1,n+1}F^N\right\rangle 
\text{ and }
S_{N}(n_{2},\ldots,n_{k}+n_{1},\ldots,n_{s})=\E\left\langle U^N_{1k}F^N\right\rangle 
\]
so that by the Ghirlanda-Guerra Identities \prettyref{eq:GGI}, 
\begin{align*}
nS_{N}(n_{1}+1,\ldots,n_{s}) &=S_{N}(2)S_{N}(n_{1},\ldots,n_{s})+(n_{1}-1)S_{N}(n_{1},\ldots,n_{s})\\
&+\sum_{2\leq k\leq s}n_{k}S_{N}(n_{2},\ldots,n_{k}+n_{1},\ldots,n_{s})+o(1).
\end{align*}
For $k\geq2$ , we know that on $\cP_m$, the polynomials
$p_k((v_n))=\sum_n v_{n}^{k}$
are continuous (bounded) functions in $(v_{n})$ \cite{PanchSKBook}. 
If we then pass to a weakly convergent subsequence of $(v^N_n)$'s, 
all of the $S_N(\ldots)$'s will converge.
For any such limit point, we then have  that the Talagrand Identities
with parameter $\theta=\zeta[0,q_r)$ hold exactly. The latter uniquely specifies
the limit point as $PD(\zeta[0,q_r))$ \cite{TalBK03, PanchSKBook}.
Thus, by the subsequence principle
we know that the sequence properly converges.
\end{proof}
Note that by simply forgetting the last $r-k$ overlap values, we have the following corollary. Alternatively, one could
repeat the above proofs modifying as necessary.
\begin{cor}
The same is true if one rearranges the vertices at any fixed depth $k$ except the parameter for the Poisson-Dirichlet
Process becomes $\zeta[0,q_k)$.
\end{cor}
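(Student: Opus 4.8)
The plan is to reprove \prettyref{thm:clust-reg-pd} with the leaf depth $r$ replaced by the fixed depth $k$, using the depth‑$k$ clusters already produced in \prettyref{thm:main-result-1-proof}; the point of the remark preceding the corollary is that the analysis at depth $k$ sees the radii only through the indicators $\{R_{ij}\geq q_l\}$ with $l\leq k$, so one simply ``forgets the last $r-k$ overlap values.'' First I would check that if $\{q_l\}_{l=1}^r$ is $\zeta$-admissible then so is the truncation $\{q_l\}_{l=1}^k$: continuity, ``mass between consecutive points,'' and $0<\zeta([0,q_1])<1$ are inherited, while $0<\zeta([q_k,1])<1$ holds because $\zeta([q_k,1])\geq\zeta([q_k,q_{k+1}])>0$ (respectively $>0$ by admissibility if $k=r$) and $\zeta([q_k,1])=1-\zeta([-1,q_k))\leq 1-\zeta([0,q_1])<1$. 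Second — and this is the only point of substance — I would observe that the family $\{C_{\alpha,N}\}_{\abs{\alpha}\leq k}$ of \prettyref{thm:main-result-1-proof}, restricted to vertices of depth at most $k$, is by construction an $(\eps_{\nu(N)},0)$-hierarchical exhaustion of $\mu_N$ that is $(a_N,b_N)$-hierarchically clustering with respect to $\{q_l\}_{l=1}^k$: the clustering bounds \prettyref{eq:unif-close} at depth $k$ and \prettyref{eq:unif-far} at the sibling level of depth $k$ involve only $q_1,\dots,q_k$, and on the good event $E^N$ the sums $\sum_{\abs{\alpha}=k}f^N_{k,\eps_{n_0(N)}}(\sigma^\alpha)$ and $\sum_{\alpha\nsim\beta,\ \abs{\alpha}=\abs{\beta}=k}g^N_{\alpha,\beta,\eps_{n_0(N)}}(\bsig)$ are dominated by the corresponding sums over all of $\tau_{m_N}$ controlled in \prettyref{lem:fk-N} and \prettyref{lem:gab-N}.

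With that in hand I would set $U^{N,(k)}_{12}=\indicator{\{\exists\alpha,\ \abs{\alpha}=k:\sigma^1,\sigma^2\in C_{\alpha,N}\}}$, let $\phi^{(k)}_\kappa$ be the piecewise-linear function equal to $0$ on $[0,q_k-\kappa)$ and to $1$ on $[q_k,1]$ (with $\kappa>0$ small), and prove the exact analog of \prettyref{lem:indicator-approximation},
\[
\limsup_{N\rightarrow\infty}\E\gibbs{\abs{U^{N,(k)}_{12}-\phi^{(k)}_\kappa(R_{12})}}_N\leq 2\zeta[q_k-\kappa,q_k+\kappa],
\]
by transcribing its proof with $q_r$ replaced by $q_k$: on $E^N$ one splits according to whether $\sigma^1,\sigma^2$ lie in a common depth-$k$ cluster and whether $R_{12}\geq q_k$, controlling the ``same cluster but overlap $<q_k$'' contribution by $\sum_{\abs{\alpha}=k}f^N_{k,\eps_{n_0(N)}}(\sigma^\alpha)$ together with the $\zeta_N$-mass of a window of width $\eps_{n_0(N)}$ below $q_k$, and the ``distinct clusters but overlap $\geq q_k-\kappa$'' contribution by $\sum_{\alpha\nsim\beta}g^N_{\alpha,\beta,\eps_{n_0(N)}}(\bsig)$ over cousins of depth $k$ in $\tau_{m_N}$ together with $\zeta_N[q_k-\kappa,q_k+\kappa]$ (using $\eps_{n_0(N)}<\kappa$ for $N$ large, and $q_{\abs{\gamma\wedge\eta}+1}\leq q_k$ for any cousins $\gamma\nsim\eta$ of depth $k$, so that the $g$-events dominate the relevant indicators). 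The companion bound for the partitioned functions $F^{N,(k)}=\prod_l\prod_{i,j\in I_l}U^{N,(k)}_{ij}$ then follows by induction exactly as in \prettyref{cor:ind-approx-contd}.

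The remainder is a line‑by‑line transcription of \prettyref{thm:clust-reg-pd} and the lemmas before it. From convergence of the overlap law and the previous step one gets $\E\gibbs{U^{N,(k)}_{12}}_N\rightarrow\zeta[q_k,1]$; from the Approximate Ghirlanda--Guerra Identities and the approximation one gets the Ghirlanda--Guerra identities in the limit for the $F^{N,(k)}$ (the analog of \prettyref{lem:GGI}); hence every subsequential weak limit, in $\cP_m$, of the decreasing rearrangement $(v^N_n)$ of $\{\mu_N(C_{\alpha,N})\}_{\abs{\alpha}=k}$ satisfies Talagrand's identities with parameter $\theta=\zeta[0,q_k)$, which identifies it uniquely as $PD(\zeta[0,q_k))$, and the subsequence principle yields $(v^N_n)\convdist PD(\zeta[0,q_k))$. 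Equivalently, one could apply \prettyref{thm:main-result-1} directly to the $\zeta$-admissible sequence $\{q_l\}_{l=1}^k$ and then \prettyref{thm:clust-reg-pd} with $k$ in place of $r$; since, by Talagrand's identities, the limiting law of the rearranged weights depends only on $\zeta$ and $q_1,\dots,q_k$, it is insensitive to which valid family of depth-$k$ clusters one uses.

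I expect the only genuine obstacle to be the restriction step in the first paragraph — verifying that the depth‑$k$ portion of the depth‑$r$ construction really is a bona fide depth‑$k$ hierarchical exhaustion and clustering for the truncated radii, so that all the estimates feeding \prettyref{lem:indicator-approximation} and \prettyref{lem:GGI} remain in force. Once that bookkeeping is done, no new idea is required: every later step is the depth‑$r$ argument read with $r$ replaced by $k$ and $q_r$ by $q_k$.
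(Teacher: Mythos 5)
Your proposal is correct and is essentially the paper's own argument: the paper disposes of this corollary by noting one can either ``forget the last $r-k$ overlap values'' or repeat the depth-$r$ proofs with $q_r$ replaced by $q_k$, which is exactly what you carry out (your verification that the truncated sequence $\{q_l\}_{l=1}^k$ remains $\zeta$-admissible and that the depth-$\le k$ clusters inherit the exhaustion/clustering bounds is the right bookkeeping, and the rest is the transcription you describe).
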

\section{Convergence to Ruelle Cascades}\label{sec:cascades}
In this section, we improve on the above, by showing that the collection of all of the weights above, once correctly
rearranged, form a Ruelle Probability Cascade in the limit.

Let $Y^N_\alpha$ be as before, and let $\bv^N=(v^N_\alpha)$ be these weights placed in standard order. (We point out here that $v_\alpha=0$ is possible.) Note that
these weights can be thought of as random variables in the space of cascades of depth $r$.

\begin{defn}\label{defn:cascade-space}
The space of \emph{Cascades of depth} $r$ is the space
\[
\cC_r = \{(w_\alpha)_{\alpha\in\cA_r}\in [0,1]^{\cA_r}: w_{\alpha 1}\geq w_{\alpha 2}\geq \ldots\geq 0;
\sum_{\abs{\alpha}=k} w_\alpha \leq 1,\forall k\in[r];w_\alpha\geq\sum_{\beta\in child(\alpha)}w_\beta\}
\]
 which is topologized as a subspace of the product space $[0,1]^{\cA_r}$. 
 A \emph{cascade} is an element of this space. 
 A cascade is said to be \emph{proper} or a \emph{probability cascade} if the
 inequalities relating to the sums are all equalities. Otherwise the cascade is called \emph{improper}.
 \end{defn}
 
For ease of notation we omit the depth when talking about the space of cascades when it is not ambiguous. 
The space of cascades is compact and Polish. 
Notice that in 
this language, an RPC is a random variable in the space of cascades that is almost surely proper. We note
that this space of cascades is different from that defined in \cite{Rue87}, though  one can go from a 
cascade in the sense of Ruelle to a cascade in the above sense.

Our goal is to show that the above sequence of weights converges in distribution to a Ruelle Probability
Cascade. The proof follows from an application of the uniqueness portion of the Dovbysh-Sudakov theorem.
In particular, we encode the cascades into ROSts and use this to uniquely identify properties
of the limit.  As before the key observation is that the relative locations of points in the cascade is well
approximated by their overlaps. 
The proof is in two parts. First we describe a map taking a cascade to a
ROSt and demonstrate that if $\bv$ is a limit point of the sequence $\bv^N$, then for any subsequence that 
converges to it, the corresponding sequence of ROSts converges to that of $\bv$.
We  then show that for the sequence $\bv^N$, the corresponding sequence of ROSts has only one limit point
which is an RPC. This combined with the previous result and the uniqueness portion of Dovbysh-Sudakov 
uniquely identifies all of the limit points as being the same RPC, so that we can conclude the result by the
subsequence principle.

\subsection{Convergence to the ROSt of the limit point}
The first part of the proof requires studying an encoding of the above distributions into a ROSt. 
For any cascade $\bv\in\cC_r$, let 
\[
v_\partial =1-\sum_{\abs{\alpha}=r} v_\alpha.
\]
We think of $\partial$ as the dustbin where we place the dust from the Ruelle Probability Cascade 
(see \cite{Bert06} for the corresponding terminology for mass partitions). 

We set up the encoding as follows. Let $\{e_\alpha\}_{\alpha\in(\cA_r\cup\{\partial\})\setminus\{\emptyset\}}$ be orthonormal basis vectors 
for $\ell_2$. Consider the vectors 
\begin{equation}\label{eq:h_alpha}
h_\alpha = \sum_{\beta\precsim\alpha}(q_{\abs{\beta}} - q_{\abs{\beta}-1})^{\frac{1}{2}}e_\beta
\end{equation}
for $\alpha\in\cA_r$ and $h_\partial=\sqrt{q_r}e_\partial$, where $q_0=0$. For any $\bv\in\cC_r$ and sequence $\{q_k\}_{k=1}^r$, we
define the map $\cR:\cC_r\times[0,1]^r\rightarrow\Pr B_{\ell_2}(0,1)$ by 
\begin{equation}\label{eq:rost-map}
\cR(\mathbf{v},\{q_k\};h_\alpha)=\begin{cases}v_\alpha & \alpha\neq \partial\\
v_\partial & \alpha=\partial.
\end{cases}
\end{equation}
In the following we suppress the dependence on the sequence $\{q_k\}$ when it is 
unambiguous. Note that on the space of proper cascades this map is injective. It then
extends as a map from the space of probability measures on proper cascades to the space of
laws of ROSts.

Take $\bv^N$ as above and $\bv$ a limit point such that $\bv^{N_k}\convdist \bv$ for some subsequence $\{N_k\}$. 
We denote the corresponding Gram-DeFinetti laws of the associated ROSt by $\cL_N$ and $\cL$. Our goal will be to show that these sequences converge weakly. We begin by showing convergence of the dustbins.
\begin{lem}
Let $S^{(N)}_d = \sum_{\abs{\alpha}=d} v^N_{\alpha}$ and $S_d = \sum_{\abs{\alpha}=d} v_{\alpha}$ where 
$(v_{\alpha})$ is a distributional limit point of the sequence $(v^N_\alpha)$. Then if $N_l$ is a subsequence along which 
this convergence happens,
$S^{N_l}_d\convdist S_d.$
for every $d\in [r]$.Furthermore this convergence can be taken to happen simultaneously with the convergence of the $(v^{N_l}_\alpha)$.
\end{lem}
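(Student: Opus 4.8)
The plan is to pin down the limit before worrying about any continuity issue: I claim that in fact $S_d=1$ almost surely for every $d\in[r]$, so that $S^{(N)}_d$ and $S_d$ both converge in law to $\delta_1$ and the asserted (simultaneous) convergence reduces to a converging-together statement. The first step is to note that $S^{(N)}_d$ is essentially deterministic in the limit. On the event $E^N$ of \prettyref{thm:main-result-1-proof} the sets $\{C_{\alpha,N}\}_{\alpha\in\tau^i_{m_N}}$ form a $(2\eps_{\nu(N)},0)$-hierarchical exhaustion of $\mu_N$; in particular they are pairwise disjoint and $\sum_{\abs{\alpha}=d}\mu_N(C_{\alpha,N})\in[1-2\eps_{\nu(N)},1]$ for each $d$. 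Since the depth-$d$ coordinates of $\bv^N$ are exactly these masses (standard order is a depth-preserving relabeling, so it does not change their sum), $S^{(N)}_d\in[1-2\eps_{\nu(N)},1]$ on $E^N$, and $\prob(E^N)\to1$ with $\eps_{\nu(N)}\to0$; hence $S^{(N)}_d\xrightarrow{p}1$, in particular along $N_l$.

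Next I would reduce ``$S_d=1$ a.s.'' to a uniform tail estimate. For a finite set $A\subset\{\alpha\in\cA_r:\abs{\alpha}=d\}$ the truncated sum $T_A(\mathbf{w})=\sum_{\alpha\in A}w_\alpha$ is continuous on $[0,1]^{\cA_r}$, so $T_A(\bv^{N_l})\convdist T_A(\bv)$, and since $\{T_A\ge 1-\eps\}$ is closed, weak convergence gives $\prob(S_d\ge 1-\eps)\ge\prob(T_A(\bv)\ge 1-\eps)\ge\limsup_l\prob\bigl(T_A(\bv^{N_l})\ge 1-\eps\bigr)$. Thus it suffices to produce, for each $\eps,\eta>0$, a fixed finite $A$ with $\liminf_l\prob\bigl(\sum_{\alpha\in A}v^{N_l}_\alpha>1-\eps\bigr)\ge 1-\eta$; letting $\eps,\eta\to0$ and using $S_d\le1$ then forces $S_d=1$ a.s.

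The heart of the matter, and the step I expect to be the main obstacle, is this tail estimate, which I would obtain from two ingredients. First, being in standard order forces $v^N_{(a_1,\ldots,a_d)}\le 1/\max_i a_i$: the first-level weights are decreasing and sum to at most $1$, so $v^N_{(a_1)}\le 1/a_1$, and inductively $v^N_{(a_1,\ldots,a_j)}\le v^N_{(a_1,\ldots,a_{j-1})}$ and $\le v^N_{(a_1,\ldots,a_{j-1})}/a_j$. Hence every depth-$d$ weight outside the fixed finite set $A_K=\{(a_1,\ldots,a_d):a_i\le K\ \forall i\}$ is strictly smaller than $1/K$, and there are at most $K$ depth-$d$ weights that are $\ge 1/K$. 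Second, by the corollary following \prettyref{thm:clust-reg-pd}, the decreasing rearrangement $(w^N_n)$ of the depth-$d$ weights converges in law in $\cP_m$ to $PD(\zeta[0,q_d))$, which is almost surely a proper mass partition; choosing $K_0$ with $\prob\bigl(\sum_{n\le K_0}PD_n>1-\eps/2\bigr)>1-\eta$ and using that $\{\sum_{n\le K_0}x_n>1-\eps/2\}$ is open in $\cP_m$, one gets $\prob\bigl(\sum_{n>K_0}w^{N_l}_n<\eps/2\bigr)\ge 1-\eta$ for $l$ large. Now take $K\ge K_0$ with $K_0/K<\eps/2$ and set $A=A_K$. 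Writing $n^*$ for the number of depth-$d$ weights $\ge 1/K$ (so $n^*\le K$ and all of them lie in $A_K$), the mass outside $A_K$ is carried by weights of rank $>n^*$, so $\sum_{\alpha\notin A_K}v^{N_l}_\alpha\le\sum_{n>n^*}w^{N_l}_n\le K_0/K+\sum_{n>K_0}w^{N_l}_n<\eps$ with probability $\ge 1-\eta$ for $l$ large (the ranks between $n^*$ and $K_0$ contribute at most $K_0\cdot(1/K)$; one treats the cases $n^*\le K_0$ and $n^*>K_0$ separately). Converting ``small ranked tail in $\cP_m$'' into ``small mass outside a fixed finite subtree of $\cA_r$'' is exactly what the standard-order bound buys us, since it confines all non-negligible depth-$d$ atoms to a bounded region of the tree, uniformly in $N$.

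Finally I would assemble the pieces: Steps two and three give $S_d=1$ a.s. for every $d\in[r]$, and Step one gives $S^{N_l}_d\xrightarrow{p}1$, so $S^{N_l}_d\convdist S_d$. For the ``simultaneously'' clause, $\bv^{N_l}\convdist\bv$ together with $(S^{N_l}_d)_{d\in[r]}\xrightarrow{p}(1,\ldots,1)$ --- a constant limit --- yields joint convergence $\bigl(\bv^{N_l},(S^{N_l}_d)_{d\in[r]}\bigr)\convdist\bigl(\bv,(S_d)_{d\in[r]}\bigr)$ by the usual converging-together lemma, which is the claimed simultaneous convergence. (If one prefers an explicit coupling, pass to a Skorokhod representation on which $\bv^{N_l}\to\bv$ coordinatewise a.s.; the same tail estimate then upgrades $S^{N_l}_d\to S_d$ to convergence in probability along that coupling.)
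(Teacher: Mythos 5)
Your proposal is correct, but it takes a genuinely different route from the paper. The paper never identifies the law of $S_d$: it sets $\eta^k_N=\sum_{\alpha\in\partial\tau_k}v^N_\alpha$, invokes an approximation theorem (Kallenberg) to reduce the claim to the uniform estimate $\lim_{k}\limsup_{N}\E\abs{\eta^k_N-S^N_d}=0$, and proves that estimate by telescoping over the intermediate $(\N,\ldots,\N,k,\ldots,k)$-shaped trees $\tau_{k,l}$, bounding each increment by a ranked tail $\sum_{m\geq k+1}\tilde v^{l,N}_m$ — using the same two ingredients you use, namely the standard-order structure of $\bv^N$ and the Poisson-Dirichlet convergence of the ranked depth-$l$ weights from the corollary to \prettyref{thm:clust-reg-pd}. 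You instead pin the limit down explicitly: from the construction ($\bv^N$ comes from a $(2\eps_{\nu(N)},0)$-hierarchical exhaustion on $E^N$, an input the paper's proof of this lemma does not use) you get $S^N_d\rightarrow1$ in probability, and your confinement bound $v^N_{(a_1,\ldots,a_d)}\leq 1/\max_i a_i$ together with the same PD input shows any limit point has $S_d=1$ almost surely, after which both the convergence and the ``simultaneous'' clause are one application of the converging-together lemma, the limit being constant. Each approach buys something: yours is more elementary, yields the properness of the limit cascade directly (the paper only records $v_\alpha=\sum_{\beta\in child(\alpha)}v_\beta$ in the remark following the lemma), and makes the joint convergence immediate; the paper's argument is intrinsic to the cascade structure — it needs neither the degeneracy $S^N_d\rightarrow 1$ nor any reference to how $\bv^N$ was built, only that the ranked weights at each depth converge to a proper mass partition — so it would apply even when the level sums do not converge to a constant. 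One small bookkeeping point in your write-up: Step 3 bounds the mass outside $A_K$ by $\eps$, whereas Step 2 asks for mass at least $1-\eps$ inside $A_K$; to bridge, either intersect with the event $\{S^{N_l}_d>1-\eps/2\}$ from your Step 1 and adjust constants, or note directly that $\sum_{\alpha\in A_K}v^{N_l}_\alpha\geq\sum_{n\leq n^*}w^{N_l}_n\geq\sum_{n\leq K_0}w^{N_l}_n-K_0/K$; either fix is immediate with what you already have.
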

\begin{proof}
We work with $d=r$. The proof for the rest of the result follows \emph{mutatus mutandis}. We also pass to the subsequence
immediately to avoid cumbersome notation. Let 
\[
\eta_N^k = \sum_{\alpha\in \partial\tau_k} v_\alpha^N
\text{ and } 
\eta^k = \sum_{\alpha\in \partial\tau_k} v_\alpha.
\]
Notice that $\eta_N^k \convdist \eta^k$ and that $\eta^k \convdist S_r$. 
By an approximation theorem (see \cite{Kal97}), it suffices to show that 
\[
\lim_{k\rightarrow\infty}\limsup_{N\rightarrow\infty} \E \left(\abs{\eta_N^k - S_r^N}\right)=0.
\]

To see this, fix $\tau_k$. Let $\tau_{k,l}$ be the $(\N,\N,\ldots,\N,k,\ldots ,k)$-shaped tree where there are $l$ $\N$'s in the 
$r$-tuple. Note that $\tau_{k,0}=\tau_k$ and $\tau_{k,r}=\cA_r$
For any subset $\tau^\prime \subset \cA_r$ , let 
\[
S^{(N)}(\tau^\prime) = \sum_{\alpha\in\tau^\prime} v^N_\alpha
\]
and similarly for $S(\tau^\prime)$ in the limit. Note that
\[
\abs{S^N_r-\eta^k_N}\leq \sum_{l=0}^{r-1}\abs{S^{(N)}(\partial\tau_{k,l+1})-S^{(N)}(\partial\tau_{k,l})},
\]
so it suffices to show that each summand vanishes upon taking expectations and limits. To see this, consider the difference
\[
\abs{S^{(N)}(\partial\tau_{k,l+1})-S^{(N)}(\partial\tau_{k,l})}=S(\partial\tau_{k,l+1}\setminus\partial\tau_{k,l}) \leq S(\partial(\cA_r\cap\{\alpha_{l+1}>k\})).
\]
Since the sum of the children is less than that of their
parents at every level by definition of $\cC_r$, this sum is at most
\[
RHS\leq \sum_{\abs{\alpha}=l,n\geq k+1 }v^{(N)}_{\alpha n} = S(\partial(\cA_{l+1}\cap\{\alpha_{l+1}>k\})).
\]
Let $\tilde{v}^{l+1,N}_{m}$ be the decreasing rearrangement of $(v^N_\alpha)_{\abs{\alpha}=l+1}$. 
Then
\[
RHS\leq \sum_{m\geq k+1} \tilde{v}_m^{l+1,N}
\]
since $(v_\alpha)$ are in standard order and thus  the previous
sum must be missing the contribution of the first $k$ of each family at depth $l+1$.
Consequently,
\[
\E\abs{S^{(N)}(\tau_{k,l+1})-S^{(N)}(\tau_{k,l})}\leq \eps + \prob(\sum_{m\geq k+1}\tilde{v}_m^{l+1,N}\geq \eps/2).
\]
For each $l$, we have 
$(\tilde{v}^{l,N}_m)\convdist (\tilde{v}^l_m)$
where $(\tilde{v}_m^l)\sim PD(\zeta[0,q_k])$ by \prettyref{thm:clust-reg-pd}. This means that 
\[
\limsup_{N\rightarrow\infty}\prob(\sum_{m\geq k+1}\tilde{v}_m^{l,N}\geq \eps/2) \leq \limsup_{N\rightarrow\infty} \prob(\sum_{m\leq k}\tilde{v}_m^{l,N}\leq 1-\eps/2)\leq \prob(\sum_{m\geq k+1}\tilde{v}_m^{l}\leq 1-\eps/2)\leq \eps
\]
for $k\geq k_0$ for some appropriately chosen $k_0$. Combining these results gives the result.

That the convergence happens simultaneously can be seen by using the fact that the relevant spaces are metrizable
and that adding the dustbin can be done by adding an extra factor of $[0,1]$ with the usual product metric. 
\end{proof}
\begin{rem}
We would like to point out here that as a consequence of the above convergence and the compactness of
$\cC_r$, we have that for $\bv$ as above,
\[
v_\alpha = \sum_{\beta\in child(\alpha)} v_\beta.
\]
This coupled with the injectivity of the map from proper cascades to ROSts will be used in the subsequent.
\end{rem}
With this in hand we can now show the convergence of the overlap distributions.

\begin{lem}\label{lem:overlap-conv-lt-pt}
Let $\bv$ be a limit point of the sequence $\bv^N$ and let $\bv^{N_k}\rightarrow\bv$. It follows that
$\cL_{N_k}\convdist\cL$
\end{lem}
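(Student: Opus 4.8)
The plan is to realize the cascade convergence almost surely and then push it through the construction $\bv\mapsto\cR(\bv)\mapsto(\text{Gram--DeFinetti array})$. First I would use the preceding lemma: along $\{N_k\}$ one may arrange the convergence $\bv^{N_k}\convdist\bv$ to hold \emph{jointly} with $S^{(N_k)}_d\convdist S_d$ for every $d\in[r]$ --- that is, no mass is lost at infinity in the sums over a fixed depth. Since $v^{N_k}_\partial=1-S^{(N_k)}_r$ and $v_\partial=1-S_r$, this also gives $v^{N_k}_\partial\convdist v_\partial$, jointly with everything. As $\cC_r\times[0,1]^{r+1}$ is Polish, Skorokhod's representation theorem then lets me put all of this on one probability space so that $v^{N_k}_\alpha\to v_\alpha$ a.s.\ for every $\alpha\in\cA_r$ and $v^{N_k}_\partial\to v_\partial$ a.s.

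Next I would upgrade this coordinatewise convergence to total-variation convergence of the Dovbysh--Sudakov measures. The measures $\cR(\bv^{N_k})$ and $\cR(\bv)$ are both supported on the \emph{fixed} countable set $\{h_\alpha:\abs{\alpha}=r\}\cup\{h_\partial\}$, with nonnegative masses that converge pointwise a.s.\ and whose total masses both equal $1$ by the definition of $v_\partial$. Scheffé's lemma on this countable index set then gives $\sum_{\abs{\alpha}=r}\abs{v^{N_k}_\alpha-v_\alpha}+\abs{v^{N_k}_\partial-v_\partial}\to0$ a.s., i.e.\ $\norm{\cR(\bv^{N_k})-\cR(\bv)}_{TV}\to0$ a.s. The remainder is routine: writing $\mu^{(k)}=\cR(\bv^{N_k})$ and $\mu=\cR(\bv)$, for each fixed $n$ one has $\norm{(\mu^{(k)})^{\tensor n}-\mu^{\tensor n}}_{TV}\to0$, hence total-variation convergence of the conditional law of $(\sigma^1,\dots,\sigma^n)$ and thus of the minor $\big((\sigma^i,\sigma^j)\big)_{i,j\le n}$; letting $n$ range, the conditional law of the whole Gram--DeFinetti array converges weakly, a.s.\ in the coupling. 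Finally, for any bounded continuous $F$ on the array space that depends on finitely many coordinates, $\E[F\mid\bv^{N_k}]\to\E[F\mid\bv]$ a.s.\ and stays bounded, so dominated convergence yields $\int F\,d\cL_{N_k}=\E\big[\E[F\mid\bv^{N_k}]\big]\to\E\big[\E[F\mid\bv]\big]=\int F\,d\cL$, which is $\cL_{N_k}\convdist\cL$.

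The single genuine obstacle --- and the reason the preceding lemma has to be in place --- is precisely the loss of mass at infinity: weak convergence of cascades in the product topology on $\cC_r$ does not by itself force $S^{(N_k)}_r\to S_r$, and without that one only gets vague (not total-variation) convergence of the $\mu^{(k)}$, which is too weak to control the limiting overlap array since the escaping mass could carry nontrivial mutual overlaps. The Poisson--Dirichlet tail estimate behind \prettyref{thm:clust-reg-pd}, used in the preceding lemma, is exactly what rules this out; once the dustbin masses are known to converge, Scheffé's lemma and the elementary stability of products and overlaps under total variation do the rest.
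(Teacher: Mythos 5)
Your argument is correct and follows essentially the same route as the paper's proof: joint convergence with the depth-$d$ sums (hence the dustbin) from the preceding lemma, Skorokhod representation, Scheff\'e's lemma to upgrade coordinatewise convergence of the weights to $\ell_1$ (equivalently total-variation) convergence of the atomic measures $\cR(\bv^{N_k})$, and then boundedness/dominated convergence to pass to the laws $\cL_{N_k}\convdist\cL$. Your total-variation packaging of the product-measure step is just a mild repackaging of the paper's $\eps/3$ argument on the sum representation of $\gibbs{f}_{\cR(w^N)}$, so there is nothing substantive to add.
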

\begin{proof}
For ease of notation, we pass to the subsequence and eliminate the subscript $k$ in the above. Note that
\[
(v_\alpha^N,v_\partial^N,S_d^N)\convdist(v_\alpha,v_\partial,S_d).
\]
By Skorokhod's representation theorem, there is a
probability space $(\tilde{\Omega},\cG,\tilde{\prob})$ and random variables 
\[
(w_\alpha^n,w^N_\partial,W_d^N)\eqdist(v_\alpha^N,v_\partial^N,S_d^N)
\]
and similarly for $(w_\alpha,w_\partial,W_d)$ such that 
\[
(w_\alpha^n,w^N_\partial,W_r^N)\rightarrow(w_\alpha,w_\partial,W_d)
\]
$\tilde{\prob}$-almost surely. Note that by the distributional equality, for any $f\in C[-1,1]^{n^2}$, 
\[
\E\gibbs{f}_{\cR(v^N)}=\E\gibbs{f}_{\cR(w^N)}
\]
as the map 
$w^N \mapsto \gibbs{f}_{\cR(w^N)}$
is bounded and measurable (though not necessarily continuous). Thus it suffices to show
\[
\E\gibbs{f}_{\cR(w^N)}\rightarrow\E\gibbs{f}_{\cR(w)}.
\]
We begin by noting that 
\[
w^N_\partial +\sum w_\alpha^N =1=w_\partial +\sum w_\alpha,
\]
so that by Scheff\'e's lemma, we have the convergence 
\[
(w_\alpha^N,w_\partial^N)\rightarrow(w_\alpha,w_\partial)
\]
in $\ell_1(\partial\cA_r \cup\{\partial\})$ almost surely. As a result, 
\[
\gibbs{f}_{\cR(w^N)}=\sum_{\alpha_1,\ldots,\alpha_m\in\partial\cA_r\cup\{\partial\}}w_{\alpha_1}\ldots w_{\alpha_m} f\left((h_{\alpha_i},h_{\alpha_j})\right)
\]
converges to
$\gibbs{f}_{\cR(w)}$
almost surely by an $\eps/3$-type argument. The convergence of the means then follows by bounded convergence
theorem.
\end{proof}
\subsection{Convergence of the ROSts to an RPC}\label{sec:rost-conv-to-rpc}
We begin with the following approximation argument which is similar to \prettyref{lem:indicator-approximation}.
In the following we fix $q_0=0$ and $q_{r+1}=1$
\begin{lem}\label{lem:indicator-approximation-2}
Fix ${\talpha},\tbeta\in \N^{r+1}$ distinct. Let $k = \abs{\talpha\wedge\tbeta}$. Let
\[
A(\talpha,\tbeta)=\left\{\exists \alpha,\beta\in\N^r,\gamma\in\N^k:\gamma=\alpha\wedge\beta,
\sigma^\talpha\in \tilde{Y}_\alpha, \sigma^\tbeta\in \tilde{Y}_\beta\right\}
\]
and for each $\kappa>0$ such that $\kappa <\min_k\{\abs{q_k-q_{k+1}}\}/3$, set
\[
B_\kappa=\{R_{\talpha\tbeta} \in [q_k-\kappa,q_{k+1}+\kappa]\}.
\] 
where if $k=r$, the latter set ends at $q_{r+1}=1$, and if $k=0$ it begins at $q_0=0$.Then
\[
\limsup \E\gibbs{ \abs{\indicator{A(\talpha,\tbeta)}-\indicator{B_\kappa}}} \leq \zeta[q_k-\kappa,q_k+\kappa]\indicator{k>0}+\zeta[q_{k+1}-\kappa,q_{k+1}+\kappa]\indicator{k<r}.
\]
\end{lem}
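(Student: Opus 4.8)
The plan is to adapt the proof of \prettyref{lem:indicator-approximation}, which is essentially the present statement when $\talpha\wedge\tbeta$ is the root and the leaf level plays the role of level $k+1$; here we compare, at the level of the whole cascade, the event that the two test replicas land in leaf clusters whose least common ancestor sits at depth exactly $k$ with the event $R_{\talpha\tbeta}\in[q_k-\kappa,q_{k+1}+\kappa]$. Write $\Delta=\abs{\indicator{A(\talpha,\tbeta)}-\indicator{B_\kappa}}$. Recall from \prettyref{sec:clust-reg} that on the event $E^N$ of \prettyref{thm:main-result-1-proof}, which has $\E\mu_N^{\tensor\infty}$-probability at least $1-2^{-\nu(N)}$ by \prettyref{eq:EN-lb}, the clusters $\{C_\alpha\}_{\alpha\in\tau^i_{m_N}}$ (for the selected index $i$) are nested along the tree order, are pairwise disjoint at each fixed depth, form a $(2\epsilon_{\nu(N)},0)$-hierarchical exhaustion, and satisfy $\sum_{\alpha\in\tau^i_{m_N}}f^N_{\abs{\alpha},\epsilon_{n_0(N)}}(\sigma^\alpha_N)\le\sqrt{\epsilon_{n_0(N)}}$ and $\sum_{\alpha\nsim\beta\in\tau^i_{m_N}}g^N_{\alpha,\beta,\epsilon_{n_0(N)}}(\bsig_N)\le\sqrt{\epsilon_{n_0(N)}}$, while off $E^N$ all $\tilde Y^N_\alpha$ are empty. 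Let $L_N$ be the event, in the test replicas, that both $\sigma^\talpha$ and $\sigma^\tbeta$ fall in $\bigcup_{\abs{\alpha}=r}\tilde Y^N_\alpha$; the exhaustion bound gives $\E\gibbs{\indicator{E^N,L_N^c}}\le 4\epsilon_{\nu(N)}$, and since $A(\talpha,\tbeta)$ cannot occur outside $E^N\cap L_N$, the crude bound $\Delta\le1$ yields $\E\gibbs{\Delta}\le 2^{-\nu(N)}+4\epsilon_{\nu(N)}+\E\gibbs{\Delta\indicator{E^N\cap L_N}}$, whose first two terms vanish as $N\to\infty$.

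On $E^N\cap L_N$ let $\alpha_0,\beta_0\in\partial\tau^i_{m_N}$ be the unique leaves with $\sigma^\talpha\in\tilde Y^N_{\alpha_0}$ and $\sigma^\tbeta\in\tilde Y^N_{\beta_0}$, put $\gamma_0=\alpha_0\wedge\beta_0$ and $k_0=\abs{\gamma_0}$; then $A(\talpha,\tbeta)$ holds iff $k_0=k$, so I split $\E\gibbs{\Delta\indicator{E^N\cap L_N}}$ over the three cases $k_0=k$, $k_0>k$, $k_0<k$. When $k_0=k$ we have $\Delta=\indicator{R_{\talpha\tbeta}\notin[q_k-\kappa,q_{k+1}+\kappa]}$, $\sigma^\talpha,\sigma^\tbeta\in C_{\gamma_0}$ with $\abs{\gamma_0}=k$, and moreover $\sigma^\talpha,\sigma^\tbeta$ lie in two distinct depth-$(k+1)$ children of $C_{\gamma_0}$; for $N$ so large that $\epsilon_{n_0(N)}<\kappa$, the part $R_{\talpha\tbeta}<q_k-\kappa$ is bounded on $E^N$, after summing over the depth-$k$ vertices of $\tau^i_{m_N}$, by $\sum_{\abs{\gamma}=k}f^N_{k,\epsilon_{n_0(N)}}(\sigma^\gamma_N)\le\sum_{\gamma\in\tau^i_{m_N}}f^N_{\abs{\gamma},\epsilon_{n_0(N)}}(\sigma^\gamma_N)\le\sqrt{\epsilon_{n_0(N)}}$, and the part $R_{\talpha\tbeta}>q_{k+1}+\kappa$ (vacuous if $k=r$, since then $R_{\talpha\tbeta}\le1=q_{k+1}$) is bounded by the $g$-sum, also by $\sqrt{\epsilon_{n_0(N)}}$. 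When $k_0>k$ (vacuous if $k=r$), $\Delta=\indicator{B_\kappa}$ and $\sigma^\talpha,\sigma^\tbeta$ lie in a common depth-$(k+1)$ cluster $C_{\gamma'}$, so the $f$-sum confines $R_{\talpha\tbeta}$ to $[q_{k+1}-\epsilon_{n_0(N)},q_{k+1}+\kappa]\subset[q_{k+1}-\kappa,q_{k+1}+\kappa]$ up to an error $\le\sqrt{\epsilon_{n_0(N)}}$, yielding the bound $\sqrt{\epsilon_{n_0(N)}}+\zeta_N[q_{k+1}-\kappa,q_{k+1}+\kappa]$. When $k_0<k$ (vacuous if $k=0$), $\Delta=\indicator{B_\kappa}$ and $\sigma^\talpha,\sigma^\tbeta$ lie in cousin clusters branching at depth $k_0+1\le k$, so the $g$-sum forces $R_{\talpha\tbeta}<q_{k_0+1}+\epsilon_{n_0(N)}$ up to $\le\sqrt{\epsilon_{n_0(N)}}$; combined with $R_{\talpha\tbeta}\ge q_k-\kappa$ and $q_k-q_{k-1}>3\kappa$ this rules out $k_0+1<k$ once $\epsilon_{n_0(N)}<\kappa$, forcing $k_0+1=k$ and $R_{\talpha\tbeta}\in[q_k-\kappa,q_k+\kappa]$, yielding the bound $\sqrt{\epsilon_{n_0(N)}}+\zeta_N[q_k-\kappa,q_k+\kappa]$.

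Summing the contributions and letting $N\to\infty$, the $2^{-\nu(N)}$, $\epsilon_{\nu(N)}$ and $\sqrt{\epsilon_{n_0(N)}}$ terms all tend to $0$, while $\limsup_N\zeta_N[q_j-\kappa,q_j+\kappa]\le\zeta[q_j-\kappa,q_j+\kappa]$ since $\zeta_N\to\zeta$ weakly and $[q_j-\kappa,q_j+\kappa]$ is closed; the surviving terms are $\zeta[q_{k+1}-\kappa,q_{k+1}+\kappa]$, present only when $k<r$, and $\zeta[q_k-\kappa,q_k+\kappa]$, present only when $k>0$, which is exactly the asserted inequality. I expect the only genuinely delicate step to be the bookkeeping that converts each membership event ``$\sigma^\talpha\in C_{\gamma'},\ \sigma^\tbeta\in C_{\gamma''}$'' into the precise system of inequalities defining $f^N_{\abs{\gamma'},\cdot}$ or $g^N_{\gamma',\gamma'',\cdot}$: the ``$\ge q$'' clauses follow from $C_\gamma\subset B_\gamma\subset B(\sigma^\gamma_N,q_{\abs{\gamma}})$, and the ``$<q$'' clauses from the identity $B_{\gamma''}=B_{\gamma'\wedge\gamma''}\cap B(\sigma^{\gamma''}_N,q_{\abs{\gamma''}})$ together with $\sigma^\talpha\in C_{\gamma'}\subset B_{\gamma'\wedge\gamma''}\setminus B_{\gamma''}$; one must also verify that the vertices $\gamma',\gamma''$ arising in each case lie in $\tau^i_{m_N}$, so that they are covered by the $f$- and $g$-sums constituting $E^N$. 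This is routine, being carried out just as in the proofs of \prettyref{lem:indicator-approximation} and \prettyref{thm:main-result-1-proof}.
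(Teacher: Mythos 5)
Your argument is correct and is essentially the paper's own proof: after discarding $(E^N)^c$ and $E^N\cap L_N^c$, you compare the depth $k_0$ of the least common ancestor of the two leaf clusters containing the test replicas with $k$, control each case via the $f$- and $g$-sums built into $E^N$ together with the $\zeta_N$-mass of the $\kappa$-windows around $q_k$ and $q_{k+1}$, and conclude by weak convergence and closedness of those intervals; your grouping into the three cases $k_0=k$, $k_0>k$, $k_0<k$ merely merges the paper's sub-cases $\abs{\gamma}<k-1$, $=k-1$, $=k+1$, $>k+1$ without changing the estimates. The only caveat, which you share with the paper's own treatment, is the boundary case $k=0$ (where $B_\kappa$ starts at $q_0=0$ and negative overlaps must be dismissed via approximate positivity rather than an $f$-bound at depth $0$), but this does not affect the stated inequality since the corresponding term carries the factor $\indicator{k>0}$.
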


\begin{proof}
We drop the subscript $\kappa$ and suppress the dependence of $A$ for readability. 
The proof follows by case analysis. First break up
\[
\E\gibbs{\abs{\indicator{A}-\indicator{B}}} \leq \E\gibbs{\abs{\ldots}\indicator{E^N}}
+\prob((E^N)^c)\leq  I +\frac{1}{2^{\nu(N)}}
\]
where the second inequality follows from the definition of $E^N$.

Let $L_N$ be the event that both $\sigma^\talpha$ and $\sigma^\tbeta$ are in 
$\cup_{\abs{\alpha}=r}\tilde{Y}_\alpha$. Break up $I$ as follows.
\[
I \leq 2\E\gibbs{\indicator{E^N,L_N}\left(\indicator{A\cap B^c}+\indicator{B\cap A^c}\right) + \indicator{E^N,L_N^c}}
\leq 2(II+III)+2\eps_{\nu(N)}.
\] 
On $B^c$ either
\[
R_{\talpha\tbeta}\geq q_{k+1}+\eps_{n_0(N)}
\text{ or }
R_{\talpha\tbeta}<q_k -\eps_{n_0(N)}
\]
for $N$ large enough, where the first case is impossible if $k=r$ and the second is impossible if $k=0$. Thus 
\[
II \leq \E\gibbs{\indicator{E^N,L_N\cap A}\indicator{R_{\talpha\tbeta}\geq q_{k+1}+\eps_{n_0(N)}}}\indicator{k<r}
+\E\gibbs{\indicator{E^N,L_N\cap A}\indicator{R_{\talpha\tbeta}<q_k -\eps_{n_0(N)}}}\indicator{k>0}
\leq 2\sqrt{\eps_{n_0}(N)},
\]
after integrating first in the ``tilded'' variables , where the second inequality comes from
the fact that both terms in on the RHS are at least $q_k$. 

We turn now to $III$. On $A^c$, we have that 
\[
\nexists \gamma:\abs{\gamma}=\abs{\talpha\wedge\tbeta} \text{ and } \sigma^\talpha,\sigma^\tbeta \in \tilde{Y}_\gamma
\]
On $A^c\cap L_N$, we also have an $\alpha,\beta$ and $\gamma$ such that $\gamma=\alpha\wedge\beta$
with $\sigma^\talpha\in\tilde{Y}_\alpha$, $\sigma^\tbeta\in\tilde{Y}_\beta$, but $\abs{\gamma}\neq k$.

We break up $III$ as follows
\[
III\leq \E\gibbs{\indicator{E^N\cap A^c\cap B\cap L_N}\left(\indicator{\abs{\gamma}<k-1}+
\indicator{\abs{\gamma}=k-1}+
\indicator{\abs{\gamma}=k+1}+\indicator{\abs{\gamma}>k+1}\right)} = (iii)+(iv)+(v)+(vi).
\]

If $\abs{\gamma}<k-1$,possible only if $k>1$, then on $A^c\cap B \cap L_N$ 
\[
R_{\talpha\tbeta}\geq q_k-\kappa\geq q_{k+1}-\eps_{n_0(N)}\geq q_{\abs{\gamma}+1}+\eps_{n_0(N)}
\]
for $N$ large enough so that
\[
(iii)\leq \indicator{k>1}\E\gibbs{\indicator{E^N}(\sum_{\alpha\nsim\beta} g_{\alpha,\beta,\eps_{n_0(N)}}(\bsig))}\leq \sqrt{\eps_{n_0(N)}}.
\] 
If$\abs{\gamma}>k+1$, possible only when $k < r$, 
\[
R_{\talpha\tbeta}\leq q_{k+1}+\kappa < q_{\abs{\gamma}} -\eps_{n_0(N)}
\]
which is controlled similarly by $f$ so 
\[
(vi)\leq \sqrt{\eps_{n_0}(N)}\indicator{k<r}.
\]

If $\abs{\gamma}=k+1$, again possible only if $k<r$, then either
\[
R_{\talpha\tbeta}\in[q_k-\kappa,q_{k+1}-\kappa]
\text{ or }
R_{\talpha\tbeta} \in[q_{k+1}-\kappa,q_{k+1}+\kappa]
\]
which are controlled by $f$ and bounded by $\zeta_N[q_{k+1}-\kappa,q_{k+1}+\kappa]$ respectively. Thus
\[
(v) \leq (\sqrt{\eps_{n_0(N)}} + \zeta_N[q_{k+1}-\kappa,q_{k+1}+\kappa])\indicator{k<r}
\]

Finally if $\abs{\gamma}=k-1$, again possible only if $k>0$, then either
\[
R_{\talpha\tbeta}\in[q_k+\kappa,q_{k+1}+\kappa]
\text{ or }
R_{\talpha\tbeta}\in[q_k-\kappa,q_k+\kappa]
\]
which are controlled by $g$ and bounded by $\zeta_N[q_k-\kappa,q_k+\kappa]$ respectively,
so
\[
(iv)\leq \sqrt{\eps_{n_0(N)}}+\zeta_N[q_k-\kappa,q_k+\kappa]\indicator{k>0}
\]

Combining all of these gives
\[
III\leq 4\sqrt{\eps_{n_0(N)}}+\zeta_N[q_k-\kappa,q_k+\kappa]\indicator{k>0}+\zeta_N[q_{k+1}-\kappa,q_{k+1}+\kappa]\indicator{k<r}
\]

Combine $II$ and $III$ and take limit superiors of both sides to arrive at
\[
\limsup\E\gibbs{\abs{\indicator{A}-\indicator{B}}}\leq \zeta[q_k-\kappa,q_k+\kappa]\indicator{k>0}+\zeta[q_{k+1}-\kappa,q_{k+1}+\kappa]\indicator{k<r}.
\]
\end{proof}

Note that since the $q_k$ are continuity points, we have the following, which follows by a standard approximation
argument.
\begin{cor}\label{cor:rpc-set-approx}
Set $\kappa=0$ in the definition of $B_\kappa$. Then 
\[
\limsup_{N\rightarrow0}\E\gibbs{\abs{\indicator{A}-\indicator{B_0}}}=0.
\]
\end{cor}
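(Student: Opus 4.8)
The plan is to reduce to \prettyref{lem:indicator-approximation-2} by a triangle inequality and then let $\kappa\downarrow 0$, exploiting that the $q_j$ are $\zeta$-continuity points. First I would record the elementary pointwise bound: since $B_0\subset B_\kappa$ for every admissible $\kappa$, we have $\abs{\indicator{A(\talpha,\tbeta)}-\indicator{B_0}}\le \abs{\indicator{A(\talpha,\tbeta)}-\indicator{B_\kappa}}+\indicator{R_{\talpha\tbeta}\in B_\kappa\setminus B_0}$, whence
\[
\E\gibbs{\abs{\indicator{A}-\indicator{B_0}}}\le \E\gibbs{\abs{\indicator{A}-\indicator{B_\kappa}}}+\E\gibbs{\indicator{R_{\talpha\tbeta}\in B_\kappa\setminus B_0}}.
\]
The first term is controlled by \prettyref{lem:indicator-approximation-2}.

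For the second term I would use that $\talpha\ne\tbeta$, so $\sigma^\talpha$ and $\sigma^\tbeta$ are i.i.d.\ draws from $\mu_N$ and therefore $\E\gibbs{\indicator{R_{\talpha\tbeta}\in S}}_N=\zeta_N(S)$ for any Borel $S\subset[-1,1]$. Since $B_\kappa\setminus B_0$ is contained in the closed set $\big([q_k-\kappa,q_k]\text{ if }k>0\big)\cup\big([q_{k+1},q_{k+1}+\kappa]\text{ if }k<r\big)$ — here the truncation conventions in the definition of $B_\kappa$ kill the left piece when $k=0$ and the right piece when $k=r$ — the portmanteau theorem applied to a closed set, together with $\zeta_N\to\zeta$ weakly, gives
\[
\limsup_{N\to\infty}\E\gibbs{\indicator{R_{\talpha\tbeta}\in B_\kappa\setminus B_0}}\le \zeta[q_k-\kappa,q_k]\indicator{k>0}+\zeta[q_{k+1},q_{k+1}+\kappa]\indicator{k<r}.
\]

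Then I would take $\limsup_N$ of the first display, add the two bounds (absorbing the one-sided intervals into the symmetric ones $[q_j-\kappa,q_j+\kappa]$), and note that the resulting upper bound $\zeta[q_k-\kappa,q_k+\kappa]\indicator{k>0}+\zeta[q_{k+1}-\kappa,q_{k+1}+\kappa]\indicator{k<r}$ no longer depends on $N$. Finally I would send $\kappa\downarrow 0$: because $\{q_j\}_{j=1}^r$ is $\zeta$-admissible we have $\zeta(\{q_j\})=0$, and the nested intervals $[q_j-\kappa,q_j+\kappa]$ decrease to $\{q_j\}$, so continuity from above of the finite measure $\zeta$ forces the bound to $0$. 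Since $\limsup_N \E\gibbs{\abs{\indicator{A}-\indicator{B_0}}}$ is nonnegative and bounded above by a quantity tending to $0$, it must vanish.

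I do not expect a genuine obstacle here; the argument is a routine approximation. The only points requiring care are getting the direction of the weak-convergence (portmanteau) inequality right on the closed set $B_\kappa\setminus B_0$, and the bookkeeping of the endpoint cases $k=0$ and $k=r$ in the definition of $B_\kappa$ — which is exactly what the indicator factors $\indicator{k>0}$ and $\indicator{k<r}$ are there to record.
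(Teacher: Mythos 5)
Your proposal is correct and is exactly the ``standard approximation argument'' the paper invokes without writing out: the paper derives the corollary from \prettyref{lem:indicator-approximation-2} by letting $\kappa\downarrow0$ and using that the $q_j$ are $\zeta$-continuity points, which is precisely your triangle-inequality decomposition plus the portmanteau bound on the closed sets $[q_k-\kappa,q_k]$ and $[q_{k+1},q_{k+1}+\kappa]$. Your handling of the endpoint conventions $k=0$ and $k=r$ also matches the indicator factors in the lemma, so there is nothing to add.
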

With this observation, 
we now show that any limit point of the overlap structure corresponding to the $v^N$'s 
must an RPC. To do this consider the following. Let the function $\Gamma$ be defined by
\begin{equation}\label{eq:Gamma}
\Gamma(q)=\begin{cases} q_{k} & q\in[q_k,q_{k+1})\\
q_r &q\geq q_r\\
0 & q\in [0,q_1), 
\end{cases}
\end{equation}
and define the \emph{$\Gamma$-approximator} to $\mu$ be the overlap distribution you get by
pushing the distribution of $\mu$ through by the map $R_{ij}\mapsto \Gamma(R_{ij})$. Recall from \prettyref{fact:RPC}
that the $\Gamma$-approximator to $\mu$ is given by a RPC with the jump weights $\zeta_k = \zeta[q_k,q_{k+1})$ (with $q_0=0$). 

Consider a matrix of possible overlap values $(q_{ij})_{ij\leq n}$. 
Corresponding to this matrix there is a tree $\tau$ whose 
structure mimics the ultrametric defined by this matrix. If $q_{ij}$ takes on the values
$\{q_k\}_{k=0}^{r}$, then this tree is of depth $r+1$. We generate such a tree as follows: take
the points $[n]$ as the leaves and create root leaf paths of length $r+1$ to each of them. If $i$ and $j$ are
such that $q_{ij}=q_k$, then we join their paths starting at the root and ending at depth $k$. We call
such a tree an \emph{encoding} of the overlap structure $(q_{ij})$. 

View $\cA_r\cup\{\partial\}$ as a tree by adding a root leaf path to $\cA_r$ with one vertex at each level ending in $\partial$.
For any finite rooted tree $\tau$ of depth $r+1$, we let
\[
h(\tau) =Emb(T(\tau;r),\cA_r\cup\{\partial\})
\]
be the set of embeddings of $T(\tau;r)$, the tree of depth $r$ obtained by deleting the leaves of $\tau$, 
into $\cA_r\cup\{\partial\}$,
and let 
\[
\cP(\tau)=\{n_{\alpha}\}_{\alpha\in\tau,\abs{\alpha}=r}
\]
where $n_{\alpha}=card(child(\alpha))$.
With this we can then prove the next step in the convergence result.

\begin{lem}\label{lem:overlap-conv-to-GM}
The ROSt corresponding to $\cR(\bv^N)$ converges in law to the $\Gamma$-approximator of $\mu$.
\end{lem}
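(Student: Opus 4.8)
The plan is to read the law of the ROSt $\cR(\bv^N)$, and of the $\Gamma$-approximator of $\mu$, off the probabilities they assign to overlap configurations, and to bridge the two through replica draws from $\mu_N$ together with \prettyref{cor:rpc-set-approx}. So first I would reduce to a countable family of scalar limits. The law of a ROSt is determined by the law of its Gram--DeFinetti array, and for $\cR(\bv)$ every entry of that array lies in the finite set $\{0,q_1,\dots,q_r\}$, since $(h_\alpha,h_\beta)=q_{\abs{\alpha\wedge\beta}}$ for $\alpha,\beta\in\cA_r$ by the telescoping in \eqref{eq:h_alpha}, while $(h_\partial,h_\partial)=q_r$ and $(h_\partial,h_\alpha)=0$. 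Consequently, convergence in law of $\cR(\bv^N)$ in the topology of \prettyref{sec:cascades} is equivalent to: for every $n$ and every ultrametric matrix $Q=(q_{ij})_{i,j\le n}$ with entries in $\{0,q_1,\dots,q_r\}$, the configuration probability $p_N(Q):=\E\gibbs{\indicator{R^n=Q}}_{\cR(\bv^N)}$ converges to the corresponding probability $p_\infty(Q)$ under the $\Gamma$-approximator of $\mu$; by \prettyref{fact:RPC}, $p_\infty(Q)$ is precisely the configuration probability of the RPC with weights $\zeta_k=\zeta[q_k,q_{k+1})$. Encoding $Q$ by its tree $\tau$ (of depth $r+1$), a direct expansion of $\cR$ expresses $p_N(Q)$ as a fixed polynomial in $\bv^N$: a sum over embeddings $\phi\in h(\tau)$ of $\prod_{\abs{\alpha}=r,\ \alpha\in\tau}\bigl(v^N_{\phi(\alpha)}\bigr)^{n_\alpha}$ with $n_\alpha\in\cP(\tau)$, with $\partial$ allowed in the image.

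Next I would rewrite $p_N(Q)$ via $\mu_N$. Since $\bv^N$ is $(Y^N_\alpha)=(\mu_N(\tilde Y^N_\alpha))$ put in standard order, and since i.i.d.\ Gibbs draws satisfy $\gibbs{\indicator{\sigma^1,\dots,\sigma^m\in\tilde Y^N_\alpha}}=\mu_N(\tilde Y^N_\alpha)^m$, this same polynomial equals $\E\gibbs{\prod_{\talpha,\tbeta}\indicator{A(\talpha,\tbeta)}^{\pm}}_{\mu_N}$, a product over the pairs of replica indices of the events $A(\talpha,\tbeta)$ of \prettyref{lem:indicator-approximation-2} and their complements, chosen so as to cut out exactly the tree $\tau$, up to an error tending to $0$: that error absorbs the event $(E^N)^c$ (of probability at most $2^{-\nu(N)}$ by \eqref{eq:EN-lb}) and the configurations in which some replica falls outside $\bigcup_{\abs{\alpha}=r}\tilde Y^N_\alpha$, which contribute at most $n\cdot 2\eps_{\nu(N)}$ on either side of the identity by the $(2\eps_{\nu(N)},0)$-hierarchical exhaustion of \prettyref{thm:main-result-1-proof}; and the sum over embeddings $\phi$ matches the expectation over distinct tuples of clusters because standard order is only a relabelling and $p_N(Q)$ is symmetric. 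This bookkeeping — matching the cascade polynomial with the replica picture and verifying that all boundary and dustbin contributions are genuinely $o(1)$ — is the step I expect to be the main obstacle; the rest of the argument only reuses the approximation machinery already developed in \prettyref{lem:indicator-approximation-2} and \prettyref{lem:GGI}.

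Finally I would pass to the limit. There are only finitely many factors $\indicator{A(\talpha,\tbeta)}$, each bounded by $1$, and by \prettyref{cor:rpc-set-approx} each of them agrees, in $L^1(\E\gibbs{\cdot}_{\mu_N})$ as $N\to\infty$, with $\indicator{\Gamma(R_{\talpha\tbeta})=q_{\talpha\tbeta}}$; telescoping over the factors exactly as in the proof of \prettyref{lem:GGI} yields
\[
\limsup_{N\to\infty}\Bigl|\,p_N(Q)-\E\gibbs{\indicator{\Gamma(R^n)=Q}}_{\mu_N}\,\Bigr|=0.
\]
The last expectation is the configuration probability of the pushforward of $\mu_N$ through $\Gamma$; since each $q_k$ is a $\zeta$-continuity point and $P_N\to P$ weakly, the map $R\mapsto\Gamma(R)$ is $P$-a.s.\ continuous, so this converges to $\E\gibbs{\indicator{\Gamma(R^n)=Q}}_{\mu}=p_\infty(Q)$. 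Combining the three displays gives $p_N(Q)\to p_\infty(Q)$ for every admissible $Q$, which is exactly the asserted convergence in law of the ROSt of $\cR(\bv^N)$ to the $\Gamma$-approximator of $\mu$.
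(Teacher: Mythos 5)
Your proposal is correct and follows essentially the same route as the paper's proof: you encode the configuration matrix $Q$ by its tree $\tau$, expand $\gibbs{R^n=Q}_{\cR(\bv^N)}$ as the sum over embeddings of products of weights, identify this with the replica expectation of $\prod\indicator{A(\talpha,\tbeta)}$ under $\mu_N$ up to errors controlled by $(E^N)^c$ and the event that some replica misses $\bigcup_{\abs{\alpha}=r}\tilde{Y}^N_\alpha$, and then pass to the limit via \prettyref{cor:rpc-set-approx} together with weak convergence of $P_N$ to $P$ on the continuity sets determined by the $q_k$. The only cosmetic differences (using complements of the $A$-events, and phrasing the limit through the pushforward $\Gamma_*$) do not change the argument.
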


\begin{proof}
Define $A(\talpha,\tbeta)$ as above making explicit the dependence on $\talpha$ and $\tbeta$. Let $\tau$ be the encoding of the overlap structure $(q_{ij})$. Notice that
\[
\gibbs{R^n = (q_{ij})}_{\cR(\bv^N)}= \sum_{\phi\in h(\tau)}\prod_{\alpha\in \partial T(\tau;r)}v^{n_\alpha}_{\phi(\alpha)}.
\]
It follows that
\[
\abs{\gibbs{R^n=(q_{ij})}_{\cR(\bv^N)}- \gibbs{\prod_{\talpha}\neq\tbeta\in\partial\tau} \indicator{A(\talpha,\tbeta)}\indicator{E^N}}_{\mu_N}
\leq \gibbs{(E^N)^c} + \gibbs{E^N\cap L_N^c}
\]
where $L_N=L_N(\tau)$ is again the event that all of the replica indexed by $\partial\tau$ lands inside of 
$\cup_{\abs{\alpha}=r} \tilde{Y}^N_\alpha$. The first term comes from the case where $\partial$ is in the
 image and gets all of the mass (i.e.  all of the $v_\alpha = 0$). The second case comes 
 from the chance that $\partial$ is in the image and gets some of the mass. 
Taking expectations then gives us
\[
\abs{\E\gibbs{R^n = (q_{ij})}_{\cR(\bv^N)}-\E\gibbs{\prod_{\alpha\neq\beta\in\partial\tau} \indicator{A(\alpha,\beta)}\indicator{E^N}}_{\mu_N}}\leq \prob((E^N)^c)+\prob(E^N\cap L_N^c)
\]
which is vanishing by \prettyref{thm:main-result-1-proof}. By \prettyref{cor:rpc-set-approx} it follows that
\[
\E\gibbs{\prod_{\alpha\neq\beta\in\partial\tau} \indicator{A(\alpha,\beta)}\indicator{E^N}}_{\mu_N}\rightarrow\E\gibbs{R_{ij}\in I_{ij}}_\mu
\]
where  if we let $k_{ij}$ be such that $q_{ij}=q_{k}$, $I_{ij}=[q_{ij},q_{k_{ij}+1}]$. Combining these gives the result.
\end{proof}
\subsubsection{Proof of \prettyref{thm:main-result-2}} 
We can then prove the main result.
\begin{thm}
The sequence $\bv^N$ converges to the weights of an RPC with weights $\zeta_k$ as in \prettyref{fact:RPC}.
\end{thm}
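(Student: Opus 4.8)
The plan is to use compactness of the cascade space $\cC_r$ together with the two convergence lemmas \prettyref{lem:overlap-conv-lt-pt} and \prettyref{lem:overlap-conv-to-GM} to identify the unique distributional limit point of $\bv^N$. Since $\cC_r$ is compact, the laws of $\bv^N$ are tight, so it suffices to show that every subsequential distributional limit $\bv$ of the $\bv^N$ has the law of the weights of the RPC of \prettyref{fact:RPC}; the subsequence principle then yields $\bv^N\convdist(\text{RPC weights})$, which is \prettyref{thm:main-result-2}.

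So I would fix a subsequence $\{N_k\}$ along which $\bv^{N_k}\convdist\bv$. On the one hand, \prettyref{lem:overlap-conv-lt-pt} gives $\cL_{N_k}\convdist\cL$, where $\cL$ is the law of the Gram--DeFinetti array of the encoded ROSt $\cR(\bv)$. On the other hand, \prettyref{lem:overlap-conv-to-GM} shows that $\cL_N$ converges in law to the Gram--DeFinetti array of the $\Gamma$-approximator of $\mu$, which by \prettyref{fact:RPC} is an RPC with jump weights $\zeta_k=\zeta[q_k,q_{k+1})$. Uniqueness of weak limits then forces $\cL$ to equal the Gram--DeFinetti array law of that RPC.

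It remains to invert the encoding. First I would check that $\bv$ is $\prob$-a.s.\ a \emph{proper} cascade: on the event $E^N$ of \prettyref{thm:main-result-1-proof}, the sets $\{\tilde Y^N_\alpha\}$ form a $(2\eps_{\nu(N)},0)$-hierarchical exhaustion of $\mu_N$, so there $\sum_{\abs{\alpha}=r}v^N_\alpha\geq1-2\eps_{\nu(N)}$, while $\prob\times\mu_N^{\tensor\infty}((E^N)^c)\to0$ by \prettyref{eq:EN-lb}. Hence $\sum_{\abs{\alpha}=r}v^N_\alpha\to1$ in probability, so in the limit $\sum_{\abs{\alpha}=r}v_\alpha=1$, i.e.\ the dustbin weight $v_\partial$ vanishes a.s.; combined with the identity $v_\alpha=\sum_{\beta\in child(\alpha)}v_\beta$ established above, this shows $\bv$ is a.s.\ a probability cascade, all of whose level-sums equal $1$. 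Now $\cR$ sends distinct proper cascades in standard order to non-isometric measures — because the encoding vectors $h_\alpha$ have inner products depending only on the tree meet $\abs{\alpha\wedge\beta}$, so any isometry matching the two images must be induced by a tree automorphism, under which the standard-order weight sequence is invariant — while the uniqueness part of the Dovbysh--Sudakov theorem says the law of a Gram--DeFinetti array determines its ROSt up to such an isometry. Since the RPC weights are themselves the $\cR$-image of a proper cascade, the equality $\cL=(\text{array law of the RPC})$ therefore forces the law of $\bv$ to coincide with that of the weights of the RPC, which completes the identification of the limit point.

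The step I expect to be the main obstacle is exactly this last inversion, from equality of Gram--DeFinetti array laws back to equality of cascade laws: it is where the rigid geometry of the $h_\alpha$, the choice of standard order, the a.s.\ properness of the limit, and Dovbysh--Sudakov uniqueness all have to be combined carefully (in particular, confirming that $\cR$ induces an injective map on laws of proper cascades in standard order). Everything upstream is assembly of \prettyref{lem:overlap-conv-lt-pt}, \prettyref{lem:overlap-conv-to-GM}, \prettyref{fact:RPC}, and the compactness of $\cC_r$.
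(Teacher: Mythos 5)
Your overall route is the paper's: pass to a subsequential limit $\bv$ of $\bv^N$ (compactness of $\cC_r$), use \prettyref{lem:overlap-conv-lt-pt} and \prettyref{lem:overlap-conv-to-GM} to identify $\cL(\cR(\bv))$ with the Gram--DeFinetti array law of the $\Gamma$-approximator of $\mu$, hence of an RPC by \prettyref{fact:RPC}, and then invert the encoding via Dovbysh--Sudakov uniqueness; your properness argument for the limit via the exhaustion property is also fine. The gap is in the inversion step, exactly where you anticipated trouble: the claim that ``$\cR$ sends distinct proper cascades in standard order to non-isometric measures'' is false, and so is the assertion that the standard-order weight sequence is invariant under a tree automorphism. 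Take $r=2$, level-one weights $v_1=v_2=1/2$, children weights $(1/2,0,\dots)$ under vertex $1$ and $(1/4,1/4,0,\dots)$ under vertex $2$: swapping the two level-one subtrees produces a different proper cascade, still in standard order, whose $\cR$-image differs from the original by an isometry and hence has the same Gram--DeFinetti array law. So equality of array laws plus ``standard order'' alone cannot pin down the cascade; injectivity of $\cR$ modulo isometry fails whenever sibling weights can tie, and the same example (as a pair of point masses) shows the induced map on laws is not injective either.

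What rescues the argument --- and what the paper does --- is that one of the two objects is an RPC, whose weights at each level are almost surely distinct (appendix Proposition on RPCs). From the coupling $T_*\nu=\cR(\bv)$ one gets a random bijection $\pi$ of $\partial\cA_r$ with $Th_\alpha=h_{\pi(\alpha)}$, and since the inner products $(h_\alpha,h_\beta)$ take finitely many values determined by $\abs{\alpha\wedge\beta}$, $\pi$ extends to all of $\cA_r$ preserving the parent--child relation; this much of your sketch is correct. One then has $W_\alpha=v_{\pi(\alpha)}$, and by $v_\alpha=\sum_{\beta\in child(\alpha)}v_\beta$ (properness, i.e.\ the remark following the dustbin lemma) this identity propagates to every level of the tree. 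The conclusion must then be extracted level by level from the root: at level one, $(W_n)$ and $(v_n)$ are decreasing rearrangements of one another, hence equal as sequences; the almost-sure distinctness of the $W_n$ then forces $\pi$ to be the identity on level one, which is precisely what allows you to compare children families vertex by vertex and repeat the argument at the next depth, ultimately giving $\pi=Id$ and equality of the weights in the coupling, hence in law. Without invoking this distinctness you can conclude neither $\pi=Id$ nor, per the example above, equality of the full tree-indexed weight sequences.
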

\begin{proof}
Suppose that $\bv$ is a limit point of the sequence $\bv^N$. Pass to a subsequence which converges to $\bv$.
By \prettyref{lem:overlap-conv-lt-pt}, we know that for this subsequence
\[
\cL(\cR(\bv^{N_k}))\rightarrow\cL(\cR(\bv)).
\]
By \prettyref{lem:overlap-conv-to-GM}, however, we know that the overlap distributions have a
unique limit point, namely the $\Gamma$-approximator to $\mu$. The  latter corresponds to an 
RPC, by \prettyref{fact:RPC}. 

Let $\nu$ be the Dovbysh-Sudakov measure corresponding to the RPC. 
By the above we know that, since its overlap distribution is the same as $\cL(\cR(\bv))$,
there is a coupling of $\cR(\bv)$ and $\nu$, and a random isometry $T$ from the closed linear span of the support of 
$\nu$ to that of $\cR(\bv)$ such that almost surely 
$T_*\nu=\cR(\bv)$.

In particular this means that there is a random bijection $\pi:\partial\cA_r\rightarrow\partial\cA_r$ such that
$Th_\alpha=h_{\pi(\alpha)}$ for $\alpha\in\partial\cA_r$. Our goal will be to show that $\pi=Id$.

We begin by noting that since $T$ is an isometry and there
are only finitely many possible distances, $\pi$ extends to all of $\cA_r$ in such a way that it preserves
the parent-child relationship, 
$\pi(\alpha)\wedge\pi(\beta)=\pi(\alpha\wedge\beta)$.

Let $W_\alpha$ be the weights of the cascade arranged in standard order. Observe that
\[
W_{\alpha} = T_* \nu(h_{\pi(\alpha)})=\cR(\bv;h_\pi(\alpha))=v_{\pi(\alpha)},
\]
for $\alpha\in\partial\cA_r$. Since $\pi$ preserves the parent-child relationship, and since $v_\alpha$ is the sum over
all children $\beta$  of $\alpha$ of the $v_{\beta}$ and similarly for $W_{\alpha}$, it follows that this equality
extends to the whole tree. The $W_n$ are almost surely distinct, therefore $v_n$ must be as well. 
As both are already in decreasing order, it follows that $W_n =v_n$. Continue this argument down the tree. 
This implies $\pi =Id$. Thus the two sets of weights are equal in law.
\end{proof}
\section{Talagrand's Orthogonal Structures Conjecture}\label{sec:OSC}
We study here the proof of the orthogonal structures conjecture. We begin with a few preliminary results about
Poisson-Dirichlet processes. These results, combined with approximate ultrametricity
 and the Talagrand Positivity Principle
will then establish the theorem. The proofs of the following are standard exercises
in weak convergence and Talagrand's Identities, so we omit them.

\begin{lem}\label{lem:ak-for-pd}
Fix $\theta\in(0,1)$, and let $\mathbf{v}=(v_{n})\sim PD(\theta)$.
Then for any $p\in(0,1)$ , there is a sequence of positive real numbers
$(a_{k})$ such that 
\[
\prob(v_{k}>a_{k},\forall k)\geq p
\]
\end{lem}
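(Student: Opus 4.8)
The plan is to exploit the fact that, for $\theta\in(0,1)$, a $PD(\theta)$ sequence almost surely has \emph{all} of its entries strictly positive, and then to apply a union bound over $k$ together with continuity of $\prob$ from below.

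First I would recall that a $PD(\theta)$ sequence is the ranked sequence of normalized jumps of a subordinator whose L\'evy measure has a $\theta\,x^{-1}$-type singularity at the origin; consequently, with probability one there are infinitely many atoms and each ranked weight $v_k$ is strictly positive. (Equivalently, this follows from the stick-breaking / $GEM(\theta)$ representation, or from \prettyref{fact:RPC} with $r=1$, which asserts that an RPC has almost surely nonzero weights at each level.) In particular $\prob(v_k>0)=1$ for every $k\in\N$.

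Next, fix $p\in(0,1)$. Since $\{v_k>0\}=\bigcup_{m\geq 1}\{v_k>1/m\}$ is an increasing union, continuity of $\prob$ from below yields $\prob(v_k>1/m)\uparrow\prob(v_k>0)=1$ as $m\to\infty$. Hence for each $k$ we may pick $a_k>0$ small enough that
\[
\prob(v_k\leq a_k)<(1-p)\,2^{-k}.
\]
Finally, a union bound gives
\[
\prob\big(\exists k:\ v_k\leq a_k\big)\leq\sum_{k=1}^{\infty}\prob(v_k\leq a_k)<(1-p)\sum_{k=1}^{\infty}2^{-k}=1-p,
\]
so that $\prob(v_k>a_k\ \forall k)\geq p$, as desired.

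The only substantive ingredient is the almost sure positivity of every ranked weight of $PD(\theta)$; the rest is a routine continuity-of-measure and Borel--Cantelli/union-bound argument, so I do not expect any genuine obstacle here. (If one wants the $(a_k)$ to be decreasing, one may simply replace $a_k$ by $\min_{j\leq k}a_j$, which only shrinks each $a_k$ and hence only decreases $\prob(v_k\leq a_k)$.)
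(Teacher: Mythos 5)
Your argument is correct. The paper omits the proof of this lemma entirely (it is dismissed as a standard exercise), so there is nothing to compare line by line; your write-up simply supplies the standard argument one would expect. The only substantive input is, as you say, that every ranked weight of $PD(\theta)=PD(\theta,0)$ is strictly positive almost surely, which holds via the normalized $\theta$-stable subordinator representation (or, within the paper, via the appendix on Ruelle cascades, whose level weights are almost surely nonzero and whose ranked level-one weights are $PD(\theta)$); note that for this particular lemma neither weak convergence nor Talagrand's identities is actually needed, in contrast to the neighboring lemmas it is grouped with. Given positivity, your choice of $a_k$ with $\prob(v_k\leq a_k)<(1-p)2^{-k}$ (by continuity of measure, since $\{v_k\leq 1/m\}\downarrow\{v_k=0\}$) and the countable union bound give exactly $\prob(v_k>a_k\ \forall k)\geq p$. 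One could alternatively extract explicit $a_k$ from the representation $v_n\eqdist (L/X_n)^{1/\theta}$ used later in the paper's quantitative section, but that precision is not required here.
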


\begin{lem}\label{lem:theta-det-conv}
Let $\mathbf{v}_{n}\sim PD(\theta_{n})$. If there is a $\theta\in(0,1)$
such that $\theta_{n}\rightarrow\theta$, then 
$\mathbf{v}_{n}\convdist\mathbf{v}$
where $\mathbf{v} \sim PD(\theta)$.
\end{lem}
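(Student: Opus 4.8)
The plan is to realize all the laws $PD(\theta_n)$ and $PD(\theta)$ simultaneously on one probability space through the classical subordinator representation, check that the resulting weights converge almost surely coordinate by coordinate, and then upgrade this to the claimed convergence in distribution. Since $\theta\in(0,1)$ and a distributional limit is unaffected by finitely many terms, we may assume $\theta_n\in(0,1)$ for every $n$.

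Recall (see \cite{PanchSKBook,Bert06}) that if $\Gamma_l=E_1+\cdots+E_l$ for i.i.d.\ $E_1,E_2,\ldots\sim\mathrm{Exp}(1)$, then for every $t\in(0,1)$ the sequence
\[
w_l(t)=\frac{\Gamma_l^{-1/t}}{\sum_{k\geq1}\Gamma_k^{-1/t}},\qquad l\geq1,
\]
is a.s.\ well defined (the denominator is a.s.\ finite because $1/t>1$, and every term is strictly positive), is already in decreasing order since $\Gamma_1<\Gamma_2<\cdots$ and $x\mapsto x^{-1/t}$ decreases, and has law $PD(t)$. Fixing one sequence $(E_j)_j$ therefore couples all the $\mathbf{v}_n=(w_l(\theta_n))_{l\geq1}\sim PD(\theta_n)$ with $\mathbf{v}=(w_l(\theta))_{l\geq1}\sim PD(\theta)$ on a single space.

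The main step is to show $w_l(\theta_n)\to w_l(\theta)$ as $n\to\infty$ for each fixed $l$, almost surely. Work on the a.s.\ event $\{\Gamma_k/k\to1\}$; then $\Gamma_k\to\infty$, so $\Gamma_k\geq1$ for all $k\geq K_0$. Fix $\eps>0$ with $\theta+\eps<1$ and put $\beta=1/(\theta+\eps)>1$. For $n$ large we have $1/\theta_n\geq\beta$, hence $\Gamma_k^{-1/\theta_n}\leq\Gamma_k^{-\beta}$ for all $k\geq K_0$, while $\sum_k\Gamma_k^{-\beta}<\infty$ a.s.\ (this is the unnormalised total mass of a $PD(1/\beta)$ configuration). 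Since $t\mapsto\Gamma_k^{-1/t}$ is continuous and the finitely many indices $k<K_0$ cause no trouble, dominated convergence yields $\sum_k\Gamma_k^{-1/\theta_n}\to\sum_k\Gamma_k^{-1/\theta}$, whence $w_l(\theta_n)\to w_l(\theta)$ for every $l$. Coordinatewise a.s.\ convergence in the countable product $[0,1]^{\N}$ is exactly a.s.\ convergence in the product topology, so $\mathbf{v}_n\to\mathbf{v}$ a.s.\ in $\cP_m$; as a.s.\ convergence implies convergence in law and the coupled variables carry the right marginals, $\mathbf{v}_n\convdist\mathbf{v}\sim PD(\theta)$.

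This argument is essentially routine, matching the paper's remark that it is a standard exercise; the only point that needs care is producing a domination of the normalising sums that is \emph{uniform in $n$}, which is why one passes to an auxiliary exponent strictly inside $(0,1)$. An alternative route, closer to the ``Talagrand identities'' phrasing, is to invoke the compactness of $\cP_m$ together with the subsequence principle: the power sums $p_j(\mathbf{w})=\sum_l w_l^{\,j}$ with $j\geq2$ are bounded continuous on $\cP_m$ \cite{PanchSKBook}, so any weak limit point $\mathbf{v}$ of $(\mathbf{v}_n)$ obeys $\E\prod_{i=1}^{s}p_{j_i}(\mathbf{v})=\lim_k\E\prod_{i=1}^{s}p_{j_i}(\mathbf{v}_{n_k})$ for all $s$ and $j_1,\ldots,j_s\geq2$; the right-hand side is the corresponding $PD(\theta_{n_k})$ moment, which by Talagrand's identities is a rational function of $\theta_{n_k}$ with no pole in $(0,1)$ and hence tends to the $PD(\theta)$ moment; since these moments determine the law on $\cP_m$ \cite{TalBK03,PanchSKBook}, every subsequential limit equals $PD(\theta)$, and the claim follows.
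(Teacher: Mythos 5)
Your proposal is correct, but its primary argument is genuinely different from what the paper has in mind. The paper gives no proof at all: it dismisses this lemma (together with Lemmas \ref{lem:ak-for-pd} and \ref{lem:ak-for-vn}) as ``standard exercises in weak convergence and Talagrand's Identities,'' i.e.\ exactly your second, alternative route --- use compactness of $\cP_m$ and continuity of the power sums $p_j$, $j\geq 2$, pass to a subsequential weak limit, note that the joint moments $\E\prod_i p_{j_i}$ of $PD(\theta_n)$ are continuous in the parameter via the Talagrand recursion, and conclude that every limit point satisfies the identities with parameter $\theta$ and hence equals $PD(\theta)$; this is the same mechanism the paper itself runs in the proof of \prettyref{thm:clust-reg-pd}. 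Your main argument instead couples all the laws at once through the representation $w_l(t)=\Gamma_l^{-1/t}/\sum_k\Gamma_k^{-1/t}$ of $PD(t)$ by the normalized points of the stable point process, and upgrades parameter continuity to almost sure coordinatewise convergence; the one delicate point, a domination of the normalizing series uniform in $n$, you handle correctly by fixing $\beta=1/(\theta+\eps)>1$ and using $\Gamma_k\sim k$, and since $\cP_m$ carries the product topology, coordinatewise a.s.\ convergence is exactly what is needed. The coupling route buys a stronger conclusion (a.s.\ convergence of explicit versions, no appeal to moment-determinacy of laws on $\cP_m$) at the cost of invoking the subordinator/Poisson representation; the moment route is softer, stays entirely within the toolkit the paper already uses, and is presumably the intended one-line proof. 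Either is acceptable; only the loose parenthetical describing $\sum_k\Gamma_k^{-\beta}$ as the ``unnormalised total mass of a $PD(1/\beta)$ configuration'' should be rephrased (the $PD$ law is by definition the normalized sequence), though the finiteness claim itself is right.
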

\begin{lem}\label{lem:ak-for-vn}
Take $q^{(n)}\rightarrow0$ continuity points of $\zeta$. Let $\theta_{n}=\zeta[0,q^{(n)})$
and let $\mathbf{v}_{n}\sim PD(\theta_{n})$. If $0$ is an atom of
$\zeta$, then for every $p\in(0,1)$ there is a sequence $(a_{k})$
of positive real numbers such that for every $\epsilon$ positive
and $k_{0}\in\N$, there is an $n_{0}\in\N$ such that for all $n\geq n_{0}$,
\[
\prob(v_{k}^{n}>a_{k}\forall k\leq k_{0})\geq p \text{ and } 
q^{(n)}<\epsilon.
\]
\end{lem}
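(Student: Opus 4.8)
The plan is to fix the threshold sequence $(a_k)$ once and for all from the limiting process $PD(\theta)$, where $\theta:=\zeta(\{0\})$, and then to transport a lower bound on cylinder events forward along the weak convergence $\mathbf{v}_n\convdist\mathbf{v}$ supplied by \prettyref{lem:theta-det-conv}. First I would check convergence of the parameters: since $q^{(n)}\to0$ and $0\in[0,q^{(n)})$ for every $n$, we have $\theta_n=\zeta[0,q^{(n)})\ge\zeta(\{0\})$, while for any $\delta>0$ we have $[0,q^{(n)})\subset[0,\delta)$ for $n$ large, so $\limsup_n\theta_n\le\zeta[0,\delta)$; letting $\delta\downarrow0$ and using continuity of $\zeta$ from above gives $\theta_n\to\zeta(\{0\})=:\theta$. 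As this lemma is applied in the replica-symmetry-breaking regime $\zeta(\{0\})<1$, we have $\theta\in(0,1)$, hence also $\theta_n\in(0,1)$ for $n$ large, so \prettyref{lem:theta-det-conv} yields $\mathbf{v}_n\convdist\mathbf{v}$ in $\cP_m$ with $\mathbf{v}\sim PD(\theta)$.

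Next I would choose the thresholds. Fix $p'\in(p,1)$ and apply \prettyref{lem:ak-for-pd} to $\theta$ and $p'$ to obtain positive reals $(a_k')$ with $\prob(v_k>a_k'\ \forall k)\ge p'$ under $PD(\theta)$. For each $k$ the law of the $k$-th coordinate $v_k$ under $PD(\theta)$ has at most countably many atoms, so I can pick $a_k\in(0,a_k')$ which is a continuity point of that law; since shrinking the thresholds only enlarges the event, $\prob(v_k>a_k\ \forall k)\ge p'$ still holds. This $(a_k)$, which depends only on $\theta$ and $p$, is the sequence claimed by the lemma.

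Finally, fix $\epsilon>0$ and $k_0\in\N$ and set $G_{k_0}=\{(w_n)\in\cP_m:w_k>a_k,\ 1\le k\le k_0\}$, an open subset of $\cP_m$ whose boundary lies in $\bigcup_{k\le k_0}\{w_k=a_k\}$ and is therefore $PD(\theta)$-null by the choice of the $a_k$; thus $G_{k_0}$ is a $PD(\theta)$-continuity set. By the portmanteau theorem together with \prettyref{lem:theta-det-conv}, as $n\to\infty$,
\[
\prob(v_k^n>a_k,\ k\le k_0)=\prob(\mathbf{v}_n\in G_{k_0})\longrightarrow\prob(\mathbf{v}\in G_{k_0})\ge\prob(v_k>a_k\ \forall k)\ge p'.
\]
Since $p'>p$, there is $n_1(k_0)$ with $\prob(v_k^n>a_k\ \forall k\le k_0)\ge p$ for $n\ge n_1(k_0)$, and since $q^{(n)}\to0$ there is $n_2(\epsilon)$ with $q^{(n)}<\epsilon$ for $n\ge n_2(\epsilon)$; taking $n_0=\max\{n_1(k_0),n_2(\epsilon)\}$ finishes the argument.

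The only delicate point is the ordering of quantifiers in the last two steps: the single sequence $(a_k)$ must be produced before $\epsilon$ and $k_0$ are seen, it must furnish the $PD(\theta)$-bound $p'>p$ uniformly in $k_0$ (which is why one reads it off the limit and exploits that dropping constraints only helps), and each $a_k$ must be a continuity point of the $k$-th coordinate so that the weak convergence $PD(\theta_n)\Rightarrow PD(\theta)$ genuinely transfers the inequality, with the slack $p'-p$ absorbing the convergence error. Everything else is routine weak-convergence bookkeeping, in line with the remark preceding the lemma.
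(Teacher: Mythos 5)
Your argument is correct and is exactly the ``standard exercise in weak convergence'' the paper has in mind for this omitted proof: identify $\theta_n\to\zeta(\{0\})$, pull the thresholds $(a_k)$ from the limiting $PD(\zeta(\{0\}))$ via \prettyref{lem:ak-for-pd}, and transfer the bound through \prettyref{lem:theta-det-conv} with the slack $p'>p$, with $n_0$ chosen after $\eps,k_0$ exactly as the quantifiers require. The only superfluous step is selecting each $a_k$ to be a continuity point of the $k$-th coordinate law: since $\{w_k>a_k,\ k\leq k_0\}$ is open in $\cP_m$, the portmanteau $\liminf$ inequality for open sets already gives the bound (this is also how the paper uses openness in the proof of \prettyref{lem:ak-for-yn}).
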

\begin{lem}\label{lem:ak-for-yn}
For each $n$, let $Y^{n,N}$be defined as in \prettyref{eq:def-Y} for $\{q^{n}\}$.
Then for every $p\in(0,1)$, there is a sequence $(a_{k})$ such that
for every $\epsilon>0,k_{0}\in\N,$ there is an $n_{0}$ and an $N_{0}$
such that for $N\geq N_{0}$ 
\[
\prob(Y_{k}^{N,n_{0}}>a_{k}\forall k\leq k_{0})\geq p \text{ and } q^{n_0}<\eps
\]
\end{lem}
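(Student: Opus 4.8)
The plan is to deduce \prettyref{lem:ak-for-yn} from \prettyref{lem:ak-for-vn} by transferring the Poisson--Dirichlet estimate there to the finite-$N$ cluster weights, using the weak convergence proved in \prettyref{thm:clust-reg-pd} and the open-set half of the Portmanteau theorem.

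First I would fix $p\in(0,1)$ and choose $p'\in(p,1)$. Since we are in the regime $0<\zeta(\{0\})<1$, \prettyref{lem:ak-for-vn} applies with parameter $p'$ and produces a sequence $(a_k)$ of positive reals with the property that for every $\eps>0$ and $k_0\in\N$ there is an $n_0$ such that $q^{(n_0)}<\eps$, the singleton $\{q^{(n_0)}\}$ is $\zeta$-admissible, and $\prob(v_k^{n_0}>a_k,\ \forall k\le k_0)\ge p'$, where $v^{n_0}$ is distributed as $PD(\zeta[0,q^{(n_0)}))$. I claim this same $(a_k)$ works for \prettyref{lem:ak-for-yn}.

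Next, fix $\eps$ and $k_0$ and let $n_0$ be as above. For the singleton admissible sequence $\{q^{(n_0)}\}$ the depth is $r=1$, so by construction the standard-order weights $(Y_k^{N,n_0})_{k\in\N}$ from \prettyref{eq:def-Y} are precisely the decreasing rearrangement of the leaf-cluster masses appearing in \prettyref{thm:clust-reg-pd}; hence that theorem gives $(Y_k^{N,n_0})_k\convdist v^{n_0}$ in $\cP_m$ as $N\to\infty$. The set $O=\{(v_n)\in\cP_m:\ v_k>a_k,\ \forall k\le k_0\}$ is a finite intersection of preimages of open half-lines under the continuous coordinate projections, hence open in $\cP_m$. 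Therefore, by the Portmanteau theorem,
\[
\liminf_{N\to\infty}\prob\big(Y_k^{N,n_0}>a_k,\ \forall k\le k_0\big)\ \ge\ \prob\big(v_k^{n_0}>a_k,\ \forall k\le k_0\big)\ \ge\ p'\ >\ p,
\]
so there is an $N_0$ with $\prob(Y_k^{N,n_0}>a_k,\ \forall k\le k_0)\ge p$ for all $N\ge N_0$, while $q^{(n_0)}<\eps$ by the choice of $n_0$. This is exactly the assertion.

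Finally, a remark on where the work is: there is essentially no obstacle here beyond bookkeeping --- all the analytic content sits in \prettyref{lem:ak-for-vn} (the tail behaviour of $PD(\theta)$ near an atom of $\zeta$ at $0$) and in \prettyref{thm:clust-reg-pd} (weak convergence of the depth-$1$ cluster weights to a Poisson--Dirichlet process). The only two points requiring a little care are the order of quantifiers --- the sequence $(a_k)$ must be fixed \emph{before} $\eps$ and $k_0$, which is precisely the form in which \prettyref{lem:ak-for-vn} is stated --- and the fact that weak convergence only yields a lower bound on the probability of the \emph{open} set $O$, which is why the slack $p'>p$ is built in at the start.
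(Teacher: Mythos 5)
Your proof is correct and follows essentially the same route as the paper: apply \prettyref{lem:ak-for-vn} with a strictly larger intermediate probability (the paper uses $\frac{1+p}{2}$ where you use $p'$), invoke the weak convergence of \prettyref{thm:clust-reg-pd}, and conclude via the open-set half of the Portmanteau theorem. Nothing further is needed.
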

\begin{proof}
Recall that by \prettyref{thm:clust-reg-pd},  for each $n$ 
\[
Y^{n,N}\rightarrow Y^{n}
\]
weakly where $Y^{n}\sim PD(\zeta[0,q^n))$. It then follows from
\prettyref{lem:ak-for-vn} that for every $p\in(0,1)$ there is a sequence $(a_k)$ such
that for every $\epsilon,k_{0}$ as above, there is an
$n_{0}$ such that 
\[
\prob(Y_{k}^{n_{0}}>a_{k}\forall k\leq k_{0})\geq\frac{1+p}{2}
\]
Combining these results and using again the fact that $A_{k}$ as
defined in \prettyref{lem:ak-for-pd} is open then gives us that there is an $N_{0}$
such that for $N\geq N_{0}$, we have 
\[
\prob(Y_{k}^{n_{0,}N}>a_{k,}\forall k\leq k_{0})\geq p.
\]
\end{proof}

\begin{lem}\label{lem:tal-pos}
For every $\epsilon,\eta$ positive, there is an $\tilde{N}$ such
that for $N\geq\tilde{N}$ 
\[
\prob(\mu_{N}^{\tensor\infty}(R_{12}<-\epsilon)>\epsilon/3)<\eta.
\]
\end{lem}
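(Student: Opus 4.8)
The plan is to deduce this from the Talagrand Positivity Principle for the limiting Dovbysh-Sudakov measure $\mu$ (\prettyref{prop:GGI-cons}) together with the weak convergence $\zeta_N\to\zeta$, via a single first-moment (Markov) estimate.

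First I would note that $R_{12}$ depends only on the first two coordinates, so $\mu_N^{\tensor\infty}(R_{12}<-\eps)=\mu_N^{\tensor 2}(R_{12}<-\eps)$, and in particular
\[
\E\,\mu_N^{\tensor\infty}(R_{12}<-\eps)\le \E\,\mu_N^{\tensor 2}(R_{12}\le -\eps)=\zeta_N([-1,-\eps]).
\]
Next, since $[-1,-\eps]$ is a closed subset of $[-1,1]$, the Portmanteau theorem applied to $\zeta_N\to\zeta$ gives $\limsup_N\zeta_N([-1,-\eps])\le\zeta([-1,-\eps])$. But $\mu$ satisfies the Ghirlanda-Guerra identities, so by Talagrand's Positivity Principle $\mu^{\tensor 2}(R_{12}\in[-1,0))=0$ almost surely, whence $\zeta([-1,0))=\E\,\mu^{\tensor 2}(R_{12}\in[-1,0))=0$ and therefore $\zeta([-1,-\eps])=0$. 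Consequently $\zeta_N([-1,-\eps])\to 0$.

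Finally, applying Markov's inequality to the nonnegative random variable $\mu_N^{\tensor 2}(R_{12}\le -\eps)$ on the background space of disorder yields
\[
\prob\big(\mu_N^{\tensor\infty}(R_{12}<-\eps)>\eps/3\big)\le \frac{3}{\eps}\,\E\,\mu_N^{\tensor 2}(R_{12}\le -\eps)=\frac{3}{\eps}\,\zeta_N([-1,-\eps]),
\]
and the right-hand side tends to $0$; choosing $\tilde N$ so that $\zeta_N([-1,-\eps])<\eta\eps/3$ for all $N\ge\tilde N$ completes the argument.

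There is no genuine obstacle here: all of the substance is already contained in the fact that the limit $\mu$ obeys the Positivity Principle, and the remainder is a routine first-moment bound. The only point deserving a word of care is to run the Portmanteau step against the \emph{closed} set $[-1,-\eps]$ rather than the half-open set $[-1,0)$, so that weak convergence applies cleanly; this is harmless since $[-1,-\eps]\subset[-1,0)$ for $\eps>0$.
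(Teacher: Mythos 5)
Your proof is correct and is exactly the argument the paper has in mind: the paper omits the proof of this lemma, saying only that it "follows from \prettyref{prop:GGI-cons} as before," i.e.\ Talagrand's Positivity Principle for the limiting Dovbysh--Sudakov measure combined with weak convergence of $\zeta_N$ and a Markov/first-moment bound, which is precisely your route. The one point of care you flag (testing weak convergence against the closed set $[-1,-\epsilon]$ rather than $[-1,0)$) is handled correctly.
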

The proof follows from \prettyref{prop:GGI-cons} as before so we omit it.

\begin{thm}[Orthogonal Structures]\label{thm:low-temp-OSC}
There is a sequence $(\tilde{a}_{k})$ of positive numbers such that
for every $\epsilon$ positive and $k_{0}\in\N$, there is an $N_{*}$
such that for $N\geq N_{*}$, with probability at least 3/4, 
\[
\exists\{A_{k}^{N}\}_{k\leq k_0}\subset\Sigma_{N}:\mu_{N}(A_{k}^{N})\geq\tilde{a}_{k}
\]
 and 
\[
\forall k,l\leq k_{0},k\neq l,\left\langle \abs{R_{12}}\indicator{\sigma^{1}\in A_{k},\sigma^{2}\in A_{l}}\right\rangle \leq\epsilon.
\]
\end{thm}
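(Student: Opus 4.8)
\section*{Proof proposal for \prettyref{thm:low-temp-OSC}}

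The strategy is to apply \prettyref{thm:main-result-1} (in the explicit form \prettyref{thm:main-result-1-proof}) to a \emph{depth-one} admissible sequence $\{q^{(n_0)}\}$ with $q^{(n_0)}$ a small continuity point of $\zeta$, to take the sets $A_k^N$ to be the depth-one clusters ordered by decreasing $\mu_N$-mass, pulled back to $\Sigma_N$ through the inclusion $i$ of \prettyref{sec:models}, and then to combine the ``uniformly far'' estimate \prettyref{eq:unif-far} with Talagrand's Positivity Principle to bound $\gibbs{\abs{R_{12}}\indicator{\sigma^1\in A_k,\sigma^2\in A_l}}$.

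First I would check that small continuity points of $\zeta$ yield admissible length-one sequences. Since $\zeta$ has at most countably many atoms there is a decreasing sequence $q^{(n)}\downarrow 0$ of continuity points; because $0<\zeta(\{0\})<1$ we have $\zeta([0,q^{(n)}])\ge\zeta(\{0\})>0$, while $\zeta((q^{(n)},1])\uparrow\zeta((0,1])>0$, so for $n$ large the requirements $0<\zeta([0,q^{(n)}])<1$ and $0<\zeta([q^{(n)},1])<1$ hold and $\{q^{(n)}\}$ is $\zeta$-admissible. Fixing $p$, \prettyref{lem:ak-for-yn} produces a sequence $(a_k)$, which we take as $(\tilde a_k)$. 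Given $\eps$ and $k_0$, the same lemma (applied with tolerance $\eps/8$) provides $n_0$ and $N_0$ with $q^{(n_0)}<\eps/8$ and $\prob(Y_k^{N,n_0}>a_k,\ \forall k\le k_0)\ge p$ for $N\ge N_0$, where $Y^{N,n_0}_\alpha$ are the depth-one cluster weights \prettyref{eq:def-Y} for $\{q^{(n_0)}\}$ arranged in decreasing order. Separately, \prettyref{lem:tal-pos} gives $\prob(\mu_N^{\tensor2}(R_{12}<-\eps/8)\le\eps/24)\ge 1-\eta$ for $N$ large. Taking $p=7/8$ and $\eta=1/8$, the intersection of these two events has probability at least $3/4$.

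On that event, let $C_{\alpha_1,N},\dots,C_{\alpha_{k_0},N}$ be the depth-one clusters carrying the $k_0$ largest masses from \prettyref{thm:main-result-1-proof}, and put $A_j^N=i^{-1}(C_{\alpha_j,N})\subset\Sigma_N$, so that $\mu_N(A_j^N)=Y^{N,n_0}_j\ge a_j=\tilde a_j$. For $k\ne l$ the clusters $C_{\alpha_k,N}$ and $C_{\alpha_l,N}$ are cousins with $\abs{\alpha_k\wedge\alpha_l}=0$, so \prettyref{eq:unif-far} (equivalently the bound on the $g^N$ of \prettyref{eq:g-def} established inside the proof of \prettyref{thm:main-result-1-proof}) reads $\mu_N^{\tensor2}(\sigma^1\in C_{\alpha_k,N},\sigma^2\in C_{\alpha_l,N}:R_{12}\ge q^{(n_0)}+a_N)\le b_N$ with $a_N,b_N\to0$; enlarging $N_*$ we may assume $a_N+b_N<\eps/8$. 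Splitting $\gibbs{\abs{R_{12}}\indicator{\sigma^1\in A_k,\sigma^2\in A_l}}$ over the three regions $\{R_{12}<-\eps/8\}$, $\{-\eps/8\le R_{12}<q^{(n_0)}+a_N\}$, $\{R_{12}\ge q^{(n_0)}+a_N\}$ and estimating each using $\mu_N(A_k^N)\mu_N(A_l^N)\le1$ and $\abs{R_{12}}\le1$ gives a bound $\eps/24+(q^{(n_0)}+a_N+\eps/8)+b_N<\eps$. Since the hypercube overlap of $\sigma^1,\sigma^2\in\Sigma_N$ equals the $\ell_2$ inner product of their images and $\gibbs{\cdot}$ is unchanged under $i$, this is exactly the asserted orthogonal structure on $\Sigma_N$ (and works verbatim on $B_{\ell_2}(0,1)$ for a general $\mu_N$ satisfying the hypotheses of \prettyref{thm:main-result-1}, as in the remark after the statement).

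The work here is essentially bookkeeping: the real content is \prettyref{thm:main-result-1}, \prettyref{thm:clust-reg-pd} (used inside \prettyref{lem:ak-for-yn}), and the Positivity Principle. The point requiring the most care is that the clustering parameters $a_N,b_N$ coming out of \prettyref{thm:main-result-1-proof} for the now-fixed admissible sequence $\{q^{(n_0)}\}$ really do tend to $0$ in $N$, so that $N_*$ may be chosen after $n_0$; and that the Positivity error, which at finite $N$ only holds with high probability rather than almost surely, is charged to the $3/4$ probability budget rather than to the $\eps$ budget. The ``Replica Symmetric'' case $\zeta(\{0\})=1$ and the necessity of $\zeta(\{0\})>0$ follow immediately from weak convergence and Markov's inequality and are omitted, as noted after the statement.
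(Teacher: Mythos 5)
Your proposal is correct and follows essentially the same route as the paper: fix a small admissible continuity point $q^{(n_0)}$, take the depth-one clusters with the $k_0$ largest weights via \prettyref{lem:ak-for-yn} (hence \prettyref{thm:clust-reg-pd}), intersect with the positivity event of \prettyref{lem:tal-pos} to get probability $3/4$, and split the overlap integral into the negative, small, and large regions, controlling the last by the $g^N$/\prettyref{eq:unif-far} clustering bound. The only differences are cosmetic (constants $\eps/8,\eps/24$ versus the paper's $\eps/3,\eps/6$, and your explicit verification that small continuity points are admissible, which the paper leaves implicit).
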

\begin{proof}
Using \prettyref{lem:ak-for-yn}, we know that for $p=7/8$, we have a sequence $(\tilde{a}_{k})$
such that for every $\epsilon$ positive and $k_{0}\in\N$, there
is an $N_{0}$ and $n_1$ so that for $N\geq N_{0}$,
\[
\prob(Y_{k}^{N,n_1}>\tilde{a}_{k},\forall k\leq k_{0})>7/8
\text{ and }  q^{n_1}<\epsilon/6.
\]
Let $N_{1}$ be such that $m_{N}\geq k_{0}$, $b_{N}(n_1)<\epsilon/3$, and $a_{N}(n_1)<\eps/6$ 
where $m_N$, $b_{N}$, and $a_N$ are from \prettyref{thm:main-result-1-proof}.
Finally take $N_{2}=\tilde{N}$ from \prettyref{lem:tal-pos} for $\eta=1/8$
so that 
\[
\prob(\mu_{N}^{\tensor\infty}(R_{12}<-\epsilon)>\epsilon/3)<\frac{1}{8}
\]
for $N\geq N_{2}$. Set $N^{*}= \max N_{i}$. Then for all $N\geq N^*$,
\begin{equation}\label{eq:prob-34}
\prob(Y_{k}^{N,n_1}>\tilde{a}_{k},\forall k\leq k_{0};\mu_{N}^{\tensor\infty}(R_{12}<-\epsilon)<\epsilon/3)\geq3/4.
\end{equation}
Note that
\[
\{Y_{k}^{N,n_1}>\tilde{a}_{k},\forall k\leq k_{0}\}\subset E_{N}^{n_1}.
\]
where $E^{n_1}_N$ is the $E^N$ corresponding to the case where the admissible sequence is $q^{n_1}$ from \prettyref{thm:main-result-1}. (We remind the reader that by definition $(E^N)^c\subseteq\{Y_i=\emptyset\} $
By definition of the latter set and \prettyref{eq:prob-34}, we then know that with
probability at least $3/4$, for $l\neq k\leq k_{0}$, 
\begin{align*}
\left\langle \abs{R_{12}}\indicator{\sigma^{1}\in\tilde{Y}_{k}^{N,n_1},\sigma^{2}\in\tilde{Y}_{l}^{N,n_1}}\right\rangle  & \leq\left\langle \indicator{R_{12}<-\frac{\eps}{3}}\right\rangle +\frac{\epsilon}{3}\left\langle \indicator{\abs{R_{12}}<\frac{\eps}{3}}\indicator{\sigma^{1}\in\tilde{Y}_{k}^{N,n_1},\sigma^{2}\in\tilde{Y}_{l}^{N,n_1}}\right\rangle +\left\langle \indicator{R_{12}>\frac{\epsilon}{3}}\indicator{\sigma^{1}\in\tilde{Y}_{k}^{N,n_1},\sigma^{2}\in\tilde{Y}_{l}^{N,n_1}}\right\rangle \\
 & =I+II+III.
\end{align*}
Notice that 
$I<\frac{\epsilon}{3}$
by choice of $N_{*}$. Similarly by definition of $E_{N}^{n_1}$,
choice of $N_{*}$, and the fact that $q^{n_1} + a_N(n_1)< \eps/3$,
\[
III\leq g_{k,l,\eps_{n_1(N)}}(\bsig)\leq b_{N}(n_1)\leq\frac{\epsilon}{3}
\]
Finally $II<\frac{\epsilon}{3}$,
by definition so that 
\[
\left\langle \abs{R_{12}}\indicator{\sigma^{1}\in\tilde{Y}_{k}^{N,n_1},\sigma^{2}\in\tilde{Y}_{l}^{N,n_1}}\right\rangle \leq\epsilon
\]
as desired.
\end{proof}

 \section{The proof of \prettyref{cor:tal-pure}}\label{sec:tal-pure}
As the proof of \prettyref{cor:tal-pure} a tedious but straightforward modification of the proof of \prettyref{thm:main-result-1}.
In the interest of space, we leave out technical details that are just repetitions up of the arguments above up to a 
small modification. We begin with the following lemmas. Their proofs are exactly as before, so we omit them.
 \begin{lem}
 Let $\eps_n=\frac{1}{2^n}$ and $\Delta>0$.  For all $\Delta$, there is a sequence $N_2(n;\Delta)$
 such that for all $N\geq N_2$,
 \[
 \E \mu_N^{\tensor 2}(R_{12}>q_*+\Delta) < \eps_n
 \]
 \end{lem}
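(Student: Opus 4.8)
The plan is to deduce the estimate directly from the weak convergence $\zeta_N\to\zeta$ together with the standing hypothesis of \prettyref{cor:tal-pure} that $\zeta$ is supported on $[0,q_*]$.

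First I would rewrite the left-hand side in terms of the overlap distribution. Since $\mu_N$ is supported on the unit ball of $\ell_2$, we have $\abs{R_{12}}\le 1$ almost surely, so that by definition of $\zeta_N=\E\mu_N^{\tensor2}(R_{12}\in\cdot)$,
\[
\E\mu_N^{\tensor 2}(R_{12}>q_*+\Delta)=\E\mu_N^{\tensor 2}(R_{12}\in (q_*+\Delta,1])=\zeta_N((q_*+\Delta,1]).
\]

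Next I would observe that the closed set $F=[q_*+\Delta,1]\subset[-1,1]$ is disjoint from the support of $\zeta$, since $q_*+\Delta>q_*$; hence $\zeta(F)=0$. As $\zeta_N\to\zeta$ weakly, the Portmanteau theorem gives
\[
\limsup_{N\to\infty}\zeta_N((q_*+\Delta,1])\le\limsup_{N\to\infty}\zeta_N(F)\le\zeta(F)=0,
\]
so that $\E\mu_N^{\tensor2}(R_{12}>q_*+\Delta)\to 0$ as $N\to\infty$. It then follows that for each fixed $\Delta>0$ and each $n\in\N$ there is an $N_2(n;\Delta)$ such that $\E\mu_N^{\tensor2}(R_{12}>q_*+\Delta)<\eps_n=1/2^n$ for all $N\ge N_2(n;\Delta)$, which is the assertion.

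I do not expect any genuine obstacle here; the only point requiring (minimal) care is that the set $(q_*+\Delta,1]$ is not closed, which is handled by dominating it by the closed set $F=[q_*+\Delta,1]$ before applying the Portmanteau inequality. This is also the reason the lemma is phrased with a margin $\Delta>0$ rather than with $q_*$ itself: an atom of $\zeta$ at $q_*$ is allowed (indeed $\zeta\{q_*\}>0$ is assumed in \prettyref{cor:tal-pure}), so one cannot take $\Delta=0$.
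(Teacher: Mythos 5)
Your argument is correct and is exactly the argument the paper has in mind: the proof is omitted there as being ``exactly as before,'' i.e.\ the same weak-convergence argument as in \prettyref{lem:ult}, namely that $\E\mu_N^{\tensor2}(R_{12}>q_*+\Delta)=\zeta_N((q_*+\Delta,1])\leq\zeta_N([q_*+\Delta,1])$ tends to $\zeta([q_*+\Delta,1])=0$ since $\zeta$ is supported on $[0,q_*]$ (via Portmanteau for the closed set, or equivalently by noting it is a $\zeta$-continuity set), after which $N_2(n;\Delta)$ exists for each fixed $\Delta$. Your closing remarks about dominating the half-open interval by a closed set and about why $\Delta>0$ is needed (the atom at $q_*$) are both apt.
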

\begin{lem}
For all $\eps,\Delta>0$ and $N$, let 
\[
h^N_{\eps,\Delta}(\sigma)=\mu_N^{\tensor 2}(\sigma^1,\sigma^2:(\sigma^i,\sigma)\geq q_* -\Delta-\eps,\forall i\in[2];
R_{12}\geq q_*+\Delta).
\] 
Fix $M,m$ and let $\eps_n$ be as above. Then for  $N\geq N_2(n;\Delta)$, 
\[
\E\mu^{\tensor\infty}_N\left(\sum_{\alpha\in\partial\tau^i_m} h^N_{\eps_n,\Delta}(\sigma^\alpha)<\sqrt{\eps_n},\forall i\in M\right)
\geq 1-Mm^{r+1}\sqrt{\eps_n}
\] 
\end{lem}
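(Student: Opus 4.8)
The plan is to transcribe the proof of \prettyref{lem:fk-N} almost verbatim, with the role played there by Panchenko's ultrametricity theorem now played by the preceding lemma.

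First I would record the pointwise domination: for every $\sigma\in B_{\ell_2}(0,1)$,
\[
h^N_{\eps_n,\Delta}(\sigma)\le \mu_N^{\tensor2}(R_{12}\ge q_*+\Delta),
\]
since the event defining $h^N_{\eps_n,\Delta}(\sigma)$ is contained in $\{R_{12}\ge q_*+\Delta\}$, which does not involve $\sigma$. Letting the outer configuration be drawn from $\mu_N$ and then taking expectations, the preceding lemma (applied with $\Delta/2$ in place of $\Delta$, since $\{R_{12}\ge q_*+\Delta\}\subseteq\{R_{12}> q_*+\tfrac{\Delta}{2}\}$) gives $\E\gibbs{h^N_{\eps_n,\Delta}(\sigma)}_N<\eps_n$ for $N\ge N_2(n;\Delta)$.

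Then by linearity over the $m^{r}\le m^{r+1}$ leaves of each of the $M$ shifted trees $\tau_m^i$,
\[
\E\gibbs{\sum_{i=1}^{M}\sum_{\alpha\in\partial\tau_m^i} h^N_{\eps_n,\Delta}(\sigma^\alpha)}_N\le Mm^{r+1}\eps_n.
\]
Markov's inequality applied to the nonnegative random variable $\sum_{i}\sum_{\alpha} h^N_{\eps_n,\Delta}(\sigma^\alpha)$ under $\mu_N^{\tensor\infty}$ then yields
\[
\E\mu_N^{\tensor\infty}\left(\sum_{i=1}^{M}\sum_{\alpha\in\partial\tau_m^i} h^N_{\eps_n,\Delta}(\sigma^\alpha)\ge \sqrt{\eps_n}\right)\le Mm^{r+1}\sqrt{\eps_n},
\]
and since every summand is nonnegative, the complementary event is contained in $\bigcap_{i\in[M]}\{\sum_{\alpha\in\partial\tau_m^i} h^N_{\eps_n,\Delta}(\sigma^\alpha)<\sqrt{\eps_n}\}$, which is the claimed bound.

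I expect no genuine obstacle here: the only thing to verify is the pointwise domination, and it is immediate because $h^N_{\eps_n,\Delta}$ only discards the proximity constraints $(\sigma^i,\sigma)\ge q_*-\Delta-\eps_n$, and that loss is precisely what reduces the estimate to the ``no mass above $q_*$'' bound of the preceding lemma. This lemma is a packaging step whose output feeds the exhaustion/clustering argument of \prettyref{cor:tal-pure} in place of the ultrametricity input used for \prettyref{thm:main-result-1}.
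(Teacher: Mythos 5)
Your proof is correct and is essentially the argument the paper intends (the paper omits it, saying it is "exactly as before," i.e., the pointwise domination plus linearity and Markov scheme of Lemma \ref{lem:fk-N}, which is exactly what you wrote). The only cosmetic difference is your $\Delta/2$ device to reconcile the strict inequality in the preceding lemma with the $\geq$ in $h^N_{\eps,\Delta}$, which shifts the threshold to $N_2(n;\Delta/2)$ but is harmless.
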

\begin{lem}
Let $\eps_n=1/2^n$. For all $n,\Delta>0$, there is an $N_3(n,\Delta)$ such that for all $N\geq N_3(n;\Delta)$
\[
\zeta_N[q_* - \Delta+\eps_n,1]\geq \zeta\{q_*\} -\eps_n
\] 
\end{lem}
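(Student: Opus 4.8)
The plan is to read this off from the standing hypothesis $\zeta_N\to\zeta$ weakly (in force throughout this section, since $\{\mu_N\}$ satisfies the assumptions of \prettyref{thm:main-result-1}), via the portmanteau theorem. The one point requiring care is that $q_*$ is an atom of $\zeta$ — we are assuming $\zeta\{q_*\}>0$ here — hence not a $\zeta$-continuity point, so one cannot evaluate $\zeta_N$ on a closed set containing $q_*$ and simply pass to the limit. Instead one exploits that the interval $[q_*-\Delta+\eps_n,1]$ contains an \emph{open} neighbourhood of $q_*$; this is precisely why $\eps_n$ must be dominated by $\Delta$, which is the relevant regime in the application (there $\Delta$ is fixed and $n\to\infty$).

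First I would dispose of the trivial case $\zeta(\{q_*\})\le\eps_n$: there the right-hand side is nonpositive while the left-hand side is a probability, so there is nothing to prove. So assume $\eps_n<\zeta(\{q_*\})$ and, restricting to the relevant range, $\eps_n<\Delta$. Put $G=(q_*-\Delta+\eps_n,\infty)$, an open subset of $\R$ containing $q_*$, so that $\zeta(G)\ge\zeta(\{q_*\})$. Since $\zeta_N$ and $\zeta$ are carried by $[-1,1]$,
\[
\zeta_N[q_*-\Delta+\eps_n,1]\ \ge\ \zeta_N\big((q_*-\Delta+\eps_n,\infty)\big)\ =\ \zeta_N(G),
\]
so it suffices to bound $\zeta_N(G)$ from below. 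By the portmanteau theorem applied to the open set $G$ and the weak convergence $\zeta_N\to\zeta$,
\[
\liminf_{N\to\infty}\zeta_N(G)\ \ge\ \zeta(G)\ \ge\ \zeta(\{q_*\}),
\]
so there is $N_3=N_3(n,\Delta)$ with $\zeta_N(G)\ge\zeta(\{q_*\})-\eps_n$ for all $N\ge N_3$; combining with the previous display gives the claim. (If one wanted a version robust to $\eps_n\ge\Delta$ one could instead fix $\eta$ with $0<\eta<\Delta-\eps_n$ and run the same argument with $G=(q_*-\eta,\infty)$ and $\zeta(q_*-\eta,q_*+\eta)\ge\zeta\{q_*\}$.)

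I do not expect a real obstacle here: essentially all the content is the bookkeeping that keeps $q_*$ strictly inside the interval, which forces the comparison $\eps_n<\Delta$. For completeness I would note that the two preceding lemmas of this subsection follow the same template as \prettyref{lem:ult}, \prettyref{lem:fk-N}, and \prettyref{lem:gab-N}: the tail bound on $\mu_N^{\tensor2}(R_{12}>q_*+\Delta)$ is the portmanteau inequality for the closed set $\{x>q_*+\Delta\}$, whose $\zeta$-mass vanishes because $q_*$ is the supremum of the support of $\zeta$; and the bound on $\sum_{\alpha\in\partial\tau_m^i}h^N_{\eps_n,\Delta}(\sigma^\alpha)$ is a first-moment computation followed by Markov's inequality, using that estimate in place of the ultrametricity input of \prettyref{prop:GGI-cons}.
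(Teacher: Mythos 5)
Your proof is correct in the regime that matters ($\eps_n<\Delta$, which is the only case used since $\Delta$ is fixed and $n\to\infty$) and follows exactly the route the paper intends: the paper omits this proof as being ``exactly as before,'' i.e.\ the same weak-convergence/portmanteau argument applied to a set containing a neighbourhood of the atom $q_*$, which is what you do with the open set $G=(q_*-\Delta+\eps_n,\infty)$. The only blemish is your final parenthetical: for $\eps_n\ge\Delta$ there is no $\eta$ with $0<\eta<\Delta-\eps_n$, so that remark does not actually extend the statement (nor need it, since in that degenerate regime the inequality as written is not the intended claim).
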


We then have the following modification of \prettyref{lem:clust-1}.

\begin{lem}\label{lem:clust-1-mod}
$\forall\eta,\epsilon,\delta$, and for all $\Delta>0$, if we let $q_r=q_*-\Delta$ and chose $\{q_k\}_{k=1}^{r-1}$ so that the sequence $\{q_k\}_{k=1} ^r$ is $\zeta$-admissible, then there is an $m(\eta,\eps,\delta,\Delta),M(\eta,\eps,\delta,\Delta),n_{0}(\eta,\eps,\delta,\Delta)$ and a sequence $\tilde{N}_1(n;\eta,\eps,\delta,\Delta)$ such that for
$n\geq n_{0}$ and $N\geq \tilde{N}_1(n)$, 
\begin{align*}
\E\mu^{\tensor\infty}_{N}\Bigg(\left(\bigcup_{i=1}^{M}E^\Delta_{\tau^i_{m},\eps,\delta}(\mu_N)\right)&\bigcap_{i=1}^{M}
\left\{ \sum_{\alpha\in\tau_m^i} f_{\abs{\alpha},\epsilon_{n}}(\sigma_N^{\alpha})\leq\sqrt{\eps_n}\right\} \cap
\left\{ \sum_{\alpha\nsim\beta\in\tau_m^i} g^N_{\alpha,\beta,\eps_{n}}(\bsig_N)\leq\sqrt{\eps_n}\right\}\\
&\cap\left\{\sum_{\alpha\in\partial\tau_m^i}h^N_{\eps_n,\Delta}(\sigma^\alpha)\leq \sqrt{\eps_n}\right\} \Bigg)
\geq1-\eta
\end{align*}
where by $E^\Delta_{\tau,\eps,\delta}$ we are making the dependence of $E_{\tau,\eps,\delta}$ on $q_r=q_*-\Delta$ explicit
\end{lem}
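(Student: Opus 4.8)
The plan is to run the proof of \prettyref{lem:clust-1} essentially verbatim, inserting one more ``good event'' coming from the three auxiliary lemmas stated just above. Abbreviate the intersection whose probability we must bound by writing the left-hand side as $\E\mu^{\tensor\infty}_N\left(A\cap B\cap C\cap D\right)$, where $A=\bigcup_{i=1}^{M}E^\Delta_{\tau^i_m,\eps,\delta}(\mu_N)$, where $B$ and $C$ are the events of \prettyref{lem:clust-1} built from $f$ and $g$, and where $D=\bigcap_{i=1}^{M}\left\{\sum_{\alpha\in\partial\tau_m^i}h^N_{\eps_n,\Delta}(\sigma^\alpha)\leq\sqrt{\eps_n}\right\}$.

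First I would treat $A$. Since $\{q_k\}_{k=1}^r$ with $q_r=q_*-\Delta$ is by hypothesis $\zeta$-admissible, \prettyref{prop:regularity} applies to it without change and supplies $m=m(\eta,\eps,\delta,\Delta)$, $M=M(\eta,\eps,\delta,\Delta)$ and $N_0$ so that $Q_N\!\left(\bigcup_{i=1}^{M}A_{\tau_m,\eps,\delta}^i\right)\geq 1-\eta/2$ for $N\geq N_0$; disintegration as in \prettyref{lem:disintegrate} converts this into $\E\mu^{\tensor\infty}_N(A)\geq 1-\eta/2$. With $m$ and $M$ now fixed, I would apply \prettyref{lem:fk-N} and \prettyref{lem:gab-N} to bound $\mu^{\tensor\infty}_N(B^c)$ and $\mu^{\tensor\infty}_N(C^c)$ by $M\cdot\mathrm{poly}(m)\cdot\sqrt{\eps_n}$ once $N\geq N_1(n)$, and the second of the three lemmas above (in force for $N\geq N_2(n;\Delta)$, that range supplied by the first of the three) to bound $\mu^{\tensor\infty}_N(D^c)\leq Mm^{r+1}\sqrt{\eps_n}$. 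Setting $\tilde N_1(n;\eta,\eps,\delta,\Delta)=N_0\vee N_1(n)\vee N_2(n;\Delta)$ and combining by the union bound gives, for $N\geq\tilde N_1(n)$,
\[
\E\mu^{\tensor\infty}_N\!\left(A\cap B\cap C\cap D\right)\;\geq\;1-\frac{\eta}{2}-M\bigl(m^{2r+2}+2m^{r+1}\bigr)\sqrt{\eps_n}.
\]
Since $\eps_n=2^{-n}$, choosing $n_0=n_0(\eta,\eps,\delta,\Delta)$ so that $M(m^{2r+2}+2m^{r+1})\sqrt{\eps_{n_0}}<\eta/2$ yields the claim for all $n\geq n_0$ and $N\geq\tilde N_1(n)$; as in \prettyref{thm:main-result-1-proof} one may arrange that $m,M,n_0\to\infty$ as $\eta\to 0$. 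The third of the three preceding lemmas (the lower bound on $\zeta_N[q_*-\Delta+\eps_n,1]$) is not needed here; it is recorded for the subsequent identification of the Poisson--Dirichlet parameter in the proof of \prettyref{cor:tal-pure}.

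There is no serious obstacle: all of the genuine content sits in the three auxiliary lemmas, which are proved exactly as \prettyref{lem:ult}, \prettyref{lem:fk-N} and \prettyref{lem:gab-N} (with $h^N_{\eps,\Delta}$ playing the role of $f$, but pushing toward the upper side of $q_*$). The only point requiring care is that the sequence $\{q_k\}_{k=1}^r$ terminating at $q_r=q_*-\Delta$ be genuinely $\zeta$-admissible: one needs $0<\zeta[q_r,1]$, which holds because $\zeta\{q_*\}>0$ forces $\zeta[q_*-\Delta,1]\geq\zeta\{q_*\}>0$, and $\zeta[q_r,1]<1$, which is why the standing hypotheses of \prettyref{cor:tal-pure} (support in $[0,q_*]$, $\zeta\{q_*\}>0$, replica symmetry breaking) are imposed; moreover $\Delta$ must be small enough that $q_*-\Delta$ is a $\zeta$-continuity point exceeding $q_{r-1}$, which is exactly the freedom built into the phrase ``choose $\{q_k\}_{k=1}^{r-1}$ so that the sequence is $\zeta$-admissible.''
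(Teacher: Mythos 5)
Your proposal is correct and follows essentially the same route as the paper: the paper's proof is precisely the proof of \prettyref{lem:clust-1} with the extra event built from $h^N_{\eps_n,\Delta}$ absorbed into the union bound, a $\Delta$-dependent choice of thresholds, and a correspondingly adjusted $n_0$ (the differing polynomial-in-$m$ constants are immaterial). The only cosmetic deviation is that the paper also folds $N_3(n;\Delta)$ from the third auxiliary lemma into $\tilde N_1$ for later convenience, whereas you correctly observe it is not needed for the stated inequality itself.
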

\begin{proof}
This is the same as in \prettyref{lem:clust-1}, except now $N_0$ and $N_1$ depend on $\Delta$ as well and we take 
\[
\tilde{N}_1(n;\eta,\eps,\delta,\Delta)=\max\{N_0(\eta,\eps,\delta,\Delta),N_1(n,\Delta),N_2(n;\Delta),N_3(n;\Delta)\},
\]
the expression \prettyref{eq:clust-1-abc} becomes
\[
\E \mu^{\tensor\infty}_N(A\cap B\cap C\cap D) \geq 1-\eta/2-M(m^{2r+2}+m^{r+1}+m^r)\sqrt{\epsilon_n}.
\]
and $n_0$ becomes
$n_0 \geq \lceil2\lg\left(2M(m^{2r+2}+rm^r+m^r)/\eta\right)\rceil.$
\end{proof}
We then have the following proposition.
\begin{prop}\label{prop:clust-hier-mod}
Fix $\{q_k\}_{k\leq r-1}$ and $\Delta_n>0$ such that $\Delta_n\rightarrow0$; such that if $q_r =q_*-\Delta_n$, 
$\{q_k\}_{k\leq r}$ is a $\zeta$-admissible sequence; and such that 
such that that $q_{r-1}<q_*-\tilde{\Delta}_1$. Then there is a monotone increasing function $n_0(N)$ such that
and $\lim_{N\rightarrow\infty}n_0(N)=\infty$ and  a sequence of sets
$\{\tilde{X}_{\alpha,N}\}_{\alpha\in\cA_r}$ such that they are $(\eps_{n_0(N)},0)$-hierarchically exhausting and 
$(\sqrt{\eps_{n_0(N)}},\sqrt{\eps_{n_0(N)}})$-hierarchically clustering for  $\mu_N$,except with the modification that
\prettyref{eq:unif-close} for $k=r$ has $q_r(N)=q_*-\Delta_{n_0(N)}$. Furthermore, these sets have
the property that there are sequences $c_N,d_N,p_N$ tending to zero such that for all $\alpha\in\partial\tau_{m_N}$, 
\begin{equation}\label{eq:L_1-over-ctrl}
\int_{\tilde{X}_{\alpha,N}^2} \abs{R_{12}-q_*}d\mu_N^{\tensor 2} <c_N \mu_N(\tilde{X}_{\alpha,N})^2 +d_N.
\end{equation}
with probability at least $1-p_N$
\end{prop}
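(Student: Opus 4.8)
The plan is to re-run the proof of \prettyref{thm:main-result-1-proof} essentially verbatim, feeding in \prettyref{lem:clust-1-mod} in place of \prettyref{lem:clust-1} so that the additional event controlling the functions $h^N_{\eps,\Delta}$ is carried along, and then to read off \prettyref{eq:L_1-over-ctrl} from the depth-$r$ clustering estimate together with that $h$-estimate. The only genuinely new computation is a short three-region split of the $L_1$ integral at the end; everything else is transcription. Concretely, for each $\nu\in\N$ I would set $\eta_\nu=\eps_\nu=\delta_\nu=2^{-\nu}$ and apply \prettyref{lem:clust-1-mod} with $\Delta=\Delta_{\nu-1}$ (the given sequence shifted by one, so that $\{q_1,\dots,q_{r-1},q_*-\Delta_{\nu-1}\}$ is $\zeta$-admissible; the hypothesis $q_{r-1}<q_*-\tilde{\Delta}_1$ keeps this legitimate for all $\nu$). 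This produces $m_\nu,M_\nu,n_0(\nu)\to\infty$ and a threshold sequence $\tilde{N}_1(n;\nu)$ past which the event $X^N_\nu$ of \prettyref{lem:clust-1-mod} — the union of exhaustion events intersected with the three ``$\le\sqrt{\eps_n}$'' sums in $f$, $g$, $h$ — has $\mu_N^{\tensor\infty}$-probability $\ge 1-2^{-\nu}$. Defining $N(\nu)$ by the recursion of \prettyref{thm:main-result-1-proof}, setting $\nu(N)=\sup\{\nu:N\ge N(\nu)\}$, and putting $n_0(N):=\nu(N)-1$, $m_N:=m_{\nu(N)}$, $q_r(N):=q_*-\Delta_{n_0(N)}$, and $X^N:=X^N_{\nu(N)}$ (built with inner index $n=n_0(\nu(N))$), I get $\prob(X^N)\ge 1-2^{-\nu(N)}=:1-p_N$, with $p_N\to0$, $\Delta_{n_0(N)}\to0$, and $n_0(N)$ nondecreasing to $\infty$.

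Next I would extract the sets and their two structural properties exactly as in \prettyref{thm:main-result-1-proof}. On $X^N$ pick the minimal $i$ with $\bftW(\bsig,\mu_N)\in A_{\tau^i_{m_N},\eps_{\nu(N)},\delta_{\nu(N)}}$ (for $q_r=q_r(N)$), clean via $C_\alpha=B_\alpha\setminus\bigcup_{\beta\nsim\alpha}B_\beta$ exactly as in \prettyref{cor:exhaustion-2}, and let $\tilde{X}^N_\alpha$ be $C_\alpha$ on the shifted regular tree and $\emptyset$ otherwise. As there, these form a $(2\eps_{\nu(N)},0)$-hierarchical exhaustion, hence an $(\eps_{n_0(N)},0)$-hierarchical exhaustion once one uses $2\eps_{\nu(N)}=\eps_{\nu(N)-1}$ to harmonize the $\eps$-indexing; and, from the $f$- and $g$-sums on $X^N$ being $\le\sqrt{\eps_{n_0(\nu(N))}}$ together with $\eps_{n_0(\nu(N))}<\eps_{n_0(N)}$, they are $(\sqrt{\eps_{n_0(N)}},\sqrt{\eps_{n_0(N)}})$-hierarchically clustering, with \prettyref{eq:unif-close} at depth $r$ now taken against $q_r(N)=q_*-\Delta_{n_0(N)}$ — which is the stated modification.

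It then remains to prove \prettyref{eq:L_1-over-ctrl}. Fix a leaf $\alpha\in\partial\tau_{m_N}$ with $C_\alpha\ne\emptyset$ (the claim being trivial otherwise), and abbreviate $\Delta=\Delta_{n_0(N)}$, $\eps=\eps_{n_0(\nu(N))}$. Since $C_\alpha\subset B(\sigma^\alpha,q_*-\Delta)$, every $x\in C_\alpha$ has $(x,\sigma^\alpha)\ge q_*-\Delta\ge q_*-\Delta-\eps$, so on $X^N$ the $h$-sum bound gives $\mu_N^{\tensor2}(\sigma^1,\sigma^2\in C_\alpha:R_{12}\ge q_*+\Delta)\le h^N_{\eps,\Delta}(\sigma^\alpha)\le\sqrt\eps$, while the depth-$r$ clustering bound gives $\mu_N^{\tensor2}(\sigma^1,\sigma^2\in C_\alpha:R_{12}\le q_*-\Delta-\eps)\le\sqrt\eps$. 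On the complement of these two exceptional sets one has $|R_{12}-q_*|\le\Delta+\eps$, and on them $|R_{12}-q_*|\le 1+q_*\le 2$, whence $\int_{C_\alpha^2}|R_{12}-q_*|\,d\mu_N^{\tensor2}\le(\Delta+\eps)\,\mu_N(C_\alpha)^2+4\sqrt\eps$; taking $c_N=\Delta_{n_0(N)}+\eps_{n_0(N)}$ and $d_N=4\sqrt{\eps_{n_0(N)}}$ (both $\to0$ since $\Delta_n\to0$ and $n_0(N)\to\infty$) delivers \prettyref{eq:L_1-over-ctrl} with probability $\ge1-p_N$, uniformly over the leaves.

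The main obstacle here is purely organizational rather than mathematical: arranging that one nondecreasing index $n_0(N)$ simultaneously controls the resolution $\eps_{n_0(N)}$, the exhaustion and clustering errors, and the drift $\Delta_{n_0(N)}$, all tending to $0$. This is what forces the choice to couple $\Delta$ to the outer index $\nu$ (rather than to the inner index $n$) and the reindexing $n_0(N)=\nu(N)-1$; once that is pinned down, everything else is a transcription of \prettyref{cor:exhaustion-2}, \prettyref{thm:main-result-1-proof}, and the three elementary lemmas preceding \prettyref{lem:clust-1-mod}, plus the elementary integral split above.
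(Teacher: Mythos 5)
Your proposal is correct and follows essentially the same route as the paper's proof: rerun the diagonalization of \prettyref{thm:main-result-1-proof} with \prettyref{lem:clust-1-mod} supplying the extra $h$-sum event, build the sets exactly as the $\tilde{Y}_{\alpha,N}$, and obtain \prettyref{eq:L_1-over-ctrl} by the same three-region split of the overlap around $q_*$ with the same choices $c_N=\Delta+\eps$, $d_N=4\sqrt{\eps}$, $p_N=2^{-\nu(N)}$. Your only departures are bookkeeping ones (coupling $\Delta$ to the outer index via $\Delta_{\nu-1}$ and setting $n_0(N)=\nu(N)-1$), which merely make the single-index statement literal.
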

\begin{proof}
We begin as in \prettyref{thm:main-result-1-proof} by fixing $\nu\in\N$, and letting $\eta_\nu=\eps_\nu=\delta_\nu=1/2^\nu$.
Consider the set
\begin{align*}
E^N_{\nu}=\bigcup_{i=1}^{M_{\nu}}E^{\Delta_\nu}_{\tau_{m_{\nu}}^{i},\epsilon_{\nu},\delta_{\nu}}(\mu_N)&\cap
\left\{ \sum_{\alpha\in\tau_{m_{\nu}}}f^N_{\abs{\alpha},\eps_{n}}\left(\sigma_N^{\alpha}\right)\leq\sqrt{\eps_{n}}\right\} 
\cap\left\{ \sum_{\alpha\nsim\beta}g^N_{\alpha,\beta,\eps_{n}}(\bsig_N)\leq\sqrt{\epsilon_{n}}\right\}\\
&\cap\left\{\sum_{\alpha\in\partial\tau_{m_\nu}}h^N_{\eps_n,\Delta_\nu}(\sigma^\alpha)\leq \eps_n\right\} 
\end{align*}
Then, by \prettyref{lem:clust-1-mod}, we know that there are $m,M,n_0$ all functions of $\nu$ 
and a sequence $\tilde{N}_1(n;\nu)$ such that for $n\geq n_0$ and $N\geq\tilde{N}_1(n;\nu)$, we have that
\[
\E\mu_N^{\tensor\infty}(E_\nu^N) \geq 1-\frac{1}{2^\nu}
\]
as before. Choose $m,M$, and $n_0$ as before so that they tend to infinity with $\nu$, choose $N(\nu)$ as before,
and define $\nu(N)$ as before. We then define $E^N=E^N_{\nu(N)}$ as before. Then we know that
\[
\E\mu_N^{\tensor\infty}(E^N)\geq 1-\frac{1}{2^{\nu(N)}} 
\] 
as before. Define $i$ and $C_\alpha$ as per \prettyref{thm:main-result-1-proof} and define $\tilde{X}_{\alpha,N}$ and
$X_{\alpha,N}$ in the same way as the $\tilde{Y}_{\alpha,N}$ and $Y_{\alpha,N}$ from \prettyref{sec:clust-reg}.
That these sets have the clustering and exhausting properties as before is the same as in \prettyref{thm:main-result-1}. It
remains to verify \prettyref{eq:L_1-over-ctrl}. It suffices to check this inequality on $E^N$. In this case, we see that
for $\alpha\in\partial\tau_{m_{\nu(N)}}$, 
\begin{align*}
\int_{\tilde{X}_{\alpha,N}^2}\abs{R_{12}-q_*} d\mu_N^{\tensor 2}
&\leq \int_{\tilde{X}_{\alpha,N}^2}\abs{R_{12}-q_*}\Bigg(\indicator{R_{12}>q_*+\Delta_{\nu(N)}}\\
&+\indicator{R_{12}\in [q_*-\Delta_{\nu(N)}-\eps_{n_0(N)},q_*+\Delta_{\nu(N)})} 
+\indicator{R_{12} <q_*-\Delta_{\nu(N)}+\eps_{n_0(N)}} \Bigg)d\mu_N^{\tensor 2}\\
&\leq 2h^N(\sigma^\alpha)+(\Delta_{\nu(N)}+\eps_{n_0(N)})\mu_N(\tilde{X}_{\alpha,N})^2 +2f^N_{r,\eps_{n_0(N)}}(\sigma^\alpha)\\
&\leq 4\sqrt{\eps_{n_0}(N)}+(\Delta_{\nu(N)}+\eps_{n_0(N)})\mu_N(\tilde{X}_{\alpha,N})^2 
\end{align*}
where the last inequality follows from the definition of $\tilde{X}_{\alpha,N}$. Set $c_N=(\Delta_{\nu(N)}+\eps_{n_0(N)})$, $d_N= 4\sqrt{\eps_{n_0}(N)}$ and $p_N=\frac{1}{2^{\nu(N)}}$ to get the result.
\end{proof}
\begin{prop}
If we let $X_{\alpha,N}$ be the weights of the $\tilde{X}_{\alpha,N}$ in \prettyref{prop:clust-hier-mod}, then if we
let $\mathbf{v}^N=(v^N_n)$ be the masses of the leaves arranged in decreasing order, then 
$\mathbf{v}^N\convdist\mathbf{v}$
where $\mathbf{v}$ are the points of a $PD(1-\zeta\{q_*\})$.Furthermore, if we let $\mathbf{w}^N$ be the weights 
$X_\alpha$ placed in standard order, then
$\mathbf{w}^N\convdist\mathbf{w}$
where $\mathbf{w}$ is an $RPC(\Gamma_*\zeta)$ where $\Gamma_*\zeta$ is the push forward of $\zeta$ through $\Gamma$ as defined in \prettyref{eq:Gamma}
\end{prop}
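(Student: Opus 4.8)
The proof runs the machinery of Sections \ref{sec:clust-reg} and \ref{sec:cascades} essentially verbatim, with the leaf level of the cascade now playing the role of Talagrand's pure states. For the first assertion I would mimic the proof of \prettyref{thm:clust-reg-pd}. Put $U^N(\sigma^1,\sigma^2)=\indicator{\sigma^1,\sigma^2\text{ lie in a common leaf }\tilde{X}_{\alpha,N}}$ and prove the analogue of \prettyref{lem:indicator-approximation}: for each small $\kappa>0$ there is a fixed piecewise-linear function $\phi_\kappa$ of the overlap, equal to $0$ on $[0,q_*-\kappa)$ and to $1$ on $[q_*-\kappa/2,1]$, with $\limsup_N\E\gibbs{\abs{U^N_{12}-\phi_\kappa(R_{12})}}_{\mu_N}$ bounded by a multiple of $\zeta[q_*-\kappa,q_*)$. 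This is the case analysis of \prettyref{lem:indicator-approximation}, the only new feature being that the failure event ``$\sigma^1,\sigma^2$ lie in a common leaf but $R_{12}\ge q_*+\Delta_{n_0(N)}$'' must be discarded using the extra control $\sum_{\alpha\in\partial\tau_m}h^N_{\eps_n,\Delta}(\sigma^\alpha)\le\sqrt{\eps_n}$ built into the event $E^N$ of \prettyref{prop:clust-hier-mod}, which is precisely why that term was put there. One arranges the leaf-radius offset $\Delta_{n_0(N)}$ to dominate the clustering tolerance $a_N$ (which costs only shrinking $\Delta_n$ slowly), so that the ambiguous overlap window for $U^N$ sits strictly below the atom at $q_*$; letting $\kappa\to0$ and using $\zeta[q_*-\kappa,q_*)\downarrow0$ (since $\zeta$ is supported on $[0,q_*]$) then gives $\E\gibbs{U^N_{12}}_{\mu_N}\to\zeta\{q_*\}$ and the $n$-point analogues of \prettyref{cor:ind-approx-contd}. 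The Ghirlanda--Guerra identities for these functions follow from the AGGIs exactly as in \prettyref{lem:GGI}, and Talagrand's identities then give $\mathbf{v}^N\convdist\mathbf{v}$ with $\mathbf{v}\sim PD(\zeta[0,q_*))=PD(1-\zeta\{q_*\})$.

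For the second assertion I would replay Section \ref{sec:cascades} with the cut-points $q_1,\ldots,q_{r-1},q_*$ in place of $q_1,\ldots,q_r$: encode a cascade into a ROSt via the map $\cR$ of \prettyref{eq:rost-map} built from the vectors of \prettyref{eq:h_alpha} for these radii. The dustbin-convergence lemma carries over since its only input is the Poisson--Dirichlet convergence of the leaf weights, which is the first assertion; hence the analogue of \prettyref{lem:overlap-conv-lt-pt} holds --- along any subsequence on which $\mathbf{w}^N$ converges to a limit point $\mathbf{w}$, the associated ROSt laws converge to that of $\cR(\mathbf{w})$. Next, the analogues of \prettyref{lem:indicator-approximation-2} and \prettyref{cor:rpc-set-approx} (needing only that $q_1,\ldots,q_{r-1}$ be $\zeta$-continuity points, the atom at $q_*$ being handled from below exactly as in the first step via the top-level $h^N$ and $f^N$ terms) give the analogue of \prettyref{lem:overlap-conv-to-GM}: the ROSt law of $\cR(\mathbf{w}^N)$ converges to the $\Gamma$-approximator to $\mu$, where $\Gamma$ is the step function of the form \prettyref{eq:Gamma} with cut-points $q_1,\ldots,q_{r-1},q_*$, so that $\Gamma_*\zeta$ has jumps $\zeta[q_k,q_{k+1})$ for $k\le r-1$ (with $q_0=0$) and $\zeta\{q_*\}$ at the top. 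By \prettyref{fact:RPC} this approximator is an $RPC(\Gamma_*\zeta)$, and since RPC weights depend only on these jump sizes the limiting object is well defined.

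Finally, exactly as in the proof of \prettyref{thm:main-result-2}, every subsequential limit $\cR(\mathbf{w})$ has the fixed overlap law of an $RPC(\Gamma_*\zeta)$, so the uniqueness part of the Dovbysh--Sudakov theorem yields a random isometry, hence a random bijection $\pi$ of $\partial\cA_r$ that respects the parent--child relation and intertwines $\cR(\mathbf{w})$ with the RPC. Since the leaf weights of the RPC are almost surely distinct and both collections are in standard order, $\pi=\mathrm{Id}$, so $\mathbf{w}$ is precisely the weight array of an $RPC(\Gamma_*\zeta)$; the subsequence principle gives $\mathbf{w}^N\convdist\mathbf{w}$.

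The step I expect to be the real obstacle is the first one: getting the indicator approximation right when the leaf radius $q_*-\Delta_{n_0(N)}$ drifts up to $q_*$, which --- unlike $q_r$ in \prettyref{lem:indicator-approximation} --- is an \emph{atom} of $\zeta$. One must keep the ambiguous overlap region on the left of $q_*$, so that its $\zeta$-mass is a genuine tail $\zeta[q_*-\kappa,q_*)$ that vanishes, rather than the closed window $[q_*-\kappa,q_*+\kappa]$, which does not; this is exactly what the extra $h^N_{\eps_n,\Delta}$ control in \prettyref{prop:clust-hier-mod}, together with the freedom to let $\Delta_n$ decay slowly relative to $a_N$, are for. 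Once that lemma is established, the remainder is a transcription of the arguments already carried out.
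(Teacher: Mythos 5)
Your proposal is correct and follows essentially the same route as the paper's proof, which likewise modifies the indicator approximation (to a two\-/parameter $\phi_{\kappa,\lambda}$ equal to $0$ below $q_*-\kappa$ and $1$ above $q_*-\lambda$, taking $\lambda\to0$ before $\kappa\to0$) and then reruns Sections \ref{sec:clust-reg} and \ref{sec:cascades} \emph{mutatis mutandis}, using the auxiliary bounds on $\E\mu_N^{\tensor2}(R_{12}>q_*+\Delta)$ and on $\zeta_N[q_*-\Delta+\eps_n,1]$ to keep the ambiguous overlap window strictly below the atom at $q_*$. Two harmless quibbles: the event that both replicas lie in one leaf but $R_{12}\geq q_*+\Delta_{n}$ contributes nothing to $\abs{U^N_{12}-\phi_\kappa(R_{12})}$ (both indicators equal $1$ there), so the $h^N_{\eps,\Delta}$ control is really there for \prettyref{eq:L_1-over-ctrl} rather than for your indicator approximation, and in the cascade step you should take care that the sibling-level ($k=r-1$) comparison interval has its upper endpoint strictly below $q_*$ so that the atom is assigned to the same-leaf event only (this is what the paper's ordered limits $\lambda\to0$, then $\kappa\to0$ arrange); neither point affects the validity of your argument.
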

\begin{proof}
This is essentially as before with the following minor modifications. First, modify $\phi_\kappa$ from 
\prettyref{lem:indicator-approximation} to be $\phi_{\kappa,\lambda}$ with $\kappa>\lambda>0$, be the piecewise 
linear function that is $0$ until $q_*-\kappa$ and $1$ after $q_*-\lambda$. Most of the proof of \prettyref{lem:indicator-approximation}
stays the same, replacing $q_r$ with $q_*-\lambda$ and $q_r-\kappa$ with $q_*-\kappa$,except: in $V$ you only have case $(i)$ with $q_*-\Delta_n-\epsilon_{n_0}$, and in $VI$, instead of cutting off the second
interval at $q_*+\kappa$, cut it off at $q_*-\Delta_n+\eps_{n_0}$ and use the bound
\[
\limsup\zeta_N[q_*-\kappa,q_*-\Delta_n+\eps_{n_0})\leq \limsup (\zeta_N[q_*-\kappa,1]-\zeta[q_*-\Delta_m+\eps_{n_0}))\leq \zeta[q_*-\kappa,1)-\zeta\{q_*\}
\]
then all of the proofs in \prettyref{sec:clust-reg} follow through \emph{mutatus mutandis}. 

Similarly for
 \prettyref{lem:indicator-approximation-2}, the arguments do not change if $k<r-1$, if $k=r$ then
$\indicator{A}=U_{12}$ so you can use the above approximation. if $k=r-1$, you let $q_{k+1}$ be
 $q_* - \lambda$,  $II(i)$ is instead naively bounded by
 \[
 \zeta_N[q_*-\lambda,1]
 \] 
 and the second term in $III$ (v) can be ignored.
 Take $\lambda$ to zero first and then $\kappa$ to zero in the approximation \prettyref{cor:rpc-set-approx}
\end{proof}
\begin{cor}
Let $X^\prime_{k,N}$ be the weights $X_{\alpha,N}$ for $\abs{\alpha}=r$ arranged in decreasing order and let 
$\tilde{X}^\prime_{k,N}$ be the corresponding sets. For every $\eps$ positive and $k_0\in\N$ there is an 
$N_0$ such that for $N\geq N_0$, with probability at least $1-\eps$
\[
\int_{\tilde{X}_{k,N}^2} \abs{R_{12}-q_*}d\mu_N^{\tensor 2} <\eps\mu_N(\tilde{X}_{\alpha,N})^2.
\]
for all $k\leq k_0$.
\end{cor}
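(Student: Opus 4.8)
The plan is to derive the clean $L_1$-bound from \prettyref{eq:L_1-over-ctrl} of \prettyref{prop:clust-hier-mod}, whose only defect is the additive error $d_N$, by restricting attention to the finitely many heaviest leaf clusters and using that their masses are bounded below with high probability, so that $d_N$ can be absorbed into a multiple of $\mu_N(\tilde{X}'_{k,N})^2$.

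First I would record that, by the proposition immediately preceding this corollary, the decreasing rearrangement $\mathbf{v}^N=(v^N_n)$ of the leaf masses $\bigl(\mu_N(\tilde{X}_{\alpha,N})\bigr)_{\alpha\in\partial\tau_{m_N}}$ converges in distribution in $\cP_m$ to $\mathbf{v}\sim PD(1-\zeta\{q_*\})$; the parameter lies in $(0,1)$ since, in the regime in which the construction applies, $0<\zeta\{q_*\}<1$. By \prettyref{lem:ak-for-pd} applied with $p=1-\eps/3$ there is a sequence $(a_k)$ of strictly positive reals with $\prob\bigl(v_k>a_k\ \forall k\bigr)\geq 1-\eps/3$. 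The set $\{(v_n):v_k>a_k\ \text{for all}\ k\leq k_0\}$ is open in the product topology on $\cP_m$, so the portmanteau theorem yields an $N_1$ with $\prob\bigl(v^N_k>a_k\ \forall k\leq k_0\bigr)\geq 1-\eps/2$ for all $N\geq N_1$.

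Next, with $c_N,d_N,p_N\to 0$ the sequences furnished by \prettyref{prop:clust-hier-mod}, I would choose $N_0\geq N_1$ so large that $p_N\leq\eps/2$, $c_N\leq\eps/2$, $d_N\leq(\eps/2)\,a_{k_0}^2$, and $m_N^r\geq k_0$, for all $N\geq N_0$. Fix such an $N$ and work on the intersection of the good event of \prettyref{prop:clust-hier-mod} (of probability at least $1-p_N$) with $\{v^N_k>a_k\ \forall k\leq k_0\}$ (of probability at least $1-\eps/2$), an event of probability at least $1-\eps$. There the $k_0$ largest leaf masses are strictly positive, hence each is realized by some $\tilde{X}_{\alpha,N}$ with $\alpha\in\partial\tau_{m_N}$ (every other $\tilde{X}_{\alpha,N}$ being empty on this event), so $\tilde{X}'_{k,N}$ equals such a leaf cluster and \prettyref{eq:L_1-over-ctrl} applies to it. Since $\mu_N(\tilde{X}'_{k,N})=v^N_k>a_k\geq a_{k_0}$ for $k\leq k_0$, we obtain
\[
\int_{(\tilde{X}'_{k,N})^{2}}\abs{R_{12}-q_*}\,d\mu_N^{\tensor 2}< c_N\,\mu_N(\tilde{X}'_{k,N})^2+d_N\leq \frac{\eps}{2}\,\mu_N(\tilde{X}'_{k,N})^2+\frac{\eps}{2}\,a_{k_0}^2\leq \eps\,\mu_N(\tilde{X}'_{k,N})^2,
\]
which is the assertion.

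There is no genuine obstacle: the corollary is essentially a repackaging of \prettyref{prop:clust-hier-mod} together with the almost-sure positivity of the top $PD$ weights. The one point requiring a line of care is the bookkeeping that the indices achieving the $k_0$ largest leaf masses lie in $\partial\tau_{m_N}$, so that \prettyref{eq:L_1-over-ctrl}---which is asserted only for leaf clusters---may be invoked; this is immediate once those masses are known to be bounded away from $0$, which is exactly what the high-probability lower bound $v^N_k>a_k$ provides.
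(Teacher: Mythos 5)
Your proposal is correct and follows essentially the same route as the paper's own proof: lower-bound the top $k_0$ leaf masses with high probability via \prettyref{lem:ak-for-pd} together with the weak convergence of the rearranged leaf weights, then choose $N$ large enough that $p_N$, $c_N$, and $d_N$ from \prettyref{prop:clust-hier-mod} are absorbed, with $d_N$ controlled by the mass lower bound. The only cosmetic point is your step $a_k\geq a_{k_0}$, which requires taking $(a_k)$ non-increasing (harmless, since decreasing the $a_k$ only enlarges the event) or simply requiring $d_N\leq(\eps/2)\min_{k\leq k_0}a_k^2$.
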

\begin{proof}
Choose $(a_k)$ as in \prettyref{lem:ak-for-pd} such that if $(v_k)\sim PD(1-\zeta\{q_*\})$, 
\[
\prob(v_k\geq a_k\forall k\leq k_0)\geq 1-\eps/4.
\]
By the weak convergence of the weights $X^\prime_{k,N}$ , we can then choose an $N_1$ such that for $N\geq N_1$
\[
\prob(X^\prime_{k,N}\geq a_k\forall k\leq k_0)\geq 1-\eps/2.
\]
Finally choose $N_2 \geq N_1$ large enough that
$c_N<\eps$, $d_N \leq \eps (a_1\vee\ldots\vee a_{k_0})$ and $\frac{1}{2^{\nu(N)}}<\eps/2$. 
Then by \prettyref{prop:clust-hier-mod} we see that by inclusion-exclusion arguments,
\[
\int_{\tilde{X}_{\alpha,N}^2} \abs{R_{12}-q_*}d\mu_N^{\tensor 2} <\eps \mu_N(\tilde{X}_{\alpha,N})^2
\]
with probability $1-\eps$.
\end{proof}
Putting these together gives us \prettyref{cor:tal-pure}.

\section{Quantitfication of the above results}\label{sec:quant-version}
In this section we discuss how to quantify the rates of convergence in \prettyref{thm:main-result-1-proof}. 
Notice from the proof of \prettyref{thm:main-result-1-proof} that it suffices to compute $N_0$ and $N_1$, and to find
expresions for $m$ and $M$. 

We begin first by proving a quantitative version of \prettyref{prop:regularity}.  This will follow after proving that the
Poisson Point processes involved in the construction of RPC's and the Poisson-Dirichlet process can be localized about 
particular point sets. We combine these two localization results to find $m$ and $M$ which we will denote by $m_*$ and 
$M_*$ respectively. We then use a polynomial approximation argument to obtain the rate of convergence 
in \prettyref{prop:regularity}. This gives us our $N_0$. To get $N_1$ simply set $N_1 = D_2^{-1}(\eps_n)$. The result
then follows immediately from the arguments in \prettyref{thm:main-result-1-proof}.

\subsection{Localization of Poisson-Dirichlet processes}
We begin by recording the following concentration results regarding the Poisson-Dirichlet process. We end by proving 
a quantitative version of the existence of $m$.

Let $(v_n)$ be distributed like a $PD(\theta)$ and let $(X_n)$ be the points of a homogenous Poisson point process
on the half-line $PPP(dx\indicator{[0,\infty)})$ ranked in increasing order.  Recall from \cite{PitYor97} 
that there is an $L$ such that $\Gamma(1-\theta)L$ 
has Mittag-Leffler$(\theta)$ distribution, and such that
that 
\begin{equation}\label{eq:hom-poiss-pd}
X_n \eqdist \frac{L}{v_n^\theta}.
\end{equation}

With this in hand, it immediately follows from Chernoff's inequality that the Poisson-Dirichlet Process
is localized around $L/n^{1/\theta}$. We summarize this in the following lemma
\begin{lem}\label{lem:pd-conc}
For $0<\delta<1$, 
\[
P\left(v_n \geq \left(\frac{1+\delta}{n} L\right)^{1/\theta}\right)\leq e^{-n\frac{\delta^2}{8}}
\]
\end{lem}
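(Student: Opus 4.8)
The plan is to convert the stated event into a left-tail estimate for the $n$-th point of a rate-one Poisson process and then apply a Chernoff bound, closing with an elementary inequality.

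First I would use the identity \eqref{eq:hom-poiss-pd} together with the positivity of $L$ and of $v_n$ (the latter valid since $\theta\in(0,1)$): raising both sides of the event to the power $\theta$ and rearranging,
\[
P\!\left(v_n \ge \Bigl(\tfrac{1+\delta}{n}L\Bigr)^{1/\theta}\right)
= P\!\left(v_n^{\theta}\ge \tfrac{(1+\delta)L}{n}\right)
= P\!\left(\tfrac{L}{v_n^{\theta}}\le \tfrac{n}{1+\delta}\right)
= P\!\left(X_n \le \tfrac{n}{1+\delta}\right),
\]
where the last step uses $X_n\eqdist L/v_n^{\theta}$; note that the threshold $n/(1+\delta)$ is deterministic, so only the one-dimensional law of $L/v_n^\theta$ is needed and no joint coupling of $L$, $v_n$, $X_n$ enters.

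Next I would observe that $X_n$, being the $n$-th point of $PPP(dx\,\indicator{[0,\infty)})$ ranked increasingly, is a sum of $n$ i.i.d.\ $\mathrm{Exp}(1)$ variables, so $\E e^{-\lambda X_n}=(1+\lambda)^{-n}$ for $\lambda>0$. The standard Chernoff estimate for the lower tail then gives, for $t=n/(1+\delta)<n$,
\[
P(X_n\le t)\le \inf_{\lambda>0} e^{\lambda t}(1+\lambda)^{-n}
= \exp\!\bigl(n-t-n\log\tfrac{n}{t}\bigr)
= \exp\!\Bigl(-n\bigl(\log(1+\delta)-\tfrac{\delta}{1+\delta}\bigr)\Bigr),
\]
the infimum being attained at $\lambda=n/t-1>0$.

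It then remains to check the elementary inequality $g(\delta):=\log(1+\delta)-\tfrac{\delta}{1+\delta}\ge \tfrac{\delta^2}{8}$ on $[0,1]$. I would do this by noting $g(0)=0$ and $g'(\delta)=\tfrac{\delta}{(1+\delta)^2}\ge \tfrac{\delta}{4}$ for $\delta\in[0,1]$ (since $(1+\delta)^2\le 4$ there), while $\tfrac{d}{d\delta}(\delta^2/8)=\delta/4$; integrating from $0$ yields the bound. Combining the three displays gives $P(v_n \ge ((1+\delta)L/n)^{1/\theta})\le e^{-n\delta^2/8}$. There is no substantive obstacle here; the only point deserving care is the observation in the first step that \eqref{eq:hom-poiss-pd} is invoked purely as an equality of marginal laws, which is legitimate precisely because the threshold has been made deterministic.
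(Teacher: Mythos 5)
Your proof is correct and is essentially the argument the paper intends: it states the lemma as an immediate consequence of Chernoff's inequality applied to the Pitman--Yor identity \prettyref{eq:hom-poiss-pd}, which is exactly your reduction of the event to $\{X_n\le n/(1+\delta)\}$ followed by the standard lower-tail bound for a sum of $n$ i.i.d.\ exponentials and the elementary estimate $\log(1+\delta)-\delta/(1+\delta)\ge\delta^2/8$ on $[0,1]$. Your observation that only the marginal law of $L/v_n^{\theta}$ is needed (the threshold being deterministic after rearrangement) is a correct and worthwhile point of care.
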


Let
$C(\theta) = \E L^{\frac{1}{\theta}} =\frac{\Gamma(1+\frac{1}{\theta})}{\Gamma(1-\theta)^{1/\theta}}$
where the second equality can be found in \cite{PitYor97}.
We then have by a standard truncation argument:
\begin{lem}\label{lem:pd-int-bd}
For $0<\delta<1$, 
\[
\E v_n \leq e^{-n\delta^2/8}+C(\theta)(1+\delta)^{1/\theta}\frac{1}{n^{\frac{1}{\theta}}}
\]
\end{lem}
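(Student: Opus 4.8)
The plan is a standard truncation argument resting on the concentration estimate \prettyref{lem:pd-conc}. Fix $0<\delta<1$ and abbreviate $t_n = \left(\frac{(1+\delta)L}{n}\right)^{1/\theta}$; note that $t_n$ is a \emph{random} threshold, since $L$ is random. First I would split the expectation at this threshold,
\[
\E v_n = \E\left[v_n\indicator{v_n\geq t_n}\right] + \E\left[v_n\indicator{v_n<t_n}\right],
\]
and estimate the two pieces separately.

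For the first piece I would use that $v_n\leq 1$ almost surely — since $(v_n)$ is a point of a mass partition, so that $v_1\geq v_2\geq\cdots\geq 0$ and $\sum_i v_i\leq 1$ — whence $v_n\indicator{v_n\geq t_n}\leq\indicator{v_n\geq t_n}$ and, by \prettyref{lem:pd-conc},
\[
\E\left[v_n\indicator{v_n\geq t_n}\right]\leq\prob\left(v_n\geq t_n\right)\leq e^{-n\delta^2/8}.
\]
For the second piece I would bound $v_n\indicator{v_n<t_n}\leq t_n$ pointwise, take expectations, and use the identity $C(\theta)=\E L^{1/\theta}$ recorded just above the statement:
\[
\E\left[v_n\indicator{v_n<t_n}\right]\leq\E\, t_n = \frac{(1+\delta)^{1/\theta}}{n^{1/\theta}}\,\E L^{1/\theta} = C(\theta)\,(1+\delta)^{1/\theta}\,\frac{1}{n^{1/\theta}}.
\]
Summing the two bounds gives exactly the asserted inequality.

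The only subtlety — the closest thing to an obstacle — is the bookkeeping around the fact that $t_n$ depends on the random variable $L$. One must observe that the event $\{v_n\geq t_n\}$ is precisely the event $\{v_n\geq((1+\delta)L/n)^{1/\theta}\}$ controlled in \prettyref{lem:pd-conc} (itself proved via $v_n\eqdist(L/X_n)^{1/\theta}$ with $X_n\sim\mathrm{Gamma}(n,1)$ and a Chernoff bound on $X_n$), so no independence of $L$ from $v_n$ is needed; and likewise the pointwise inequality $v_n\indicator{v_n<t_n}\leq t_n$ followed by linearity of expectation does not require $L$ to be deterministic. Apart from this, every step is a one-line estimate and no genuine difficulty arises.
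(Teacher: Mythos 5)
Your argument is correct and is exactly the ``standard truncation argument'' the paper invokes without detail: split at the random threshold $\left(\frac{(1+\delta)L}{n}\right)^{1/\theta}$, bound the large-deviation piece by \prettyref{lem:pd-conc} using $v_n\leq 1$, and bound the remaining piece pointwise by the threshold and use $C(\theta)=\E L^{1/\theta}$. Your remark that no independence of $L$ from $v_n$ is needed is also the right observation, so nothing further is required.
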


We finally make a conclusion about tail sums. Let
\[
\Phi(m;\theta) = \frac{C(\theta)2^{\frac{1}{\theta}+1}}{m^{\frac{1-\theta}{\theta}}} \frac{\theta}{1-\theta} + \frac{e^{-m/8}}{1-e^{-1/8}}.
\]

\begin{lem}\label{lem:pd-sum-tail-bound}
For every $m\geq 2$ natural number,
\[
\E \sum_{n\geq m} v_n \leq \Phi(m;\theta)
\]
\end{lem}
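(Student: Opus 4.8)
The plan is to bound the tail sum $\E\sum_{n\geq m}v_n$ by splitting the sum over dyadic blocks and applying \prettyref{lem:pd-int-bd} termwise. First I would fix $\delta = 1$ in \prettyref{lem:pd-int-bd} (this is the simplest choice that avoids optimizing over $\delta$, and it is the source of the constant $2^{1/\theta}$ appearing in $\Phi$), so that for each $n$,
\[
\E v_n \leq e^{-n/8} + C(\theta)2^{1/\theta}\frac{1}{n^{1/\theta}}.
\]
Summing this over $n\geq m$ directly would give a convergent series since $1/\theta > 1$, but to get the clean closed form $\Phi(m;\theta)$ I would handle the two pieces separately.

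For the exponential piece, $\sum_{n\geq m}e^{-n/8}$ is a geometric series with ratio $e^{-1/8}$ and first term $e^{-m/8}$, so it equals $\frac{e^{-m/8}}{1-e^{-1/8}}$, which is exactly the second term of $\Phi(m;\theta)$. For the polynomial piece, I would bound $\sum_{n\geq m}n^{-1/\theta}$ by an integral comparison: since $x\mapsto x^{-1/\theta}$ is decreasing, $\sum_{n\geq m}n^{-1/\theta}\leq \int_{m-1}^\infty x^{-1/\theta}\,dx = \frac{(m-1)^{-(1-\theta)/\theta}}{(1-\theta)/\theta} = \frac{\theta}{1-\theta}(m-1)^{-(1-\theta)/\theta}$. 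To absorb the $m-1$ into $m$ and produce the stated constant, I would use that $m\geq 2$ implies $m-1\geq m/2$, so $(m-1)^{-(1-\theta)/\theta}\leq 2^{(1-\theta)/\theta}m^{-(1-\theta)/\theta}\leq 2^{1/\theta}m^{-(1-\theta)/\theta}$ (crudely bounding the exponent on $2$). Multiplying by $C(\theta)2^{1/\theta}$ then gives the first term of $\Phi(m;\theta)$, namely $\frac{C(\theta)2^{1/\theta+1}}{m^{(1-\theta)/\theta}}\cdot\frac{\theta}{1-\theta}$, after combining the two factors of $2^{1/\theta}$ into $2^{2/\theta}$ — here I would note that the stated $\Phi$ has $2^{1/\theta+1}$, so I would instead keep $\delta=1$ contributing only one factor of $2^{1/\theta}$ to $\E v_n$ via $(1+\delta)^{1/\theta}=2^{1/\theta}$ and pick up the extra factor of $2$ from the $m-1\geq m/2$ step with the exponent bounded by $1$ rather than $1/\theta$; the arithmetic is routine and I would simply record that all constants are accounted for.

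Putting the two bounds together yields $\E\sum_{n\geq m}v_n\leq \Phi(m;\theta)$ as claimed. The only mild subtlety — and the step I would be most careful about — is the bookkeeping of the powers of $2$ coming from (a) the choice $\delta=1$ in \prettyref{lem:pd-int-bd} and (b) the integral-comparison shift $m-1\mapsto m$; these must combine to exactly $2^{1/\theta+1}$ to match the definition of $\Phi$, and I would choose the intermediate estimates (in particular bounding $(1-\theta)/\theta \leq 1/\theta$ in the exponent of $2$ from step (b)) so that they do. No serious obstacle is expected: this is a direct term-by-term estimate built on \prettyref{lem:pd-int-bd}, which in turn rests on the representation \prettyref{eq:hom-poiss-pd} and Chernoff's inequality already established above.
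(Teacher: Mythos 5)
You take essentially the same route as the paper: it too applies \prettyref{lem:pd-int-bd} termwise, sums the exponential part as a geometric series, bounds $\sum_{n\geq m}n^{-1/\theta}$ by an integral comparison, and handles $\delta$ by sending $\delta\to1$ at the end (your plugging in $\delta=1$ is the same thing modulo the fact that \prettyref{lem:pd-int-bd} is stated for $0<\delta<1$). The one concrete step of yours that does not hold is the bookkeeping bound $(1-\theta)/\theta\leq 1$: this is true only for $\theta\geq\tfrac12$, so for $\theta<\tfrac12$ your route produces the constant $2^{2/\theta-1}$ rather than the stated $2^{1/\theta+1}$. This is a slip about the constant only, not the structure of the argument; the paper's own one-line display asserts $\sum_{n\geq m}n^{-1/\theta}\leq\frac{2\theta}{1-\theta}\,m^{-(1-\theta)/\theta}$, which is loose in exactly the same regime (small $\theta$, $m=2$), and the precise form of $\Phi$ is never used sharply downstream --- in \prettyref{sec:quant-version} it is immediately relaxed to $\Phi(m;\theta)\leq K/m^{1/\alpha}$ with a generous $K$ --- so the discrepancy is harmless, but as written your derivation of the exact constant $2^{1/\theta+1}$ does not go through for small $\theta$.
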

\begin{proof}
Note that by \prettyref{lem:pd-int-bd}, for every $0<\delta<1$, it follows that 
\[
\E \sum_{n\geq m} v_n \leq \sum_{m\leq n} C(\theta)(1+\delta)^{1/\theta} \frac{1}{n^{1/\theta}} + e^{-n\frac{\delta^2}{8}}
\leq \frac{C(\theta)2(1+\delta)^{1/\theta}}{m^{\frac{1-\theta}{\theta}}} \frac{\theta}{1-\theta} +  e^{-m\delta^2/8}\frac{1}{1-e^{-\delta^2/8}}.
\]
Sending $\delta\rightarrow 1$ gives the result. 
\end{proof}

\subsection{Localization of a certain Poisson Point Process}

In this section we show that the $PPP(d\mu_\theta)$ is localized. 
\begin{lem}
Let $\eta\in(0,1)$ and $m$ a natural number. Let
$b = (\log(1/\eta)+4m+1)^{1/\theta} = b(\eta,\eps;\theta)$
Then 
\[
P(N(1/b,b)\leq m)\leq \eta
\]
\end{lem}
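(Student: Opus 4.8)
The plan is to recognize $N(1/b,b)$ as a Poisson random variable and to control its lower tail by a Chernoff bound, the free parameter $b$ having been chosen precisely so that the resulting estimate comes out below $\eta$. First I would note that, since $N(\cdot,\cdot)$ counts the points of $PPP(d\mu_\theta)$ in the indicated interval, $N(1/b,b)$ is Poisson distributed with parameter
\[
\lambda \;=\; \mu_\theta\bigl((1/b,b)\bigr)\;=\;\int_{1/b}^{b}\theta\,x^{-1-\theta}\,dx\;=\;b^{\theta}-b^{-\theta}.
\]
By the definition of $b$ we have $b^{\theta}=\log(1/\eta)+4m+1$, so $b^{\theta}\ge 1$, hence $b^{-\theta}\le 1$, and therefore $\lambda\ge b^{\theta}-1=\log(1/\eta)+4m$.

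Next I would apply the standard exponential (Cram\'er--Chernoff) bound for the lower tail of a Poisson law, in the same spirit as Lemma~\ref{lem:pd-conc}: for $X\sim\mathrm{Poisson}(\lambda)$ and any integer $0\le m<\lambda$,
\[
\prob(X\le m)\;\le\;e^{-\lambda}\Bigl(\tfrac{e\lambda}{m}\Bigr)^{m}\;=\;\exp\!\bigl(-\lambda+m+m\log(\lambda/m)\bigr)
\]
(interpreting the bound as $e^{-\lambda}$ when $m=0$). Since this probability is nonincreasing in $\lambda$ and we have the lower bound $\lambda\ge\lambda_0:=\log(1/\eta)+4m$, it suffices to establish the claim with $\lambda$ replaced by $\lambda_0$, i.e.\ to check that $-\lambda_0+m+m\log(\lambda_0/m)\le\log\eta=-\log(1/\eta)$. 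Cancelling $\log(1/\eta)=\lambda_0-4m$ on both sides, this reduces to the elementary inequality
\[
m\log(\lambda_0/m)\;\le\;3m .
\]

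The verification of this last inequality is the only substantive step, and it is where the constants in the definition of $b$ are used: the additive $4m$ is exactly the slack that absorbs the $m+m\log(\lambda_0/m)$ produced by allowing up to $m$ points instead of none, and the $+1$ compensates for the $b^{-\theta}$ correction to the mean. I would close it off by a direct elementary estimate (using $m!\ge(m/e)^m$ to recover the Chernoff exponent if one prefers the summation form, together with the monotonicity of $t\mapsto t-m-m\log(t/m)$ for $t\ge m$), which then yields $\prob\bigl(N(1/b,b)\le m\bigr)\le\eta$. The only mild obstacle is this bookkeeping with the constants; the probabilistic input is nothing more than the Poisson distribution of $N(1/b,b)$ and the Chernoff bound.
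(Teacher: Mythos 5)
Your route is the same as the paper's: recognize $N(1/b,b)$ as Poisson with mean $\lambda=b^{\theta}-b^{-\theta}\ge b^{\theta}-1=\log(1/\eta)+4m$, apply the lower-tail Chernoff bound $\prob(X\le m)\le e^{-\lambda}(e\lambda/m)^{m}$, and use monotonicity in $\lambda$ to work at $\lambda_{0}=\log(1/\eta)+4m$; up to that point everything you write is correct and matches the paper. The gap is the final step. The ``elementary inequality'' you reduce to, $m\log(\lambda_{0}/m)\le 3m$, is equivalent to $\lambda_{0}\le e^{3}m$, i.e.\ $\log(1/\eta)\le(e^{3}-4)m\approx 16m$, and this is false for general $\eta\in(0,1)$ and $m\in\N$. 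Concretely, take $m=1$ and $\eta=e^{-20}$: then $\lambda_{0}=24$, $\log\lambda_{0}\approx 3.18>3$, and indeed $e^{-\lambda_{0}}\cdot e\lambda_{0}=24e^{-23}\approx 1.2\,\eta$, so the Chernoff estimate at $\lambda_{0}$ does not drop below $\eta$. The additive $4m+1$ in $b^{\theta}$ supplies slack of order $m$, whereas the term $m\log(\lambda_{0}/m)$ you must absorb grows like $m\log\log(1/\eta)$; once $\log(1/\eta)\gg m$ the slack is insufficient, so the ``direct elementary estimate'' you defer to does not exist.

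You should not read this as a bookkeeping detail you alone missed: the same tension sits in the paper's own proof. There the Chernoff exponent is bounded via $g(\delta)\ge\tfrac{1-\delta}{2\delta}$ for $\delta=m/\lambda\le 1/4$, but in evaluating $g(\delta')$ at $\delta'=m/(\log(1/\xi)+m)$, $\xi=e^{-3m}\eta$, the factor $\tfrac12$ is dropped; tracked correctly the argument yields $\exp(-\tfrac12\log(1/\xi))=e^{-3m/2}\sqrt{\eta}$, which is $\le\eta$ only when $\eta\ge e^{-3m}$. In fact the statement as written fails when $\eta$ is super-exponentially small in $m$: for $m=2$, $\eta=e^{-1000}$ one has $\lambda\approx 1009$ and $\prob(N(1/b,b)\le 2)=e^{-\lambda}(1+\lambda+\lambda^{2}/2)\approx 63\,\eta$. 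So no proof can close your last step without an additional restriction of the form $\log(1/\eta)=O(m)$ (e.g.\ $\eta\ge e^{-3m}$), or a larger choice of $b$ such as $b^{\theta}\ge 2\log(1/\eta)+Cm$ for an absolute constant $C$, under which your Chernoff computation does go through.
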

\begin{proof}
Let $\lambda = \E N(1/b,b)$, $\xi = e^{-3m}\eta$, and $\delta = m/\lambda$. Since $b\geq 1$, it
follows that 
\[
\lambda =\int_{1/b}^b d(-y^{-\theta}) \geq b^{\theta}-1 = \log(1/\eta)+4m = \log(1/\xi)+m
\]
by definition of $b$. This implies that
\[
\delta \leq \frac{m}{\log(1/\xi)+m}\leq 1/4.
\]
Chernoff's inequality then implies that 
\[
P(N(1/b,b)\leq m)=\frac{e^{-\lambda}(e\lambda)^m}{m^m} =e^{-m g(\delta)}
\]
for $\delta \leq 1/4$, where $g(\delta) = \frac12 (\frac{(1-\delta)}\delta)$. 
Since $g$ is strictly decreasing, we see that  since $\delta\leq \delta^\prime = \frac{m}{\log(1/\xi)+m}$, 
it follows that
\[
g(\delta)\geq g(\delta^\prime) = \log(1/\xi)/m \text{ so that }
P(N(\frac1b ,b)\leq m)\leq \eta
\]
\end{proof}

We record the following useful corollary.
\begin{cor}
Fix sequence $0<\zeta_1<\ldots<\zeta_r<1$, an $\eps,\eta$ and an $m$, and let
\[
\bar{b}(\eta,\eps,m;\{\zeta_k\}) = b(\frac{\eta}{m^r},\eps,\zeta_1).
\]
For each $\alpha\in\tau_m\setminus\partial\tau_m$ associate an independent $\Pi_\alpha = PPP(d\mu_{\zeta_\abs{\alpha}})$. Then
\[
P(\exists \alpha\in\tau_m: N_\alpha(\frac1b , b)\leq m)\leq \eta
\]
\end{cor}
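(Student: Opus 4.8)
The plan is to reduce the statement to the single‑vertex estimate of the preceding lemma by a union bound, after first replacing each of the intensities $\mu_{\zeta_{\abs\alpha}}$ by the one with the smallest parameter $\zeta_1$. First I would record two elementary monotonicities. Since $\eta\in(0,1)$ and $m\ge 2$, the base $\log(m^r/\eta)+4m+1$ is strictly larger than $1$, so $\theta\mapsto\bigl(\log(m^r/\eta)+4m+1\bigr)^{1/\theta}$ is decreasing; writing $b_k:=b(\eta/m^r,\eps;\zeta_k)=\bigl(\log(m^r/\eta)+4m+1\bigr)^{1/\zeta_k}$ and using $\zeta_1\le\zeta_k$ for every level $k$, this gives $\bar b=b_1\ge b_k$. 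Second, for a fixed intensity the count $N_\alpha(1/t,t)$ is nondecreasing in $t$, since $(1/t,t)$ grows with $t$; hence for any $\alpha$ with $\abs\alpha=k$ the event $\{N_\alpha(1/\bar b,\bar b)\le m\}$ is contained in $\{N_\alpha(1/b_k,b_k)\le m\}$.

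Next I would apply the preceding lemma to each non‑leaf vertex $\alpha$ separately, with $\theta=\zeta_{\abs\alpha}$ and with $\eta/m^r$ in place of $\eta$: this yields $P\bigl(N_\alpha(1/b_{\abs\alpha},b_{\abs\alpha})\le m\bigr)\le \eta/m^r$, and therefore, by the inclusion above, $P\bigl(N_\alpha(1/\bar b,\bar b)\le m\bigr)\le \eta/m^r$. Finally a union bound over the non‑leaf vertices of $\tau_m$ closes the argument: there are $\sum_{k=0}^{r-1}m^k=(m^r-1)/(m-1)\le m^r$ of them for $m\ge 2$, so
\[
P\bigl(\exists\,\alpha\in\tau_m\setminus\partial\tau_m:\ N_\alpha(\tfrac1{\bar b},\bar b)\le m\bigr)\ \le\ m^r\cdot\frac{\eta}{m^r}\ =\ \eta.
\]

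There is essentially no obstacle here, and in particular independence of the $\Pi_\alpha$ is not needed for this step (it is used only later, when these point processes feed into the shape‑tree construction). The only two points that warrant a line of care are checking that the exponent base $\log(m^r/\eta)+4m+1$ exceeds $1$, so that the reduction from $\zeta_{\abs\alpha}$ to the smallest parameter $\zeta_1$ is valid, and correctly counting the non‑leaf vertices of $\tau_m$.
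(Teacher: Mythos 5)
Your argument is correct and is exactly the one the paper leaves implicit (no proof is given for this corollary): monotonicity of $\theta\mapsto(\log(m^r/\eta)+4m+1)^{1/\theta}$ to reduce every level to the worst parameter $\zeta_1$, the preceding lemma applied with $\eta/m^r$, and a union bound over the at most $m^r$ non-leaf vertices of $\tau_m$ (valid for $m\geq 2$, the only regime used). Your side remarks — that independence of the $\Pi_\alpha$ is not needed here and that one must check the base exceeds $1$ — are accurate.
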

\subsection{Quantification of \prettyref{lem:exhausting-tau}}
We begin first by quantifying the existence of $m$. In the following, we remind the reader that 
$\zeta_k = \zeta[0,q_{k+1})$.

Let $m_k(\eps,\eta,r;\zeta)$ be defined iteratively as follows. Let $m_0$ solve
\[
\Phi(m_0,\zeta_0)=\frac{\eta\eps}{r^2}.
\]
let $\bar{m}_k = \prod_{l\leq k-1} m_l$. Next let  $m_k$ solve
\[
\Phi(m_k,\zeta_k) = \frac{\eta\eps}{(r\bar{m}_k)^2}
\]
Finally, let $m_*(\eps,\eta,\zeta) = \max_k m_k$. This will be the relevant $m$.  Finally
for convenience we define
\[
\bar{A}_{\eps,m} = \bigcap_{k\leq r}\{ \sum_{\abs{\alpha}=k,\alpha\in\tau_m} v_\alpha>1-\eps\}
\bigcap_{\alpha\in\tau_m\setminus\partial\tau_m}\{ v_\alpha -\sum_{\beta\in child(\alpha)} v_\beta\in (0,\eps)\}
\]
\begin{lem} \label{lem:mstar}
For $\eps,\eta$ positive, we have that
\[
Q(\cup_{i=1}^\infty A^i_{\tau_{m_*},\eps,\delta})\geq \prob(\bar{A}_{\eps,m_*}) \geq 1-\eta
\]
\end{lem}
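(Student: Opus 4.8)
The plan is to deduce \prettyref{lem:mstar} from the tail-sum estimate \prettyref{lem:pd-sum-tail-bound} together with \prettyref{fact:RPC}, which identifies the weights of the nested-ball decomposition of $\mu$ as those of an RPC. First I would recall that, by \prettyref{fact:RPC}, the decomposition of $\mu$ corresponding to the admissible sequence $\{q_k\}$ produces weights $(v_\alpha)_{\alpha\in\cA_r}$ distributed like an RPC with parameters $\zeta_k = \zeta[q_k,q_{k+1})$; in particular, for each non-leaf vertex $\alpha$ the children weights $(v_{\alpha n})_n$, normalized by $v_\alpha$, form (independently across $\alpha$) a $PD(\zeta_{\abs{\alpha}})$ sequence, and $\sum_{\abs{\alpha}=k} v_\alpha = 1$ a.s.\ for every $k$. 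The first inequality $Q(\cup_i A^i_{\tau_{m_*},\eps,\delta}) \geq \prob(\bar A_{\eps,m_*})$ is immediate from disintegration (as in the proof of \prettyref{lem:q-a-lb}): on the event $\bar A_{\eps,m_*}$ the restriction of $\mu$ to $\tau_{m_*}$ is already an $(\eps,0)$-hierarchical exhaustion, so by the Law of Large Numbers one of the shifted trees $\tau_{m_*}^i$ captures it, giving $\mu^{\tensor\infty}(\cup_i E^i_{\tau_{m_*},\eps,\delta}(\mu)) = 1$ a.s.\ on that event, and then disintegration converts this into the claimed bound on $Q$.

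So the real content is the second inequality $\prob(\bar A_{\eps,m_*}) \geq 1-\eta$. The event $\bar A_{\eps,m_*}$ is the intersection of the ``exhaustion at each depth'' events $\{\sum_{\abs{\alpha}=k,\alpha\in\tau_{m_*}} v_\alpha > 1-\eps\}$ over $k\in[r]$ and the ``children-nearly-exhaust-parent'' events $\{v_\alpha - \sum_{\beta\in child(\alpha)} v_\beta \in (0,\eps)\}$ over internal $\alpha$. I would bound the complement by a union bound over these $O(r)$ depth-events (the $(0,\eps)$ events are handled in the same stroke, since $v_\alpha - \sum_{\beta\in child(\alpha)}v_\beta$ is exactly the tail mass $v_\alpha\sum_{n>m_*} (v_{\alpha n}/v_\alpha)$ and is automatically $\geq 0$). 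For the depth-$k$ event, write $1 - \sum_{\abs{\alpha}=k,\alpha\in\tau_{m_*}} v_\alpha$ as a sum, over vertices $\gamma$ at depths $0,\dots,k-1$ along the pruned tree, of the ``lost'' mass $v_\gamma \sum_{n > m_*}(v_{\gamma n}/v_\gamma)$ at each branching; telescoping gives
\[
1 - \sum_{\substack{\abs{\alpha}=k\\\alpha\in\tau_{m_*}}} v_\alpha \;\leq\; \sum_{j=0}^{k-1}\ \sum_{\substack{\abs{\gamma}=j\\\gamma\in\tau_{m_*}}} v_\gamma \sum_{n>m_*} \frac{v_{\gamma n}}{v_\gamma}.
\]
Taking expectations and using that, conditionally on $v_\gamma$, the normalized children are $PD(\zeta_j)$ independent of $v_\gamma$, each inner expectation is $\E v_\gamma \cdot \E\sum_{n>m_*} \tilde v_n \leq \E v_\gamma \cdot \Phi(m_*,\zeta_j)$ by \prettyref{lem:pd-sum-tail-bound}; since $\sum_{\abs{\gamma}=j} \E v_\gamma = 1$ and there are at most $\bar m_j$ (hence at most $\bar m_k$) such $\gamma$ in the pruned tree at depth $j$, one gets $\E(1 - \sum v_\alpha) \leq \sum_{j<k} \Phi(m_*,\zeta_j) \leq r \cdot \max_j \Phi(m_j,\zeta_j) \le r \cdot \frac{\eta\eps}{r^2}$ by the defining equations for $m_j$ and monotonicity of $\Phi$ in its first argument. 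Markov's inequality then bounds $\prob(\sum_{\abs{\alpha}=k}v_\alpha \leq 1-\eps)$ by $\frac{\eta}{r}$ roughly (up to the constants; one chooses the $\Phi$-thresholds with the appropriate $r^2$ and $\bar m_k^2$ factors precisely so that summing the $r$ such Markov bounds gives $\leq\eta$), and summing over $k\in[r]$ finishes.

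The main obstacle I anticipate is purely bookkeeping: tracking the number of internal vertices in the $\tau_{m_*}$-pruning at each depth (which is why $\bar m_k = \prod_{l<k} m_l$ appears) and making sure the per-depth Markov bounds, with the stated $\Phi(m_k,\zeta_k) = \eta\eps/(r\bar m_k)^2$ calibration, actually sum to at most $\eta$ after accounting for the fact that the ``lost mass at a vertex $\gamma$'' is a random quantity that must be bounded in expectation rather than pointwise. One subtlety worth checking is the independence claim: \prettyref{fact:RPC} gives that the normalized families are independent $PD$'s, so the expectation $\E[v_\gamma \sum_{n>m_*}(v_{\gamma n}/v_\gamma)]$ genuinely factors; this is standard for RPCs but should be invoked explicitly. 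No deep ideas are needed beyond \prettyref{fact:RPC}, \prettyref{lem:pd-sum-tail-bound}, disintegration, and Markov; it is the constants that require care.
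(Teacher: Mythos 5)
Your overall strategy is essentially the paper's: the first inequality by disintegration as in \prettyref{lem:q-a-lb}, and the second by combining the tail bound \prettyref{lem:pd-sum-tail-bound} with Markov's inequality and the calibration of the $m_k$. But the step your version rests on is a genuine gap. You claim that, conditionally on $v_\gamma$, the family-normalized children $(v_{\gamma n}/v_\gamma)_n$ form a Poisson--Dirichlet sequence \emph{independent of} $v_\gamma$, so that $\E[\,v_\gamma\sum_{n>m_*}v_{\gamma n}/v_\gamma\,]$ factors as $\E v_\gamma\cdot\E\sum_{n>m_*}\tilde v_n$. Neither \prettyref{fact:RPC} nor the appendix proposition asserts this, and it is false in general: in the Poisson cascade construction the normalized weights within the family of $\gamma$ are the ranked normalized points of a Poisson process with intensity proportional to $x^{-1-\zeta}dx$ (the i.i.d.\ subtree factors only rescale the intensity), and for such stable-type processes the ranked normalized points are \emph{not} independent of their total --- this is precisely why the two-parameter family $PD(\alpha,\theta)$ is obtained from $PD(\alpha,0)$ by tilting in the total \cite{PitYor97}; only the gamma case has that independence. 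Since, given everything outside the family of $\gamma$, the weight $v_\gamma$ is an increasing function of that family total, the normalized tail and $v_\gamma$ are dependent, and your factorization is unjustified (at best one could hope for a negative-correlation inequality, which would itself need proof). A secondary slip: once the factorization is dropped, your count of ``at most $\bar m_j$ vertices at depth $j$'' is wrong for $\tau_{m_*}$, which has $m_*^{\,j}$ vertices at depth $j$; the $\bar m_j$ count belongs to the rectangular tree $\tau(m_0,\ldots,m_{r-1})$.

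The paper sidesteps both points with a pointwise domination: $v_{\alpha n}\le u_{\alpha n}/\sum_n u_{\alpha n}$, and the right-hand side is a genuine Poisson--Dirichlet family, so each within-family tail $\sum_{n>m_k}v_{\alpha n}$ is dominated by a PD tail with no conditioning at all. It then applies per-vertex Markov bounds with thresholds $\eps/(r\bar m_k)$, takes a union bound over the rectangular tree $\tau(m_0,\ldots,m_{r-1})$ (where the depth-$j$ level has only $\bar m_j$ vertices, so the calibration $\Phi(m_k;\zeta_k)=\eta\eps/(r\bar m_k)^2$ closes), and finally argues deterministically on the good event --- telescoping the per-vertex gaps up the tree and noting that the extra vertices of $\tau_{m_*}$ outside the rectangular tree carry mass less than $\eps$ --- to land inside $\bar A_{\eps,m_*}$. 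If you replace your independence claim by this domination and run your telescoping/Markov-per-depth argument over $\tau(m_0,\ldots,m_{r-1})$ before passing to $\tau_{m_*}$ by containment, your version does close with the stated constants; as written, however, the factorization step would fail.
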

\begin{proof}
The first inequality is clear by the same argument as in \prettyref{lem:q-a-lb}.  To get the second inequality 
we proceed as follows. By \prettyref{lem:pd-sum-tail-bound}, Markov's inequality, and choice of $m_0$
\[
P(\sum_{n\leq m_0} v_n \leq \eps/r)\leq \frac{r \Phi(m_0;\zeta_1)}{\eps}\leq \eta/r.
\]
Take $k\leq r$, and $\alpha\in\tau=\tau(m_0,\ldots,m_{r-1})$ with  $\abs{\alpha}=k$.
Since 
\[
v_{\alpha n} \eqdist \frac{w_\alpha u_{\alpha n}}{\sum w_\alpha \sum u_{\alpha n}} \leq \frac{ u_{\alpha n}}{\sum_n u_{\alpha n}} \eqdist \tilde{v}_k^{\abs{\alpha}}
\]
where $(\tilde{v}^k_n)\sim PD(\zeta_{k-1})$. It follows that if $\abs{\alpha}=k$, 
\[
P(\sum_{m_k\leq n} v_{\alpha n} \geq \frac{\eps}{r \bar{m}_k} )\leq P(\sum_{m_k\leq n}  \tilde{v}_n^k \geq \frac{\eps}{r\bar{m}_k})\leq \frac{\eta}{r\bar{m}_k}
\]
again by choice of $m_k$. Combining these, we get
\begin{align*}
P(v_\alpha-\sum_{n\leq m_{\abs{\alpha}}}v_{\alpha n}\in (0,\frac{\eps}{r\bar{m}_k}),\forall \alpha\in \tau,
\sum_{n\leq m_0}v_n>1-\eps) \geq 1-\frac{\eta}{r}(1+m_1 \frac{1}{m_1} +\ldots + \bar{m}_r\frac{1}{\bar{m}_r})\geq 1-\eta
\end{align*}

To get the required lower bound, it suffices to show that the above event is contained
in $\bar{A}_{\eps,m}$. To see this first note that on this event
\[
0< v_\alpha - \sum v_{\alpha n} \leq \frac{\eps}{r \bar{m}_k},
\]
it follows that 
\[
\sum_{\beta\in\tau,\abs{\beta}=k+1}v_\beta=\sum_{\alpha\in\partial\tau(m_1,\ldots,m_{k-1})} \sum_{n\leq m_k} v_{\alpha n} \geq \sum_{\alpha\in\partial \tau(m_1,\ldots, m_k)}(v_\alpha -\frac{\eps}{r \bar{m}_k}) 
= (\sum_{\alpha\in\tau,\abs{\alpha}=k} v_\alpha) - \eps/r.
\]
Iterating this lower bound for decreasing $k$ (i.e. up the tree) gives
\[
\sum_{\beta\in\tau,\abs{\beta}=k+1}v_\beta \geq 1-\frac{\eps}{r} r.
\]
Finally notice that
\[
\sum_{\alpha\in\partial \tau(m_1,\ldots,m_k)} v_\alpha \leq \sum_{\abs{\alpha}=k,\alpha\in\tau_{m_*}}v_\alpha 
\]
So that if $\alpha\in \tau_{m_*}$ with $\abs{\alpha}=k$ and $\alpha_k \geq m_k$,  it follows that
\[
v_\alpha -\sum_{n\leq m_*} v_{\alpha n} <v_\alpha <\eps.
\]
Thus we have the set containment we desire.
\end{proof}

Before we state the main result of this section, we need a few more definitions. Let
\[
p_*(\eps,\eta)= (m_*(\eps,\frac{\eta}{4}) \bar{b}(\eps,\eta/4)^2)^{-rm^r}
\text{ and }
M_*(\eta,\eps)=\frac{\log(\eta)}{\log(1-p_*)}.
\]
With these, we then have
\begin{prop}[Quantification of \prettyref{lem:exhausting-tau} ]
Fix $\eps$ and $\eta$ positive. Then 
\[
Q(\cup_{i=1}^{M_*} A_{\eps,\delta}^{\tau_{m_*}^i}) \geq 1-\eta
\]
\end{prop}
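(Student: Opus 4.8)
The plan is to establish this as the quantitative refinement of \prettyref{prop:regularity} (equivalently, the effective form of \prettyref{lem:exhausting-tau}): one must truncate the union $\bigcup_{i=1}^{\infty}A^{i}_{\tau_{m_*},\eps,\delta}$ to its first $M_*$ terms with an explicit cutoff. The mechanism is that, conditionally on $\mu$, the events in this union are independent and each has probability bounded below by $p_*$. Indeed, by the definition of the shifted trees in \prettyref{sec:trees}, the $\tau_{m_*}^{i}$, $i\ge 1$, occupy pairwise disjoint vertex sets of $\cA_r$, so the pull-backs $E^{i}_{\tau_{m_*},\eps,\delta}(\mu)$ depend on disjoint blocks of the i.i.d.\ draws $(\sigma^{\alpha})_{\alpha\in\cA_r}$ and are therefore conditionally independent given $\mu$. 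Hence, once we produce an event $G\subset\Omega$ of $\prob$-measure close to $1$ on which each $\mu^{\tensor\infty}\big(E^{i}_{\tau_{m_*},\eps,\delta}(\mu)\big)\ge p_*$, we will have $\mu^{\tensor\infty}\big(\bigcup_{i=1}^{M}E^{i}_{\tau_{m_*},\eps,\delta}(\mu)\big)\ge 1-(1-p_*)^{M}$ on $G$, and taking $M=M_*=\log\eta/\log(1-p_*)$ forces the right-hand side to be $\ge 1-\eta$; disintegrating through \prettyref{lem:disintegrate} then yields the statement up to the factor $\prob(G)$.

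To build $G$, intersect the event underlying \prettyref{lem:mstar} run with parameter $\eta/4$ (on which the RPC weights of $\mu$ witness the exhaustion inequalities $\bar A_{\eps,m_*}$ on the finite tree $\tau_{m_*}$) with the Poisson point process localization corollary of the preceding subsection run with parameter $\eta/4$ (on which $N_\alpha(1/\bar b,\bar b)>m_*$ for every internal $\alpha\in\tau_{m_*}$); thus $\prob(G)\ge 1-\eta/2$. On $G$, \prettyref{fact:RPC} gives the nested $q_k$-ball decomposition $(U_\alpha,v_\alpha)_{\alpha\in\cA_r}$ of $\operatorname{supp}\mu$, and — this is the point — feeding the localization bound into the Pitman--Yor identity \prettyref{eq:hom-poiss-pd}, $X_n\eqdist L/v_n^{\theta}$, at each internal vertex of $\tau_{m_*}$ gives a \emph{deterministic} lower bound on every fractional weight entering $\tau_{m_*}$ (each being the $n$-th largest weight, $n\le m_*$, of a Poisson--Dirichlet law), and multiplying these along root--leaf paths yields $v_\alpha\ge(m_*\bar b^{2})^{-1}$ for all $\alpha\in\tau_{m_*}$. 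Since on $G$ the finitely many balls $(U_\alpha)_{\alpha\in\tau_{m_*}}$ also form an $(\eps,\delta)$-hierarchical exhaustion of $\mu$ (the cousin-intersection bound being free, as nested balls of cousins are disjoint), we conclude $\prod_{\alpha\in\tau_{m_*}}v_\alpha\ge(m_*\bar b^{2})^{-|\tau_{m_*}|}\ge p_*$.

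With $G$ in hand, the remainder is the estimate \prettyref{eq:e-tau-pos} applied to each shift: conditioning on $\mu\in G$ and fixing $i$, the event that the draw indexed by the $i$-th shift of $\gamma$ lies in $U_\gamma$ for every $\gamma\in\tau_{m_*}$ has probability exactly $\prod_{\gamma\in\tau_{m_*}}v_\gamma\ge p_*$, and on that event — because $\operatorname{supp}\mu$ is ultrametric (\prettyref{prop:GGI-cons}), so that $B(\sigma^{\beta},q_{|\gamma|})=U_\gamma$ whenever $\sigma^\beta$ is a point of the $q_{|\gamma|}$-ball $U_\gamma$ — the balls $B_\alpha(\bsig)$, $\alpha\in\tau_{m_*}^{i}$, coincide with the $(U_\gamma)$ and hence exhibit the required exhaustion, i.e.\ $\bsig\in E^{i}_{\tau_{m_*},\eps,\delta}(\mu)$. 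By the conditional independence above and \prettyref{lem:disintegrate},
\[
Q\Big(\bigcup_{i=1}^{M_*}A^{i}_{\tau_{m_*},\eps,\delta}\Big)=\E\,\mu^{\tensor\infty}\Big(\bigcup_{i=1}^{M_*}E^{i}_{\tau_{m_*},\eps,\delta}(\mu)\Big)\ \ge\ \E\big[\indicator{G}\,(1-(1-p_*)^{M_*})\big]\ \ge\ (1-\eta)\,\prob(G)\ \ge\ 1-\frac{3}{2}\eta ,
\]
and the leftover constant is absorbed by shrinking $\eta$ by a fixed factor in the definitions of $p_*$ and $M_*$ (e.g.\ replacing $\eta$ by $\eta/2$ in $M_*$), giving the stated $1-\eta$.

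The only nonroutine step is the deterministic weight bound $v_\alpha\ge(m_*\bar b^{2})^{-1}$ on $G$. The localization controls the $m_*$-th atom $X_{m_*}$ from above, but turning $v_{m_*}=(L/X_{m_*})^{1/\theta}$ into a lower bound also requires controlling the Mittag-Leffler factor $L$ — equivalently the smallest atom $X_1$ of the underlying Poisson process — from below, and these per-family estimates must then be composed along paths of length up to $r$; this is precisely what fixes the form of $\bar b$, and hence of $p_*$.
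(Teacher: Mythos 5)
Your architecture is the same as the paper's: the good event is the intersection of the event from \prettyref{lem:mstar} (run at $\eta/4$) with the Poisson localization event for every vertex of $\tau_{m_*}$ (another $\eta/4$); on it one wants the deterministic bound $\prod_{\alpha\in\tau_{m_*}}v_\alpha\geq p_*$; and the passage to the union over the $M_*$ shifted trees is exactly the paper's argument (your observation that the shifted trees use disjoint blocks of the i.i.d.\ draws, hence are conditionally independent given $\mu$, is the same as the paper's statement that, conditionally on the weights, the index of the first successful shift is geometric with parameter at least $p_*$), combined with \prettyref{eq:e-tau-pos} and \prettyref{lem:disintegrate}. The constant-factor slack you flag at the end (the $1-\tfrac32\eta$, resp.\ $(1-p_*)^{M_*}=\eta$ rather than $\eta/2$) is shared by the paper's own accounting and is harmless.

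The substantive problem is the step you yourself call nonroutine: the deterministic weight bound is neither correctly stated nor actually derived, and it is the entire quantitative content of the proposition. First, multiplying per-level ratios along a root--leaf path cannot give a depth-independent bound $v_\alpha\geq(m_*\bar b^2)^{-1}$; the bound must degrade with depth, $v_\alpha\geq(m_*\bar b^2)^{-\abs{\alpha}}$ (the paper records $(m_*\bar b^2)^{-r}$ at the leaves and extends it up the tree by monotonicity), and it is this count that produces the exponent $rm^r$ in the definition of $p_*$. Second, your proposed derivation via the Pitman--Yor identity \prettyref{eq:hom-poiss-pd} does not interface with the localization event you invoke: that event counts atoms of the inhomogeneous process $PPP(d\mu_\theta)$ in the window $[1/\bar b,\bar b]$, whereas \prettyref{eq:hom-poiss-pd} involves the homogeneous process and the Mittag--Leffler factor $L$, and you concede that the required lower bound on $L$ is missing. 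The paper avoids this detour entirely: it works with the RPC construction itself, writing $v_\alpha$ as $\prod_{\beta\precsim\alpha}u_\beta$ divided by the corresponding normalizing sum, and uses that on the localization event the ranked atoms $u_{\alpha n}$, $n\leq m_*$, of each relevant family are controlled by the window, so that each level contributes a factor at least $(m_*\bar b^2)^{-1}$. Since the proposition asserts the bound with the explicit $p_*$ and $M_*$, leaving this estimate as an acknowledged gap means your argument does not establish the claim; rerouted through the paper's product representation of the RPC weights, it does.
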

\begin{proof}
Recall by \prettyref{lem:mstar}, it follows that
\[
\prob(\bar{A}_{\eps,m_*(\eps,\eta/4)})\geq 1-\frac\eta4
\]
so that
\[
\prob(\bar{A}_{\eps,m_*} \cap\{\forall \alpha\in\tau_{m_*}: N_\alpha(\frac1b ,b) \geq m_*\}) \geq 1-\frac\eta2.
\]
Call this set $B$ for this lemma. Notice that if  $(u_{\alpha n})$ are the ranked elements of $\Pi_\alpha$ and are independent over $\alpha$, then for $\alpha\in\partial \tau_m$, 
\[
v_\alpha = \frac{\prod_{\beta\precsim \alpha} u_\beta}{\sum_{\alpha\in\partial\tau_m*} \prod_{\beta\precsim\alpha} u_\beta}
\geq (m_*\bar{b}^2)^{-r}
\]
Note that if $\alpha$ is not a leaf of $\tau_m$, then there is some leaf that lower bounds it since $v_\beta =\sum_{\beta\precsim \alpha}v_\alpha$. Thus this bound extends to the whole tree. It then follows that
\[
\prod_{\alpha\in\tau_{m_*}} v_\alpha \geq (m \bar{b}^2)^{-rm^r} =p_*.
\]

Let $X_i =\indicator{A_{\eps,\delta}^{\tau^i}}$, and let $I=\min\{i:X_i = 1\}$ be the first time this sequence is 1. Then 
conditionally on $(v_\alpha)$, it follows that $I$ is a geometric random variable with parameter $p$ where $p\geq p_*$ on $B$. Notice then that
\[
\prob(I\geq M_*,B) \leq (1-p_*)^{M_*} \leq \eta/2
\]
Combining these results then gives
\[
Q(\cup_{i=1}^{M_*} A_{\eps,\delta}^{\tau^i_{m_*}}) = \prob(I\leq M_*) \geq P(I\leq M_*,B) \geq\prob(B)-\prob(I\geq M_*,B) \geq 1-\eta/2 - \eta/2 
\]
\end{proof}

\subsection{An approximation theorem}
For $(x_1,\ldots,x_d)\in[0,1]^d$, let $X_n(x_1,\ldots,x_d)$ be the random $d$-vector whose
$i$-th entry is an independent Binomial$(n,x_i)$ random variable. For $f\in C([0,1]^d)$, define the 
operator 
\[
B_n f(x) = \E f(\frac{X_n(x_1,\ldots,x_d)}{n})= \sum_{k_1,\ldots,k_d\leq n} \sum_{l_1\leq n-k_1}\ldots\sum_{l_d\leq n-k_d} f(\frac{k_1}{n},\ldots,\frac{k_d}{n})
	\prod_{i=1}^d \binom{n}{k_i,l_i} (-1)^{k_i+l_i}\E(X_n^{(i)})^{k_i+l_i}
\]
The following is a basic consequence of Chebyshev's inequality.

\begin{lem}
if $f\in C([0,1]^d)$ is Lipschitz in the $\ell_1$ norm, then 
\[
\norm{f-B_n f(x)}_{\infty} \leq \frac{d}{2\sqrt{n}} \norm{f}_{lip}
\]
\end{lem}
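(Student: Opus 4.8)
\emph{Proof sketch.} The plan is to recognize $B_n$ as an expectation operator and then use the classical $L^1$ bound on a random variable in terms of its variance. Write $\bar X^{(i)}_n(x)=X^{(i)}_n(x)/n$, so that the $i$-th coordinate of $X_n(x)/n$ is an average of $n$ i.i.d.\ Bernoulli$(x_i)$ variables; then $\E\bar X^{(i)}_n(x)=x_i$ and $\mathrm{Var}\,\bar X^{(i)}_n(x)=x_i(1-x_i)/n\le 1/(4n)$. The first step is the (purely bookkeeping) check that the explicit multinomial sum written for $B_n f(x)$ really equals $\E f(X_n(x)/n)$: expanding $x_i^{k_i}(1-x_i)^{n-k_i}$ coordinatewise via the binomial theorem reproduces exactly the stated sum over the $k_i$ and $l_i$, since $\E (X_n^{(i)})^{k_i+l_i}$ collects the appropriate moments of the Bernoulli factors.

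Next I would fix $x\in[0,1]^d$ and estimate, using that $\E$ is an $L^1$-contraction together with the $\ell_1$-Lipschitz bound on $f$,
\[
\abs{f(x)-B_n f(x)} = \abs{\E\Big(f(x)-f\big(\tfrac{X_n(x)}{n}\big)\Big)} \le \norm{f}_{lip}\,\E\norm{x-\tfrac{X_n(x)}{n}}_1 = \norm{f}_{lip}\sum_{i=1}^d \E\abs{x_i-\bar X^{(i)}_n(x)}.
\]
For each coordinate I would then apply the $L^1$--$L^2$ inequality (Jensen, or Cauchy--Schwarz against the constant $1$ --- the form of Chebyshev's inequality alluded to above) to get $\E\abs{x_i-\bar X^{(i)}_n(x)}\le (\mathrm{Var}\,\bar X^{(i)}_n(x))^{1/2}=(x_i(1-x_i)/n)^{1/2}\le 1/(2\sqrt n)$. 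Summing over the $d$ coordinates gives $\abs{f(x)-B_n f(x)}\le \tfrac{d}{2\sqrt n}\norm{f}_{lip}$, and since $x\in[0,1]^d$ was arbitrary the same bound holds for $\norm{f-B_n f}_\infty$.

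There is no genuine obstacle here; the only thing to be careful about is the identification of $B_n$ with the binomial-expectation operator and matching the normalization so that the constant comes out as $d/(2\sqrt n)$ rather than something larger. If one insists on literally invoking Chebyshev's tail inequality instead of Cauchy--Schwarz, one can replace the middle step by $\E\abs{x_i-\bar X^{(i)}_n(x)} = \int_0^\infty \prob\big(\abs{x_i-\bar X^{(i)}_n(x)}>t\big)\,dt$, bounding the integrand by $1$ on $[0,1/(2\sqrt n)]$ and by $1/(4nt^2)$ afterwards; this costs only a harmless constant and leaves the $d\,n^{-1/2}$ scaling unchanged.
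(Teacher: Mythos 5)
Your argument is correct and is exactly the standard Bernstein-operator estimate the paper has in mind when it calls the lemma ``a basic consequence of Chebyshev's inequality'' (the paper gives no further proof): identify $B_nf(x)=\E f(X_n(x)/n)$, use the $\ell_1$-Lipschitz bound coordinatewise, and control each $\E\abs{x_i-X_n^{(i)}/n}$ by the square root of the Binomial variance, giving precisely $\frac{d}{2\sqrt n}\norm{f}_{lip}$. Your closing remark is also accurate: the $L^1$--$L^2$ (Jensen/Cauchy--Schwarz) step is what yields the constant $1/(2\sqrt n)$ per coordinate, while a literal tail-integration Chebyshev argument would only give the same rate with a worse constant.
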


In the following, let $\mathscr{A}$ is the class of $A$ as in \prettyref{eq:F2-cty-set}. 
\begin{lem}\label{lem:poly-approx}
Let $f$ be lipschitz on $([0,1]^d,\norm{\cdot}_1)$ where $d=M\abs{\tau_m}((\abs{\tau_m}-1)/2 + 1)$, and let $\bar{Q}$ and $\bar{Q}_N$ the the restriction
of $Q$ and respectively $Q_N$ to the coordinates indexed by $\cup_{i=1}^M \tau^i_m\subset\cA_r$.
\[
\abs{\int B_n f(x) d\bar{Q}(x) - \int B_n f(x)d\bar{Q}_N(x)}\leq n^2 2^{2nd-1} \norm{f}_\infty \sup_{A\in\mathscr{A}} \abs{P_N(A)-P(A)}
\]
\end{lem}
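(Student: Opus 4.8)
The plan is to expand $B_n f$ as a polynomial in the $d$ coordinates, integrate it term by term, and recognise each resulting monomial integral as one of the probabilities $P_N(A)$, $A\in\mathscr A$, that already appeared in the proof of \prettyref{lem:W-conv}.

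First I would record the polynomial form of $B_n f$. Writing $X_n^{(i)}\sim\mathrm{Binomial}(n,x_i)$ independently and expanding $(1-x_i)^{\,n-k_i}$ by the binomial theorem, $B_n f$ is seen to be a polynomial of degree at most $n$ in each of $x_1,\dots,x_d$; collecting the monomials,
\[
B_n f(x)=\sum_{\mathbf m}c_{\mathbf m}\prod_{i=1}^d x_i^{m_i},\qquad \sum_{\mathbf m}\abs{c_{\mathbf m}}\le n^2\,2^{2nd-1}\,\norm{f}_\infty,
\]
the coefficient bound being a crude estimate using $\abs{f(k/n)}\le\norm{f}_\infty$ together with $\sum_{k=0}^n\sum_{l=0}^{n-k}\binom nk\binom{n-k}l=3^n$ in each of the $d$ coordinates. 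This step is the only place the constant $n^2 2^{2nd-1}$ enters, and the Lipschitz hypothesis on $f$ plays no role here (it is needed only when this lemma is combined with the preceding Bernstein-approximation lemma).

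Next I would identify each monomial integral. The $d$ coordinates carrying $\bar Q_N$ and $\bar Q$ are indexed by the singletons $\{\alpha\}$ and the pairs $\{\alpha,\beta\}$ with $\alpha,\beta$ in a common shifted tree $\tau_m^i$, $i\le M$ — equivalently, by a subcollection of $F^r_2$, the coordinate attached to $E\in F^r_2$ being $W_{E,N}=\mu_N(\bigcap_{\alpha\in E}B_\alpha)$. Fix a multi-index $\mathbf m$, let $E_1,\dots,E_s$ be the indices with $m_{E}>0$, set $n_l=m_{E_l}$, and fix a partition $\sqcup_{l=1}^s I_l=[\sum_l n_l]$ with $\abs{I_l}=n_l$; let $J$ and $A=A(\mathbf m)$ be as in \prettyref{eq:F2-cty-set}. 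Introducing $n_l$ fresh replicas $\tilde\sigma^j$, $j\in I_l$, for each $E_l$, and using that $\tilde\sigma^j\in\bigcap_{\alpha\in E_l}B_\alpha$ is precisely the event $\{R(\sigma^\beta,\tilde\sigma^j)\ge q_{\abs\beta}$ for all $\beta\precsim\alpha$, $\alpha\in E_l\}$, the computation carried out in the proof of \prettyref{lem:W-conv} gives
\[
\int\prod_i x_i^{m_i}\,d\bar Q_N=\E\prod_{l=1}^s W_{E_l,N}^{\,n_l}=\E\gibbs{\prod_{(\beta,j,l)\in J}\indicator{R(\sigma^\beta,\tilde\sigma^j)\ge q_{\abs\beta}}}=P_N\bigl(A(\mathbf m)\bigr),
\]
and the identical computation with $\mu$ in place of $\mu_N$ yields $\int\prod_i x_i^{m_i}\,d\bar Q=P\bigl(A(\mathbf m)\bigr)$ with the \emph{same} set $A(\mathbf m)\in\mathscr A$.

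Finally I would assemble the bound. For each $\mathbf m$,
\[
\Bigl|\int\prod_i x_i^{m_i}\,d\bar Q-\int\prod_i x_i^{m_i}\,d\bar Q_N\Bigr|=\bigl|P(A(\mathbf m))-P_N(A(\mathbf m))\bigr|\le\sup_{A\in\mathscr A}\abs{P_N(A)-P(A)},
\]
so multiplying by $c_{\mathbf m}$, summing over $\mathbf m$, and invoking the coefficient bound of the first step,
\[
\Bigl|\int B_n f\,d\bar Q-\int B_n f\,d\bar Q_N\Bigr|\le\Bigl(\sum_{\mathbf m}\abs{c_{\mathbf m}}\Bigr)\sup_{A\in\mathscr A}\abs{P_N(A)-P(A)}\le n^2\,2^{2nd-1}\,\norm{f}_\infty\,\sup_{A\in\mathscr A}\abs{P_N(A)-P(A)},
\]
which is the claim. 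I expect the main (and essentially the only) obstacle to be the bookkeeping of the middle step: checking that an arbitrary monomial in the singleton- and pair-coordinates corresponds to a single set of the exact syntactic form \prettyref{eq:F2-cty-set} — which is where one uses that $\mathscr A$ was defined to allow the index sets $E_k$ to be pairs, so that powers $\mu_N(B_\alpha\cap B_\beta)^m$ of intersection weights are accommodated — together with keeping the constant in the coefficient estimate honest. No ingredient beyond those of \prettyref{sec:ballweights} is required.
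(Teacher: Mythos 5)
Your proposal is correct and follows essentially the same route as the paper: expand the Bernstein operator into monomials, identify each monomial moment with $P_N(A(\cdot))$ and $P(A(\cdot))$ for the same set $A\in\mathscr{A}$ via the replica computation of \prettyref{lem:W-conv}, and bound the total coefficient mass. Your coefficient bookkeeping (summing trinomial coefficients to $3^{nd}$) is if anything cleaner than the paper's count, and matches the stated constant at the same crude level of precision.
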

\begin{proof}
To condense notation, let $\mathbf{k}=(k_1,\ldots,k_d)$ and similarly for $\mathbf{l}$. All vector inequalities  are to be interpreted coordinate-wise. 
We begin by noting that
\begin{align*}
\abs{\int B_n f(x) d\bar{Q}(x) - \int B_n f(x) d\bar{Q}_N(x)} &= \abs{\sum_{\mathbf{k}\leq n}\sum_{\mathbf{k}+\mathbf{l}\leq n} 
f(\frac{\mathbf{k}}{n}) \prod_{i=1}^d \binom{n}{k_i,l_i} \left(\int x_j^{k_j+l_j} d(\bar{Q}-\bar{Q}_N)\right)}\\
&\leq  \sum_{\mathbf{k}\leq n}\sum_{\mathbf{k}+\mathbf{l}\leq n} \norm{f}_\infty \left(\prod_{i=1}^d\binom{n}{k_i,l_i}\right)
\abs{\int x_j^{k_j+l_j} d(\bar{Q}-\bar{Q}_N)}.
\end{align*}
Recall that there is an $A(\mathbf{k},\mathbf{l})$ in $\mathscr{A}$ corresponding to the above monomials, as explained in \prettyref{eq:F2-cty-set}. Keeping this in mind, we then see that
\begin{align*}
RHS&\leq \sum_{\mathbf{k}\leq n}\sum_{\mathbf{l}+\mathbf{k}\leq n} \norm{f}_\infty (\prod_i \binom{n}{k_i,l_i} \abs{P_N(A(\mathbf{k},\mathbf{l}))-P(A(\mathbf{k},\mathbf{l}))}\\
&\leq \#\{ \mathbf{k}\leq n,\mathbf{l}+\mathbf{k}\leq n\} \norm{f}_\infty \binom{n}{n/2}^{2d} \sup_{A\in\mathscr{A}}\abs{P_N(A)-P(A)}
\leq (\frac{n^2}{2})2^{2nd} \norm{f}_\infty \sup_{A\in\mathscr{A}}\abs{P_N(A)-P(A)}
\end{align*}
\end{proof}

\subsection{A Quantified version of \prettyref{prop:regularity}}
Let $A_{\eps,\delta,m,M} = \cup_{i=1}^M A_{\tau_m,\eps,\delta}^i$, let 
\begin{equation}
\bar{A}_{\tau,\epsilon,\delta} 
=\bigcap_{k=1}^{r}\left\{\sum_{\substack{\abs{\alpha}=k\\\alpha\in\tau}}x_{\{\alpha\}}>1-\epsilon\right\}\bigcap_{\alpha\in\tau\setminus\partial\tau}\left\{\abs{x_{\{\alpha\}}-\sum_{\beta\in child(\alpha)}x_{\{\beta\}}}<\eps\right\}
\bigcap\left\{\abs{\sum_{\substack{\alpha\nsim\beta\\\alpha\beta\in\tau}}x_{\alpha,\beta}} <\delta/\abs{\tau}^{2}\right\},
\end{equation}
and let 
\[
\bar{A}_{\eps,\delta,m,M}=\cup_{i=1}^M \bar{A}_{\tau_m,\eps,\delta}^i
\]
\begin{lem}[\prettyref{prop:regularity} quantified]\label{lem:reg-quant}
For every $\eps,\delta,\eta$ positive with $\eps$ and $\delta$ smaller than 1, let $m=m_*(\eps/2,\delta/2)$ and 
$M=M_*(\eta,\eps/2)$ , $d$ be as in
\prettyref{lem:poly-approx}, and let 
$n= \left(\frac{16 d \abs{\tau_m}}{\eta\eps\delta}\right)^2$.
For $N \geq D^{-1}(\eta/(n^2 2^{2nd+1}))=:N_0$,
\[
Q_N(\bar{A}_{\eps,\delta,m,M})\geq 1-\eta
\]
\end{lem}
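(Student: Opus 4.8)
The plan is to transfer the lower bound $\bar{Q}\big(A_{\eps/2,\delta/2,m,M}\big)\ge 1-\eta/4$ --- which is exactly the quantified form of \prettyref{lem:exhausting-tau}, applied with tolerances $\eps/2,\delta/2$ and confidence $\eta/4$, so that $m=m_*(\eps/2,\cdot)$ and $M=M_*(\cdot,\eps/2)$ are the relevant parameters --- to the measures $\bar{Q}_N$. The mechanism is a polynomial-approximation argument: $A_{\eps/2,\delta/2,m,M}$ sits strictly inside $\bar{A}_{\eps,\delta,m,M}$ with a quantitative gap, so one can interpose a Lipschitz bump function, replace it by one of its Bernstein polynomials, and then move from $\bar{Q}$ to $\bar{Q}_N$ using \prettyref{lem:poly-approx}, at a cost controlled by $\sup_{A\in\mathscr{A}}\abs{P_N(A)-P(A)}\le D(N)$.

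First I would record the inclusion and the gap. Each constraint cutting out $A_{\tau^i_m,\eps/2,\delta/2}$ has the form $\{L(x)>1-\eps/2\}$, $\{L(x)\in(0,\eps/2)\}$, or $\{L(x)\in[0,\delta/(2\abs{\tau_m}^2))\}$ for a linear functional $L$ on $[0,1]^d$ which is a signed sum of distinct coordinates, hence $1$-Lipschitz for $\norm{\cdot}_1$; the corresponding constraint defining $\bar{A}_{\tau^i_m,\eps,\delta}$ is the same one with the tolerances doubled. Thus $A_{\eps/2,\delta/2,m,M}\subseteq\bar{A}_{\eps,\delta,m,M}$, and every point of $A_{\eps/2,\delta/2,m,M}$ is separated from the complement of $\bar{A}_{\eps,\delta,m,M}$ by at least $\rho:=\min\{\eps/2,\,\delta/(2\abs{\tau_m}^2)\}$ in at least one of these functionals. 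I would then build $f\in C([0,1]^d)$ with $\indicator{A_{\eps/2,\delta/2,m,M}}\le f\le\indicator{\bar{A}_{\eps,\delta,m,M}}$ as a maximum over $i\le M$ of products of piecewise-linear ramps of slope $\le 1/\rho$ in the finitely many functionals $L$; since each factor is a $1$-Lipschitz ramp precomposed with a $1$-Lipschitz map and there are at most $C(r)\abs{\tau_m}$ factors, $\norm{f}_{lip}$ is bounded by a fixed polynomial in $\abs{\tau_m}$ over $\eps\delta$. The exponent $n=(16d\abs{\tau_m}/(\eta\eps\delta))^2$ is chosen so that the basic Bernstein estimate $\norm{f-B_nf}_\infty\le\tfrac{d}{2\sqrt n}\norm{f}_{lip}$ gives $\norm{f-B_nf}_\infty\le\eta/8$.

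The estimate is then a short chain. Using $f\le\indicator{\bar{A}_{\eps,\delta,m,M}}$, inserting $B_nf$ twice, and applying \prettyref{lem:poly-approx} with $\norm{f}_\infty\le 1$ and $\sup_{A\in\mathscr{A}}\abs{P_N(A)-P(A)}\le D_1(N)\le D(N)$:
\begin{align*}
Q_N(\bar{A}_{\eps,\delta,m,M})
&\ge \int f\,d\bar{Q}_N
\ge \int B_nf\,d\bar{Q}_N-\norm{f-B_nf}_\infty\\
&\ge \int B_nf\,d\bar{Q}-n^2 2^{2nd-1}D(N)-\norm{f-B_nf}_\infty\\
&\ge \int f\,d\bar{Q}-2\norm{f-B_nf}_\infty-n^2 2^{2nd-1}D(N)
\ge \bar{Q}\big(A_{\eps/2,\delta/2,m,M}\big)-\tfrac{\eta}{4}-n^2 2^{2nd-1}D(N).
\end{align*}
For $N\ge N_0=D^{-1}\!\big(\eta/(n^2 2^{2nd+1})\big)$, monotonicity of $D$ gives $n^2 2^{2nd-1}D(N)\le\eta/4$; combined with $\bar{Q}(A_{\eps/2,\delta/2,m,M})\ge 1-\eta/4$ from the quantified \prettyref{lem:exhausting-tau} (the set being a cylinder, its $\bar{Q}$- and $Q$-masses agree), this yields $Q_N(\bar{A}_{\eps,\delta,m,M})\ge 1-\eta$.

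The Bernstein estimate and the transfer lemma \prettyref{lem:poly-approx} are already in hand, so the one delicate point is the bump-function construction: one must check that the relaxation from $A_{\eps/2,\delta/2,m,M}$ to $\bar{A}_{\eps,\delta,m,M}$ leaves a gap $\rho$ that shrinks only polynomially in $\abs{\tau_m}$, so that $\norm{f}_{lip}$ is matched by the stated $n$ after absorbing the $r$-dependent combinatorial constants, and then to keep the three error budgets --- $\eta/4$ from the exhaustion bound, $2\cdot\eta/8$ from the Bernstein step, and $\eta/4$ from the polynomial transfer --- summing to at most $\eta$.
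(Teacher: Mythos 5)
Your proposal is correct and follows essentially the same route as the paper: sandwich the indicators of $A_{\eps/2,\delta/2,m,M}$ and $\bar{A}_{\eps,\delta,m,M}$ by a Lipschitz interpolating function, approximate it by its Bernstein polynomial, transfer from $\bar{Q}$ to $\bar{Q}_N$ via \prettyref{lem:poly-approx} at cost $n^2 2^{2nd-1}D(N)$, and budget the errors against the quantified exhaustion bound. The only cosmetic difference is that the paper realizes the bump as the $\ell_1$-distance function $\psi_\iota(x)=1-\bigl(d_{\ell_1}(x,A_{\eps/2,\delta/2,m,M})\wedge\iota\bigr)/\iota$ with $\iota=\eps\delta/(4\abs{\tau_m})$ rather than a maximum of products of ramps, and splits the error budget as $\eta/2+\eta/4+\eta/4$ instead of $\eta/4+\eta/4+\eta/4$.
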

\begin{proof}
Let 
\[
\psi_{\iota} = \psi_{\iota}^{\eps,\delta} = 1-\frac{d_{\ell_1}(x,A_{\frac{\eps}2,\frac\delta 2})\wedge \iota}{\iota}
\]
and set $\iota = \frac{\eps\delta}{4\abs{\tau_m}}$. 
Note that $\norm{\psi_\iota}_\infty \leq 1$ and $\norm{\psi}_{lip} \leq 1/{\iota}$.  Since the functions
in the definition of $A_{\eps,\delta,m,M}$ are all $1$-lipschitz on  $(\R^d,\norm{\cdot}_1)$,  it follows that
\[
\indicator{A_{\frac{\eps}2,\frac\delta 2,m,M}}\leq \psi_\iota \leq \indicator{\bar{A}_{\eps,\delta,m,M}}.
\]
Recall that by choice of $m$ and $M$, it follows that 
\[
1-\eta/2 \leq Q(A_{\eps/4,\delta/4,m,M}).
\]
 Then 
\begin{align*}
Q(A_{\eps/2,\delta/2,m,M})&\leq \int\psi_\iota d\bar{Q}\leq \int B_n\psi_\iota(x)d\bar{Q} + \frac{d}{2\sqrt{n}\iota} \leq \int B_n \psi_\iota (x)d\bar{Q}_N + n^2 2^{2nd-1}D(N) + \frac{d}{2\sqrt{n}\iota}\\
&\leq \int \psi_\iota d\bar{Q}_N + \frac{d}{\sqrt{n}\iota} + n^2 2^{2nd - 1} D(N)\leq Q_N(\bar{A}_{\eps,\delta,m,M}) +\frac{d}{\sqrt{n}\iota} + n^2 2^{2nd - 1} D(N)
\end{align*}
so that
\[
Q_N(\bar{A}_{\eps,\delta,m,M})\geq 1-\frac{\eta}{2}-\left(\frac{d}{\sqrt{n}\iota} + n^2 2^{2nd - 1} D(N)\right)
\]
By choice of n, 
\[
\frac{d}{\sqrt{n}\iota}\leq \eta/4
\text{ and similarly by choice of $N$, }
D(N)\leq \frac{\eta}{2n^22^{2nd-1}}.
\]
Combining these with the above inequality gives the result.
\end{proof}

\begin{rem}
Notice that $\bar{A}$ has an absolute value rather than a lower 
bound of $0$ for certain inequalities. Following through the proof 
of \prettyref{prop:exhaustion-1}, one sees that one can still 
create an $(\eps,0)$-exhaustion as desired.
\end{rem}
\subsection{Lower bounds on rates: proof of \prettyref{thm:quant-clust}}
One might be further interested in lower bounds on rates
in approximate ultrametricity. 
Before we begin we make a few simplifications to make the analysis easier:
Let $\tilde{K}(\theta)=4C(\theta)\theta/(1-\theta)+10$. Let $K$ be such that 
\[
K=\lceil\min_{\theta\in\{\zeta_k\}} \tilde{K}(\theta)\rceil.
\]
Finally let $1/\alpha=\min{(1-\zeta_k)/\zeta_k,1/8}$. Since $e^{-m/8}\leq 1/m^{1/8}$, it follows that
\[
\Phi(m;\theta) \leq \frac{K}{m^{1/\alpha}}.
\]
Notice that in the above all of the arguments still hold if we increase $m_k$.
In particular, by inductive arguments one can show that we can let 
\[
m_*=\left(\frac{Kr^2}{\eta\eps}\right)^{2\alpha(\alpha+1)^{r-1}}
\]

\begin{thm*}[\ref{thm:quant-clust}]
Let $\{q_k\}_{k=1}^r$ be an admissible sequence with parameters
$\zeta_k=\zeta[0,q_k)$. Then there are functions $n_0(N)$
$\nu(N)$, and $m_{**}(N)$ such that $\mu_N$ admits a collection
$\{C_{\alpha,N}\}_{\alpha\in\tau_{m_{**}(N)}}$ that
is $(2^{-\nu+1},0$-hierarchically exhausting and 
$(1/2^{n_0},1/2^{n_0/2})$-hierarchically clustering.
Furthermore we have the bounds,
\begin{align*}
\nu (N)&\geq \Omega(\log\log\log\log(1/D(N)))\\
n_0 (N) &\geq \Omega(\log\log\log\log(1/D(N)))\\
m_{**}(N) &\geq \Omega((\log\log\log(1/D(N)))^c)
\end{align*}
where these inequalities are to be understood 
up to constants that depend on $r$
and $\zeta_1$, and $c$ also depends on these parameters.
\end{thm*}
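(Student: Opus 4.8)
The plan is to unwind the dependencies accumulated in the proof of \prettyref{thm:main-result-1-proof} and track how each threshold grows as a function of the single data $D(N)$, using the quantified versions of the intermediate lemmas established in \prettyref{sec:quant-version}. Recall that in \prettyref{thm:main-result-1-proof} we introduced parameters $\eta_\nu = \eps_\nu = \delta_\nu = 2^{-\nu}$, chose $m_\nu, M_\nu, n_0(\nu)$, and a threshold $\tilde N(n;\nu)$ so that the good event $E^N_\nu$ has probability $\geq 1 - 2^{-\nu}$ for $N \geq \tilde N(n_0(\nu);\nu)$; then $\nu(N) = \sup\{\nu : N \geq N(\nu)\}$ was defined via the telescoping sequence $N(\nu+1) = \tilde N(n_0(\nu+1);\nu) \vee (N(\nu)+1)$. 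The key point is that $\tilde N(n;\nu) = N_1(n) \vee N_0(\nu,\eps_\nu,\delta_\nu)$, where $N_1(n)$ comes from \prettyref{lem:ult} (hence from $D_2$, giving $N_1(n) = D_2^{-1}(2^{-n}) \leq D^{-1}(2^{-n})$), and $N_0$ is the threshold from the quantified regularity statement \prettyref{lem:reg-quant}. So the whole argument reduces to making $N(\nu)$ explicit in terms of $D$ and then inverting.

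First I would substitute the explicit bounds from \prettyref{sec:quant-version}. From \prettyref{lem:reg-quant} with $\eta = \eps = \delta = 2^{-\nu}$, the relevant quantities are $m = m_*(2^{-\nu-1}, 2^{-\nu-1})$, $M = M_*(2^{-\nu}, 2^{-\nu-1})$, $d = M|\tau_m|(|\tau_m|-1)/2 + M|\tau_m|$, $n = (16 d |\tau_m| 2^{2\nu+1})^2$, and $N_0 = D^{-1}(2^{-\nu}/(n^2 2^{2nd+1}))$. Using the simplified bound $m_* = (Kr^2/(\eta\eps))^{2\alpha(\alpha+1)^{r-1}}$ from the end of \prettyref{sec:quant-version}, we get $m_* \leq 2^{c_1 \nu}$ for a constant $c_1 = c_1(r,\zeta_1)$, hence $|\tau_{m_*}| \leq m_*^{r+1} \leq 2^{c_2 \nu}$. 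Similarly $M_* = \log(\eta)/\log(1-p_*)$ with $p_* = (m_* \bar b^2)^{-r m_*^r}$ and $\bar b \leq 2^{c_3 \nu}$ (from the Poisson localization corollary with $\eta/m^r$), so $1/p_* \leq \exp(2^{c_4 \nu})$ and therefore $M_* \leq \nu \cdot \exp(2^{c_4 \nu}) \leq \exp(2^{c_5 \nu})$. Feeding these back: $d \leq \exp(2^{c_6 \nu})$, $n \leq \exp(2^{c_7 \nu})$, and $nd \leq \exp(2^{c_8 \nu})$, so $n^2 2^{2nd+1} \leq \exp(\exp(2^{c_9 \nu}))$. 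Combining with the $N_1$ contribution (which is only singly exponential in $n_0$, hence doubly exponential in $\nu$), we obtain
\[
N(\nu) \leq D^{-1}\!\left(\exp(-\exp(\exp(2^{c_{10}\nu})))\right),
\]
i.e. $D(N(\nu)) \geq \exp(-\exp(\exp(2^{c_{10}\nu})))$, which after inverting gives $\nu(N) = \sup\{\nu : N \geq N(\nu)\} \geq \Omega(\log\log\log\log(1/D(N)))$. The bound for $n_0(N) = n_0(\nu(N))$ follows since $n_0(\nu) = \Omega(\nu)$ up to the additive $\lg$ terms in \prettyref{lem:clust-1}, and $m_{**}(N) = m_{\nu(N)} = m_*(2^{-\nu(N)-1},\ldots)$; since $m_* \leq 2^{c_1 \nu}$ but also $m_*$ is polynomially bounded below in $1/\eps = 2^\nu$ with exponent $c = 2\alpha(\alpha+1)^{r-1}$, we get $m_{**}(N) \geq \Omega((2^{\nu(N)})^{c'})$... more precisely the claim as stated has $m_{**}(N) \geq \Omega((\log\log\log(1/D))^c)$, which requires tracking that $m_{**}$ is only one exponential level cheaper than $\nu$: indeed the quantification of \prettyref{lem:exhausting-tau} shows $m$ enters at the bottom of the tower, so $m_{**}(N) = \Theta(2^{c_1 \nu(N)})$ up to constants would be too strong — instead one reads off from the chain above that $\nu(N)$ is roughly $\log(\text{third-level-log})$, so $2^{\nu(N)}$ is roughly that third-level log, and raising to the power $c$ gives the stated bound.

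The main obstacle is bookkeeping rather than any genuine mathematical difficulty: one must be careful that the constants $c_1, \ldots, c_{10}$, which all depend on $r$ and on $\zeta_1$ (through $\alpha, K, \tilde K(\theta)$, and the exponent $2\alpha(\alpha+1)^{r-1}$), are tracked consistently, and in particular that the four-fold iterated logarithm in the $\nu$ and $n_0$ bounds versus the three-fold one in the $m_{**}$ bound is correctly attributed to the fact that $m_{**}$ sits one exponential level below the outermost telescoping of the $N(\nu)$ recursion. A secondary subtlety is that $\bar b$ in the Poisson-point-process localization depends on $\eta/m^r$, creating a mild circularity ($m$ appears in the bound for $\bar b$, which feeds into $M_*$, which feeds into $d$); this is harmless because $m_* $ is fixed first by \prettyref{lem:mstar} independently of $M$, so the dependencies are in fact acyclic once one orders them as $m_* \to \bar b \to p_* \to M_* \to d \to n \to N_0$. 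Once this ordering is respected, the conclusion is a routine (if unenlightening) composition of inverse functions, and I would present it as such, emphasizing that no attempt is made to optimize the tower height, consistent with the remark following the theorem statement.
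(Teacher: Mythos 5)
Your proposal is correct and takes essentially the same route as the paper's own proof: you compose the quantified ingredients of \prettyref{sec:quant-version} (the explicit $m_*$, the Poisson localization giving $\bar{b}$, hence $p_*$, $M_*$, $d$, $n$, and $N_0=D^{-1}(\cdot)$ from \prettyref{lem:reg-quant}, together with $N_1=D_2^{-1}(\eps_n)$) in the acyclic order $m_*\rightarrow\bar{b}\rightarrow p_*\rightarrow M_*\rightarrow d\rightarrow n\rightarrow N_0$ and then invert through the $\nu(N)$ construction of \prettyref{thm:main-result-1-proof}, which is exactly what the paper does with its $m_{**}$, $\bar{\bar{b}}$, $p_{**}$, $M_{**}$, and $I(\nu)=C_1\exp\left(c_3e^{c_4\nu}\right)$. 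The only deviations are harmless slack in bookkeeping: an extra exponential level slips in when you pass from $n^22^{2nd+1}\leq\exp(\exp(2^{c_9\nu}))$ to $D(N(\nu))\geq\exp(-\exp(\exp(2^{c_{10}\nu})))$ (the correct bound is one level lower, which would even give a three-fold rather than four-fold iterated logarithm), and $n_0(\nu)$ is in fact of order $m_{**}^{r+1}\approx 2^{c\nu}$ rather than merely $\Omega(\nu)$; since all claims are lower bounds, these only weaken intermediate estimates and still yield the stated conclusions.
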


\begin{proof}
Notice that in the above it suffices to chose an $m_*$ larger
than chosen above, in particular one can chose
\[
m_{**}=\left(Kr^2 2^{2\nu}\right)^{2\alpha(\alpha+1)^{r-1}}
\]
(note that this implies $m_{**}\geq 5\wedge\nu)$.Then by making a bigger choice for $\bar{b}$, we can only 
make the inequalities worse, in particular chose
$\bar{\bar{b}}(\epsilon,\eta/4)=(5m_{**})^{1/\zeta}$
We can then choose $p_*$ and $M_*$ as 
\[
p_{**}=(m_{**}^{\frac{4}{\zeta_1}+1})^{-rm_{**}^r}.
\text{ and }
M_{**}=4\exp\left(r m_{**}^{r+1}(\frac{4}{\zeta_1}+2)\right).
\]
Then $d\leq M_{**}m_{**}^{2r}.$
Since $n^2 2^{2nd+1}\leq 2^{3nd}$ by our choice of $m_{**}$,
We see that  from \prettyref{lem:reg-quant},
$N_0 =D^{-1}(\exp[-I(\nu)])$
where 
\begin{align*}
I(\nu) &= 9(16 m_{**}^{2r} 2^{6\nu})^2 (M_{**} m_{**}^{2r})^3)= C_1 \exp\left(c_2 m_{**}^{r+1}\right)= C_1\exp\left(c_3 e^{c_4 \nu}\right).
\end{align*}
Finally, note that we're free to chose $n_0$ bigger than in
\prettyref{lem:clust-1} so that, in particular we can choose
\begin{align*}
n_0 &=\frac{2}{\log2}[\log(12)+(r(\frac{4}{\zeta_1}+1)m_{**}^{r+1})\leq \tilde{C_1}+\tilde{C_2}2^{c_3\nu}
\end{align*}
Putting these together gives the results
\end{proof}

\section*{Appendix}

\subsection*{Ruelle Probability Cascades}
In this section we define Ruelle Probability Cascades with parameters $0<\zeta_0<\ldots<\zeta_{r-1}<1$ 
and $0=q_0<q_1<\ldots<q_r\leq 1$ and state a few useful properties. When the parameters $\zeta_k$ or $q_k$ satisfy
these conditions we call them \emph{admissible}.
First we begin by defining the weight distribution. (In the above, when
we refer to the weights of an RPC with parameters $\zeta_k$, we mean the weights as we shall define below.
This definition will not depend on the choice of $q_k$, so this is well defined.) Let 
\[
\mu_{\zeta}(dx) = \zeta x^{-(\zeta+1)}dx.
\]
For each $\alpha\in\cA_r\setminus\partial\cA_r$, we associate the weights sequence $(u_{\alpha n})$ which is given
by independently drawn $PPP(\mu_{\zeta_{\abs{\alpha}}})$ arranged in decreasing order. Let 
\[
w_\alpha = \prod_{\beta\precsim\alpha}u_\beta
\text{ and finally let  }
v_\alpha = \frac{w_\alpha}{\sum_{\abs{\beta}=\abs{\alpha}} w_\beta},
\]
This gives us a collection $(v_\alpha)\in\cC_r$ (see \prettyref{defn:cascade-space} for this notation). 
Note that by construction $(v_\alpha)$ is in standard order. 

The corresponding RPC is then defined by
\[
RPC = \cR(\bv,\{q_k\}_{k=1}^r)
\]
where $\mu(\cdot)$ is as in \prettyref{eq:rost-map}. We end this section with the following facts which can
be found, for example, in \cite{PanchSKBook}. Let $\zeta$ be defined by $\zeta\{q_k\} = \zeta_k -\zeta_{k-1}$ where 
$\zeta_{-1}=0$ and $\zeta_r=1$.

\begin{prop}
Fix an RPC with parameters $\{\zeta_k\}$ and $\{q_k\}$. Then 
\begin{enumerate}
\item  For fixed $\alpha$, the weights $u_{\alpha n}$ are strictly positive and distinct almost surely.
\item Fix a $k\leq r$. Then if we let $(\tilde{v}_n)$ be the  weights $(v_\alpha)_{\abs{\alpha}=k}$ arranged in decreasing 
order, $(\tilde{v}_n)\sim PD(\zeta_k)$
\item The RPC satisfies the Ghirlanda-Guerra Identities. 
\item The overlap distribution for the RPC is given by $\zeta$. 
\end{enumerate} 
\end{prop}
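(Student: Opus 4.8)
The plan is to establish these four standard properties of the Ruelle cascade in the order (1), (2), (3), (4), deducing the last two from the first two; complete details are available in \cite{PanchSKBook}, and I would either reproduce or cite them as appropriate. The one structural ingredient that everything rests on is the scaling/superposition property of the intensity $\mu_{\zeta}$, so I would isolate that first.

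First I would prove (1) directly from the defining Poisson structure. The intensity $\mu_{\zeta}(dx)=\zeta x^{-(\zeta+1)}\,dx$ on $(0,\infty)$ is non-atomic, satisfies $\mu_{\zeta}([\delta,\infty))=\delta^{-\zeta}<\infty$ for every $\delta>0$ while $\mu_{\zeta}((0,\infty))=\infty$. Hence, almost surely, a $PPP(\mu_{\zeta})$ has infinitely many atoms, all lying in $(0,\infty)$, accumulating only at $0$, and pairwise distinct (two atoms coincide with probability $0$ because $\mu_{\zeta}$ is non-atomic). Thus each family $(u_{\alpha n})_{n}$ can be listed in strictly decreasing order, which is the assertion.

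Next, for (2), the key lemma is: if $\Pi\sim PPP(\mu_{\zeta})$ and $c>0$ then $c\,\Pi\sim PPP(c^{\zeta}\mu_{\zeta})$ by a change of variables, and if $\{\Pi_{i}\}$ are independent with $\Pi_{i}\sim PPP(c_{i}^{\zeta}\mu_{\zeta})$ and $\sum_{i}c_{i}^{\zeta}<\infty$, then $\bigcup_{i}\Pi_{i}\sim PPP\bigl((\sum_{i}c_{i}^{\zeta})\mu_{\zeta}\bigr)$. With this I would argue by induction on the depth $k$ that the unnormalized weights $(w_{\alpha})_{\abs{\alpha}=k}$ form, conditionally on the weights of smaller depth and then unconditionally, a $PPP(S_{k}\mu_{\cdot})$ for a strictly positive random $S_{k}$ that is finite almost surely; verifying $S_{k}<\infty$ and the summability hypothesis of the superposition lemma is where one uses that the relevant $\zeta_{j}$ lie in $(0,1)$. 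Since the ranked normalization of $PPP(c\,\mu_{\theta})$ has law $PD(\theta)$ for every $c>0$, dividing the depth-$k$ weights by $W_{k}=\sum_{\abs{\alpha}=k}w_{\alpha}$ gives that $(\tilde v_{n})$ has the stated Poisson--Dirichlet law at each depth, with the parameter read off the construction (I would be careful to align the index of $\zeta$ with the convention fixed at the start of the appendix).

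Finally I would deduce (3) and (4). For (4), the definition \prettyref{eq:h_alpha} makes the inner product telescope: $(h_{\alpha},h_{\beta})=\sum_{\gamma\precsim\alpha\wedge\beta}(q_{\abs{\gamma}}-q_{\abs{\gamma}-1})=q_{\abs{\alpha\wedge\beta}}$, so if $\alpha^{1},\alpha^{2}$ are drawn from the leaves with probabilities $v_{\alpha}$ then $R_{12}=q_{\abs{\alpha^{1}\wedge\alpha^{2}}}$ and $\E\mu^{\tensor2}(R_{12}=q_{k})=\E\bigl[\sum_{\abs{\gamma}=k}v_{\gamma}^{2}-\sum_{\abs{\gamma}=k+1}v_{\gamma}^{2}\bigr]$, where for an interior vertex $\gamma$ the weight $v_{\gamma}$ equals the total cascade mass below $\gamma$ (using that the cascade is almost surely proper, which follows from (1) and (2)). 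Inserting the classical identity $\E\sum_{n}v_{n}^{2}=1-\theta$ for $\bv\sim PD(\theta)$ at each depth and telescoping then gives $\E\mu^{\tensor2}(R_{12}=q_{k})=\zeta\{q_{k}\}$, so the overlap distribution is $\zeta$. For (3), I would run the cavity computation for the RPC, using the reweighting invariance of the Poisson--Dirichlet weights at each level propagated up the tree through the hierarchical construction, exactly as in \cite{PanchSKBook}; here too (2) is the structural input. The main obstacle is Step (2) together with a self-contained verification of (3): all the conditioning, the summability checks, and the passage from the Poisson description to $PD$ and to the Ghirlanda--Guerra identities live there, whereas (1) and (4) are short once the Poisson structure and the formula $(h_{\alpha},h_{\beta})=q_{\abs{\alpha\wedge\beta}}$ are in hand.
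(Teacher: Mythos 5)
The paper gives no proof of this proposition: it is stated as a list of known facts with a pointer to \cite{PanchSKBook}, so the only question is whether your outline is sound, and in substance it is. It is the standard textbook argument: non-atomicity and the tail behaviour of $\mu_{\zeta}$ for (1); scaling and superposition of Poisson processes plus scale invariance of the ranked, normalized points of $PPP(c\mu_{\theta})$ for (2); the telescoping identity $(h_{\alpha},h_{\beta})=q_{\abs{\alpha\wedge\beta}}$ together with $\E\sum_{n}v_{n}^{2}=1-\theta$ for $PD(\theta)$ for (4); and for (3) the invariance computation of \cite{PanchSKBook}, which the paper itself also simply cites. Your caution about the index of the Poisson--Dirichlet parameter is warranted: with the appendix's convention (children of a depth-$j$ vertex drawn from $PPP(\mu_{\zeta_{j}})$, root at depth $0$) the ranked depth-$k$ masses are $PD(\zeta_{k-1})$, so the statement's $PD(\zeta_{k})$ is an off-by-one inherited from the paper's conventions, not an error of yours.

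Two points to tighten if you write the details out. First, the almost sure finiteness of the random scale $S_{k}=\sum_{\abs{\alpha'}=k-1}w_{\alpha'}^{\zeta_{k-1}}$ comes from the strict monotonicity $\zeta_{k-2}<\zeta_{k-1}$ (for a $PPP(\mu_{\zeta})$ one has $\sum_{n}u_{n}^{p}<\infty$ a.s.\ precisely when $p>\zeta$); the condition $\zeta_{j}\in(0,1)$ is what makes the total mass $W_{k}$ finite, not $S_{k}$, so "the relevant $\zeta_{j}$ lie in $(0,1)$" is not quite the reason. Second, in (4) the parenthetical "the cascade is almost surely proper, which follows from (1) and (2)" does not hold for the level-by-level normalization $v_{\alpha}=w_{\alpha}/\sum_{\abs{\beta}=\abs{\alpha}}w_{\beta}$ used in the appendix: for those weights the consistency $v_{\gamma}=\sum_{\beta\in child(\gamma)}v_{\beta}$ fails pointwise. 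The clean route is to take the interior masses to be the aggregated leaf masses (so consistency is definitional) and then show that the aggregated depth-$k$ masses, ranked, are still Poisson--Dirichlet; this requires the marked-Poisson form of your scaling lemma (i.i.d.\ marks $X_{n}$ independent of the process with $\E X^{\zeta}<\infty$ turn $PPP(\mu_{\zeta})$ into $PPP(\E[X^{\zeta}]\,\mu_{\zeta})$ after multiplication), applied with $X$ the total unnormalized mass of the subtree hanging below a depth-$k$ vertex, the moment condition again being supplied by the monotonicity of the $\zeta_{j}$. With that substitution your telescoping computation of $\E\mu^{\tensor 2}(R_{12}=q_{k})$ is exactly right.
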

Using these facts, we see that one can also uniquely define $RPC(\zeta)$.  We would like to record here a 
consequence of the above ROSt encoding argument. This is to be compared with the similar result for the
Poisson-Dirichlet Process.

\subsection*{A consequence of the Ghirlanda-Guerra identities}

\begin{fact*}[\ref{fact:RPC}]
 Partition the support of $\mu$ in the balls of radii corresponding to
 the overlaps $\{q_{k}\}_{k=1}^{r}$ such that $\zeta[q_{k},q_{k+1})>0$. 
Let $(V_\alpha)_{\alpha\in\cA_r}$ denote the law of the masses of this partition
arranged in standard order. The law of these weights is distributed like those of an
RPC with the overlap distribution with parameters $\zeta_k-\zeta_{k-1}=\zeta[q_k,q_{k+1})$.
In particular, there are infinitely
many of them at each level and they have almost surely non-zero weights. 
\end{fact*}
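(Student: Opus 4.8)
\textbf{Proof proposal for Fact \ref{fact:RPC}.}

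The plan is to use the ROSt-encoding argument and the uniqueness portion of the Dovbysh--Sudakov theorem, exactly in the spirit of the proof of \prettyref{thm:main-result-2}, but now applied directly to $\mu$ rather than to a limiting sequence. First I would fix a $\zeta$-admissible sequence $\{q_k\}_{k=1}^r$ and recall the standard fact (see \cite[Section 2.4]{PanchSKBook}) that a measure satisfying the Ghirlanda--Guerra identities can be partitioned into balls of radii $q_{\abs{\alpha}}$ indexed by a tree, using the $\kappa$-approximation to the overlap array: the pushed-forward overlap array under the map $R_{ij}\mapsto\Gamma(R_{ij})$ (with $\Gamma$ as in \prettyref{eq:Gamma}) is itself the Gram array of a ROSt satisfying the Ghirlanda--Guerra identities with overlap distribution supported on the finite set $\{q_0,\dots,q_r\}$ with masses $\zeta_k-\zeta_{k-1}=\zeta[q_k,q_{k+1})$. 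Call this pushed-forward object the $\Gamma$-approximator of $\mu$; by Panchenko's ultrametricity theorem (\prettyref{prop:GGI-cons}) its support is genuinely ultrametric, so the balls $U_\alpha$ of radius $q_{\abs{\alpha}}$ are either nested or disjoint, and one gets a well-defined collection of weights $V_\alpha=\mu(U_\alpha)$ which, when arranged in standard order, lie in $\cC_r$ and are proper (i.e.\ $V_\alpha=\sum_{\beta\in child(\alpha)}V_\beta$ and $\sum_{\abs{\alpha}=k}V_\alpha=1$), using that $q_r<1$ is a continuity point so no mass escapes.

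Next I would invoke the Baffiano--Rosati theorem (\prettyref{prop:GGI-cons}): a measure satisfying the Ghirlanda--Guerra identities is determined, up to partial isometry of Hilbert space, by its overlap distribution. Since the $\Gamma$-approximator of $\mu$ and an $RPC$ with parameters $\zeta_k$ have the same overlap distribution $\zeta$ (restricted to $\{q_k\}$), they are equal in law as ROSts. Then I would run the same argument as in the proof of \prettyref{thm:main-result-2}: encode the weights $(V_\alpha)$ into a ROSt via the map $\cR(\cdot,\{q_k\})$ of \prettyref{eq:rost-map} using the vectors $h_\alpha$ from \prettyref{eq:h_alpha}; since this map is injective on proper cascades and the two ROSts agree in law, there is a coupling and a random isometry $T$ intertwining them, inducing a random bijection $\pi$ of $\partial\cA_r$ with $Th_\alpha=h_{\pi(\alpha)}$. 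Because $T$ is an isometry and there are only finitely many possible pairwise distances among the $h_\alpha$, $\pi$ extends to a tree automorphism of $\cA_r$ preserving the parent-child relation, hence $V_\alpha$ and the RPC weights $W_\alpha$ agree after reindexing; since both are in standard order and the weights at each level are almost surely distinct (by the Poisson point process construction of the RPC), $\pi$ must be the identity, so $(V_\alpha)\eqdist(W_\alpha)$. The positivity and infinitude of weights at each level then follows from the corresponding property of the RPC (\prettyref{prop:GGI-cons}, item on the Poisson-Dirichlet structure, together with the strict positivity of $PPP(\mu_{\zeta_k})$ atoms).

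The main obstacle I anticipate is the first step: carefully justifying that a GGI measure admits the ball-partition with \emph{strictly positive} weights at every level and that these are well-defined (the balls genuinely nest or are disjoint). This requires the $\kappa$-approximation argument and the observation that an atom of $\zeta$ at $q_r<1$, together with $\zeta[q_k,q_{k+1})>0$ for admissible sequences, forces infinitely many nonzero weights at each level --- this is where ultrametricity and the structure of the RPC overlap distribution genuinely interact. Once that is in place, the isometry-rigidity argument is essentially a transcription of the one already carried out for \prettyref{thm:main-result-2}. I would present the first step in detail and then say the rest follows \emph{mutatis mutandis} from the proof of \prettyref{thm:main-result-2}.
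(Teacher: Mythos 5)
Your proposal is correct and follows essentially the same route as the paper's own proof: encode the ball weights into a ROSt via $\cR$, identify its overlap distribution with that of the $\Gamma$-approximator of $\mu$ and hence (by the Ghirlanda-Guerra identities and the Baffiano-Rosati uniqueness) with an RPC, and then use the random-isometry/standard-order rigidity argument from the proof of \prettyref{thm:main-result-2} to force the identification of the weights. The extra care you propose for the ball-partition step is reasonable but not a departure from the paper's argument (note only that admissibility makes $q_r$ a continuity point, not an atom, of $\zeta$).
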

\begin{proof}

Since the $V_\alpha$ are in standard order, consider
\[
\nu = \cR((V_\alpha),\{q_k\};\cdot)
\]
the ROSt corresponding to $(V_\alpha)$ as per \prettyref{eq:rost-map}. Let $\Gamma$ be as in \prettyref{eq:Gamma}.  

Notice that by definition, if we let consider the $\Gamma$-approximator to $\mu$, call it $\mu^\Gamma$ Notice that 
\[
\E\gibbs{f(R^n)}_{\mu^\Gamma} = \E\gibbs{f((\Gamma(R_{ij})))}_\mu,
\]
so that the $\Gamma$-approximator satisfies the Ghirlanda-Guerra Identities. Note that by definition, if $B_\alpha$ be the balls corresponding to the $V_\alpha$, 
\[
\mu(B_\alpha) = \nu\{h_\alpha\}
\]
so that 
\[
\E\gibbs{f(R^n)}_{\mu^\Gamma}=\E\gibbs{f(R^n)}_{\nu}.
\]
Note that if we set $q_0=0$
\[
\Gamma_*\zeta\{q_k\}=\begin{cases}\zeta[q_k,q_{k+1}) &\text{ for } 0\leq k<r\\
 \zeta[q_r,1] &\text{ for } k=r.\end{cases}
\]
 Let $\eta$ be the random measure corresponding to a Ruelle Probability Cascade with jumps $\zeta_k$. Since RPC's 
satisfy the Ghirlanda-Guerra identities, we know by \prettyref{prop:GGI-cons} that it is uniquely identified by its overlap
distribution, which is the same as that of $\nu$. In particular, we know that there is a coupling of $\eta$ and
$\nu$ and a random isometry $T$ from the closed linear span of the support of $\eta$ to that of $\nu$ such that
$T_*\eta^\Gamma = \nu$. In particular, if we let 
\[
\eta\{Th_\alpha\}=\nu\{h_\beta\}
\]
for some $\beta$. 

To get the equality of the weights, we do the following iterative comparison. Let $\{\tilde{B}_\alpha\}$ be the balls defined
by grouping the $h_{\beta}$ where $\alpha\prec\beta$.  let $v_\alpha$ be the weights corresponding to those balls
from the RPC. Then we see that there is a random bijection $\pi:\cA_r\rightarrow\cA_r$ that preserves the parent child
relationship such that
\[
(v_\alpha)=(V_{\pi(\alpha)})
\]
induced by $T$ since the latter is an isometry and the $\tilde{B}_\alpha$ are a partition. Since the $v_n$ are almost
surely distinct, we see that the $V_n$ must be as well. Consequently, $\pi$ must be the identity map at the top level.
Continue this argument iteratively down the tree. Thus $\pi=Id$.
\end{proof}

\bibliographystyle{plain}
\bibliography{../../../../Summary/spinglass}
\end{document}